\theoremstyle{plain}
\newtheorem{theorem}{Theorem}[section]
\newtheorem{question}[theorem]{Question}
\newtheorem{corollary}[theorem]{Corollary}
\newtheorem{lemma}[theorem]{Lemma}
\newtheorem{proposition}[theorem]{Proposition}
\theoremstyle{definition}
\newtheorem{problem}[theorem]{Problem}
\newtheorem{remark}[theorem]{Remark}
\newtheorem*{remark*}{Remark}
\def\!#1{#1^\s}
\def\ld{\backslash}
\def\ker#1{\mathrm{ker}(#1)}
\def\aut#1{\mathrm{Aut}(#1)}
\def\End#1{\mathrm{End}(#1)}
\def\aff#1{\mathrm{Aff}#1}
\def\lmlt{\mathrm{LMlt}}
\def\dis{\mathrm{Dis}}
\def\sym{\mathrm{Sym}}
\def\Sym{\mathrm{Sym}}
\def\s{\mathfrak{s}}
\def\comment#1{{\color{red} #1}}
\def\setof#1#2{\{#1\, : \,#2\}}
\def\m{\mathfrak{ip}}
\def\O{\mathcal O}
\def\inv{^{-1}}
\def\ldiv{\backslash}
\def\N{\mathrm{Norm}}
\def\c#1{\mathrm{con}_{#1}}
\def\cg#1{\equiv_\alpha}
\newcommand*\xbar[1]{%
   \hbox{%
     \vbox{%
       \hrule height 0.5pt % The actual bar
       \kern0.5ex%         % Distance between bar and symbol
       \hbox{%
         \kern-0.1em%      % Shortening on the left side
         \ensuremath{#1}%
         \kern-0.1em%      % Shortening on the right side
       }%
     }%
   }%
}
\title{Medial and semimedial left quasigroups}
\author{Marco Bonatto}
\address{Instituto IMAS - CONICET, Buenos Aires}
\email{marco.bonatto.87@gmail.com}
\begin{document}

\maketitle

\section*{Abstract}

In this paper we investigate the class of semimedial left quasigroups, a class that properly contains racks and medial left quasigroups. We extend most of the results about commutator theory for racks collected in \cite{CP} and some of the results concerning Malt'sev conditions for quandles collected in \cite{Maltsev_paper} to the class of semimedial left quasigroups.

\section*{Introduction}

Left quasigroups are rather combinatorial objects, but often binary algebraic structures of interests have an underlying left quasigroup structure: examples are {\it racks and quandles}, motivated by knot theory \cite{AG, J}, or other structures, arising in the study of the solution of the set-theoretic Yang-Baxter equation \cite{Rump, Rumples}.

One goal of this paper is to provide some general tools for the study of the underlying left quasigroups of such structures, taking inspiration on the recent developments in racks and quandles theory, in particular towards the interplay between congruences and the displacement group on one hand, and the commutator theory in the sense of Freese-McKenzie \cite{CP} on the other.  %\comment{add blabla on commutator}

A natural setting for the commutator theory is the class of {\it left translation} (LT) left quasigroups, already introduced in \cite{CP}: indeed, for such left quasigroups the notion of {\it abelianness} and {\it centrality} of congruence (in the sense of \cite{comm}) is completely captured by of group theoretical properties of the {\it relative displacement groups}.

{\it Semimedial} left quasigroups, axiomatized by the identity
\begin{equation}
(x*x)*(y*z)\approx (x*y)*(x*z),\tag{SM1}
\end{equation}
have the LT property. The semimedial identity has been also studied in the framework of quasigroups as in \cite{book_quasi} and \cite{MJD}. This class properly contains both {\it racks}, i.e. the left quasigroups satisfying the identity 
\begin{equation}\label{LD}
 x*(y*z)\approx (x*y)*(x*z),\tag{LD}
 \end{equation}
  and {\it medial left quasigroups}, obeying the law
\begin{equation}\label{M}
 (x*y)*(z*u)\approx (x*z)*(y*u).\tag{M}
\end{equation}
%\comment{add picture?}
Racks and {\it quandles} (i.e. idempotent racks) are the inspiring and leading example for the present paper. For racks the interplay between commutator theory and the properties of the displacement group is quite strong. Indeed, also the notion of {\it solvability} and {\it nilpotence} is reflected by the correspondent properties of the displacement group \cite{CP}. 

The variety of semimedial left quasigroups turned out to be a natural framework to extend the theory already developed for racks. Indeed, most of the results of \cite{CP} can be extended to semimedial left quasigroups and most of the argument and ideas still go through with very little adjustments.

The paper is organized as follows: in Section \ref{Sec 1} we recall all the basic definitions in the framework of left quasigroups. In Theorem \ref{Galois for orbits} we also show that there is a non-trivial interplay between congruences and displacement group of left quasigroups as we can define a Galois connection between the congruence lattice and a sublattice of certain normal subgroup of the left multiplication group that we call {\it admissible subgroups} (such connection was already noticed for racks in \cite{CP}).

In Section \ref{Sec 2} we recall the basics of commutator theory and summarize the results obtained for LT left-quasigroups. In Proposition \ref{dis solv then Q solvable for LT} we partially extend the correspondence between solvability (resp. nilpotence) of a LT left quasigroup and its displacement group, namely we prove that if the displacement group admits an abelian (resp. central) series of {\it admissible subgroups} then also the left quasigroup is solvable (resp. nilpotent).

Section \ref{Sec 3} is dedicated to semimedial left quasigroups. We show that several properties of racks can be extended to semimedial left quasigroups. We prove that solvability and nilpotence are completely reflected by the corresponding properties of the displacement groups (as for racks) in Proposition \ref{solvable then dis solv}. In Theorem \ref{Galois for semimedial} we prove that we have a second Galois connection between the congruence lattice and the lattice of the admissible subgroups, as for racks. Moreover, semimedial left quasigroups have the {\it Cayley property} and the {\it strongly solvable} ones (in the sense of \cite{TCT}) are nilpotent, see Corollary \ref{multip}.  

In the rest of the section we study two subclasses of semimedial subgroups, defined according to the property of the {\it squaring mapping}: {\it multipotent} and {\it $2$-divisible} left quasigroups (note that racks are {\it $2$-divisible} semimedial left quasigroups). In Theorem \ref{equivalence} we prove that there exists an isomorphism of categories between the category of $2$-divisible left quasigroups and a suitable category which objects are given by pairs $(Q,f)$ where $Q$ is a quandle and $f$ is an automorphism of $Q$. This isomorphism preserves the displacement group and accordingly nilpotence and solvability. In particular, we prove that finite $2$-divisible semimedial quasigroups are solvable in Corollary \ref{semimedial quasi are solv}.

In Section \ref{Sec 4} we turn our attention to medial left quasigorups.  We prove that medial left quasigroups are nilpotent of length at most $2$ in Corollary \ref{medial 2 div are nilp}. Using the isomorphisms of categories in Theorem \ref{equivalence} we can use the structure theory for medial quandles in \cite{Medial} in order to have a complete description also for the structure of $2$-divisible medial left quasigroups. In Theorem \ref{finite medial connected racks} we provide a classification of finite connected medial racks 

In Section \ref{Sec 5} we investigate Malt'sev varieties of semimedial left quasigroups and we can partially extend the results about Malt'sev varieties of quandles obtained in \cite{Maltsev_paper}.

Finally in Section \ref{Sec 6}, we introduce the class of {\it spelling left quasigroup} as a subclass of the LT left quasigroups containing racks and we prove that several features of racks can be extended to this class.

\subsection*{Acknowledgments}

The author wants to thanks Michael K. Kinyon for pointing him out the relevant identity defining the semimedial law.

\section{Left quasigroups}\label{Sec 1}

%\subsection*{Terminology and basic facts}

A {\it left quasigroup} is a binary algebraic structure $(Q,*,\ldiv)$ such that the following identities hold:
\begin{displaymath}
x*(x\ldiv y)\approx y\approx x\ldiv (x*y).
\end{displaymath}
%If a map $h:(Q,*,\ldiv)\longrightarrow (R,*,\ldiv)$ satisfies $h(x*y)=h(x)*h(y)$ for every $x,y\in Q$ then it also implies that $h(x\ldiv y)=h(x)\ldiv h(y)$. So $h$ is a left-quasigroup homomorphism, and then in particular $\End{Q,*}=\End{Q,*,\ldiv}$ and $\aut{Q,*}=\aut{Q,*,\ldiv}$. \comment{$\aut{Q}$ normalizes $\lmlt(Q)$}

Hence, a left quasigroup is a set $Q$ endowed with a binary operation $*$ such that the mapping $L_x:y\mapsto x*y$ is a bijection of $Q$ for every $x\in Q$. Clearly the left division is defined by $x\ldiv y=L_x^{-1}(y)$, so we usually denote left quasigroups just as a pair $(Q,*)$. A {\it term} in the language of left quasigroup is any meaningful nesting of the basic operations $\{*,\ldiv\}$: e.g. $t(x,y,z)=x*(y\ldiv (x*(z*z))$. 

A {\it quasigroup} is a binary algebraic structure $(Q,*,\ldiv,/)$ such that $(Q,*,\ldiv)$ is a left quasigroup and $(Q,*,/)$ is a right quasigroup defined analogously. If $(Q,*,\ldiv,/)$ is finite then it is {\it term equivalent} to its left quasigroup reduct $(Q,*,\ldiv)$ (right divisions can be expressed as a power of right multiplications).

%\comment{We call left quasigroup with bijective right multiplications, i.e. they are reduct of quasigroups, {\it latin left quasigroups}.}
%
%\comment{The fixed points of an automorphism are a subalgebra.}
%
We will denote the classes of an equivalence relation $\alpha$ over a set $Q$ as $[a]_\alpha$ and the correspondent quotient set as $Q/\alpha$. A {\it congruence} of a left quasigroup $Q$ is an equivalence relation $\alpha$ such that the following implication holds:
\begin{equation}\label{congruence prop}
a\, \alpha\, c \text{ and } b\, \alpha\, d \quad\Rightarrow \quad L_a^{\pm 1}(b)\, \alpha \, L_c^{\pm 1}(d) %\text{ and } a\ldiv c\, \alpha \, b\ldiv d.
\end{equation}
Note that to check \eqref{congruence prop} it is enough to have that $L_a^{\pm 1}(c)\, \alpha \, L_b^{\pm 1}(c)$, $L_c^{\pm 1}(a)\, \alpha \, L_c^{\pm 1}(b)$ for every $a\, \alpha\, b$ and every $c\in Q$. Indeed, if $a\, \alpha\, c$ and $b\, \alpha\, d$ then
$$L_a^{\pm 1}(b)\, \alpha\, L_c^{\pm 1}(b) \, \alpha\, L_c^{\pm 1}(d).$$
Congruences of a left quasigroups $Q$ form a lattice denoted by $Con(Q)$ and they are in one-to-one correspondence with homomorphic images. Indeed a mapping $f:(Q,*)\longrightarrow (R,*)$ is a homomorphism of left quasigroups if and only if the equivalence relation $\ker{f}=\setof{(x,y)\in Q^2}{f(x)=f(y)}$ is a congruence. On the other hand, if $\alpha\in Con(Q)$ then the operations
$$[a]_\alpha * [b]_\alpha=[a*b]_\alpha, \quad [a]_\alpha \ldiv [b]_\alpha=[a\ldiv b]_\alpha,$$
are well defined and they provide a left quasigroup structure on $Q/\alpha$ and	the natural map $a\mapsto [a]_\alpha$ is a homomorphism. The congruence lattice of $Q/\alpha$ is given by $$Con(Q/\alpha)=\setof{\beta/\alpha}{\alpha\leq \beta\in Con(Q)}$$ where $[a]_\alpha \, \beta/\alpha\, [b]_\alpha$ if and only if $a\, \beta\, b$ (see \cite{UA} for further details). A congruence is {\it uniform} if all its blocks have the same size. 

A left quasigroup $Q$ is {\it connected} if the {\it left multiplication group} $\lmlt(Q)=\langle L_a,\, a\in Q\rangle$ is transitive on $Q$. If all the subalgebras of $Q$ are connected, then $Q$ is said to be {\it superconnected}. In particular congruences with connected factor are uniform \cite[Proposition 2.5]{CP}.
%
%For a left quasigroup $Q$ we define \comment{needed?}
%
%\begin{itemize}
%\item $LU(Q)=\setof{a\in Q}{L_a=1}$ the set of {\it left units} of $Q$;
%\item $I(Q)=\setof{a\in Q}{L_a^2=1}$ the set of {\it involutive elements} of $Q$; 
%\item $E(Q)=\setof{a\in Q}{a\ast a=a}$ the set of {\it idempotents elements} of $Q$. 
%\end{itemize}
%
%\comment{Note that these subsets might be empty. A left quasigroup is said to be {\it involutive} if $Q=I(Q)$, {\it idempotent} if $Q=E(Q)$ and {\it projection} (i.e. $x*y=y$ for every $x,y\in Q$) if $Q=LU(Q)$ and in such case we denote it by $\mathcal{P}_{|Q|}$. Clearly $LU(Q)$ is a projection subalgebra of $Q$ and $LU(Q)\subseteq E(Q)\cap I(Q)$. The subsets $LU(Q), I(Q)$ and $E(Q)$ are preserved under taking homomorphic images. If $(Q,*,\ldiv)$ is a left quasigroup it is easy to see that $(Q,\ldiv,*)$ has the same set of left units, involutive elements and idempotents.} 
%%
%%\comment{Add right zero/absorbing element?}
%
%
%

We denote by $Sg(S)$ the {\it subalgebra generated by a subset $S\subseteq Q$}, i.e. the smallest subalgebra of $Q$ containing $S$. In particular, $Sg(S)$ is given by all the left quasigroups terms evaluated on elements of $S$.

An element of $a\in Q$ is said to be {\it idempotent} if $a*a=a$. We denote by $E(Q)$ the set of the idempotent elements of $Q$ and we say that $Q$ is idempotent if $Q=E(Q)$.

Let $A$ be an abelian group, $f\in \End{A}$, $g\in \aut{A}$ and $c\in A$. The left quasigroup $(A,*)$ where
$$a*b=f(a)+g(b)+c$$
is denote by $\aff(A,f,g,c)$ and it is called {\it affine} left quasigroup over $A$.

\subsection*{The Cayley kernel}

The {\it Cayley kernel} of a left-quasigroup $Q$ is the equivalence relation $\lambda_Q$, defined by setting
\begin{displaymath}
a\, \lambda_Q\, b\, \text{ if and only if } \,L_a=L_b.
\end{displaymath}
If $\lambda_Q=0_Q$, we say that $Q$ is {\it faithful}. Note that if $Q/\alpha$ is faithful, then $\lambda_Q\leq \alpha$. 
%Indeed if $L_a=L_b$ then $a*c=b*c$ for every $c\in Q$. Thus, $[a]_\alpha* [c]_\alpha=[b]_\alpha* [c]_\alpha$ for every $c\in Q$, i.e. $L_{[a]_\alpha}=L_{[b]_\alpha}$ and therefore $[a]_\alpha=[b]_\alpha$.

\begin{lemma}\label{injective right}
Left quasigroups with injective right multiplications are faithful.
\end{lemma}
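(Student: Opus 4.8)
The plan is to simply unfold the two relevant definitions and apply injectivity once. Recall that \emph{faithful} means the Cayley kernel $\lambda_Q$ is the identity relation $0_Q$, i.e. that $L_a=L_b$ forces $a=b$. By a \emph{right multiplication} I understand the map $R_z\colon x\mapsto x*z$, so the hypothesis is that each $R_z$ is injective: $x*z=x'*z$ implies $x=x'$.

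So I would argue as follows. Suppose $a\,\lambda_Q\,b$, which by definition means $L_a=L_b$, i.e. $a*z=b*z$ for every $z\in Q$. Fix any single element $z\in Q$. Reading the equation $a*z=b*z$ as $R_z(a)=R_z(b)$ and invoking the injectivity of $R_z$, I immediately conclude $a=b$. Hence $\lambda_Q\subseteq 0_Q$, so $\lambda_Q=0_Q$ and $Q$ is faithful.

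There is essentially no obstacle here: the only content is the change of perspective from viewing $a*z=b*z$ as an equality of \emph{left} translations evaluated at $z$ to viewing it as the \emph{right} translation $R_z$ applied to the two first arguments $a$ and $b$. Note in particular that one does not even need the equality $a*z=b*z$ to hold for all $z$ — a single $z$ suffices — which reflects the fact that injectivity of the right multiplications is a much stronger hypothesis than faithfulness. This extra strength is worth flagging, since it explains why the lemma holds in this one-directional form (injective right multiplications give faithfulness, but faithfulness need not give injective right multiplications).
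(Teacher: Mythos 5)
Your proof is correct and is essentially identical to the paper's: both unfold $L_a=L_b$ into $a*c=b*c$ and apply injectivity of a right multiplication to conclude $a=b$. Your extra remark that a single $c$ suffices is a fine observation but does not change the argument.
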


\begin{proof}
Assume that the right multiplication of $Q$ are injective and assume that $L_a=L_b$. Then $a*c=b*c$ for every $c\in Q$ and so $a=b$.
\end{proof}

For any set $S$ and any mapping $\theta:Q\times Q\longrightarrow \sym_S$ the algebraic structure $Q\times_\theta S=(Q\times S,*)$ where
\begin{equation}\label{extensions by theta}
(a,s)*(b,t)=(a*b,\theta_{a,b}(t))
\end{equation}
for every $a,b\in Q$ and every $s,t\in S$ is a left quasigroup and the canonical projection $Q\times_\theta S\longrightarrow Q$ is a homomorphism and its kernel is contained in $\lambda_Q$. On the other hand, if a left quasigroup $Q$ has a uniform congruence $\alpha\leq \lambda_Q$ with blocks of cardinality $|S|$, then $Q\cong Q/\alpha\times_\theta S$ for a suitable mapping $\theta:Q/\alpha\times Q/\alpha\longrightarrow \sym_S$ (the proof is the same as in \cite[Section 2.2, 2.3]{AG}). Moreover, if $\gamma:Q\longrightarrow \sym_S$ and
\begin{equation}\label{cohomologous}
\varepsilon_{a,b}=\gamma_{a*b}\theta_{a,b}\gamma_b^{-1}
\end{equation}
 for every $a,b\in Q$ then $Q\times_\theta S\cong Q\times_\varepsilon S$.

If $\lambda_Q=1_Q$, i.e. $a*b=\theta(b)$ for some $\theta\in \sym_Q$ for every $a,b\in Q$ we that that $Q$ is a {\it permutation left-quasigroup} and we denote it by $(Q,\theta)$. For instance the affine left quasigroup $\aff(C,0,1,1)$ where $C$ is a cyclic group and
$$a*b=b+1$$
for all $a,b\in C$, is a permutation left quasigroup (note that all the connected permutation left quasigroups are obtained in this way).  Idempotent permutation left quasigroup, satisfies $x*y\approx \approx y*y\approx y$ and they are called {\it projection}. The one element projection left quasigroup is called {\it trivial}.

In general $\lambda_Q$ is not a congruence of $Q$. If this is the case we say that $Q$ has the {\it Cayley property} or that $Q$ is a {\it Cayley left quasigroup}. 

Let $\mathcal{V}$ be a class of Cayley left-quasigroup closed under homomorphic images and $Q\in \mathcal{V}$. We define:
\begin{equation}\label{chain of L}
L_0(Q)=Q,\quad L_{n+1}(Q)=L_n(Q)/\lambda_{L_n(Q)},
\end{equation}
for every $n\in \mathbb{N}$. Following \cite{Modes} and \cite{covering_paper} we say that $Q$ is {\it $n$-reductive} if $Q$ satisfies the identity
\begin{equation}\label{reductive}
(\ldots((y*x_1)*x_2)*\ldots)*x_n \approx (\ldots((z*x_1)*x_2)*\ldots)*x_n,
\end{equation}
or equivalently if $|L_n(Q)|=1$. For the binary algebras arising from the solution to the Yang-Baxter equation, the algebraic structures satisfying \eqref{reductive} are also called {\it multipermutational} \cite{Gateva, Jedlicka}.

The identities \eqref{reductive} are related to the notion of {\it strongly abelianess}, as defined in \cite{TCT}. A congruence $\alpha$ of an algebraic structure $A$ is {\it strongly abelian} if the following implication holds
$$t(a,\bar{x})=t(b,\bar{y})\, \Rightarrow\, t(a,\bar{z})=t(b,\bar{z})$$
for every term $t$, every $a\, \alpha\, b$ and every tuple $\bar{z}$. An algebraic structure is {\it strongly solvable} if there exists a chain of congruences
$$0_A=\alpha_0\leq \alpha_1\leq \alpha_2\leq \ldots \leq \alpha_n=1_A$$
such that $\alpha_{i+1}/\alpha_i$ is strongly abelian in $A/\alpha_i$ for every $1\leq i \leq n-1$. According to \cite[Theorem 5.3]{covering_paper} we have that a Cayley left quasigroup $Q$ is $n$-strongly solvable if and only if $Q$ is $n$-reductive.

\subsection*{The squaring mapping}

We will denote the {\it squaring mapping} of a left quasigroup $Q$ as
$$\s:Q\longrightarrow Q,\quad a\mapsto a*a.$$
 We say that $Q$ is {\it (uniquely) $2$-divisible} if $\s$ is a bijection. Moreover, if $Q/\alpha$ is uniquely $2$-divisible then $\ker{\s}=\setof{(a,b)\in Q^2 }{\s(a)=\s(b)}\leq \alpha$.  Indeed, if $\s(a)=\s(b)$ then $[\s(a)]_\alpha=\s([a]_\alpha)=[\s(b)]_\alpha=\s([b]_\alpha)$ and therefore $[a]_\alpha=[b]_\alpha$.

For a left quasigroup $Q$ we also define
$$\s^0(Q)=Q,\quad \s^{n+1}(Q)=\s(\s^{n-1}(Q))$$
for every $n\in \mathbb{N}$. We say that $Q$ is {\it $n$-multipotent} if $|\s^n(Q)|=1$. If $n=1$ we say that $Q$ is {\it unipotent}.

\begin{lemma}\label{s and lambda}
Let $Q$ be a left quasigroup. Then $\ker{\s}\cap \lambda_Q=0_Q$. In particular, unipotent left quasigroups are faithful.
\end{lemma}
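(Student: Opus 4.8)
The plan is to unwind the two relational conditions on a pair $(a,b)$ lying in the intersection and then collapse them to $a=b$ using the bijectivity of a single left translation. Concretely, suppose $(a,b)\in \ker{\s}\cap \lambda_Q$. Membership in $\lambda_Q$ says $L_a=L_b$, i.e. $a*c=b*c$ for every $c\in Q$; membership in $\ker{\s}$ says $\s(a)=\s(b)$, that is $a*a=b*b$.

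The key step is to evaluate the translation identity $L_a=L_b$ at the single well-chosen point $c=a$. This yields $a*a=b*a$. Combining with $a*a=b*b$ gives $b*a=b*b$, i.e. $L_b(a)=L_b(b)$. Since $Q$ is a left quasigroup, $L_b$ is a bijection of $Q$, hence injective, and so $a=b$. This establishes $\ker{\s}\cap \lambda_Q = 0_Q$.

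For the ``in particular'' clause, I would note that a unipotent left quasigroup has $|\s(Q)|=1$, so $\s$ is constant and therefore $\ker{\s}=1_Q$ is the full relation. The first part then forces $\lambda_Q = \lambda_Q \cap \ker{\s} = \lambda_Q \cap 1_Q = 0_Q$, which is precisely the statement that $Q$ is faithful.

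I do not expect a genuine obstacle here: the argument is a short chain of equalities. The only point requiring a little care is to specialize the identity $L_a=L_b$ at exactly the point ($c=a$, though $c=b$ works symmetrically) that makes the squaring information $a*a=b*b$ usable, and then to invoke the injectivity of $L_b$ rather than trying to cancel on the wrong side.
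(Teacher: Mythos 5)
Your proof is correct and is essentially the paper's own argument: the paper combines $a*a=b*b$ with $L_a=L_b$ evaluated at $b$ (giving $a*a=b*b=a*b$) and cancels $L_a$, while you evaluate at $a$ and cancel $L_b$ — a symmetric variant of the same one-line computation. The treatment of the unipotent case ($\ker{\s}=1_Q$, hence $\lambda_Q=\lambda_Q\cap 1_Q=0_Q$) also matches the paper exactly.
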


\begin{proof}
Assume that $a\, (\ker{\s}\cap \lambda_Q)\, b$. Then $a*a=b*b=a*b$ which implies $a=b$. If $Q$ is unipotent, then $\ker{\s}\cap \lambda_Q=\lambda_Q=0_Q$ and so $Q$ is faithful. 
\end{proof}

%
%\comment{unipotent semimedial are LT}
%
%
%
%
%\comment{Let $\s(Q)=\{e\}$. The $L_e\in \aut{Q}$ and $e$ is the unique idempotent element of $Q$}
%

%The property of being $2$-divisible is preserved by homomorphic images in the finite case.
%\begin{lemma}\label{2 div and factor}
%%The class of semimedial $2$-divisible left quasigroups is closed under taking homomorphic images.
%Let $Q$ be a finite left quasigroup and $\alpha\in Con(Q)$. 
%\begin{itemize}
%\item[(i)] If $Q$ is $2$-divisible then $Q/\alpha$ is $2$-divisible.
%\item[(i)] If $\alpha\leq \lambda_Q$ and $Q/\alpha$ is $2$-divisible then $Q$ is $2$-divisible
%\end{itemize}
%
%\comment{NOT used, keep it? Finite subalgebras of $2$-divisible are $2$-divisible.}
%\end{lemma}
%%\comment{needed?}
%\begin{proof}
%%Let $Q$ be a finite $2$-divisible left quasigroup and $\alpha\in Con(Q)$.
%(i) The map $\s: x\mapsto x*x$ is a bijection of $Q$ and so for every $b\in Q$ there exists $a\in Q$ such that $b=\s(a)$. Then
%$$[b]=[\s(a)]=[a]*[a]=\s([a])$$
%for every $a\in Q$. Hence the squaring mapping of $Q/\alpha$ is surjective. 
%
%(ii) Since $Q/\alpha$ is $2$-divisible then $\ker{\s}\leq \alpha\leq \lambda_Q$. Since $\lambda_Q\cap \ker{\s}=0_Q$ we have that $\ker{\s}=0_Q$, i.e. $\s$ is injective.
%%Assume that $\s(a)\, \alpha\, \s(b)$ for some $a,b\in Q$. Then $L_{\s(a)} L_{\s(b)}^{-1}=\s L_a L_b^{-1} \s^{-1}\in \dis_\alpha\leq \dis^\alpha$.  
%%
%%Moreover, if $a\, \alpha\, b$ then $\sigma(a)\, \alpha\, \sigma(b)$. Assume that $[a*a]=[b*b]$.  
%\end{proof}

\begin{lemma}
Let $Q$ be a multipotent left quasigroup. Then $Q$ has a unique idempotent element and it is contained in every subalgebra of $Q$.
\end{lemma}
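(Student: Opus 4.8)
The plan is to locate the unique idempotent as the single element of $\s^n(Q)$ and then verify both assertions against it. Since $Q$ is multipotent, fix $n$ with $|\s^n(Q)|=1$ and write $\s^n(Q)=\{e\}$. The candidate for the unique idempotent is this element $e$, and the two claims become: $e$ is a fixed point of $\s$, and $e$ lies in every subalgebra.

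First I would establish that $e$ is idempotent by comparing $\s^n(Q)$ with $\s^{n+1}(Q)$. On the one hand, $\s^{n+1}(Q)=\s(\s^n(Q))=\{\s(e)\}=\{e*e\}$. On the other hand, since $\s(Q)\subseteq Q$ and the image operator induced by the $n$-fold iterate of $\s$ is monotone with respect to inclusion, we get $\s^{n+1}(Q)=\s^n(\s(Q))\subseteq \s^n(Q)=\{e\}$; being nonempty, this set equals $\{e\}$. Comparing the two descriptions of $\s^{n+1}(Q)$ yields $e*e=e$, so $e\in E(Q)$.

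For uniqueness I would argue that any idempotent is forced into $\s^n(Q)$. If $a\in E(Q)$ then $\s(a)=a$, hence $\s^n(a)=a$, so that $a=\s^n(a)\in \s^n(Q)=\{e\}$ and therefore $a=e$. Thus $e$ is the unique idempotent of $Q$. For the final statement, let $S$ be any nonempty subalgebra. Since $S$ is closed under $*$ it is closed under $\s$, whence $\s^n(S)\subseteq S$; but $S\subseteq Q$ forces $\s^n(S)\subseteq \s^n(Q)=\{e\}$, and as $S$ is nonempty so is $\s^n(S)$, giving $\s^n(S)=\{e\}$ and hence $e\in S$.

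All the steps are elementary, so I do not expect a serious obstacle; the only point requiring a little care is the idempotency of $e$, where one should invoke the inclusion $\s^{n+1}(Q)\subseteq \s^n(Q)$ (equivalently, the monotonicity of the image map) rather than attempt to evaluate $\s(e)$ directly. The subalgebra claim then follows from exactly the same monotonicity applied to $S$ in place of $\s(Q)$.
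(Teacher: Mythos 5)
Your proof is correct and follows essentially the same route as the paper: identify $e$ as the single element of $\s^n(Q)$, obtain $\s(e)=e$ by commuting $\s$ past $\s^n$ (the paper does this element-wise via $\s(\s^n(x))=\s^n(\s(x))$, you via images of $Q$), get uniqueness from $a=\s^n(a)=e$, and get subalgebra membership from closure of any subalgebra under $\s^n$. The only differences are cosmetic.
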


\begin{proof}
Let $\s^n(Q)=\{e\}$ Then $\s(e)=\s(\s^n(x))=\s^n(\s(x))=e$ and so $e\in E(Q)$. If $a\in E(Q)$ then $a=\s^n(a)=e$. Moreover, $e=s ^n(a)\in Sg(a)$ for every $a\in Q$ and so it is contained in every subalgebra of $Q$.
\end{proof}

\subsection*{Congruences and subgroups}

Let $\alpha$ be a equivalence relation on a left quasigroup $Q$. The group
$$\dis_{\alpha}=\langle \setof{h L_a L_b\inv h\inv}{ a\,\alpha\,b,\,h\in \lmlt(Q)}\rangle$$
is called the {\it displacement group relative to $\alpha$} and it is a normal subgroup of $\lmlt(Q)$ (indeed it is the normal closure of the set $\setof{L_a L_b^{-1} }{a\, \alpha\, b}$ in $\lmlt(Q)$). Such groups have been defined for congruences of racks in \cite{CP}. The following combinatorial characterization follows by the same argument given in in \cite[Lemma 3.1]{CP}:
$$\dis_\alpha=\setof{L_{a_1}^{k_1}\ldots L_{a_n}^{k_n}L_{b_n}^{-k_n}\ldots L_{b_1}^{-k_1}}{a_i\, \alpha\, b_i \, \text{ for every }1\leq i \leq n, \, n\in \mathbb{N}}.$$
%Sometimes we need to consider the following subgroups of $\dis_\alpha$:
%\begin{equation}\label{positive and negative dis}
%\dis^+_\alpha=\langle L_a L_b^{-1} ,  \,a\, \alpha\, b\rangle \quad \text{ and }  \quad  \dis^-_\alpha=\langle L_a^{-1} L_b ,  \,a\, \alpha\, b\rangle.
%\end{equation}

The displacement group relative to $1_Q$ is called {\it the displacement group of $Q$} and denoted simply by $\dis(Q)$. According to \cite[Lemma 1.1]{Maltsev_paper}, it has the following characterization
%\begin{lemma}\cite{Maltsev_paper}
%Let $Q$ be a left quasigroup. Then
\begin{equation}\label{disQ combinatorial}
\dis(Q)=\setof{L_{a_1}^{k_1},\ldots L_{a_n}^{k_n}}{\sum_{i=1}^n k_i=0},
\end{equation}
and in particular $\lmlt(Q)=\dis(Q)\langle L_a\rangle$ for every $a\in Q$. 

We define the {\it kernel relative to $\alpha$} as
\begin{equation}\label{kernel}
\dis^\alpha = \setof{h\in \dis(Q)}{h(a)\, \alpha\, a\, \text{ for every }a\in Q}.
\end{equation}
Clearly both the assignments $\alpha\mapsto \dis_\alpha$, $\alpha\mapsto \dis^\alpha$ are monotone.

If $\alpha$ is a congruence then the following mapping
\begin{equation}\label{pi_alpha}
\pi_\alpha:\lmlt(Q)\longrightarrow \lmlt(Q/\alpha),\quad L_a\mapsto L_{[a]_\alpha}
\end{equation}
can be extended to a well defined group homomorphism with kernel denoted by $\lmlt^\alpha$   \cite[Section 2.3]{CP}. Moreover \eqref{pi_alpha} restricts and corestricts to $\dis(Q)$ and $\lmlt^\alpha\cap \dis(Q)=\dis^\alpha$ as defined in \eqref{kernel} and clearly $\dis_\alpha\leq \dis^\alpha$.

\begin{lemma}\label{caratt congruences}
Let $Q$ be a left quasigroup and $\alpha$ an equivalence relation on $Q$. The following are equivalent:
\begin{itemize}
\item[(i)] $\alpha\in Con(Q)$.
\item[(ii)] the blocks of $\alpha$ are blocks with respect to the action of $\lmlt(Q)$ and $\dis_\alpha\leq \dis^\alpha$.
\end{itemize}
\end{lemma}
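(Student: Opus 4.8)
The plan is to match the two clauses of (ii) with the two halves of the reduced congruence condition recorded just after \eqref{congruence prop}: it suffices to verify $L_c^{\pm1}(a)\,\alpha\,L_c^{\pm1}(b)$ and $L_a^{\pm1}(c)\,\alpha\,L_b^{\pm1}(c)$ for all $a\,\alpha\,b$ and all $c\in Q$. The first family says exactly that every $L_c$, hence every element of $\lmlt(Q)$, preserves $\alpha$, which is the block condition; the second family I will translate into membership statements for $\dis_\alpha$ and $\dis^\alpha$, using that $L_aL_b^{-1}$ and $L_b^{-1}L_a$ lie in $\dis(Q)$ by \eqref{disQ combinatorial}.

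Two preliminary observations drive the argument. First, $\dis^\alpha$ is always a subgroup of $\dis(Q)$: closure under products is transitivity of $\alpha$, and closure under inverses follows by evaluating $h(a)\,\alpha\,a$ at $h^{-1}(a)$. Second, and this is the key interlock between the two clauses, once the $\alpha$-blocks form a block system for $\lmlt(Q)$ the subgroup $\dis^\alpha$ becomes normal in $\lmlt(Q)$: for $h\in\dis^\alpha$ and $g\in\lmlt(Q)$ we have $h(g^{-1}(a))\,\alpha\,g^{-1}(a)$, and applying the $\alpha$-preserving map $g$ gives $ghg^{-1}(a)\,\alpha\,a$.

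For (i)$\Rightarrow$(ii) I would first read off the block condition from the $L_c^{\pm1}$-half of the congruence property. Then, writing $c=L_b(d)$, the relation $L_a(d)\,\alpha\,L_b(d)$ becomes $L_aL_b^{-1}(c)\,\alpha\,c$, so every generator $L_aL_b^{-1}$ of $\dis_\alpha$ lies in $\dis^\alpha$; since $\dis^\alpha$ is now normal, it contains the entire normal closure $\dis_\alpha$. For (ii)$\Rightarrow$(i) the block condition gives $L_c^{\pm1}(a)\,\alpha\,L_c^{\pm1}(b)$ immediately, and $L_aL_b^{-1}\in\dis_\alpha\leq\dis^\alpha$ gives $L_aL_b^{-1}(d)\,\alpha\,d$, which at $d=L_b(c)$ returns $L_a(c)\,\alpha\,L_b(c)$.

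The hard part will be the inverse (left-division) case $L_a^{-1}(c)\,\alpha\,L_b^{-1}(c)$, and it is precisely where the fact that $\dis_\alpha$ is the normal closure rather than the subgroup generated by the $L_aL_b^{-1}$ does the work. The controlling element is $L_b^{-1}L_a=L_b^{-1}(L_aL_b^{-1})L_b$, a conjugate of a generator, so it too lies in $\dis_\alpha\leq\dis^\alpha$; then $L_b^{-1}L_a(d)\,\alpha\,d$ evaluated at $d=L_a^{-1}(c)$ yields $L_b^{-1}(c)\,\alpha\,L_a^{-1}(c)$. Symmetrically, in the forward direction it is the normality of $\dis^\alpha$ (supplied by the block condition) that lets one pass from ``each generator lies in $\dis^\alpha$'' to ``$\dis_\alpha\leq\dis^\alpha$''. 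Thus the two clauses of (ii) are genuinely intertwined, and keeping track of which conjugates one needs is the only real subtlety.
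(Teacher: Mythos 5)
Your proof is correct and follows essentially the same route as the paper's: both directions rest on the reduction stated after \eqref{congruence prop}, with the block condition supplying the $L_c^{\pm 1}$ half and membership of $L_aL_b^{-1}$ and its conjugates/inverses in $\dis^\alpha$ supplying the $L_a^{\pm 1}(c)$ half. The only differences are cosmetic: the paper phrases (ii)$\Rightarrow$(i) through the induced homomorphism $\pi_\alpha:\lmlt(Q)\longrightarrow \Sym_{Q/\alpha}$ and treats $\dis_\alpha\leq\dis^\alpha$ as immediate in (i)$\Rightarrow$(ii), whereas you argue directly with elementwise relations and make the subgroup and normality properties of $\dis^\alpha$ explicit.
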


\begin{proof}
(i) $\Rightarrow$ (ii) If $\alpha$ is a congruence, $L_c^{\pm 1}(a)\, \alpha \, L_c^{\pm 1}(b)$ and so the blocks of $\alpha$ are blocks for the action of $\lmlt(Q)$. 

(ii) $\Rightarrow$ (i) The blocks of $\alpha$ are blocks with respect to the action of $\lmlt(Q)$ so the map
$$\pi_\alpha:\lmlt(Q)\longrightarrow \Sym_{Q/\alpha}	$$
where $\pi_\alpha(h)([a]_\alpha)=[h(a)]_\alpha$ for every $a\in Q$ is a well defined group homomorphism and $\dis^\alpha=\ker{\pi_\alpha}\cap \dis(Q)$. Let $a\, \alpha \, b$ and $c\, \alpha\, d$. Then, since $L_a L_b^{-1}\in \dis^\alpha$ we have
$$\pi_\alpha(L_a L_b^{-1})[c] =[a*(b\ldiv c)]=[c],$$
and so $(a\ldiv c)\, \alpha\, (b\ldiv d)$. Since also $L_a^{-1} L_b\in \dis^\alpha$ then $(a*c)\, \alpha\, (b*d)$.
\end{proof}

Let $N\leq \sym_Q$: we denote by $\mathcal{O}_N$ the orbit decomposition with respect to the action of $N$. Clearly, $N\leq \dis^{\mathcal{O}_N}$.  We can associate to $N$ another equivalence relation:
%\begin{lemma}
%Let $Q$ be a left-quasigroup and $N\leq \sym_Q$. Then
$$\c{N}=\setof{(a,b)\in Q^2}{L_a L_b^{-1}\in N}.$$
%is an equivalence relation.
%\end{lemma}
%\begin{proof}
%Let $\alpha=\c{N}$ and let $a\, \alpha\, b$ and $b\, \alpha\,c$. 
The relation $\c{N}$ is obviously reflexive and if $a\, \c{N}\, b\, \c{N}\, c$ then
\begin{eqnarray*}
(L_a L_b^{-1})^{-1}=L_b L_a^{-1}\in N,\quad  L_a L_b^{-1}L_b L_c^{-1}=L_a L_c^{-1}\in N,
\end{eqnarray*}
i.e. $\c{N}$ is symmetric and transitive.

The relation above can be used to characterize faithful factors of a left quasigroups (we extend \cite[Proposition 3.7]{CP} to arbitrary left quasigroups).

\begin{proposition}\label{p:con_dis}Let $Q$ be a left quasigroup and $\alpha$ its congruence. Then $Q/\alpha$ is faithful if and only if $\alpha=\c {\dis^{\alpha}}$.
\end{proposition}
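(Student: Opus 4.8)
The plan is to compute the relation $\c{\dis^\alpha}$ explicitly in terms of the Cayley kernel $\lambda_{Q/\alpha}$ of the factor, and then read off both implications from that single reformulation. First I would observe that for any pair $(a,b)$ the element $L_aL_b\inv$ lies in $\dis(Q)$ by the characterization \eqref{disQ combinatorial}, since its exponents sum to zero. Hence, using the identity $\dis^\alpha=\lmlt^\alpha\cap\dis(Q)$ recorded just before Lemma \ref{caratt congruences}, membership $L_aL_b\inv\in\dis^\alpha$ is equivalent to $L_aL_b\inv\in\lmlt^\alpha=\ker{\pi_\alpha}$. Applying the homomorphism $\pi_\alpha$ of \eqref{pi_alpha} gives $\pi_\alpha(L_aL_b\inv)=L_{[a]_\alpha}L_{[b]_\alpha}\inv$, and this element is trivial precisely when $L_{[a]_\alpha}=L_{[b]_\alpha}$ in $\lmlt(Q/\alpha)$, that is, precisely when $[a]_\alpha\,\lambda_{Q/\alpha}\,[b]_\alpha$. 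Collecting these equivalences yields the key reformulation
$$(a,b)\in\c{\dis^\alpha}\quad\Longleftrightarrow\quad [a]_\alpha\,\lambda_{Q/\alpha}\,[b]_\alpha.$$

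Next I would record the inclusion $\alpha\leq\c{\dis^\alpha}$, which holds for every congruence: if $a\,\alpha\,b$ then $L_aL_b\inv$ belongs to $\dis_\alpha$ by definition of the relative displacement group, and $\dis_\alpha\leq\dis^\alpha$ as remarked before Lemma \ref{caratt congruences}, so $(a,b)\in\c{\dis^\alpha}$. (Equivalently, in the reformulation above $a\,\alpha\,b$ forces $[a]_\alpha=[b]_\alpha$ and hence $[a]_\alpha\,\lambda_{Q/\alpha}\,[b]_\alpha$.) Consequently the asserted equality $\alpha=\c{\dis^\alpha}$ is equivalent to the single reverse inclusion $\c{\dis^\alpha}\leq\alpha$.

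Finally I would unwind both directions using the reformulation. If $Q/\alpha$ is faithful, i.e. $\lambda_{Q/\alpha}=0_{Q/\alpha}$, then $(a,b)\in\c{\dis^\alpha}$ forces $[a]_\alpha=[b]_\alpha$, whence $a\,\alpha\,b$; this gives $\c{\dis^\alpha}\leq\alpha$ and therefore $\alpha=\c{\dis^\alpha}$. Conversely, if $\alpha=\c{\dis^\alpha}$ and $[a]_\alpha\,\lambda_{Q/\alpha}\,[b]_\alpha$, then $(a,b)\in\c{\dis^\alpha}=\alpha$, so $[a]_\alpha=[b]_\alpha$; as $a,b$ were arbitrary this shows $\lambda_{Q/\alpha}=0_{Q/\alpha}$, i.e. $Q/\alpha$ is faithful. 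I do not expect any serious obstacle here: the whole argument reduces to the reformulation in the first paragraph, and the only point needing care is the bookkeeping that passes between membership in $\dis^\alpha$ and triviality of $\pi_\alpha(L_aL_b\inv)$, which is exactly the content of the identification $\dis^\alpha=\lmlt^\alpha\cap\dis(Q)$.
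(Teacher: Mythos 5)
Your proposal is correct and follows essentially the same route as the paper: the paper's proof also rests on identifying $\c{\dis^\alpha}$ modulo $\alpha$ with the Cayley kernel $\lambda_{Q/\alpha}$ (via $L_aL_b\inv\in\dis^\alpha$ iff $L_{[a]_\alpha}=L_{[b]_\alpha}$, i.e. your key reformulation) together with the inclusion $\alpha\leq\c{\dis_\alpha}\leq\c{\dis^\alpha}$. You merely spell out in more detail the bookkeeping through $\pi_\alpha$ and $\dis^\alpha=\lmlt^\alpha\cap\dis(Q)$ that the paper leaves implicit.
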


\begin{proof}
Clearly $\alpha\leq \c{\dis_\alpha}\leq \c{\dis^{\alpha}}$ since $\dis_\alpha\leq \dis^\alpha$. We prove that the equivalence $\lambda_{Q/\alpha}$ is equal to  
$$\c{\dis^{\alpha}}/\alpha=\setof{([a],[b])\in ( Q/\alpha)^2}{a\,\c{\dis^\alpha}\, b}.$$ Indeed, $[a]\, (\c{\dis^{\alpha}}/\alpha) \, [b]$ in and only if $L_a L_b^{-1}\in \dis^{\alpha}$, i.e. $L_{[a]}=L_{[b]}$.
%\begin{displaymath}
%[a]\, (\c{\dis^{\alpha}}/\alpha) \, [b] \ \Longleftrightarrow \ a\, \c{\dis^{\alpha} } \, b \ \Longleftrightarrow \ L_a L_b^{-1}\in \dis^{\alpha}   \ \Longleftrightarrow \ L_{[a]}=L_{[b]}.
%\end{displaymath}
Therefore, $Q/\alpha$ is faithful if and only if $\c{\dis^\alpha }/\alpha=0_{Q/\alpha}$, that is, if and only if $\alpha=\c {\dis^{\alpha}}$.
\end{proof}

Note that $\mathcal{O}_N\leq \c{N}$ if and only if $\dis_{\mathcal{O}_N}\leq N$.
%\end{proof}
Let us denote by 
$$\N(Q)=\setof{N\trianglelefteq \lmlt(Q)}{\mathcal{O}_N\leq \c{N}}.$$ 
the set of {\it admissible subgroups} of $\dis(Q)$.
\begin{lemma}\label{N(Q)}
Let $Q$ be a left-quasigroup and $\alpha\in Con(Q)$. Then:
\begin{itemize}
\item[(i)] $\N(Q)$ is a sublattice of the lattice of normal subgroups of $\lmlt(Q)$.
\item[(ii)] If $H\in \N(Q)$ then $\pi_\alpha(H)\in \N(Q/\alpha)$.
\end{itemize}
\end{lemma}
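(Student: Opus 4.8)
The plan is to rely throughout on the reformulation noted just before the statement, namely that a normal subgroup $N\trianglelefteq \lmlt(Q)$ lies in $\N(Q)$ exactly when $\dis_{\mathcal{O}_N}\leq N$. This turns every admissibility check into a comparison of relative displacement groups, so both parts reduce to the monotonicity of $\alpha\mapsto \dis_\alpha$ together with the normality of each $\dis_\alpha$ in $\lmlt(Q)$. For part (i) recall that in the lattice of normal subgroups of $\lmlt(Q)$ the meet of $M,N$ is $M\cap N$ and the join is the product $MN$ (which is normal since $M,N$ are). For the meet I would first observe that the $(M\cap N)$-orbits refine both the $M$-orbits and the $N$-orbits, so $\mathcal{O}_{M\cap N}\leq \mathcal{O}_M$ and $\mathcal{O}_{M\cap N}\leq \mathcal{O}_N$; monotonicity then gives $\dis_{\mathcal{O}_{M\cap N}}\leq \dis_{\mathcal{O}_M}\leq M$ and likewise $\dis_{\mathcal{O}_{M\cap N}}\leq N$, whence $\dis_{\mathcal{O}_{M\cap N}}\leq M\cap N$ and $M\cap N\in\N(Q)$.

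For the join the crucial point is that an $MN$-orbit is built up by alternately applying elements of $M$ and of $N$, so $\mathcal{O}_{MN}$ is the transitive closure of $\mathcal{O}_M\cup\mathcal{O}_N$. Given $a\,\mathcal{O}_{MN}\,b$ I would choose a chain $a=c_0,c_1,\dots,c_k=b$ whose consecutive pairs each lie in a single $M$-orbit or a single $N$-orbit, and telescope $L_aL_b^{-1}=\prod_{i} L_{c_i}L_{c_{i+1}}^{-1}$, each factor lying in $\dis_{\mathcal{O}_M}$ or in $\dis_{\mathcal{O}_N}$. Since $\dis_{\mathcal{O}_M}\dis_{\mathcal{O}_N}$ is a product of normal subgroups, hence a normal subgroup contained in $MN$, passing to the normal closure yields $\dis_{\mathcal{O}_{MN}}\leq \dis_{\mathcal{O}_M}\dis_{\mathcal{O}_N}\leq MN$. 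Therefore $MN\in\N(Q)$, and combined with the meet computation this shows $\N(Q)$ is a sublattice of the lattice of normal subgroups of $\lmlt(Q)$.

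For part (ii) write $\bar N=\pi_\alpha(H)$ and $\bar Q=Q/\alpha$. Because $\lmlt(\bar Q)=\pi_\alpha(\lmlt(Q))$, the homomorphism $\pi_\alpha$ is surjective, so $\bar N$ is normal in $\lmlt(\bar Q)$. The key computation is the identification of the orbits: since $\pi_\alpha(h)([a]_\alpha)=[h(a)]_\alpha$, the $\bar N$-orbit of $[a]_\alpha$ is exactly $\{[h(a)]_\alpha : h\in H\}$, so $[a]_\alpha\,\mathcal{O}_{\bar N}\,[b]_\alpha$ precisely when $h(a)\,\alpha\,b$ for some $h\in H$. I would then apply the admissibility of $H$ not to the pair $(a,b)$ but to $(a,h(a))$: these lie in the same $H$-orbit, so $\mathcal{O}_H\leq \c H$ gives $L_aL_{h(a)}^{-1}\in H$, and applying $\pi_\alpha$ produces $L_{[a]_\alpha}L_{[h(a)]_\alpha}^{-1}=L_{[a]_\alpha}L_{[b]_\alpha}^{-1}\in\bar N$, i.e. $[a]_\alpha\,\c{\bar N}\,[b]_\alpha$. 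Hence $\mathcal{O}_{\bar N}\leq \c{\bar N}$ and $\bar N\in\N(\bar Q)$.

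The routine ingredients are normality and monotonicity; the two steps I expect to require genuine care are the join in part (i)---where one must translate the combinatorial description of $MN$-orbits into the telescoping factorization giving $\dis_{\mathcal{O}_{MN}}\leq \dis_{\mathcal{O}_M}\dis_{\mathcal{O}_N}$---and, in part (ii), the idea of invoking admissibility of $H$ on the pair $(a,h(a))$ rather than on $(a,b)$, which is what lets the membership $L_aL_{h(a)}^{-1}\in H$ survive the push-forward along $\pi_\alpha$.
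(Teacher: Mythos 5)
Your proof is correct and follows essentially the same route as the paper's: both parts reduce to checking that the generators $L_{n(a)}L_a^{-1}$ of $\dis_{\mathcal{O}_N}$ land in the right subgroup, with a telescoping factorization handling the join and the push-forward of the generators along $\pi_\alpha$ handling (ii). The only cosmetic difference is that the paper exploits $NM=\setof{gh}{g\in N,\ h\in M}$ (valid for normal subgroups) to telescope $L_{gh(a)}L_a^{-1}$ in exactly two steps, whereas you telescope along an arbitrary chain coming from the transitive closure of $\mathcal{O}_M\cup\mathcal{O}_N$; both arguments are sound and of the same depth.
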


\begin{proof}
(i) Let $N,M\in \N(Q)$ and $g\in N$ and $h\in M$. Then
$$L_{gh(a)}L_a^{-1}=\underbrace{L_{gh(a)}L_{h(a)}^{-1}}_{\in\dis_{\mathcal{O}_N}\leq N} \underbrace{L_{h(a)}L_a^{-1}}_{\in \dis_{\mathcal{O}_M}\leq M}$$
for every $a\in Q$. Then $NM\in \N(Q)$. Let $h\in N\cap M$ then $L_{h(a)} L_a^{-1}\in \dis_{\mathcal{O}_N}\cap \dis_{\mathcal{O}_M}\leq N\cap M$. Hence $N\cap M\in \N(Q)$. 

(ii) If $H\in \N(Q)$, then $\pi_\alpha(H)$ is normal in $\lmlt(Q/\alpha)$. Since $\dis_{\mathcal{O}_N}\leq H$ so we have that 
$$L_{\pi_\alpha(h)([a])}L_{[a]}^{-1}=L_{[h(a)]}L_{[a]}^{-1} =\pi_\alpha(L_{h(a)} L_{a}^{-1})\in \pi_\alpha(H)$$
for every $h\in H$. Hence $\dis_{\O_{\pi_\alpha(H)}}\leq \pi_\alpha(H)$. 
\end{proof}

\begin{lemma}\label{on orbits}
Let $Q$ be a left quasigroup, $N\leq \sym_{Q}$ such that $\dis_{\mathcal{O}_N}\leq N$ and $\lmlt(Q)$ normalizes $N$. Then $\mathcal{O}_N$ is a congruence of $Q$. 
%\item[(iii)]
%\begin{itemize}
%\item[(i)] $\mathcal{O}_N$ is a congruence of $Q$. 
%\item[(ii)] If $N\trianglelefteq \lmlt(Q)$ then $\alpha\circ \mathcal{O}_N=\mathcal{O}_N\circ \alpha$ for every $\alpha\in Con(Q)$.
%\end{itemize}
\end{lemma}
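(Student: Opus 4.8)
The plan is to verify the two conditions of Lemma \ref{caratt congruences}(ii), namely that the blocks of $\mathcal{O}_N$ are blocks for the action of $\lmlt(Q)$ and that $\dis_{\mathcal{O}_N}\leq \dis^{\mathcal{O}_N}$. Once both are checked, the lemma immediately gives $\mathcal{O}_N\in Con(Q)$.

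First I would establish that the orbits of $N$ form a block system for $\lmlt(Q)$. This is where the hypothesis that $\lmlt(Q)$ normalizes $N$ is used. Given $g\in \lmlt(Q)$ and an orbit $[a]_{\mathcal{O}_N}=N(a)$, I would show $g(N(a))=N(g(a))$: for $n\in N$, write $g(n(a))=(gng^{-1})(g(a))$, and since $gng^{-1}\in N$ by normality, the element $g(n(a))$ lies in the orbit $N(g(a))$. The reverse inclusion follows symmetrically using $g^{-1}Ng\leq N$. Hence $g$ permutes the $N$-orbits, so they are blocks for the action of $\lmlt(Q)$.

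Next I would verify $\dis_{\mathcal{O}_N}\leq \dis^{\mathcal{O}_N}$. By the very definition of the relative kernel in \eqref{kernel}, it suffices to show that every generator $hL_aL_b^{-1}h^{-1}$ of $\dis_{\mathcal{O}_N}$ (with $a\,\mathcal{O}_N\,b$, i.e. $a,b$ in the same $N$-orbit, and $h\in\lmlt(Q)$) moves each point of $Q$ within its own $\mathcal{O}_N$-block. Since the blocks of $\mathcal{O}_N$ are $\lmlt(Q)$-blocks by the previous step, and $\dis_{\mathcal{O}_N}$ is by construction generated by elements fixing $\mathcal{O}_N$ blockwise, this inclusion is essentially definitional; alternatively, I would note that the hypothesis $\dis_{\mathcal{O}_N}\leq N$ gives $\dis_{\mathcal{O}_N}\leq N\leq \dis^{\mathcal{O}_N}$, where the last inequality $N\leq \dis^{\mathcal{O}_N}$ is the observation recorded just before $\c{N}$ was introduced (namely $N\leq \dis^{\mathcal{O}_N}$, since every element of $N$ fixes each $N$-orbit setwise). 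One subtlety to confirm is that $\dis_{\mathcal{O}_N}\leq \dis(Q)$, so that the containment lands inside $\dis^{\mathcal{O}_N}\subseteq \dis(Q)$; this holds because each generator $hL_aL_b^{-1}h^{-1}$ has exponent sum zero in the $L$'s and hence lies in $\dis(Q)$ by \eqref{disQ combinatorial}.

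The main obstacle, and the only genuinely nontrivial point, is the blockwise action in the first step: one must be careful that $\mathcal{O}_N$ being an orbit decomposition for $N$ does not a priori make it $\lmlt(Q)$-invariant, and it is precisely the normalization hypothesis that repairs this. Everything after that is a matter of assembling the two conditions of Lemma \ref{caratt congruences}; I expect no computational difficulty once the block system is in place.
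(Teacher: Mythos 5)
Your proposal is correct and follows essentially the same route as the paper: normality of $N$ under $\lmlt(Q)$ makes the $N$-orbits a block system, the chain $\dis_{\mathcal{O}_N}\leq N\leq \dis^{\mathcal{O}_N}$ (together with the check that $\dis_{\mathcal{O}_N}\leq\dis(Q)$) gives the second condition, and Lemma \ref{caratt congruences}(ii) finishes. One small caution: your first claim that the blockwise action of the generators of $\dis_{\mathcal{O}_N}$ is ``essentially definitional'' is not right on its own---it genuinely requires the hypothesis $\dis_{\mathcal{O}_N}\leq N$---but the ``alternative'' argument you give, $\dis_{\mathcal{O}_N}\leq N\leq \dis^{\mathcal{O}_N}$, is exactly the paper's and closes that gap.
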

\begin{proof}
%(i)
 Since $\lmlt(Q)$ normalizes $N$ then the orbits of $N$ are blocks with respect to the action of $\lmlt(Q)$. Moreover $\dis_{\mathcal{O}_N}\leq N\leq \dis^{\mathcal{O}_N}$. So we can apply Lemma \ref{caratt congruences} and then $\mathcal{O}_N$ is a congruence.
%(i) $\Leftrightarrow$ (ii)
%(i) Let $g,h\in N$ and let $\alpha=\mathcal{O}_N$. Then 
%$$L_{h(a)}^{\pm 1} g(b)=L_{h(a)}^{\pm 1} g L_a^{\mp 1} L_a^{\pm 1} (b)=\underbrace{L_{h(a)}^{\pm 1} L_a^{\mp 1}}_{\in \dis_\alpha} \underbrace{L_a^{\pm 1} g L_a^{\mp 1}}_{\in N} L_a^{\pm 1} (b).$$ 
%And so $L_{h(a)}^{\pm 1 }g(b)\, \alpha\, L_a^{\pm 1}(b)$, i.e. $\alpha$ is a congruence. 
%
%(ii) Assume that $N\trianglelefteq \lmlt(Q)$, $\alpha\in Con(Q)$ and that $a\, \alpha \, c\,  \mathcal{O}_N \, b$. Then there exists $h\in N$ such that $c=h(b)\, \alpha\, a$. The blocks of $\alpha$ are blocks with respect to the action of $N$, so $h([b]_\alpha)=[a]_\alpha$. Therefore, $a\, \mathcal{O}_N\, h^{-1}(a)\, \alpha\, b$. Therefore $\mathcal{O}_N\circ \alpha=\alpha\circ \mathcal{O}_N$.
\end{proof}

The following Corollary extends \cite[Theorem 6.1]{Even} and \cite[Lemma 2.6]{CP} to arbitrary left quasigroups.
\begin{corollary}\label{orbits of dis}
Let $Q$ be a left quasigroup, $N\in\N(Q)$ and $\alpha\in Con(Q)$. Then:
\begin{itemize}
\item[(i)] $\dis_\alpha$ and $\dis^\alpha\in \N(Q)$. 

\item[(ii)]  $\mathcal{O}_N$ is a congruence and  $\alpha\circ \mathcal{O}_N=\mathcal{O}_N\circ \alpha$.
\end{itemize}

\end{corollary}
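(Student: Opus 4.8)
The plan is to handle the two parts separately: part (i) reduces to the criterion $\mathcal{O}_N\leq\c N$ (equivalently $\dis_{\mathcal{O}_N}\leq N$) recorded just before the definition of $\N(Q)$, while part (ii) splits into invoking Lemma \ref{on orbits} and then a direct computation of the two composite relations.

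For part (i), I would first note that both $\dis_\alpha$ and $\dis^\alpha$ are normal in $\lmlt(Q)$: the former is by construction a normal closure, and the latter equals $\lmlt^\alpha\cap\dis(Q)$, an intersection of two normal subgroups. The key observation is that the orbit partition of each refines $\alpha$. Indeed, every $h\in\dis^\alpha$ satisfies $h(a)\,\alpha\,a$ for all $a$ by definition, so $\mathcal{O}_{\dis^\alpha}\leq\alpha$; and since $\alpha$ is a congruence, Lemma \ref{caratt congruences} gives $\dis_\alpha\leq\dis^\alpha$, whence also $\mathcal{O}_{\dis_\alpha}\leq\alpha$. Using the monotonicity of $\beta\mapsto\dis_\beta$ this yields $\dis_{\mathcal{O}_{\dis_\alpha}}\leq\dis_\alpha$ and $\dis_{\mathcal{O}_{\dis^\alpha}}\leq\dis_\alpha\leq\dis^\alpha$. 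By the equivalence $\mathcal{O}_N\leq\c N\Leftrightarrow\dis_{\mathcal{O}_N}\leq N$, these inclusions say exactly that $\mathcal{O}_{\dis_\alpha}\leq\c{\dis_\alpha}$ and $\mathcal{O}_{\dis^\alpha}\leq\c{\dis^\alpha}$, so together with normality both groups lie in $\N(Q)$.

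For part (ii), that $\mathcal{O}_N$ is a congruence follows at once from Lemma \ref{on orbits}: the definition of $\N(Q)$ supplies $N\trianglelefteq\lmlt(Q)$ (so $\lmlt(Q)$ normalizes $N$) and $\mathcal{O}_N\leq\c N$, i.e. $\dis_{\mathcal{O}_N}\leq N$, which are precisely its hypotheses. For permutability I would argue by double inclusion. Suppose $a\,(\alpha\circ\mathcal{O}_N)\,c$, so there is $b$ with $a\,\alpha\,b$ and $c=g(b)$ for some $g\in N$. Since $g\in N\leq\lmlt(Q)$ and $\alpha$ is a congruence, $g$ preserves $\alpha$-blocks, hence $g(a)\,\alpha\,g(b)=c$; moreover $a\,\mathcal{O}_N\,g(a)$ because $g\in N$. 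Thus $a\,\mathcal{O}_N\,g(a)\,\alpha\,c$, giving $a\,(\mathcal{O}_N\circ\alpha)\,c$. The reverse inclusion is symmetric: from $a\,\mathcal{O}_N\,b'$ with $b'=g(a)$, $g\in N$, and $b'\,\alpha\,c$, apply $g^{-1}\in N\leq\lmlt(Q)$ to obtain $a=g^{-1}(b')\,\alpha\,g^{-1}(c)$ together with $g^{-1}(c)\,\mathcal{O}_N\,c$, so $a\,(\alpha\circ\mathcal{O}_N)\,c$. Combining the inclusions gives $\alpha\circ\mathcal{O}_N=\mathcal{O}_N\circ\alpha$.

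Worth flagging is that the permutability argument uses no more than $N\leq\lmlt(Q)$ and the congruence property of $\alpha$; the normality of $N$ and the condition $\dis_{\mathcal{O}_N}\leq N$ enter only to guarantee, through Lemma \ref{on orbits}, that $\mathcal{O}_N$ is itself a congruence. I expect the only (modest) obstacle to be purely bookkeeping: keeping the direction of the composite $\circ$ consistent throughout the two inclusions, and applying the refinement inequalities of part (i) with the correct monotonicity. Once the observation that the orbits of $\dis_\alpha$ and $\dis^\alpha$ refine $\alpha$ is in place, the algebraic content is light.
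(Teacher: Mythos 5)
Your proof is correct and takes essentially the same route as the paper's: part (i) is the same chain of inclusions $\mathcal{O}_{\dis_\alpha}\leq\mathcal{O}_{\dis^\alpha}\leq\alpha$ followed by monotonicity of $\beta\mapsto\dis_\beta$ and the criterion $\dis_{\mathcal{O}_N}\leq N$, and part (ii) invokes Lemma \ref{on orbits} and then permutes the composite using that elements of $N\leq\lmlt(Q)$ send $\alpha$-blocks to $\alpha$-blocks. The only cosmetic difference is that you check both inclusions explicitly, whereas the paper proves one and concludes equality (which is legitimate since both relations are symmetric).
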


\begin{proof}

(i) The groups $\dis_\alpha$ and $\dis^\alpha$ are normal in $\lmlt(Q)$ and $\mathcal{O}_{\dis_\alpha}\leq \mathcal{O}_{\dis^\alpha}\leq \alpha$ and therefore, using that the assignment $\beta\mapsto \dis_\beta$ is monotone, we have 
$$\dis_{\mathcal{O}_{\dis_\alpha}}\leq \dis_{\mathcal{O}_{\dis^\alpha}}\leq \dis_\alpha\leq \dis^\alpha.$$ 

(ii) According to Lemma \ref{on orbits}, $\mathcal{O}_N$ is a congruence. Let $a\, \alpha \, c\,  \mathcal{O}_N \, b$, i.e. there exists $h\in N$ such that $c=h(b)\, \alpha\, a$. The blocks of $\alpha$ are blocks with respect to the action of $N$, so $h([b]_\alpha)=[a]_\alpha$. Therefore, $a\, \mathcal{O}_N\, h^{-1}(a)\, \alpha\, b$ and so $\mathcal{O}_N\circ \alpha=\alpha\circ \mathcal{O}_N$.
\end{proof}

%
%\begin{corollary}
%Let $Q$ be a left-quasigroup and $\alpha\in Con(Q)$. Then $\dis_\alpha$ and $\dis^\alpha\in \N(Q)$. 
%%\begin{itemize}
%%\item[(i)] $\mathcal{O}_{\dis^\alpha}$ and $\mathcal{O}_{\dis_\alpha}$ are congruence of $Q$. 
%%\item[(ii)] 
%%\end{itemize}
%\end{corollary}
%
%\begin{proof}
%The groups $\dis_\alpha$ and $\dis^\alpha$ are normal in $\lmlt(Q)$ and $\mathcal{O}_{\dis_\alpha}\leq \mathcal{O}_{\dis^\alpha}\leq \alpha$ and therefore, using that the assignment $\beta\mapsto \dis_\beta$ is monotone, we have 
%$$\dis_{\mathcal{O}_{\dis_\alpha}}\leq \dis_{\mathcal{O}_{\dis^\alpha}}\leq \dis_\alpha\leq \dis^\alpha.$$ 
%Hence we can apply Lemma \ref{caratt congruences}. 
%\end{proof}

The orbit decomposition with respect to the left multiplication group of a left quasigroup is the smallest congruence with a projection factor \cite[Proposition 1.3]{Maltsev_paper}. The orbit decomposition with respect to the displacement group has a similar characterization.

\begin{corollary}\label{on orbits2}
Let $Q$ be a left quasigroup. Then $\mathcal{O}_{\dis(Q)}$ is the smallest congruence of $Q$ with permutation factor. 
\end{corollary}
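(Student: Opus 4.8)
The statement bundles three claims: that $\alpha:=\mathcal{O}_{\dis(Q)}$ is a congruence, that the quotient $Q/\alpha$ is a permutation left quasigroup, and that $\alpha$ is the least congruence with this property. The plan is to route everything through the admissible-subgroup machinery already set up. First I would check that $\dis(Q)$ feeds into Lemma \ref{on orbits}: it is normal in $\lmlt(Q)$ by construction, and since $\alpha=\mathcal{O}_{\dis(Q)}\leq 1_Q$ and $\beta\mapsto\dis_\beta$ is monotone, we get $\dis_{\mathcal{O}_{\dis(Q)}}\leq\dis_{1_Q}=\dis(Q)$; equivalently, $\dis(Q)=\dis_{1_Q}\in\N(Q)$ by Corollary \ref{orbits of dis}(i). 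Either way Lemma \ref{on orbits} (or Corollary \ref{orbits of dis}(ii)) gives that $\alpha$ is a congruence.

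For the permutation factor, the key identity I would establish is $\dis^\alpha=\dis(Q)$. The inclusion $\dis^\alpha\leq\dis(Q)$ is definitional, and conversely every $h\in\dis(Q)$ sends each $a$ to a point $h(a)$ in its own $\dis(Q)$-orbit, i.e.\ $h(a)\,\alpha\,a$ for all $a$, so $h\in\dis^\alpha$. Since $L_aL_b^{-1}\in\dis(Q)=\dis^\alpha\leq\lmlt^\alpha$ for all $a,b$, applying $\pi_\alpha$ yields $L_{[a]_\alpha}=L_{[b]_\alpha}$; hence $\lambda_{Q/\alpha}=1_{Q/\alpha}$ and $Q/\alpha$ is a permutation left quasigroup.

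For minimality, let $\beta$ be any congruence with $Q/\beta$ a permutation left quasigroup, so that all left translations of $Q/\beta$ coincide with one fixed $\theta$ and $\pi_\beta(L_a)=\theta$ for every $a$. Using the combinatorial description \eqref{disQ combinatorial}, an arbitrary $h=L_{a_1}^{k_1}\cdots L_{a_n}^{k_n}\in\dis(Q)$ has $\sum_{i=1}^n k_i=0$, whence $\pi_\beta(h)=\theta^{\sum_i k_i}=\mathrm{id}$. Thus $\dis(Q)\leq\lmlt^\beta$, meaning $h(a)\,\beta\,a$ for every $h\in\dis(Q)$ and every $a$; consequently each $\dis(Q)$-orbit sits inside a single $\beta$-block and $\alpha=\mathcal{O}_{\dis(Q)}\leq\beta$.

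I do not anticipate a serious obstacle: the cited results do the heavy lifting, and the two genuine computations — the identification $\dis^\alpha=\dis(Q)$ and the collapse $\pi_\beta(h)=\theta^{\sum_i k_i}=\mathrm{id}$ on $\dis(Q)$ — are both short. The only point needing a little care is that $\pi_\beta$ is a well-defined group homomorphism, multiplicative on all of $\dis(Q)$, which holds precisely because $\beta$ is a congruence, together with the exponent-sum-zero constraint in \eqref{disQ combinatorial} that makes the powers of $\theta$ cancel.
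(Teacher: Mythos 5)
Your proposal is correct and follows essentially the same route as the paper: congruence-hood via Corollary \ref{orbits of dis} (equivalently Lemma \ref{on orbits}), the permutation factor from $\dis(Q)\leq\dis^{\mathcal{O}_{\dis(Q)}}$ forcing all left translations of the quotient to coincide, and minimality from $\dis(Q)\leq\dis^\beta$ whenever $Q/\beta$ is permutation. The only difference is that you unpack two steps the paper leaves implicit — the identity $\dis^\alpha=\dis(Q)$ and the exponent-sum computation showing $\pi_\beta(\dis(Q))=1$ via \eqref{disQ combinatorial} — which the paper compresses into the single assertion $\dis(Q/\beta)=1$.
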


\begin{proof}
The relation $\pi=\mathcal{O}_{\dis(Q)}$ is a congruence according to Corollary \ref{orbits of dis}. Since $\dis(Q)\leq \dis^{\pi}$ then $\dis(Q/\pi)=1$ and therefore $L_{[a]}=L_{[b]}$ for every $a,b\in Q$, i.e. $Q/\pi$ is a permutation left quasigroup. Assume that  $Q/\beta$ is a permutation left quasigroup. Then $\dis(Q/\beta)=1$. Hence $\dis(Q)\leq \dis^{\beta}$ and so $\pi \leq \beta$.  
\end{proof}

The Galois connection between $\N(Q)$ and $Con(Q)$ observed for racks in \cite[Remark 3.9]{CP} can be defined for arbitrary left quasigroups.

\begin{theorem}\label{Galois for orbits}
Let $Q$ be a left quasigroup. The assignments $N\mapsto \mathcal{O}_N$ and $\alpha\mapsto \dis^\alpha$ provide a monotone Galois connection between $\N(Q)$ and $Con(Q)$. 
\end{theorem}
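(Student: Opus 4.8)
The plan is to check that both assignments are well defined and monotone, and then to verify the defining adjunction
\[
\mathcal{O}_N\leq\alpha\quad\Longleftrightarrow\quad N\leq\dis^\alpha,
\]
for all $N\in\N(Q)$ and $\alpha\in Con(Q)$; once this biconditional is in place, the monotone Galois connection (including monotonicity of both maps) follows formally. Well-definedness is already supplied by the preceding results: if $N\in\N(Q)$, then $N\trianglelefteq\lmlt(Q)$ and $\dis_{\mathcal{O}_N}\leq N$, so $\mathcal{O}_N$ is a congruence by Lemma \ref{on orbits} (equivalently by Corollary \ref{orbits of dis}(ii)); and $\dis^\alpha\in\N(Q)$ for every congruence $\alpha$ by Corollary \ref{orbits of dis}(i). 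Monotonicity of $\alpha\mapsto\dis^\alpha$ has already been recorded, and $N\mapsto\mathcal{O}_N$ is monotone because $N\leq M$ forces every $N$-orbit into an $M$-orbit, hence $\mathcal{O}_N\leq\mathcal{O}_M$.

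For the adjunction, the implication $N\leq\dis^\alpha\Rightarrow\mathcal{O}_N\leq\alpha$ is immediate: any $h\in N\leq\dis^\alpha$ satisfies $h(a)\,\alpha\,a$ for every $a\in Q$, so each $N$-orbit sits inside a single $\alpha$-class, that is $\mathcal{O}_N\leq\alpha$. Conversely, suppose $\mathcal{O}_N\leq\alpha$ and fix $h\in N$. As an admissible subgroup of $\dis(Q)$, $N$ is contained in $\dis(Q)$, so $h\in\dis(Q)$; and for each $a\in Q$ the point $h(a)$ lies in the $N$-orbit of $a$, whence $a\,\mathcal{O}_N\,h(a)$ and therefore $a\,\alpha\,h(a)$ by hypothesis. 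By the definition \eqref{kernel} of the relative kernel this says precisely $h\in\dis^\alpha$, so $N\leq\dis^\alpha$.

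The adjunction computation itself is short; the substantive content lies in the well-definedness of the two maps, which is exactly what Corollary \ref{orbits of dis} delivers, via Lemma \ref{caratt congruences}, by turning the orbit/block condition $\dis_{\mathcal{O}_N}\leq N\leq\dis^{\mathcal{O}_N}$ into congruence-hood of $\mathcal{O}_N$. The one place that must be handled with care is the use of $N\leq\dis(Q)$ in the converse direction, which is where admissibility as a subgroup of $\dis(Q)$ is genuinely needed: for an arbitrary normal subgroup of $\lmlt(Q)$ the membership $h\in\dis(Q)$ would not be guaranteed and the equivalence would fail.
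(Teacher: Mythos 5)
Your proof is correct and follows essentially the same route as the paper, whose entire proof is the one-line observation that, by the definition \eqref{kernel} of the relative kernel, $N\leq \dis^\alpha$ if and only if $\mathcal{O}_N\leq \alpha$; your additional verifications of well-definedness and monotonicity are exactly the facts the paper has already recorded (Corollary \ref{orbits of dis} and the remark that $\alpha\mapsto\dis^\alpha$ is monotone). Your closing caveat---that the converse implication genuinely needs $N\leq \dis(Q)$, so admissible subgroups must actually sit inside $\dis(Q)$ as the paper's terminology (though not its displayed definition of $\N(Q)$) suggests---is a correct and worthwhile observation that the paper's one-line proof glosses over.
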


\begin{proof}
By the definition of the kernel \eqref{kernel} we have that $N\leq \dis^\alpha$ if and only if $\mathcal{O}_N\leq \alpha$.
\end{proof}

For a rack $Q$ we have that $\N(Q)$ coincides with the lattice of normal subgroups of $\lmlt(Q)$.

\begin{lemma}
Let $Q$ be a left quasigroup and $N\leq \aut{Q}$. The following are equivalent:
\begin{itemize}
\item[(i)] $\lmlt(Q)$ normalizes $N$.
\item[(ii)] $\dis_{\mathcal{O}_N}\leq N$.
\end{itemize}
In particular, if $Q$ is a rack $\N(Q)$ is the lattice of normal subgroups of $\lmlt(Q)$.
\end{lemma}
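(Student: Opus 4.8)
The plan is to reduce everything to the single conjugation identity $g L_c g^{-1} = L_{g(c)}$, valid for every $g\in\aut{Q}$ and every $c\in Q$ (it follows at once from $g(c*g^{-1}(x))=g(c)*x$). This is the bridge between the purely group-theoretic statement (i) and the orbit statement (ii), because membership $a\,\mathcal{O}_N\,b$ means precisely that $b=g(a)$ for some $g\in N$, and the identity turns such an equation into a relation among left translations.

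For (i)$\Rightarrow$(ii), I would take a generator $L_a L_b^{-1}$ of $\dis_{\mathcal{O}_N}$ with $a\,\mathcal{O}_N\,b$, write $b=g(a)$ with $g\in N$, and use the identity to rewrite $L_b^{-1}=L_{g(a)}^{-1}=g L_a^{-1}g^{-1}$, so that $L_a L_b^{-1}=(L_a g L_a^{-1})\,g^{-1}$. Since $\lmlt(Q)$ normalizes $N$, the factor $L_a g L_a^{-1}$ lies in $N$, hence so does $L_a L_b^{-1}$. As $\dis_{\mathcal{O}_N}$ is the normal closure in $\lmlt(Q)$ of these elements and $N$ is already normalized by $\lmlt(Q)$, I conclude $\dis_{\mathcal{O}_N}\leq N$.

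For (ii)$\Rightarrow$(i) it suffices to show that each $L_a$ and each $L_a^{-1}$ conjugates $N$ into itself, since the $L_a$ generate $\lmlt(Q)$. Given $g\in N$, the identity gives $g^{-1}L_a g=L_{g^{-1}(a)}$, hence $L_a g L_a^{-1}=g\,(L_{g^{-1}(a)}L_a^{-1})$; since $g^{-1}(a)\,\mathcal{O}_N\,a$, the second factor lies in $\dis_{\mathcal{O}_N}\leq N$, so $L_a g L_a^{-1}\in N$. A symmetric computation, $L_a^{-1}g L_a=(L_a^{-1}L_{g(a)})\,g$ with $L_a^{-1}L_{g(a)}\in\dis_{\mathcal{O}_N}$, handles the inverse, and together they show $\lmlt(Q)$ normalizes $N$. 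The step I expect to be the crux is exactly here: in a general left quasigroup $L_a$ need not be an automorphism, so $L_a g L_a^{-1}$ is a priori only a permutation and need not lie in $\aut{Q}$; it is precisely the hypothesis $\dis_{\mathcal{O}_N}\leq N$ that forces the conjugate back into $N$, which is also the point where the assumption $N\leq\aut{Q}$ is genuinely used (both directions invoke the identity for automorphisms in $N$).

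Finally, for the \textit{in particular} statement I would observe that in a rack the left-distributive law makes every $L_a$ an automorphism, so $\lmlt(Q)\leq\aut{Q}$. Then any $N\trianglelefteq\lmlt(Q)$ automatically satisfies $N\leq\aut{Q}$ and is normalized by $\lmlt(Q)$, so the equivalence gives $\dis_{\mathcal{O}_N}\leq N$, i.e. $\mathcal{O}_N\leq\c{N}$ by the remark preceding the definition of $\N(Q)$; thus $N\in\N(Q)$. Since every member of $\N(Q)$ is by definition normal in $\lmlt(Q)$, the set $\N(Q)$ coincides with the full collection of normal subgroups of $\lmlt(Q)$, which is a sublattice by Lemma \ref{N(Q)}(i) and here is the entire lattice of normal subgroups.
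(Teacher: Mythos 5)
Your proposal is correct and follows essentially the same route as the paper: both rest on the identity $gL_cg^{-1}=L_{g(c)}$ for $g\in\aut{Q}$ (the paper writes it as $L_a^{\pm 1} n L_a^{\mp 1}=n L_{n^{-1}(a)}^{\pm 1}L_a^{\mp 1}$), use it to exchange conjugates of $N$ with generators $L_{n^{-1}(a)}^{\pm1}L_a^{\mp1}$ of $\dis_{\mathcal{O}_N}$ in both directions, and close the rack case via $\lmlt(Q)\leq\aut{Q}$. Your explicit normal-closure remark in (i)$\Rightarrow$(ii) and the separate treatment of $L_a$ and $L_a^{-1}$ in (ii)$\Rightarrow$(i) are just unpacked versions of the paper's $\pm$ formulation.
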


\begin{proof}

Let $N\leq \aut{Q}$, $a\in Q$ and $n\in N$ then
$$L_a^{\pm 1} n L_a^{\mp 1}=n n^{-1} L_a^{\pm 1} n L_a^{\mp 1}=n L_{n^{-1}(a)}^{\pm 1}L_a^{\mp 1}.$$
If $\lmlt{(Q)}$ normalizes $N$, then $N$ contains the generators of $\dis_{\mathcal{O}_N}$ as a normal subgroup of $\lmlt{(Q)}$ and hence $\dis_{\mathcal{O}_N}\leq N$. On the other hand, if $\dis_{\mathcal{O}_N}\leq N$
then $L_{n^{-1}(a)}^{\pm 1} L_a^{\mp 1}\in N$ for every $n\in N$ and every $a\in Q$. Therefore $L_a^{\pm 1} n L_a^{\mp 1} \in N$ for every $n\in N$ and every $a\in Q$, i.e. $\lmlt(Q)$ normalizes $N$.

If $Q$ is a rack, then $\lmlt(Q)\leq \aut{Q}$ and so we have that every normal subgroup of $\lmlt(Q)$ is in $\N(Q)$.
\end{proof}

\section{Commutator theory}\label{Sec 2}

%\subsection*{Commutator theory in universal algebra}

Let us make a brief outline of commutator theory for general algebras developed in \cite{comm, MS}. The goal of commutator theory is to define the concept of commutator for congruences, in order to define abelian and central congruence and consequently solvable and nilpotent algebraic structures.

Let $\alpha$, $\beta$, $\delta$ be congruences of an algebraic structure $A$. We say that \emph{$\alpha$ centralizes $\beta$ over $\delta$}, and write $C(\alpha,\beta;\delta)$, if for every $(n+1)$-ary term operation $t$, every pair $a\,\alpha\,b$ and every $u_1\,\beta\,v_1$, $\dots$, $u_n\,\beta\,v_n$ we have
\[  t^A(a,u_1,\dots,u_n)\,\delta \, t^A(a,v_1,\dots,v_n)\quad\Rightarrow\quad t^A(b,u_1,\dots,u_n) \, \delta \, t^A(b,v_1,\dots,v_n). \tag{TC}\]
%The implication (TC) is referred to as the \emph{term condition} for $t$, or shortly $\TC(t,\alpha,\beta,\delta)$. 
%It is easy to show that $C(\alpha,\beta;\delta)$ holds if and only if $TC(t,\alpha,\beta,\delta)$ is satisfied for every term $t$ in which the first variable occurs only once: indeed, we can use (TC) several times to replace every occurrence one-by-one (see \cite[Lemma 4.1]{SV1} for a formal proof). 
The following observations are crucial for the definition of commutator between congruences introduced below:
\begin{itemize}
	\item[(C1)] if $C(\alpha,\beta;\delta_i)$ for every $i\in I$, then $C(\alpha,\beta;\bigwedge\delta_i)$,
	\item[(C2)] $C(\alpha,\beta;\alpha\wedge\beta)$,
	\item[(C3)] if $\theta\leq\alpha\wedge\beta\wedge\delta$, then $C(\alpha,\beta;\delta)$ in $A$ if and only if $C(\alpha/\theta,\beta/\theta;\delta/\theta)$ in $A/\theta$.
\end{itemize}

The \emph{commutator} of $\alpha$, $\beta$, denoted by $[\alpha,\beta]$, is the smallest congruence $\delta$ such that $C(\alpha,\beta;\delta)$ (the definition makes sense thanks to (C1)). From (C2) follows that $[\alpha,\beta]\leq\alpha\wedge\beta$. A congruence $\alpha$ is called 
\begin{itemize}
	\item \emph{abelian} if $C(\alpha,\alpha;0_A)$, i.e., if $[\alpha,\alpha]=0_A$,
	\item \emph{central} if $C(\alpha,1_A;0_A)$, i.e., if $[\alpha,1_A]=0_A$.
\end{itemize}
Using the centralizing relation we can define several familiar concept for arbitrary algebraic structure. The \emph{center} of $A$, denoted by $\zeta_A$, is the largest congruence $\delta$ of $A$ such that $C(\delta,1_A;0_A)$. Hence, $[\alpha,1_A]$ is the smallest congruence $\delta$ such that $\alpha/\delta\leq\zeta_{A/\delta}$. Similarly, $[\alpha,\alpha]$ is the smallest congruence $\delta$ such that $\alpha/\delta$ is an abelian congruence of $A/\delta$. 

The following lemma will be used for inductive arguments later.

\begin{lemma}\label{l:2rd_iso_thm}\cite[Lemma 4.3]{CP}
Let $A$ be an algebraic structure, and $\theta\leq\alpha\leq\beta$ its congruences. 
Then $\beta/\alpha$ is central (resp. abelian) in $A/\alpha$ if and only if $(\beta/\theta)\big/(\alpha/\theta)$ is central (resp. abelian) in $(A/\theta)\big/(\alpha/\theta)$.
\end{lemma}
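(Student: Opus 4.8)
The plan is to derive the whole statement, in both the central and the abelian case, from a single tool applied three times: the quotient property (C3). The key observation is that centrality (resp. abelianness) of $\beta/\alpha$ in $A/\alpha$ can be pulled back to a centralizing relation living in $A$ itself, and likewise centrality (resp. abelianness) of $(\beta/\theta)/(\alpha/\theta)$ in $(A/\theta)/(\alpha/\theta)$ can be pulled back through $A/\theta$ to the \emph{same} relation in $A$. Once both sides are expressed inside $A$ they coincide verbatim, so the equivalence is immediate. Before starting I would record the trivial identifications $1_A/\gamma=1_{A/\gamma}$ and $\gamma/\gamma=0_{A/\gamma}$ for a congruence $\gamma$, which are what turn a collapsed top/bottom congruence into the genuine top/bottom of the quotient.

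For the central case I would chain three instances of (C3). First, applying (C3) in $A$ with $\alpha$ as the collapsed congruence to the triple $(\beta,1_A,\alpha)$ — the hypothesis $\alpha\leq \beta\wedge 1_A\wedge\alpha=\alpha$ holds since $\alpha\leq\beta$ — gives that $\beta/\alpha$ is central in $A/\alpha$ if and only if $C(\beta,1_A;\alpha)$ holds in $A$. Second, applying (C3) in $A$ with $\theta$ collapsed to the same triple $(\beta,1_A,\alpha)$ — the hypothesis $\theta\leq\beta\wedge 1_A\wedge\alpha=\alpha$ holds since $\theta\leq\alpha$ — gives $C(\beta,1_A;\alpha)$ in $A$ if and only if $C(\beta/\theta,1_{A/\theta};\alpha/\theta)$ in $A/\theta$. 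Third, applying (C3) in the algebra $A/\theta$ with $\alpha/\theta$ collapsed to the triple $(\beta/\theta,1_{A/\theta},\alpha/\theta)$ — the hypothesis $\alpha/\theta\leq (\beta/\theta)\wedge 1_{A/\theta}\wedge(\alpha/\theta)=\alpha/\theta$ holds since $\alpha/\theta\leq\beta/\theta$ — gives $C(\beta/\theta,1_{A/\theta};\alpha/\theta)$ in $A/\theta$ if and only if $(\beta/\theta)/(\alpha/\theta)$ is central in $(A/\theta)/(\alpha/\theta)$. Composing the three equivalences proves the central case.

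For the abelian case I would repeat the three applications of (C3) verbatim, replacing the second argument $1_A$ by $\beta$ (and accordingly $1_{A/\theta}$ by $\beta/\theta$). The inclusion hypotheses still hold, because $\beta\wedge\beta\wedge\alpha=\alpha$ and $(\beta/\theta)\wedge(\beta/\theta)\wedge(\alpha/\theta)=\alpha/\theta$, so the identical chain yields that $\beta/\alpha$ is abelian in $A/\alpha$ if and only if $(\beta/\theta)/(\alpha/\theta)$ is abelian in $(A/\theta)/(\alpha/\theta)$.

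I expect no genuine obstacle here: the argument is purely formal. The only care needed is the bookkeeping — verifying the meet-inclusion hypothesis of (C3) at each of the three steps and correctly reading off the top and bottom congruences under the successive quotients. Conceptually the statement also follows from the third isomorphism theorem $(A/\theta)/(\alpha/\theta)\cong A/\alpha$, under which $(\beta/\theta)/(\alpha/\theta)$ corresponds to $\beta/\alpha$ and $0,1$ to $0,1$, together with the fact that the centralizing relation $C(-,-;-)$, being defined through term operations, is invariant under isomorphism; I would nonetheless favor the (C3) route since it uses only the properties already isolated as crucial in the excerpt.
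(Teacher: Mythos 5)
Your proof is correct: each of the three applications of (C3) has its meet-inclusion hypothesis verified, the identifications $1_A/\gamma=1_{A/\gamma}$ and $\gamma/\gamma=0_{A/\gamma}$ are exactly what is needed to read off centrality (resp. abelianness) at the two ends of the chain, and the abelian case follows by the same substitution. The paper itself gives no proof, citing \cite[Lemma 4.3]{CP}, and the argument there is this same purely formal reduction via the quotient property (C3), so your proposal matches the intended proof.
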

%
%\begin{proof}
%In case of centrality, using (C3) repetitively, we obtain
%\begin{align*}
%C(\beta/\alpha,1_{A/\alpha};0_{A/\alpha})\text{ in } A/\alpha 
%\ &\Leftrightarrow\ 
%C(\beta,1_A;\alpha)\text{ in } A  \\
%&\Leftrightarrow\ 
%C(\beta/\theta,1_{A/\theta};\alpha/\theta)\text{ in } A/\theta \\
%&\Leftrightarrow\ 
%C((\beta/\theta)\big/(\alpha/\theta),1_{(A/\theta)\big/(\alpha/\theta)};0_{(A/\theta)\big/(\alpha/\theta)}))\text{ in } (A/\theta)\big/(\alpha/\theta).
%\end{align*}
%A similar argument works for abelianness, too.
%\end{proof}

The structure $A$ is called \emph{abelian} if $\zeta_A=1_A$, (equivalently the congruence $1_A$ is abelian). The main examples of abelian algebraic structures are modules. Affine left quasigroups are abelian left quasigroups as they are {\it reducts} of modules.

Solvability and nilpotence can be defined by the existence of chain of congruences satisfying certain centralizing relation, as familiar for groups and other classical algebraic structures. Indeed, $A$ is called \emph{nilpotent} (resp. \emph{solvable}) if there exists a chain of congruences 
\begin{displaymath}
    0_A=\alpha_0\leq\alpha_1\leq\ldots\leq\alpha_n=1_A
\end{displaymath}
such that $\alpha_{i+1}/\alpha_{i}$ is a central (resp. abelian) congruence of $A/\alpha_{i}$, for all $i\in\{0,1,\dots,n-1\}$.
The length of the smallest such series is called the \emph{length} of nilpotence (resp. solvability).

Similarly to group theory, one can define the series
\begin{displaymath}
    \gamma_{0}(A)=1_A,\qquad \gamma_{i+1}(A)=[\gamma_{i}(A),1_A],
\end{displaymath}
and
\begin{displaymath}
    \gamma^{0}(A)=1_A,\qquad \gamma^{i+1}(A)=[\gamma^{i}(A),\gamma^{i}(A)],
\end{displaymath}
and prove that an algebra $A$ is nilpotent of length $n$ (resp. solvable) if and only if $\gamma_{n}(A)=0_A$ (resp. $\gamma^{n}(A)=0_A$).  The series of the centers can be defined analogously to the series for groups as
$$\zeta_1(A)=\zeta_Q,\quad \zeta_{n+1}(A)/\zeta_n(A)=\zeta_{A/\zeta_{n}(A)}$$ 
for $n\in \mathbb{N}$ and we have that $A$ is nilpotent of length $n$ if and only if $\zeta_n(A)=1_A$.
%Note that both definitions use a special type of commutators: nilpotence uses commutators $[\alpha,1_A]$, solvability uses commutators $[\alpha,\alpha]$. 

For groups, the classical notion of abelianness and centrality of normal subgroups coincide with the corresponding notions of abelianness and centrality of the corresponding congruences. In loops, the situation is more complicated \cite{SV1}. In a wider setting, the commutator behaves well in all congruence-modular varieties \cite{comm}; for example, it is commutative (note that its definition is asymmetric with respect to $\alpha,\beta$). 
%For racks, the commutator in general lacks many desired properties, such as commutativity (Proposition \ref{p:comm_comm} and Example \ref{ex:non-commutative commutator}), nevertheless, the notions of abelianness and centrality seem to have a very good meaning.

%We also point out that there is no general principle providing a natural generating set for the congruence commutator, such as the element-wise commutators in groups. Analogies are known in several special cases, including loops \cite{SV1} and quasigroups \cite{Bel}.

\subsection*{Commutator theory for LT left quasigroups}
The left quasigroup terms where every left branch consists of unary subterm will be called left-translation term (shortly LT term). Formally, they are terms of the form
\begin{equation}\label{t}
t(x_1,\ldots, x_n)= s_1(x_{i_1})\bullet_1( s_2(x_{i_2})
\bullet_2(\ldots(s_{m-1}(x_{i_{m-1}})\bullet_{m-1} s_m(x_{i_{m}}))\ldots)),
\end{equation} 
where $s_i$ is a unary term for $1\leq i\leq m$ and $\bullet_i\in \{*,\ldiv\}$. %If $s_i(x_i)=x_i$ for all $i$, then $t$ is called a left-translation term (shortly LT-term). LT terms are those for which every left branch consists of a unary subterm.
A concrete example of an LT term is given in Figure \ref{fig:ltt}.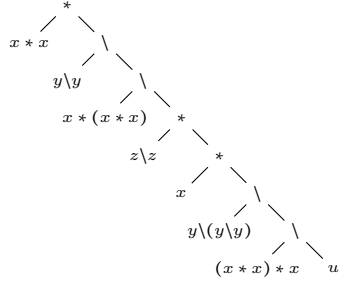
\begin{figure}[!ht]

\tiny{
\begin{tikzpicture}[scale =0.5]
%\node(a0) at (0,0) {$\ast$};  \node(m) at (3,0) {$\approx$}; 
%\node(a1) at (-1,-1) {$\backslash$}; \node(a2) at (1,-1) {$u$}; 
%\node(b1) at (-2,-2) {$\ast$}; \node(b2) at (0,-2) {$z$}; 
%\node(c1) at (-3,-3) {$x$}; \node(c2) at (-1,-3) {$y$}; 
%
%\node(aa0) at (6,0) {$\backslash$}; 
%\node(aa1) at (5,-1) {$\ast$};
%\node(bb1) at (4,-2) {$x$}; \node(bb2) at (6,-2) {$y$};
% \node(cc1) at (8,-2) {$\ast$};
%\node(d1) at (7,-3) {$z$};  \node(d2) at (9,-3) {$\ast$};
%\node(e1) at (8,-4) {$\ast$};  \node(e2) at (10,-4) {$u$};
%\node(f1) at (7,-5) {$x$};  \node(f2) at (9,-5) {$y$};
%
%\draw(aa0) -- (aa1);
%\draw(aa1) -- (bb1);
%\draw(aa0) -- (cc1);
%\draw(cc1) -- (d1);
%
%\draw(e1) -- (f1);
%
%\draw(e1) -- (f2);
%
%\draw(cc1) -- (d2);
%\draw(d2) -- (e1);
%\draw(d2) -- (e2);
%
%\draw(aa1) -- (bb2);
%\draw(a0) -- (a1);
%\draw(a0) -- (a2);
%\draw(a1) -- (b1);
%\draw(a1) -- (b2);
%\draw(b1) -- (c1);
%\draw(b1) -- (c2);
%
%
%\node(a0) at (12,0) {$\ast$}; \node at (9,0) {$\approx$};
%\node(b0) at (11,-1) {$x$}; \node(b1) at (13,-1) {$\backslash$};
%\node(c0) at (12,-2) {$y$}; \node(c1) at (14,-2) {$\backslash$};
%\node(dd0) at (13,-3) {$x$}; \node(dd1) at (15,-3) {$\ast$};
%\node(ee0) at (14,-4) {$z$}; \node(ee1) at (16,-4) {$\backslash$};
%\node(ff0) at (15,-5) {$\ast$}; \node(ff1) at (17,-5) {$u$};
%\node(gg0) at (14,-6) {$x$}; \node(gg1) at (16,-6) {$y$};
%
%
%\draw(a0) -- (b0);
%\draw(a0) -- (b1);
%\draw(b1) -- (c0);
%\draw(b1) -- (c1);
%\draw(c1) -- (dd0);
%\draw(c1) -- (dd1);
%\draw(dd1) -- (ee0);
%\draw(dd1) -- (ee1);
%\draw(ee1) -- (ff0);
%\draw(ee1) -- (ff1);
%\draw(ff0) -- (gg0);
%\draw(ff0) -- (gg1);
\node(f) at (18,0) {$\ast$}; \node at (15,0) {};
\node(b1) at (17,-1) {$x*x$}; \node(b2) at (19,-1) {$\backslash$};
\node(b3) at (18,-2) {$y\ldiv y$}; \node(b4) at (20,-2) {$\backslash$};
\node(b5) at (19,-3) {$x*(x*x)$}; \node(b6) at (21,-3) {$\ast$};
\node(b7) at (20,-4) {$z\ldiv z$}; \node(b8) at (22,-4) {$\ast$};
\node(b9) at (21,-5) {$x$}; \node(b10) at (23,-5) {$\backslash$};
\node(b11) at (22,-6) {$y\ldiv (y\ldiv y)$}; \node(b12) at (24,-6) {$\backslash$};
\node(b13) at (23,-7) {$(x*x)*x$}; \node(b14) at (25,-7) {$u$};

\draw(f) -- (b1);
\draw(f) -- (b2);
\draw(b2) -- (b3);
\draw(b2) -- (b4);
\draw(b4) -- (b5);
\draw(b4) -- (b6);
\draw(b6) -- (b7);
\draw(b6) -- (b8);
\draw(b8) -- (b9);
\draw(b8) -- (b10);
\draw(b10) -- (b11);
\draw(b10) -- (b12);
\draw(b12) -- (b13);
\draw(b12) -- (b14);

\end{tikzpicture} }
\caption{An example of LT term.}\label{fig:ltt}
%Example: $((xy)u)v=(xy)(u((xy)\ld v))=x(y(x\ld(u((xy)\ld v))))=x(y(x\ld(u(x(y(x\ld v))))))$.
\end{figure}

A left-quasigroup $Q$ has the {\it LT property} if every term is equivalent to a LT-term ($Q$ is also said to be a {\it left translation term} (LT left quasigroup). Therefore $Q$ is a LT left-quasigroup if and only if for every term $t$ an identity as in \eqref{t} holds in $Q$. Clearly such identities are also satisfied by any subalgebra, factor or power of $Q$ and so they also have the LT property.

\begin{remark*}
The class of LT left-quasigroups is stable under taking subalgebras, homomorphic images and powers.
\end{remark*}
Examples of LT left quasigroups are racks (see \cite{CP}). % i.e. left quasigroups satysfing the identity
%\begin{equation}\label{LD}
%x* (y*z)\approx (x*y)*(x*z).\tag{LD}
%\end{equation}
Indeed, using \eqref{LD} we can transform every term into an LT term (see Figure \ref{tree for rack}).

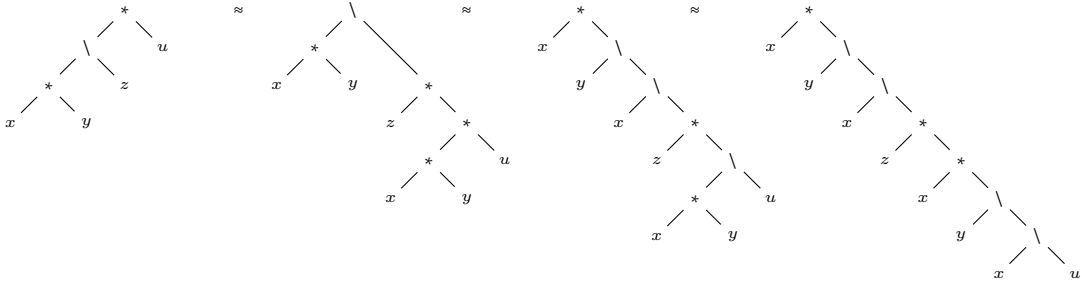
\begin{figure}[!ht]

\tiny{
\begin{tikzpicture}[scale =0.5]
\node(a0) at (0,0) {$\ast$};  \node(m) at (3,0) {$\approx$}; 
\node(a1) at (-1,-1) {$\backslash$}; \node(a2) at (1,-1) {$u$}; 
\node(b1) at (-2,-2) {$\ast$}; \node(b2) at (0,-2) {$z$}; 
\node(c1) at (-3,-3) {$x$}; \node(c2) at (-1,-3) {$y$}; 

\node(aa0) at (6,0) {$\backslash$}; 
\node(aa1) at (5,-1) {$\ast$};
\node(bb1) at (4,-2) {$x$}; \node(bb2) at (6,-2) {$y$};
 \node(cc1) at (8,-2) {$\ast$};
\node(d1) at (7,-3) {$z$};  \node(d2) at (9,-3) {$\ast$};
\node(e1) at (8,-4) {$\ast$};  \node(e2) at (10,-4) {$u$};
\node(f1) at (7,-5) {$x$};  \node(f2) at (9,-5) {$y$};

\draw(aa0) -- (aa1);
\draw(aa1) -- (bb1);
\draw(aa0) -- (cc1);
\draw(cc1) -- (d1);

\draw(e1) -- (f1);

\draw(e1) -- (f2);

\draw(cc1) -- (d2);
\draw(d2) -- (e1);
\draw(d2) -- (e2);

\draw(aa1) -- (bb2);
\draw(a0) -- (a1);
\draw(a0) -- (a2);
\draw(a1) -- (b1);
\draw(a1) -- (b2);
\draw(b1) -- (c1);
\draw(b1) -- (c2);

\node(a0) at (12,0) {$\ast$}; \node at (9,0) {$\approx$};
\node(b0) at (11,-1) {$x$}; \node(b1) at (13,-1) {$\backslash$};
\node(c0) at (12,-2) {$y$}; \node(c1) at (14,-2) {$\backslash$};
\node(dd0) at (13,-3) {$x$}; \node(dd1) at (15,-3) {$\ast$};
\node(ee0) at (14,-4) {$z$}; \node(ee1) at (16,-4) {$\backslash$};
\node(ff0) at (15,-5) {$\ast$}; \node(ff1) at (17,-5) {$u$};
\node(gg0) at (14,-6) {$x$}; \node(gg1) at (16,-6) {$y$};

\draw(a0) -- (b0);
\draw(a0) -- (b1);
\draw(b1) -- (c0);
\draw(b1) -- (c1);
\draw(c1) -- (dd0);
\draw(c1) -- (dd1);
\draw(dd1) -- (ee0);
\draw(dd1) -- (ee1);
\draw(ee1) -- (ff0);
\draw(ee1) -- (ff1);
\draw(ff0) -- (gg0);
\draw(ff0) -- (gg1);

\node(f) at (18,0) {$\ast$}; \node at (15,0) {$\approx$};
\node(b1) at (17,-1) {$x$}; \node(b2) at (19,-1) {$\backslash$};
\node(b3) at (18,-2) {$y$}; \node(b4) at (20,-2) {$\backslash$};
\node(b5) at (19,-3) {$x$}; \node(b6) at (21,-3) {$\ast$};
\node(b7) at (20,-4) {$z$}; \node(b8) at (22,-4) {$\ast$};
\node(b9) at (21,-5) {$x$}; \node(b10) at (23,-5) {$\backslash$};
\node(b11) at (22,-6) {$y$}; \node(b12) at (24,-6) {$\backslash$};
\node(b13) at (23,-7) {$x$}; \node(b14) at (25,-7) {$u$};

\draw(f) -- (b1);
\draw(f) -- (b2);
\draw(b2) -- (b3);
\draw(b2) -- (b4);
\draw(b4) -- (b5);
\draw(b4) -- (b6);
\draw(b6) -- (b7);
\draw(b6) -- (b8);
\draw(b8) -- (b9);
\draw(b8) -- (b10);
\draw(b10) -- (b11);
\draw(b10) -- (b12);
\draw(b12) -- (b13);
\draw(b12) -- (b14);

\end{tikzpicture} }
\caption{Transforming the term $((x*y)\ld z)*u$ into a left translation form using \eqref{LD}.}\label{tree for rack}
%Example: $((xy)u)v=(xy)(u((xy)\ld v))=x(y(x\ld(u((xy)\ld v))))=x(y(x\ld(u(x(y(x\ld v))))))$.
\end{figure}

In \cite{CP} a characterization of the centralizing relation has been proved for the class of LT left-quasigroups, providing also a class in which commutator theory is particularly easy to understand and which is not within the framework of congruence modular varieties. To this end we need the notion of $\alpha$-semiregularity of a group of permutation $N$ acting on a set $Q$: if $\alpha$ is an equivalence relation we say that $N$ is {\it $\alpha$-semiregular} if whenever $h(a)=a$ for some $n\in N$ and $a\in Q$ then $h(b)=b$ for every $b\, \alpha\, a$. We define the relation $\sigma_N$ as
$$a\, \sigma_N\, b\, \,\text{ if and only if }\, N_a=N_b.$$
%The blocks of $\sigma_N$ are blocks with respect to the action of $\sym_Q$, indeed if $a\, \sigma_N\, b$ then
%$$N_{f(a)}=f N_a f^{-1}=f N_b f^{-1}=N_{f(b)}$$
%for every $f\in \sym_Q$. 
The group $N$ is $\alpha$-semiregular if and only if $\alpha\leq \sigma_{N}$.

\begin{proposition}\label{p:commutators}\cite[Proposition 5.2]{CP}
Let $Q$ be a LT left quasigroup and let $\alpha,\beta\in Con(Q)$. Then $[\alpha,\beta]$ is the smallest congruence $\delta$ such that $[\dis_{\alpha/\delta},\dis_{\beta/\delta}]=1$ and $\dis_{\beta/\delta}$ acts $\alpha/\delta$-semiregularly on $Q/\delta$.
\end{proposition}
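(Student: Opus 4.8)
The plan is to show that the family of congruences $\delta$ satisfying $C(\alpha,\beta;\delta)$ coincides with the family satisfying the two group-theoretic conditions; since $[\alpha,\beta]$ is by definition the least element of the former family, this identifies it with the least element of the latter. By (C3), together with the fact that $\dis_{\alpha/\delta}$ and $\dis_{\beta/\delta}$ are the relative displacement groups of $Q/\delta$, it suffices to prove the statement for $\delta=0_Q$, i.e.\ that $C(\alpha,\beta;0_Q)$ holds if and only if $[\dis_\alpha,\dis_\beta]=1$ and $\dis_\beta$ is $\alpha$-semiregular. The key reduction is that, since $Q$ is a LT left quasigroup, every term is equivalent to one of the form $t(x,\bar y)=W(x,\bar y)(\ell)$, where $W(x,\bar y)\in\lmlt(Q)$ is a product of translations $L_{s_i(\mathrm{var}_i)}^{\pm1}$ and $\ell$ is a single leaf $s(\mathrm{var})$. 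Treating $x$ as the $\alpha$-variable (values $a\,\alpha\,b$) and $\bar y$ as the $\beta$-variables (values $\bar u\,\beta\,\bar v$), passing from $\bar u$ to $\bar v$ changes $W$ by an element of $\dis_\beta$ (a ``column switch''), while passing from $a$ to $b$ changes it by an element of $\dis_\alpha$ (a ``row switch''); both facts follow from normality of the relative displacement groups together with their combinatorial description.

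For the forward direction I extract the two conditions from well-chosen terms. Using the combinatorial description of $\dis_\beta$, write an arbitrary $d\in\dis_\beta$ as a translation word $W_\beta(\bar y,\bar y')$ that evaluates to the identity on the $\bar u$-values and to $d$ on the $\bar v$-values (duplicate each index into a constant variable and a switching one). The term $t(x,\bar y,\bar y')=W_\beta(\bar y,\bar y')(x)$ then yields, through (TC), the implication $d(a)=a\Rightarrow d(b)=b$ for all $a\,\alpha\,b$, which is exactly $\alpha$-semiregularity of $\dis_\beta$. To obtain commutativity I use $t(x,\bar y,\bar y',z)=W_\beta(\bar y,\bar y')\big(L_x(z)\big)$, with $L_x$ placed innermost so that the column-switch element stays equal to $d$ independently of the value of $x$, while the row switch contributes $c=L_bL_a^{-1}\in\dis_\alpha$. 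Choosing the base variable $z$ to take the values $L_a^{-1}dL_a(q)$ and $q$ (these are $\beta$-related since $\dis_\beta\leq\dis^\beta$) makes the antecedent of (TC) an identity, so its conclusion forces $cd(p)=dc(p)$ at the arbitrary point $p=L_a(q)$; hence $c$ and $d$ commute. As $c$ runs over the generators $L_bL_a^{-1}$ and $d$ over $\dis_\beta$, normality upgrades this to $[\dis_\alpha,\dis_\beta]=1$.

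For the converse I run the general computation on $t=W(x,\bar y)(\ell)$, distinguishing whether $\ell$ is the $\alpha$-variable or a $\beta$-variable. Writing $t(b,\cdot)=c\,t(a,\cdot)$ with $c\in\dis_\alpha$ and $t(\cdot,\bar v)=d\,t(\cdot,\bar u)$ with $d\in\dis_\beta$, the commutativity hypothesis first guarantees that the column-switch element does not depend on the row, so one $d$ serves both. If $\ell$ is a $\beta$-variable, the antecedent $t(a,\bar u)=t(a,\bar v)$ together with commutativity collapses the conclusion to the identity $cd(p)=dc(p)$, which holds. If $\ell$ is the $\alpha$-variable, the antecedent says that $d$ fixes the point $P_a=t(a,\bar u)$; since $P_a\,\alpha\,P_b$ for $P_b=t(b,\bar u)$ (as $a\,\alpha\,b$ and $\alpha$ is a congruence), $\alpha$-semiregularity of $\dis_\beta$ transports this fixed point to $P_b$, giving the conclusion. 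Thus the two families of valid $\delta$ agree.

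The main obstacle is the forward extraction of commutativity: one must engineer a term whose (TC)-antecedent is forced to hold for the chosen data while its conclusion is precisely a commutator relation, and this is delicate because (TC) only ever compares the $a$-row against the $b$-row within a fixed column pair and never equates an $a$-row value directly with a $b$-row value. The device that resolves it is placing $L_x$ innermost, which decouples the row switch (in $\dis_\alpha$) from the column switch (in $\dis_\beta$). A secondary technical point, needed in the converse, is the bookkeeping that the column-switch element is genuinely row-independent; this is where commutativity of $\dis_\alpha$ with $\dis_\beta$ is consumed, and it is cleanest to handle by inducting on the number of translation factors of $W$.
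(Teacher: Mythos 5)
Your proposal is correct and is essentially the proof that this paper relies on: the paper itself gives no argument but cites \cite{CP} (Lemma 5.1 for the forward implication, Proposition 5.2 for the converse), and your argument reproduces it — reduction to $\delta=0_Q$ via (C3), extraction of $\alpha$-semiregularity and of commutativity of the generators $L_bL_a^{-1}$ with $\dis_\beta$ from terms built out of the combinatorial description of $\dis_\beta$ (with $L_x$ placed innermost), and the converse by writing every term in LT form and splitting on whether the leaf carries the $\alpha$- or a $\beta$-variable, using commutativity for row-independence of the column-switch element and semiregularity for the $\alpha$-leaf case. All the steps check out, including the centralizer-of-a-normal-subgroup upgrade from generators to all of $\dis_\alpha$, so there is no gap.
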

\noindent The forward implication of Proposition \ref{p:commutators} holds for arbitrary left quasigroups (see \cite[Lemma 5.1]{CP}).

\begin{corollary}\label{ab an central cong iff}\cite[Theorem 1.1]{CP}
Let $Q$ be a LT left quasigroup and $\alpha\in Con(Q)$.
\begin{itemize}
\item[(i)] $\alpha$ is abelian if and only if $\dis_\alpha$ is abelian and $\alpha$-semiregular.
\item[(ii)] $\alpha$ is central if and only if $\dis_\alpha$ is central in $\dis(Q)$ and $\dis(Q)$ is $\alpha$-semiregular.
\end{itemize} 
\end{corollary}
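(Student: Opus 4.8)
The plan is to obtain both statements as the special case $\delta=0_Q$ of Proposition \ref{p:commutators}, reading off the characterizations of abelianness and centrality directly from the description of the commutator. The only genuine content is the following elementary observation about smallest elements: since Proposition \ref{p:commutators} asserts that $[\alpha,\beta]$ is the \emph{smallest} congruence $\delta$ enjoying the stated pair of properties, the equality $[\alpha,\beta]=0_Q$ holds if and only if $0_Q$ itself enjoys these properties. Indeed, if $0_Q$ satisfies them then $[\alpha,\beta]\leq 0_Q$ by minimality, whence $[\alpha,\beta]=0_Q$; conversely, if $[\alpha,\beta]=0_Q$ then $0_Q$ is the witnessing smallest congruence and so it satisfies the properties.

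For part (i) I would instantiate Proposition \ref{p:commutators} with $\beta=\alpha$. By definition $\alpha$ is abelian precisely when $[\alpha,\alpha]=0_Q$, which by the observation above amounts to $0_Q$ satisfying the two properties. I would then unwind these for $\delta=0_Q$: since $Q/0_Q=Q$ and $\alpha/0_Q=\alpha$, we have $\dis_{\alpha/0_Q}=\dis_\alpha$, so the first property $[\dis_{\alpha/0_Q},\dis_{\alpha/0_Q}]=1$ becomes exactly the statement that $\dis_\alpha$ is abelian, and the second property becomes that $\dis_\alpha$ acts $\alpha$-semiregularly on $Q$. This yields (i).

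For part (ii) I would instead take $\beta=1_Q$, so that $\alpha$ central means $[\alpha,1_Q]=0_Q$. The same argument applies, using in addition the identification $\dis_{1_Q/0_Q}=\dis_{1_Q}=\dis(Q)$, which holds because $1_Q/0_Q=1_{Q/0_Q}=1_Q$ and the displacement group relative to $1_Q$ is by definition $\dis(Q)$. Then the first property $[\dis_{\alpha/0_Q},\dis_{1_Q/0_Q}]=1$ reads $[\dis_\alpha,\dis(Q)]=1$, i.e.\ $\dis_\alpha$ is central in $\dis(Q)$, while the second property reads that $\dis(Q)$ acts $\alpha$-semiregularly on $Q$, giving (ii).

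I do not expect any real obstacle here: the statement is a formal consequence of Proposition \ref{p:commutators}, and all the work lies in carefully recording the harmless identifications $Q/0_Q=Q$, $\alpha/0_Q=\alpha$, $\dis_{\alpha/0_Q}=\dis_\alpha$ and $\dis_{1_Q/0_Q}=\dis(Q)$. The one point deserving a line of care is the equivalence between ``$[\alpha,\beta]$ is the smallest such congruence and equals $0_Q$'' and ``$0_Q$ has the defining properties'', which I would state explicitly, as above, so that both directions of each ``if and only if'' become transparent.
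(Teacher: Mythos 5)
Your proposal is correct and matches the paper's intended derivation: the paper gives no separate argument but states this as a corollary of Proposition \ref{p:commutators} (citing \cite[Theorem 1.1]{CP}), and the intended proof is exactly your specialization $\delta=0_Q$ with $\beta=\alpha$ for (i) and $\beta=1_Q$ for (ii), using $\dis_{1_Q}=\dis(Q)$. Your explicit remark that ``$[\alpha,\beta]=0_Q$ iff $0_Q$ itself satisfies the two properties'' is the right (and only) point of logic needed, and you handle it correctly in both directions.
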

%
%
%Let $Q$ be a left quasigroup and $N\leq \sym_Q$. We define the relation $\sigma_N$ as
%$$a\, \sigma_N\, b\, \,\text{ if and only if }\, N_a=N_b.$$
%%The blocks of $\sigma_N$ are blocks with respect to the action of $\sym_Q$, indeed if $a\, \sigma_N\, b$ then
%%$$N_{f(a)}=f N_a f^{-1}=f N_b f^{-1}=N_{f(b)}$$
%%for every $f\in \sym_Q$. 
%The group $N$ is $\alpha$-semiregular if and only if $\alpha\leq \sigma_{N}$.

%\comment{I could define $\mathfrak{Z}_N=\c{Z(N)}\cap \sigma_N$ and if $\c{Z(N)}$ is a congruence then so it is $\mathfrak{Z}_N$}

%Let $Q$ be a left quasigroup. We define the relation $\sigma_Q$ as
%$$a\, \sigma_Q\, b\, \,\text{ if and only if }\, \dis(Q)_a=\dis(Q)_b.$$
%The group $\dis(Q)$ is $\alpha$-semiregular if and only if $\alpha\leq \sigma_Q$.

%
%\begin{lemma}\label{cong zeta}
%Let $Q$ be a LT left quasigroup and $N\leq \sym_Q$. If $\c{Z(N)}$ is a congruence then $\mathfrak{Z}_N=\c{Z(N)}\cap \sigma_N$ is an abelian congruence.
%\end{lemma}
%
%\begin{proof}
%Assume that the relation $\c{N}$ is a congruence. We just need to prove that $N_{L_a^{\pm 1}(c)}=N_{L_b^{\pm 1}(c)}$ and $N_{L_c^{\pm 1}(a)}=N_{L_c^{\pm 1}(b)}$ whenever $a\, \beta\, b$. The first equality follows since $$N_{L_a^{\pm 1}(c)}=L_a^{\pm 1} N_c L_a^{\mp 1}=L_b^{\pm 1} L_b^{\mp 1} L_a^{\pm 1}\dis(Q)_c L_a^{\mp 1}=L_b^{\pm 1}N_c L_b^{\mp 1}=N_{L_b^{\pm 1}(c)}$$
%since $L_b^{\mp 1} L_a^{\pm 1}\in Z(N)$. The second one follows since
%$$\dis(Q)_{L_c^{\pm 1}(a)}=L_c^{\pm 1} \dis(Q)_a L_c^{\mp 1}=L_c^{\pm 1} \dis(Q)_b L_c^{\mp 1}=\dis(Q)_{L_c^{\pm 1}(b)}. $$
%\end{proof}

\begin{corollary}\label{central cong}
Let $Q$ be a LT left quasigroup and $\alpha\in Con(Q)$. The following are equivalent: 
\begin{itemize}
\item[(i)] $\alpha$ is central. 
\item[(ii)] $\alpha\leq \c{Z(\dis(Q))}\cap \sigma_{\dis(Q)}$. %\comment{=\mathfrak{Z}_{\dis(Q)}}$.
\end{itemize}
In particular, if $\c{Z(\dis(Q))}$ is a congruence then $\zeta_Q=\c{Z(\dis(Q))}\cap \sigma_{\dis(Q)}$.
\end{corollary}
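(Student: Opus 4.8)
The plan is to derive both conditions from Corollary \ref{ab an central cong iff}(ii) and then translate the two group-theoretic requirements appearing there into the two displayed lattice inequalities. By the definition of $\alpha$-semiregularity, the clause ``$\dis(Q)$ is $\alpha$-semiregular'' is literally ``$\alpha\leq\sigma_{\dis(Q)}$'', so only the other clause needs work.

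Write $Z=Z(\dis(Q))$. I would first record that $Z$, being characteristic in $\dis(Q)$ and with $\dis(Q)\trianglelefteq\lmlt(Q)$, is normal in $\lmlt(Q)$. If $\alpha\leq\c Z$, then each generator $L_aL_b^{-1}$ (with $a\,\alpha\,b$) of the normal subgroup $\dis_\alpha$ lies in $Z$; since $Z\trianglelefteq\lmlt(Q)$, the whole normal closure satisfies $\dis_\alpha\leq Z$, i.e. $\dis_\alpha$ is central in $\dis(Q)$. Conversely, if $\dis_\alpha\leq Z$ then $a\,\alpha\,b$ forces $L_aL_b^{-1}\in\dis_\alpha\leq Z$, that is $a\,\c Z\,b$. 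Combining these two translations with Corollary \ref{ab an central cong iff}(ii) gives (i)$\Leftrightarrow$(ii).

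For the final assertion set $\mu=\c Z\cap\sigma_{\dis(Q)}$ and assume $\c Z\in Con(Q)$. One inclusion is immediate: $\zeta_Q$ is a central congruence, so by the equivalence just proved $\zeta_Q\leq\mu$. The crux is to prove that $\mu$ is itself a congruence; once that is known, $\mu\leq\c Z\cap\sigma_{\dis(Q)}$ makes $\mu$ central by (i)$\Leftrightarrow$(ii), whence $\mu\leq\zeta_Q$ and equality follows. To verify that $\mu$ is a congruence I would invoke Lemma \ref{caratt congruences}. For the block condition, $\c Z$ is a congruence so its blocks are $\lmlt(Q)$-blocks, while a direct computation with stabilizers gives $\dis(Q)_{g(a)}=g\,\dis(Q)_a\,g^{-1}$ for $g\in\lmlt(Q)$; hence $a\,\sigma_{\dis(Q)}\,b$ is preserved by $\lmlt(Q)$ and the blocks of $\sigma_{\dis(Q)}$ are $\lmlt(Q)$-blocks as well. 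An intersection of two $\lmlt(Q)$-blocks through a common point is again a block, so the blocks of $\mu$ are $\lmlt(Q)$-blocks.

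It remains to check $\dis_\mu\leq\dis^\mu$; from the definition of the relative kernel \eqref{kernel} this is the same as $\dis_\mu\leq\dis^{\c Z}\cap\dis^{\sigma_{\dis(Q)}}$. Since $\mu\leq\c Z$ and $\beta\mapsto\dis_\beta$ is monotone, $\dis_\mu\leq\dis_{\c Z}\leq\dis^{\c Z}$, the last step because $\c Z$ is a congruence. For the other factor, the stabilizer formula shows that every $g\in Z$ acts trivially by conjugation on $\dis(Q)$, so $\dis(Q)_{g(a)}=\dis(Q)_a$ for all $a$, giving $Z\leq\dis^{\sigma_{\dis(Q)}}$; as $\dis_\mu\leq Z$ (its generators lie in $Z$ and $Z\trianglelefteq\lmlt(Q)$), we obtain $\dis_\mu\leq\dis^{\sigma_{\dis(Q)}}$. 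Therefore $\dis_\mu\leq\dis^\mu$, Lemma \ref{caratt congruences} yields $\mu\in Con(Q)$, and the argument closes. The main obstacle is precisely this last point---showing $\mu$ is a congruence---and the key move is to bound $\dis_\mu$ through $\dis_{\c Z}$ by monotonicity, rather than trying to establish the (in general false) inclusion $Z\leq\dis^{\c Z}$ directly.
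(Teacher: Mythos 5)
Your proof is correct and follows essentially the same route as the paper's: both derive the equivalence (i)$\Leftrightarrow$(ii) by translating the two conditions of Corollary \ref{ab an central cong iff}(ii) into $\dis_\alpha\leq Z(\dis(Q))$ (equivalently $\alpha\leq\c{Z(\dis(Q))}$, using that $Z(\dis(Q))$ is normal in $\lmlt(Q)$ and that $\dis_\alpha$ is the normal closure of its generators) and $\alpha\leq\sigma_{\dis(Q)}$, and both reduce the final claim to showing that $\c{Z(\dis(Q))}\cap\sigma_{\dis(Q)}$ is a congruence, which then must coincide with $\zeta_Q$. The only difference is packaging: you verify the congruence property via Lemma \ref{caratt congruences} (block invariance plus $\dis_\mu\leq\dis^\mu$, split through $\dis_{\c{Z(\dis(Q))}}$ and through $Z(\dis(Q))\leq\dis^{\sigma_{\dis(Q)}}$), whereas the paper checks the stabilizer equalities $\dis(Q)_{L_a^{\pm1}(c)}=\dis(Q)_{L_b^{\pm1}(c)}$ and $\dis(Q)_{L_c^{\pm1}(a)}=\dis(Q)_{L_c^{\pm1}(b)}$ directly, but both verifications rest on the same two facts, namely the conjugation formula for stabilizers and the fact that elements of $Z(\dis(Q))$ conjugate $\dis(Q)$-stabilizers trivially.
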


\begin{proof}
(i) $\Rightarrow$ (ii) If $\alpha$ is central, then $\dis_\alpha\leq Z(\dis(Q))$ and so $\alpha\leq \c{\dis_\alpha}\leq \c{Z(\dis(Q))}$. Moreover, if $h(a)=a$ then $h(b)=b$ for every $a\, \alpha\, b$ and every $h\in \lmlt(Q)$, i.e. $\alpha\leq \sigma_{\dis(Q)}$. 

(ii) $\Rightarrow$ (i) If $\alpha\leq \c{Z(\dis(Q))}\cap \sigma_{\dis(Q)}$, then $\dis_\alpha\leq Z(\dis(Q))$ and if $h(a)=a$ then $h(b)=b$ for $a\,\alpha\,b$, i.e. $\dis(Q)$ is semiregular.

Assume that the relation $\c{Z(\dis(Q))}$ is a congruence. If we prove that $\beta=\c{Z(\dis(Q))}\cap \sigma_{\dis(Q)}$ is a congruence then it is equal to $\zeta_Q$. We just need to prove that $\dis(Q)_{L_a^{\pm 1}(c)}=\dis(Q)_{L_b^{\pm 1}(c)}$ and $\dis(Q)_{L_c^{\pm 1}(a)}=\dis(Q)_{L_c^{\pm 1}(b)}$ whenever $a\, \beta\, b$. The first equality follows since $L_b^{\mp 1} L_a^{\pm 1}\in Z(\dis(Q))$ and so $$\dis(Q)_{L_a^{\pm 1}(c)}=L_a^{\pm 1} \dis(Q)_c L_a^{\mp 1}=L_b^{\pm 1} L_b^{\mp 1} L_a^{\pm 1}\dis(Q)_c L_a^{\mp 1}=L_b^{\pm 1}\dis(Q)_c L_b^{\mp 1}=\dis(Q)_{L_b^{\pm 1}(c)}.$$
The second one follows since
$$\dis(Q)_{L_c^{\pm 1}(a)}=L_c^{\pm 1} \dis(Q)_a L_c^{\mp 1}=L_c^{\pm 1} \dis(Q)_b L_c^{\mp 1}=\dis(Q)_{L_c^{\pm 1}(b)}. $$
\end{proof}

Note that, $\c{Z(\dis(Q))}\cap \sigma_{\dis(Q)}$ is not a congruence of $Q$ in general. Nevertheless, this is the case for racks  \cite[Proposition 5.9]{CP}. Let
\medskip
\begin{center}
$Q=$\,\begin{tabular}{|c c c c  |}
\hline
 2 & 1 & 3 & 4 \\
  1 &  2 & 4  & 3 \\
   2 & 1 & 3 & 4\\
   3 & 4 & 2 & 1\\
% 1 & 2 & 3 & 4 & 5\\
%  1 &  3 & 2 & 4 & 5 \\
%   1 & 2 & 4 & 3 & 5\\
% 1 & 2 & 3 & 4 & 5\\
%1 & 2 & 3 & 4 & 5\\
\hline
\end{tabular}\,.
\end{center}
\comment{is this LT?}

\medskip

The relation $\alpha=\c{Z(\dis(Q))}\cap \sigma_{\dis(Q)}$ 
%has classes $[1]=\{1,4\}$, $[2]=\{2\}$, $[3]=\{3\}$ and it 
is not a congruence (indeed $1\,\alpha\,2$ but $1*3=3$ and $2*3=4$ are not $\alpha$-related).

%\comment{In particular, $Z(\dis(Q))\leq \dis^{\zeta_Q}$.  $\mathcal{O}_{Z(\dis(Q))}$ is a central congruence and therefore $\O_{Z(\dis(Q))}\leq \zeta_Q$, so $Z(\dis(Q))\leq \dis^{\zeta_Q}$. THE CENTER MIGHT NOT BE IN $\N(Q)$. WE CAN SAY THAT $\zeta_Q\leq \c{Z(\dis(Q))}\cap \sigma_Q$.}

Abelianness and centrality of congruence coming from orbit decompositions of subgroup of the left multiplication are reflected by the properties of the subgroup.

\begin{lemma}\label{abelian subgroup gives abelian cong for LTT}
Let $Q$ be a LT left-quasigroup and let $N\in \N(Q)$ abelian (resp. centralizes $\dis(Q)$). Then $\O_N$ is an abelian (resp. central) congruence of $Q$. 
\end{lemma}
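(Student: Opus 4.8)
The plan is to reduce both claims to the group-theoretic characterisation of abelian and central congruences for LT left quasigroups, namely Corollary \ref{ab an central cong iff}, applied to the congruence $\alpha=\mathcal{O}_N$. First I would record the structural facts that come for free from $N\in\N(Q)$: the orbit decomposition $\mathcal{O}_N$ is a congruence by Corollary \ref{orbits of dis}(ii), and the admissibility condition $\mathcal{O}_N\leq\c{N}$ is exactly $\dis_{\mathcal{O}_N}\leq N$. Moreover, every generator $L_aL_b^{-1}$ of $\dis_{\mathcal{O}_N}$ lies in $\dis(Q)$ and $\dis(Q)\trianglelefteq\lmlt(Q)$, so $\dis_{\mathcal{O}_N}\leq\dis(Q)$ as well. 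These two inclusions are the bridge between the hypothesis on $N$ and the conditions demanded by Corollary \ref{ab an central cong iff}.

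For the abelian case, Corollary \ref{ab an central cong iff}(i) requires that $\dis_{\mathcal{O}_N}$ be abelian and $\mathcal{O}_N$-semiregular. Abelianness is immediate, since $\dis_{\mathcal{O}_N}$ is a subgroup of the abelian group $N$. For semiregularity I would verify $\mathcal{O}_N\leq\sigma_{\dis_{\mathcal{O}_N}}$ directly. Suppose $a\,\mathcal{O}_N\,b$, so $b=n(a)$ for some $n\in N$, and let $h\in\dis_{\mathcal{O}_N}$ fix $a$. Since $h,n\in N$ and $N$ is abelian they commute, whence $h(b)=h(n(a))=n(h(a))=n(a)=b$; the symmetric argument using $a=n^{-1}(b)$ gives the reverse inclusion of stabilisers, so $(\dis_{\mathcal{O}_N})_a=(\dis_{\mathcal{O}_N})_b$. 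Thus $\mathcal{O}_N$ is abelian.

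The central case is entirely parallel, using Corollary \ref{ab an central cong iff}(ii), which asks that $\dis_{\mathcal{O}_N}$ be central in $\dis(Q)$ and that $\dis(Q)$ act $\mathcal{O}_N$-semiregularly. Centrality follows at once from the two inclusions above together with the hypothesis: $\dis_{\mathcal{O}_N}\leq N\cap\dis(Q)$ and $N$ centralises $\dis(Q)$, so $\dis_{\mathcal{O}_N}\leq Z(\dis(Q))$. For the semiregularity of $\dis(Q)$ I would repeat the commuting computation with the roles adjusted: given $a\,\mathcal{O}_N\,b$ with $b=n(a)$, $n\in N$, and $g\in\dis(Q)$ fixing $a$, the hypothesis that $N$ centralises $\dis(Q)$ gives $g(b)=g(n(a))=n(g(a))=n(a)=b$, and symmetrically, so $\dis(Q)_a=\dis(Q)_b$ and $\mathcal{O}_N\leq\sigma_{\dis(Q)}$.

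The computations themselves are short, so the only point genuinely requiring care is the bookkeeping linking the hypothesis on $N$ to the exact objects appearing in Corollary \ref{ab an central cong iff}: that $\dis_{\mathcal{O}_N}$ sits inside both $N$ and $\dis(Q)$, and that semiregularity is precisely the stabiliser condition $\alpha\leq\sigma_{(-)}$. Once these are in place, the commutativity hypotheses do all the work, and no use of the LT property beyond Corollary \ref{ab an central cong iff} itself is needed.
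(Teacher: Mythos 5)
Your proposal is correct and follows essentially the same route as the paper's proof: both reduce the claim to Corollary \ref{ab an central cong iff} via the inclusion $\dis_{\O_N}\leq N$ and then verify $\O_N$-semiregularity by the same commuting computation $h(n(a))=n(h(a))$. The only difference is that you spell out the bookkeeping the paper leaves implicit (that $\O_N$ is a congruence by Corollary \ref{orbits of dis}, and that $\dis_{\O_N}\leq\dis(Q)$ is needed for centrality), which is a harmless refinement.
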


\begin{proof}
According to Corollary \ref{ab an central cong iff}, we need to check that $\dis_{\O_N}$ is abelian (resp. central in $\dis(Q)$) and that $\dis_{\O_N}$ (resp. $\dis(Q)$) acts $\O_N$-semiregularly on $Q$.
Since $\dis_{\O_N}\leq N$ then it is abelian (resp. central in $\dis(Q)$). Let $h\in \dis_{\O_N}$ (resp. $h\in \dis(Q)$) and let $h(a)=a$. If $b=n(a)$ for some $n\in N$, then $hn(a)=nh(a)=n(a)$, therefore $\dis_{\O_N}$ (resp. $\dis(Q)$) is $\O_N$-semiregular.
%First, observe that $\dis_{\O_N}\leq N$. By Lemma \ref{on dis_alpha}(iii), $\O_N\leq\c{N}$, and applying the Galois connection we obtain $\dis_{\O_N}\leq\dis_{\c {N}}\leq N$.
%Indeed, if $a\,\O_N\,b$, then there is $f\in N$ such that $b=f(a)$, and we have $L_aL_b^{-1}=L_aL_{f(a)}^{-1}=(L_a \!{f} L_a^{-1})f^{-1}\in N$ using \eqref{L_f(a)} and the fact that $N$ is normal in $\lmlt(Q)$. Therefore, $N$ constains all generators of $\dis_{\O_N}$.
%
%Consequently, $\dis_{\O_N}$ is abelian (resp. central in $\dis(Q)$), since $N$ is. Let $f\in\dis_{\O_N}$ (resp. $f\in\dis(Q)$) and consider $a\in Q$ such that $f(a)=a$. For any $b\,\O_N\,a$, take $g\in N$ such that $b=g(a)$. Then $f(b)=f(g(a))=g(f(a))=g(a)=b$, where $fg=gf$ follows from abelianness (resp. centrality) of $N$.
\end{proof}

Let $G$ be a group. We say that 
$$1=H_0\leq H_1\leq H_2\leq \ldots \leq H_n=G$$
is a {\it central} (resp. {\it abelian}) series if $H_i\unlhd G$ and $H_{i+1}/H_i\leq Z(G/H_i)$ (resp. $H_{i+1}/H_i$ is abelian) for every $i\in \{ 0,\ldots, n-1\}$. 

Let $K\unlhd G$ and $\pi_K:G\longrightarrow G/K$ the canonical projection. Then
$$[H_{i+1} K, G]\leq H_{i} K \quad \text{(resp. }[H_{i+1} K, H_{i+1} K]\leq H_{i} K )$$
for every $i\in \{0,\ldots n-1\}$ and therefore $[\pi_K(H_{i+1}),G/K]\leq \pi_K(H_i)$ (resp. $[\pi_K(H_{i+1}),\pi_K(H_{i+1})]\leq \pi_K(H_i)$). So the series
  $$1\leq \pi_K(H_2)\leq \pi_K(H_3)\leq \ldots\leq \pi_K(H_{n-1})\leq G/K$$
 is a central (resp. abelian) series of $G/K$.

\begin{proposition}\label{dis solv then Q solvable for LT}
Let $Q$ be a LT left quasigroup. If
$$1=H_0\leq H_1\leq H_2\leq \ldots \leq H_n=\dis(Q)$$
is an abelian (central) series and $H_i\in \N(Q)$ for every $i\in \{1,\ldots,n-1\}$ then $Q$ is solvable (nilpotent) of length less or equal to $n+1$.
\end{proposition}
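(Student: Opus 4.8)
The plan is to convert the given subgroup series into a series of congruences via the orbit map $N\mapsto \mathcal{O}_N$. I would set $\alpha_i=\mathcal{O}_{H_i}$ for $0\le i\le n$. Since $H_0=1$ we get $\alpha_0=0_Q$, and since $H_n=\dis(Q)$, Corollary \ref{on orbits2} identifies $\alpha_n=\mathcal{O}_{\dis(Q)}$ as the smallest congruence with permutation factor. Each $H_i$ lies in $\N(Q)$ (by hypothesis for $1\le i\le n-1$, trivially for $i=0$, and by Corollary \ref{orbits of dis} for $i=n$), so by Corollary \ref{orbits of dis} each $\alpha_i$ is a congruence, and monotonicity of $N\mapsto\mathcal{O}_N$ gives a chain $0_Q=\alpha_0\le\alpha_1\le\cdots\le\alpha_n\le 1_Q$. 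The goal is then to show that every factor $\alpha_{i+1}/\alpha_i$ is abelian (resp. central) in $Q/\alpha_i$ for $0\le i\le n-1$, and that the top factor $1_{Q/\alpha_n}$ is abelian (resp. central) in $Q/\alpha_n$; concatenating these $n+1$ steps is, by definition, a solvability (resp. nilpotence) witness of length $\le n+1$.

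For the core step, fix $0\le i\le n-1$, write $\bar Q=Q/\alpha_i$ (an LT left quasigroup, since the LT property passes to homomorphic images) and $\bar H_{i+1}=\pi_{\alpha_i}(H_{i+1})$. The crucial inclusion is $H_i\le \dis^{\mathcal{O}_{H_i}}=\dis^{\alpha_i}=\ker(\pi_{\alpha_i}|_{\dis(Q)})$, coming from the general fact $N\le\dis^{\mathcal{O}_N}$. Using it, I would first check directly from the orbit description that $\alpha_{i+1}/\alpha_i=\mathcal{O}_{\bar H_{i+1}}$ in $\bar Q$ (an element pair is related by $\mathcal{O}_{\bar H_{i+1}}$ iff some $h\in H_{i+1}$ moves one $\alpha_i$-class to the other, and absorbing the $H_i$-part recovers $\alpha_{i+1}$), while Lemma \ref{N(Q)}(ii) yields $\bar H_{i+1}\in\N(\bar Q)$. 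Since $H_i\le\ker\pi_{\alpha_i}$, the group $\bar H_{i+1}$ is a homomorphic image of $H_{i+1}/H_i$: in the abelian case this forces $\bar H_{i+1}$ abelian, and in the central case applying $\pi_{\alpha_i}$ to $[H_{i+1},\dis(Q)]\le H_i$ gives $[\bar H_{i+1},\dis(\bar Q)]=1$, i.e. $\bar H_{i+1}$ centralizes $\dis(\bar Q)$. Lemma \ref{abelian subgroup gives abelian cong for LTT} then shows $\mathcal{O}_{\bar H_{i+1}}=\alpha_{i+1}/\alpha_i$ is abelian (resp. central) in $\bar Q$.

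For the top step, $Q/\alpha_n=Q/\mathcal{O}_{\dis(Q)}$ is a permutation left quasigroup by Corollary \ref{on orbits2}, so $\dis(Q/\alpha_n)=1$; as $\dis_{1_{Q/\alpha_n}}\le\dis(Q/\alpha_n)=1$ is trivially abelian, central, and semiregular, Corollary \ref{ab an central cong iff} gives that $1_{Q/\alpha_n}$ is simultaneously abelian and central in $Q/\alpha_n$. Together with the previous paragraph, every factor of the chain $0_Q=\alpha_0\le\cdots\le\alpha_n\le 1_Q$ is abelian (resp. central), establishing the claim. I expect the main obstacle to be the bookkeeping of the core step: verifying the identity $\alpha_{i+1}/\alpha_i=\mathcal{O}_{\bar H_{i+1}}$ and confirming that collapsing $H_i$ to the identity in the quotient converts ``$H_{i+1}/H_i$ abelian (resp. central)'' into exactly the hypotheses of Lemma \ref{abelian subgroup gives abelian cong for LTT}. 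The extra $+1$ in the length is unavoidable precisely because $\mathcal{O}_{\dis(Q)}$ need not equal $1_Q$, and it is absorbed by the permutation-factor top step.
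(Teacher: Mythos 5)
Your proof is correct and takes essentially the same route as the paper: both convert the subgroup series into the chain of orbit congruences $\mathcal{O}_{H_i}$ and invoke Lemma \ref{abelian subgroup gives abelian cong for LTT} together with Lemma \ref{N(Q)}(ii) at each stage, with the permutation factor $Q/\mathcal{O}_{\dis(Q)}$ accounting for the extra $+1$ in the length. The only difference is organizational: the paper peels off $H_1$ and inducts on $n$, transferring the witness back through Lemma \ref{l:2rd_iso_thm}, whereas you unroll that induction and instead verify the identification $\alpha_{i+1}/\alpha_i=\mathcal{O}_{\pi_{\alpha_i}(H_{i+1})}$ directly, which is sound.
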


\begin{proof}
We proceed by induction on $n$. 
For $n=1$, the group $\dis(Q)$ is abelian, and so the orbit decomposition of $Q$ is a central congruence and the factor is a permutation left-quasigroup and so abelian.  Then $Q$ is nilpotent (and thus solvable, too) of length $\leq 2$.
In the induction step, assume that the statement holds for all LT left quasigroups with an central (abelian) series of length at most $n-1$.

%First, observe that, for any $N\in\N(Q)$, $\pi_{\O_N}(N)=1$: indeed, for every $f\in N$ and $a\in Q$, we have $f(a)\,\O_N\,a$, hence $f$ acts identically on $Q/\O_N$. Therefore, $N\leq \dis^{\O_N}$.

 Since $H_1$ is central (resp. abelian) in $\dis(Q)$, the congruence $\alpha=\O_{H_1}$ is also central (resp. abelian), by Lemma \ref{abelian subgroup gives abelian cong for LTT}. 
% Since 
%$$[H_{i+1} \dis^\alpha, \dis(Q)]\leq H_{i} \dis^\alpha, \quad \text{(resp. }[H_{i+1} \dis^\alpha, H_{i+1} \dis^\alpha]\leq H_{i} \dis^\alpha ), $$
%for every $i\in \{0,\ldots n-1\}$ then  $[\pi_\alpha(H_{i+1}),\dis(Q/\alpha)]\leq \pi_\alpha(H_i)$ (resp. $[\pi_\alpha(H_{i+1}),\pi_\alpha(H_{i+1})]\leq \pi_\alpha(H_i)$). 
The series
  $$1\leq \pi_\alpha(H_2)\leq \pi_\alpha(H_3)\leq \ldots\leq \pi_\alpha(H_{n-1})\leq \dis(Q/\alpha)$$
 is a central (resp. abelian) series of length $n-1$ and according to Lemma \ref{N(Q)} its elements are in $\N(Q/\alpha)$. By the induction assumption, $Q/\alpha$ is nilpotent (resp. solvable) of length $m\leq n$.
Let
\[ 0_{Q/\alpha}\leq\alpha_1/\alpha\leq\ldots\leq \alpha_m/\alpha=1_{Q/\alpha} \]
be the witness. Then 
\[ 0_Q\leq\alpha\leq\alpha_1\leq\ldots\leq \alpha_m=1_Q \]
is the witness that $Q$ is nilpotent (resp. solvable) of length less or equal to $n+1$, using Lemma \ref{l:2rd_iso_thm}.
\end{proof}

\section{Semimedial left quasigroups}\label{Sec 3}
%
%\comment{The relation $x\, \sim \, y$ iff $x*x=y*y$ is a congruence. 
%%In the finite case yes! Indeed, and $z*z=u*u$ then $(x*z)*(x*z)=(x*x)*(z*z)=(y*y)*(u*u)=(x*u)*(x*u)$. 
%In particular, $\!{\dis_{\sim}}=1$.  If $x*x=y*y$ then $h(x)*h(x)=\!{h}L_x h^{-1}h(x)=\!{h} (x*x)=\!{h} (y*y)=L_{h(y)}h(y)$. Hence the blocks of $\sim$ are blocks w.r.t the action of $\lmlt(Q)$. Let $L_x L_y^{-1}(z)*L_x L_y^{-1}(z)=L_z L_y L_x^{-1} L_x L_y^{-1}(z)=z*z$. Hence $\dis_{\sim}\leq \dis^{\sim}$ and then it is a congruence. Is it the smallest congruence with $2$ divisible factor? If $Q/\alpha$ is $2$-divisible, then if $\sigma(a)=\sigma(b)$ then $[\sigma(a)]=[\sigma(b)]$ and so $[a]=[b]$. Hence $\sim\leq \alpha$. }

%
%\begin{lemma}
%Affine semimedial left quasigroups are medial.
%\end{lemma}
%
%\begin{proof}
%Let $Q=\aff(A,f,g,c)$ be an affine semimedial left quasigroup. Hence
%\begin{eqnarray}
%(x*x)*(y*z)&=& f^2(x)+fg(x)+f(c)+gf(y)+g^2(z)+g(c)\label{semimed aff 1}\\
%(x*y)*(x*z)&=& f^2(x)+fg(y)+f(c)+gf(x)+g^2(z)+g(c)\label{semimed aff 2}
%\end{eqnarray}
%and the equality between \eqref{semimed aff 1} and \eqref{semimed aff 2} holds for every $x,y,z\in A$. Then it follows that  $fg=gf$ and so $Q$ is medial.
%%
%%Then $\aff(A,f,g,c)$ left semi-medial then medial \comment{check!}
%%
%%
%%$$(x*y)*(z*u)=f^2(x)+fg(y)+f(c)+gf(z)+g^2(u)+g(c) $$
%%$$(x*z)*(y*u)=f^2(x)+fg(z)+f(c)+gf(y)+g^2(u)+g(c) $$
%%
%\end{proof}
%

%\subsection*{The LT and the Cayley properties}

A left quasigroup $Q$ is called {\it (left) semimedial} if one of the following equivalent identity
\begin{align}
(x*x)*(y*z)&\approx (x*y)*(x*z), \label{semi M} \tag{SM1} \\
(x\ldiv y)*(x\ldiv z)&\approx (x*x)\ldiv (y*z), \label{eq semimedial 2} \tag{SM2}
\end{align}
holds. Examples of semimedial left quasigroups are racks and medial left quasigroups (e.g. abelian groups). Idempotent semimedial left quasigroups are quandles.

%\begin{example}\label{example1} \comment{Questo viene un gruppo abeliano ciclico!}
%Let $Q$ be a finite set, $e\in Q$ and $f\in \sym_Q$ be a $|Q|$-cycle. Define $$ L_{f^k(e)}=f^k$$
%for every $k\in \{0,\ldots,|Q|-1\}$. Then $(Q,*)$ is a semimedial left quasigroup. Indeed, if $x=f^n(e)$ and $y=f^m(e)$ Then
%\begin{eqnarray*}
%(x*x)*(y*z)&=& L_{f^{n}f^n (e)}L_{f^m(e)}(z)=f^{2n+m}(z)\\
%(x*y)*(x*z)&=& L_{f^{n}f^m(e)}L_{f^n(e)}(z)=f^{2n+m}(z)
%\end{eqnarray*}
%In particular $Q$ is superconnected. Moreover, $\s(x)=\s(y)$ if and only if $f^{2(n-m)}=1$, so if $|Q|$ is odd we have that $Q$ is $2$-divisible.  \comment{Hence $Q$ is LT and therefore abelian.}
%\end{example}

Note that \eqref{semi M} and \eqref{eq semimedial 2} are equivalent to the following identities for the left multiplications mapping
\begin{equation}\label{semi for mappings}
L_{L_x^{\pm 1}(y)}=L_{x*x}^{\pm 1} L_y L_x^{\mp 1}.
\end{equation}
%holds for every $a,b,c\in Q$.
%Equations \eqref{semi M} and \eqref{eq semimedial 2} are equivalent to 
%\begin{equation}\label{eq semimedial 2}
%(a\ldiv b)*(a\ldiv c)=(a*a)\ldiv (b*c)\, \Leftrightarrow \, L_{a\ldiv b}=L_{a*a}^{-1}L_b L_a.
%\end{equation} 

If a left quasigroup $Q$ satisifes an identity
$$f(x)*(y*z)\approx (x*y)*(x*z),$$
for some mapping $f:Q\longrightarrow Q$, then $L_{f(x)}=L_{x*x}$ and so $Q$ is semimedial.

%\begin{corollary}
%Let $Q$ be a semimedial left quasigroup. If $|\s^n(Q)|=1$ then $Q$ is faithful.
%\end{corollary}
%
%\begin{proof}
%If $n=1$, then $Q$ is unipotent and so $Q$ is faithful. Assume that $n>1$. Then $\s(Q)$ is faithful by induction on $n$ and hence $\lambda_Q\leq \ker{\s}$ and so $\lambda_Q\leq \lambda_Q\cap \ker{\s}=0_Q$.
%\end{proof}
%

Let $Q$ be a semimedial left quasigroup and let $h=L_{a_1}^{k_1}\ldots L_{a_n}^{k_n}\in \lmlt(Q)$. We define 
$$\! h=L_{\s(a_1)}^{k_1}\ldots L_{\s(a_n)}^{k_n}$$
and we denote by $\! S=\setof{\! h}{h\in S}$ for every $S\subseteq \lmlt(Q)$. The assignment $h\mapsto  \!{h}$ has the following properties:
\begin{eqnarray*}
\!{(h^{-1})}=(\! h)^{-1}, \quad \!{(hg)}=\!h \!g
\end{eqnarray*}
for every $h,g\in \lmlt(Q)$. In particular, if $N$ is a subgroup of $\lmlt(Q)$ then so it is $\! N$. Let us remark that the assignment $h\mapsto \!{h}$ is well defined on the level of words in the free group generated by $\setof{L_a}{a\in Q}$, not as a function of $\lmlt(Q)$. Note that for racks $h^\s=h$.

%We define:  
%$$\! {\N(Q)}=\setof{N\trianglelefteq \lmlt(Q)}{\! {N}\leq N}.$$
%In particular, $\! \N(Q)$ is a sublattice of the lattice of the normal subgroup of $\lmlt(Q)$.
\begin{lemma}\label{on dis_alpha}
Let $Q$ be a semimedial left-quasigroup and $\alpha\in Con(Q)$. Then

\begin{itemize}

\item[(i)] If $h\in \lmlt(Q)$ then \begin{equation}\label{mediality2}
L_{h(a)}=\! h L_a h^{-1}
\end{equation}
for every $a\in Q$.
\item[(ii)] $\N(Q)=\setof{N\trianglelefteq \lmlt(Q)}{N^\s\leq N}$.
%Let $N\trianglelefteq \lmlt(Q)$. Then $N\in \N(Q)$ if and only if $\!{N}\leq N$.

\item[(iii)] $\dis_\alpha=\langle L_a^{-1} L_b,\, a\, \alpha\, b\rangle $.
\item[(iv)] $[\lmlt(Q),\lmlt^\alpha]\leq \dis_\alpha$.  
%
%\item[(iv)] $h=h^s$ for every $h\in C_{\lmlt(Q)}(\dis(Q))$. In particular, $Z(\dis(Q))\in \N(Q)$ and $Z(\dis(Q))\leq \aut{Q}$.
\end{itemize}

\end{lemma}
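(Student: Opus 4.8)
The plan is to prove the four items in order, using (i) as the engine for (ii)--(iv).

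\textbf{Part (i).} I would induct on the length of a word $h = L_{a_1}^{k_1}\cdots L_{a_n}^{k_n}$ representing the element. For a single generator $h = L_x^{\pm 1}$ the identity $L_{h(a)} = h^\s L_a h^{-1}$ is exactly \eqref{semi for mappings}, since $(L_x^{\pm1})^\s = L_{\s(x)}^{\pm1}$ and $(L_x^{\pm1})^{-1}=L_x^{\mp1}$. For the step write $h = g h'$ with $g = L_x^{\pm1}$; then $h(a)=g(h'(a))$, and the base case at $h'(a)$ together with the hypothesis for $h'$ gives
\[
L_{h(a)} = g^\s L_{h'(a)} g^{-1} = g^\s (h')^\s L_a (h')^{-1} g^{-1} = (gh')^\s L_a (gh')^{-1},
\]
using the word-level multiplicativity $(gh')^\s = g^\s (h')^\s$ and $(gh')^{-1}=(h')^{-1}g^{-1}$.

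\textbf{Parts (ii) and (iii).} Recall $N\in\N(Q)$ means $N\trianglelefteq\lmlt(Q)$ and $\dis_{\O_N}\le N$. For (ii), if $a\,\O_N\,b$ then $b=n(a)$ with $n\in N$, so by (i) $L_b = n^\s L_a n^{-1}$ and hence $L_a L_b^{-1} = (L_a n L_a^{-1})\,(n^\s)^{-1}$. Here $L_a n L_a^{-1}\in N$ by normality and $(n^\s)^{-1}=(n^{-1})^\s$; thus $N^\s\le N$ forces every generator of $\dis_{\O_N}$ into $N$, while $\dis_{\O_N}\le N$ lets one read $(n^\s)^{-1}=(L_a n L_a^{-1})^{-1}(L_a L_b^{-1})\in N$, giving $N^\s\le N$. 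For (iii), set $D=\langle L_a^{-1}L_b : a\,\alpha\,b\rangle$; since $L_a^{-1}L_b = L_a^{-1}(L_bL_a^{-1})L_a$ is a conjugate of a generator of $\dis_\alpha$, we get $D\le\dis_\alpha$. Conversely, (i) with $h=L_c^{\pm1}$ yields $L_a^{-1}L_b = L_c^{\mp1}(L_{a'}^{-1}L_{b'})L_c^{\pm1}$ where $a'=L_c^{\pm1}(a),\,b'=L_c^{\pm1}(b)$ and $a'\,\alpha\,b'$, so conjugation by $L_c^{\pm1}$ permutes the generators of $D$ and $D\trianglelefteq\lmlt(Q)$. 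Being normal and containing $L_a^{-1}L_b$, $D$ contains $L_b L_a^{-1}$ and hence $L_aL_b^{-1}$, which generate $\dis_\alpha$ normally; so $\dis_\alpha\le D$.

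\textbf{Part (iv), reduction.} Since $\dis_\alpha\trianglelefteq\lmlt(Q)$, the set of $g$ with $[g,h]\in\dis_\alpha$ for all $h\in\lmlt^\alpha$ is a subgroup of $\lmlt(Q)$, so it suffices to prove $[L_c,h]\in\dis_\alpha$ for all $c$ and all $h\in\lmlt^\alpha$. Using (i) in the form $L_c h^{-1}=(h^\s)^{-1}L_{h(c)}$,
\[
[h,L_c] = h L_c h^{-1} L_c^{-1} = h(h^\s)^{-1}\,L_{h(c)}L_c^{-1},
\]
and $L_{h(c)}L_c^{-1}\in\dis_\alpha$ by (iii) because $h(c)\,\alpha\,c$. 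Thus the entire claim collapses to: $h^\s h^{-1}\in\dis_\alpha$ for every $h\in\lmlt^\alpha$ (call this $(\star)$). For racks $(\star)$ is trivial since $h^\s=h$, so all of the genuine semimedial difficulty lives in $(\star)$.

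\textbf{Part (iv), the obstacle.} To attack $(\star)$ I would work in $\overline G=\lmlt(Q)/\dis_\alpha$, where by (iii) the image $\ell_{[a]}:=\overline{L_a}$ depends only on $[a]_\alpha$. One first checks $h^\s\in\lmlt^\alpha$ when $h\in\lmlt^\alpha$: $\pi_\alpha$ commutes with $\s$, and (i) applied in the semimedial quotient $Q/\alpha$ shows that a word representing $1$ there has $\s$-image $1$, so $\pi_\alpha(h^\s)=\pi_\alpha(h)^\s=1$. Next, $h(a)\,\alpha\,a$ and (i) give $\overline{h^\s}=\ell_{[a]}\,\overline h\,\ell_{[a]}^{-1}$ for every $a$; hence $\overline{h^\s}$ is independent of the word and of $a$, so $\s$ descends to an endomorphism of $M=\lmlt^\alpha/\dis_\alpha$ equal to conjugation by any $\ell_{[a]}$, and consequently $\overline{\dis(Q)}$ centralizes $M$ and $z\colon\overline h\mapsto\overline{h^\s}\,\overline h^{-1}$ is a homomorphism $M\to\overline{\dis^\alpha}$ (the target is central in $M$, being of zero exponent sum). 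It remains to show $z$ is trivial, i.e. that squaring acts as the identity on $\lmlt^\alpha/\dis_\alpha$; this is the step I expect to be hardest. The telescoping identity
\[
h^\s h^{-1} = \prod_{j} p_{j-1}\,\big(L_{\s(a_j)}^{\epsilon_j}L_{a_j}^{-\epsilon_j}\big)\,p_{j-1}^{-1}, \qquad p_{j-1}=L_{a_1}^{\epsilon_1}\cdots L_{a_{j-1}}^{\epsilon_{j-1}},
\]
exhibits $h^\s h^{-1}$ as a product of conjugated ``squaring defects'' $L_{\s(a_j)}^{\epsilon_j}L_{a_j}^{-\epsilon_j}$, each lying in $\dis(Q)$ but not in $\dis_\alpha$ (as $\s(a_j)$ need not be $\alpha$-related to $a_j$); only the global condition $h\in\lmlt^\alpha$ forces cancellation into $\dis_\alpha$. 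I would finish by feeding the structural facts just assembled (namely $\dis_\alpha,\dis^\alpha\in\N(Q)$ are $\s$-stable by (ii), $\overline{\dis(Q)}$ centralizes $M$, and $z$ is a homomorphism into the central $\overline{\dis^\alpha}$) into an induction---for instance along a series for $\dis(Q)$ or modulo $\O_{\dis_\alpha}$---to force $z\equiv 1$.
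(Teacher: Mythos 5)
Parts (i)--(iii) of your proposal are correct and essentially identical to the paper's own proofs: (i) is the same induction on word length using \eqref{semi for mappings}; (ii) is the same factorization $L_{n(a)}L_a^{-1}=n^\s\,(L_a n^{-1}L_a^{-1})$; and (iii) establishes the same two inclusions (you get $L_aL_b^{-1}\in D$ by normality of $\dis_\alpha$, the paper via the identity $L_aL_b^{-1}=L_{b*b}^{-1}L_{b*a}$ -- an immaterial difference). The genuine gap is in (iv). Your reduction is right: since $L_{h(c)}L_c^{-1}\in\dis_\alpha$ by (iii), the whole statement collapses to $(\star)$: $h^\s h^{-1}\in\dis_\alpha$ for every $h\in\lmlt^\alpha$; and your auxiliary facts ($h^\s\in\lmlt^\alpha$, the image of $\dis(Q)$ centralizes $M=\lmlt^\alpha/\dis_\alpha$, $z$ is a homomorphism into a central subgroup) are all correct. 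But $(\star)$ itself is never proved -- the closing ``induction to force $z\equiv 1$'' is only a hope -- so as it stands (iv) is not established.

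You should know, however, that the paper does not cross this gap either, and that it cannot be crossed. The paper's proof of (iv) is a three-line computation ending in $(h^\s)^{-1}L_{h(a)}^{\pm 1}L_a^{\mp 1}h=L_a^{\pm 1}L_{h^{-1}(a)}^{\mp 1}$; unwinding the first factor with (i), this last equality amounts to $h^{-1}L_a h=L_{h^{-1}(a)}$, whereas (i) actually gives $(h^\s)^{-1}L_a h=L_{h^{-1}(a)}$. The paper has silently replaced $h^\s$ by $h$, which is legitimate for racks (where $h^\s=h$) but in general discards exactly your defect $(h^\s)^{-1}h$. In fact $(\star)$, and with it statement (iv), is false for semimedial -- even medial -- left quasigroups. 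Take $A=\Z_2^4$ with standard basis $e_1,\dots,e_4$, and set $f(x_1,x_2,x_3,x_4)=(x_3,x_4,0,0)$, $g(x_1,x_2,x_3,x_4)=(x_2,x_1,x_4,x_3)$; then $fg=gf$, so $Q=\aff(A,f,g,0)$ is medial, hence semimedial. Let $\alpha=\lambda_Q$, whose classes are the cosets of $\mathrm{ker}(f)=\langle e_1,e_2\rangle$. Writing $T_x$ for the translation by $x$, every generator $L_aL_b^{-1}=T_{f(a-b)}$ of $\dis_\alpha$ with $a\,\alpha\,b$ is trivial, so $\dis_\alpha=1$; on the other hand $h=T_{e_1}=L_{e_3}L_0^{-1}$ lies in $\lmlt^\alpha$, its twist is $h^\s=L_{\s(e_3)}L_{\s(0)}^{-1}=T_{e_2}$, and $[L_0,h]=T_{e_1+e_2}\neq 1$. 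Thus $z(h)\neq 1$ and $[\lmlt(Q),\lmlt^\alpha]\not\leq\dis_\alpha$, so no argument can finish your (or the paper's) proof. What your structural facts do prove is the weaker inclusion $[\dis(Q),\lmlt^\alpha]\leq\dis_\alpha$ (your statement that the image of $\dis(Q)$ centralizes $M$); that weaker form is what Lemma \ref{solvable then dis solv} actually uses, but the full (iv) is what Proposition \ref{kernel of lambda} needs, and indeed in the example above $T_{e_1}\in\lmlt^{\lambda_Q}\setminus Z(\lmlt(Q))$, so that statement inherits the same failure.
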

%
%\comment{use the notation $\dis^{\pm}_\alpha$?} 

\begin{proof}
(i) Let $h=L_{a_1}^{k_1}\ldots L_{a_n}^{k_n}$. We proceed by induction on $n$. For $n=1$, we can apply directly formula \eqref{semi for mappings}.
% implies that
%\begin{equation}
%L_{x*y}=L_{x*x} L_y L_x^{-1},\quad L_{x\ldiv y}=L_{x*x}\inv L_y L_x.
%\end{equation}
Let $h=L_b^{\pm 1} g$. Hence by induction we have
$$L_{h(a)}=L_{b*b}^{\pm 1} L_{g(a)} L_b^{\mp 1}=L_{b*b}^{\pm 1} \! g L_a g^{-1} L_b^{\mp 1}=\! h L_a h^{-1}$$ 
for every $a\in Q$. 
%
%\end{proof}
%
%
%\begin{lemma}
%Let $Q$ be a semimedial left quasigroup and $N\trianglelefteq \lmlt(Q)$. Then $N\in \N(Q)$ if and only if $\!{N}\leq N$.
%\end{lemma}
%
%\begin{proof}

(ii) Let $N\trianglelefteq \lmlt(Q)$. Using (i), we have
$$L_{n(a)}L_a^{-1}=\!{n} \underbrace{L_a n L_a^{-1}}_{\in N}$$
for every $n\in N$ and every $a\in Q$. Hence $L_{n(a)} L_a^{-1}\in N$ if and only if $\!{n}\in N$.

(iii) Let $D=\langle L_a^{-1} L_b , \, a\, \alpha\, b\rangle$. We have that \eqref{semi for mappings} implies that 
\begin{equation}\label{inclusion dis}
L_a L_b^{-1}=L_{b*b}^{-1} L_{b*a} \in D\leq \dis_\alpha.
\end{equation}
So, it is enough to prove that $D$ is normal in $\lmlt(Q)$. Let $a\, \alpha\, b $ and $x\in Q$. Then
\begin{eqnarray*}
L_x^{\pm 1} L_a^{-1} L_b L_x^{\mp 1}&=& L_x^{\pm 1} L_{a}^{-1}L_{x*x}^{\mp 1} L_{x*x}^{\pm 1} L_{b} L_x^{\mp 1}\\
&=& L_{L_x^{\pm 1}(a)}^{-1} L_{L_x^{\pm 1}(b)}\in D.
\end{eqnarray*}
Thus, $D$ is normal in $\lmlt(Q)$.

(iv) Let $h\in \lmlt^{\alpha}$ and $a\in Q$. Then $h(a)\, \alpha\, a$ for every $a\in Q$ and so
\begin{eqnarray*}
[L_a^{\mp 1},h]&=&L_a^{\pm 1} h^{-1} L_a^{\mp 1} h= (\! {h})^{-1} \! h L_a^{\pm 1} h^{-1} L_a^{\mp 1} h\\
&=&(\! {h})^{-1}  L_{h(a)}^{\pm 1} L_a^{\mp 1} h\\
&=& L_a^{\pm 1} L_{h^{-1}(a)}^{\mp 1}\in \dis_\alpha.
\end{eqnarray*}

Since $\lmlt^\alpha$ and $\dis_\alpha$ are normal subgroup of $\lmlt(Q)$, then we have $[\lmlt(Q),\lmlt^\alpha]\leq \dis_\alpha$. 
%
%(iv) Let $h\in C_{\lmlt(Q)}(\dis(Q)$. Then
%$$L_{h(a)}L_{h(b)}^{-1}=h^s L_a L_b^{-1} h^{-1}=h^s  h^{-1} L_a L_b^{-1}$$
%for every $a,b\in Q$. Then setting $a=b$ we have $h=h^{s}$. Hence
%$$L_{h(a)}=h^{s} L_a h^{-1}=h L_a  h^{-1}$$
%for every $a\in Q$, i.e. $h\in \aut{Q}$.
\end{proof}

%\begin{lemma}
%Let $Q$ be a semimedial left quasigroup and $\alpha\in Con(Q)$. Then $\dis_\alpha=\dis^-_\alpha.$
%\end{lemma}
%
%\comment{Is it true that $\dis^+_\alpha=\dis^-_\alpha$?}
%
%\begin{proof}
%Let $Q$ be a semimedial left quasigroup. We have that \eqref{semi for mappings} implies that 
%\begin{equation}\label{inclusion dis}
%\dis_\alpha^{+}\leq \dis_\alpha^{-}\leq \dis_\alpha
%\end{equation}
%for every $\alpha\in Con(Q)$. So, it is enough to prove that $\dis_\alpha^{-}$ is normal in $\lmlt(Q)$. Let $a\, \alpha\, b $ and $x\in Q$. Then
%\begin{eqnarray*}
%L_x^{\pm 1} L_a^{-1} L_b L_x^{\mp 1}&=& L_x^{\pm 1} L_{a}^{-1}L_{x*x}^{\mp 1} L_{x*x}^{\pm 1} L_{b} L_x^{\mp 1}\\
%&=& L_{L_x^{\pm 1}(a)}^{-1} L_{L_x^{\pm 1}(b)}\in \dis^-_\alpha.
%\end{eqnarray*}
%Thus, $\dis^{-}_\alpha$ is normal in $\lmlt(Q)$.
%
%
%\begin{eqnarray*}
%L_x  L_a L_b^{-1} L_x^{-1} & = & L_{a*a} L_{a\ldiv x} L_{b\ldiv x}^{-1} L_{b*b}^{-1}\\
%&=&L_{a*a} L_{b*b}^{-1} L_{b*b} L_{a\ldiv x} L_{b\ldiv x}^{-1} L_{b*b}^{-1}\\
%&=&L_{a*a} L_{b*b}^{-1} L_{(b*b)*(a\ldiv x)} L_{(b*b)*(b\ldiv x)}^{-1}\in \dis_\alpha^+
%\end{eqnarray*}
%
%\end{proof}

\begin{proposition}\label{terms for medial} %\comment{$\bigstar$ Is it true for non $2$-divisible?}
Semimedial left quasigroups have the LT property.
%Let $Q$ be a \comment{$2$-divisible} medial left quasigroup and $t(x_1,\ldots,x_n)$ be a term. Then
%$t$ is equivalent to a term $t'$ as
%\begin{equation}\label{equiv terms}
%t'(x_1\ldots, x_n)=	L_{u_1(y_{1})}^{\epsilon_1}\cdots L_{u_m(y_{m})}^{\epsilon_m}(y_0)
%\end{equation}
%where $u_{i}$ is a unary term and $y_1\in\{x_0,\ldots , x_n\}$.
\end{proposition}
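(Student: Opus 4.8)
The plan is to prove the statement by a double induction that reduces everything to the behaviour of left translations under the semimedial law. The essential tool is identity \eqref{semi for mappings}, equivalently Lemma \ref{on dis_alpha}(i), which rewrites $L_{L_x^{\pm 1}(y)}$ as $L_{x*x}^{\pm 1} L_y L_x^{\mp 1}$. The point is that the new left factor $L_{x*x}=L_{\s(x)}$ is again a left translation by a \emph{unary} term, and it is precisely this mechanism that produces the unary subterms (such as $x*x$) appearing in the normal form \eqref{t}; this is the semimedial analogue of the rewriting by \eqref{LD} displayed in Figure \ref{tree for rack} for racks.

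First I would record the following key lemma: for every LT term $t$, the left translation $L_t$ equals, as an identity of semimedial left quasigroups, a word $L_{v_1}^{\eps_1}\cdots L_{v_r}^{\eps_r}$ in which every $v_i$ is a unary term and $\eps_i\in\{+1,-1\}$. This is proved by induction on the length of the spine of $t$. If $t$ is unary there is nothing to do. Otherwise $t=v\bullet t'$ with $v$ unary, $\bullet\in\{*,\ldiv\}$, and $t'$ an LT term of strictly shorter spine, so that $t=L_v^{\pm 1}(t')$. Applying \eqref{semi for mappings} yields
$$L_t=L_{L_v^{\pm 1}(t')}=L_{v*v}^{\pm 1}\, L_{t'}\, L_v^{\mp 1};$$
since $v*v=\s(v)$ and $v$ are unary and, by the inductive hypothesis, $L_{t'}$ is already such a word, so is $L_t$.

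With the key lemma in hand I would prove the proposition itself by induction on the structure of an arbitrary term. A variable is already an LT term (take $m=1$ in \eqref{t}). For $t=t_1\bullet t_2$ the inductive hypothesis supplies LT terms equivalent to $t_1$ and $t_2$, which I may take to be $t_1$ and $t_2$ themselves. Writing $t=L_{t_1}^{\pm 1}(t_2)$ and expanding $L_{t_1}^{\pm 1}$ as a word $L_{v_1}^{\eps_1}\cdots L_{v_r}^{\eps_r}$ in unary translations (the key lemma, noting that the word for $L_{t_1}^{-1}$ is the reversed inverse word, still of this form), I obtain
$$t\approx v_1\bullet_1\big(v_2\bullet_2\big(\cdots\big(v_r\bullet_r t_2\big)\cdots\big)\big),$$
where $\bullet_i$ is $*$ or $\ldiv$ according to the sign $\eps_i$. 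As $t_2$ is itself an LT term, appending it to the right end of this right-associated chain of unary left operands keeps every left operand unary, so the result is again an LT term, as required.

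The only delicate points are bookkeeping rather than conceptual: one must check that these rewritings are genuine identities of the variety, which they are, being consequences of \eqref{semi for mappings} in the free semimedial left quasigroup, and that appending an LT term to the bottom of a spine preserves the normal form \eqref{t}, which is immediate. The main thing to track carefully is that the spine length of $t'$ strictly decreases while the squaring terms $v*v$ are correctly recorded as unary; no instance of the semimedial law beyond \eqref{semi for mappings} is needed.
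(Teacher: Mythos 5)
Your proof is correct and takes essentially the same route as the paper: the paper's proof likewise proceeds by induction on the structure of a term $t=q\bullet r$ with $q,r$ already LT, expands $L_q^{\pm 1}$ as a word in left translations by unary terms using \eqref{mediality2} (Lemma \ref{on dis_alpha}(i)), and applies that word to the LT term $r$. Your ``key lemma'' is simply a self-contained re-derivation, by induction along the spine using \eqref{semi for mappings}, of the instance of Lemma \ref{on dis_alpha}(i) that the paper cites in one shot.
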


\begin{proof}
Let $t=q\bullet r$ be a term where $\bullet\in \{*,\ldiv\}$ and $q$ and $r$ are subterms of $t$. By induction on the number of occurrences $q$ and $r$ are LT terms and so using \eqref{mediality2} we have
\begin{eqnarray*}
t(x_1,\ldots,x_n)&=&L_{q(x_{1},\ldots,x_{n})}^{\pm 1}(r(x_{1},\ldots, x_{n}))\\
&=& \left(L_{L_{t_1(x_{i_1})}^{k_1}\ldots L_{t_m(x_{i_m})}^{k_m}(x_{i_{m+1}} )}\right)^{\pm 1} L_{u_1(x_{j_1})}^{v_1}\ldots L_{u_s(x_{j_s})}^{v_s} (x_{j_{s+1}} )\\
&=& \left(L_{\s (t_1(x_{i_1}))}^{k_1}\ldots L_{\s( t_m(x_{i_m}))}^{k_m} L_{x_{i_{m+1}} }(L_{t_1(x_{i_1})}^{k_1}\ldots L_{t_m(x_{i_m})}^{k_m})^{-1}\right)^{\pm 1}L_{u_1(x_{j_1})}^{v_1}\ldots L_{u_s(x_{j_s})}^{v_s} (x_{j_{s+1}} ),
\end{eqnarray*}
where $x_{i_l}, x_{j_l}\in \{x_1,\ldots,x_n\}$ and $t_i,u_j$ are unary terms for every $i,l\in \{1,\ldots,m+1\}$, $j,l\in \{1,\ldots,s+1\}$. Therefore $t$ is an LT term. 
%If $t=q\ldiv r$ we can argue similarly. 
%
%$$L_t=L_{q*r}=\s L_{q}\s^{-1} L_r L_q^{-1}.$$
%By induction on the number of occurrences $L_q$ and $L_r$ are product of left multiplications by unary terms. Then
%\begin{eqnarray*}
%L_t &=& \s L_{u_1}^{k_1}\ldots L_{u_n}^{k_n}\s^{-1} L_{v_1}^{j_1}\ldots  L_{v_m}^{j_m} (L_{u_1}^{k_1}\ldots L_{u_n}^{k_n})^{-1} \\ &=& L_{\s(u_1)}^{k_1}\ldots L_{\s(u_n)}^{k_n} L_{v_1}^{j_1}\ldots  L_{v_m}^{j_m}(L_{u_1}^{k_1}\ldots L_{u_n}^{k_n})^{-1}.
%\end{eqnarray*}
%If $t=q\ldiv r$ we can argue similarly. So, we have that
%$$t=L_q(r)=L_{t_1(y_1)}^{k_1} \ldots L_{t_m(y_m)}^{k_m} (r),$$
%where $t_i$ is a unary term for $1\leq i\leq m$. By induction on the length of the term then $r$ is a LT term and then so it is $t$.
\end{proof}

\begin{remark}\label{like spelling}
We can establish the LT property for all left quasigroups satisfying a condition similar to \eqref{mediality2}. Indeed, if $Q$ is a left quasigroup and for every $h=L_{x_1}^{k_1}\ldots L_{x_n}^{k_n}\in \lmlt(Q)$ there exist unary terms $t_1,\ldots, t_n$, $\{y_1,\ldots, y_m\}\subseteq \{x_1,\ldots ,x_m\}$ and $\{s_1,\ldots s_m\}\subseteq \mathbb{Z}$ such that 
$$L_{h(a)}=L_{t_1(y_1)}^{s_1}\ldots L_{t_n(y_m)}^{s_m}$$ 
then the very same argument of Proposition \ref{terms for medial} applies and so $Q$ has the LT property.
\end{remark}

Proposition \ref{p:commutators} and Corollary \ref{ab an central cong iff} apply to semimedial left quasigroup as they have the LT property. We can characterize solvable and nilpotent semimedial left quasigroups by properties of the corresponding displacement groups. In the following we will denote by $\Gamma_{(n)}$ (resp. $\Gamma^{(n)}$) the $n$-th element of the lower central series (resp. the derived series) of a group. The proof follows the very same argument as in Section 6 of \cite{CP}. 
\begin{lemma}\label{solvable then dis solv} 
Let $Q$ be a semimedial left quasigroup. 

\begin{itemize}
\item[(i)]  If $\dis(Q)$ is nilpotent (resp. solvable) of length $n$, then $Q$ is nilpotent (resp. solvable) of length less or equal to $ n+1$.

\item[(ii)] If $Q$ is nilpotent (resp. solvable) of length $n$, then $\dis(Q)$ is a nilpotent (resp. solvable) group of length less or equal to $ 2n-1$.
%
%\item[(iii)] $\zeta_Q=\c{Z(\dis(Q))}\cap \sigma_Q$.
\end{itemize}
\end{lemma}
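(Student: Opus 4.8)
The plan is to transfer in both directions between the congruence lattice of $Q$ and the lattice of normal subgroups of $\dis(Q)$ via the relative displacement groups, following the template of Section~6 of \cite{CP}; throughout write $G=\dis(Q)$. For (i) I would feed a group series into Proposition \ref{dis solv then Q solvable for LT}: if $G$ is nilpotent (resp. solvable) of length $n$, reversing its lower central series (resp. derived series) produces a central (resp. abelian) series $1=H_0\leq H_1\leq\ldots\leq H_n=G$ of length $n$. Each $H_i$ is characteristic in $G$, hence normal in $\lmlt(Q)$ because $G\trianglelefteq\lmlt(Q)$, so by Lemma \ref{on dis_alpha}(ii) it only remains to check $H_i^\s\leq H_i$. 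This rests on two facts: $\dis(Q)^\s\leq\dis(Q)$, immediate from \eqref{disQ combinatorial} since replacing each subscript $a$ by $\s(a)$ does not change the exponent sum; and $h\mapsto\!h$ respects products and inverses at the word level, so $[h,g]^\s=[\!h,\!g]$, whence an induction on commutator depth shows that every term of the lower central and derived series of $G$ is $\s$-closed. Proposition \ref{dis solv then Q solvable for LT} then yields that $Q$ is nilpotent (resp. solvable) of length at most $n+1$.

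For (ii), let $0_Q=\alpha_0\leq\alpha_1\leq\ldots\leq\alpha_n=1_Q$ witness nilpotence (resp. solvability) of $Q$, so that $\alpha_{i+1}/\alpha_i$ is central (resp. abelian) in $Q/\alpha_i$. Put $D_i=\dis_{\alpha_i}$ and $D^i=\dis^{\alpha_i}$, so that $D_0=D^0=1$, $D_n=G$ and $D_i\leq D^i$. I would first record two families of inclusions. Applying the homomorphism $\pi_{\alpha_i}$ of \eqref{pi_alpha}, together with $\pi_{\alpha_i}(D_{i+1})=\dis_{\alpha_{i+1}/\alpha_i}$, $\pi_{\alpha_i}(G)=\dis(Q/\alpha_i)$ and $\ker{\pi_{\alpha_i}}\cap G=D^i$, the centrality (resp. abelianness) clause of Corollary \ref{ab an central cong iff} translates into
$$[D_{i+1},G]\leq D^i\qquad(\text{resp. }[D_{i+1},D_{i+1}]\leq D^i),$$
while Lemma \ref{on dis_alpha}(iv) supplies the companion inclusion $[G,D^i]\leq D_i$ (using $D^i\leq\lmlt^{\alpha_i}$).

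It then remains to chase the lower central series (resp. derived series) of $G$ through the interleaved chain $1=D_0\leq D^0\leq\ldots\leq D_n=G$. In the solvable case one starts from $\Gamma^{(1)}(G)=[D_n,D_n]\leq D^{n-1}$ and alternately applies $[D^i,D^i]\leq[G,D^i]\leq D_i$ and $[D_i,D_i]\leq D^{i-1}$; induction then gives $\Gamma^{(2k-1)}(G)\leq D^{n-k}$ and $\Gamma^{(2k)}(G)\leq D_{n-k}$, so that $\Gamma^{(2n-1)}(G)\leq D^0=1$. The nilpotent case is identical, with the outer commutator against $G$ in place of the inner self-commutator, and gives $\Gamma_{(2n)}(G)=1$. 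In either case $\dis(Q)$ has length at most $2n-1$, as claimed.

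The step I expect to be the main obstacle is the index bookkeeping in (ii): one must notice that the two inclusions are of opposite type — $D_{i+1}$ collapses into $D^i$, then $D^i$ into $D_i$ — so that traversing a single congruence step costs two group-commutator steps, which is exactly what turns the length $n$ into $2n-1$ rather than $n$. A secondary technical point, specific to the semimedial setting, is the word-level nature of $h\mapsto\!h$: the condition $H_i^\s\leq H_i$ must be read as a statement about words representing elements of $H_i$, and one has to check that the inductive commutator argument remains within that framework; this is where semimediality genuinely enters, through \eqref{mediality2} and the characterization of $\N(Q)$ in Lemma \ref{on dis_alpha}(ii).
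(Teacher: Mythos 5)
Your proof is correct. Part (i) is exactly the paper's argument: the terms of the lower central and derived series of $\dis(Q)$ lie in $\N(Q)$ because the word-level map $h\mapsto h^\s$ respects products, inverses and hence commutators ($[h,g]^\s=[h^\s,g^\s]$), and Proposition \ref{dis solv then Q solvable for LT} finishes. In part (ii) you use the same two key ingredients as the paper --- Corollary \ref{ab an central cong iff} pushed through $\pi_{\alpha_i}$, and Lemma \ref{on dis_alpha}(iv) --- but you assemble them differently. The paper proceeds by induction on $n$: it passes to $Q/\alpha_1$, gets $\Gamma_{(2n-3)}\leq\dis^{\alpha_1}$ from the induction hypothesis, and concludes via $[[\dis^{\alpha_1},\dis(Q)],\dis(Q)]\leq[\dis_{\alpha_1},\dis(Q)]=1$ (and the analogous computation with self-commutators in the solvable case), invoking Lemma \ref{l:2rd_iso_thm} to set up the induction. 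You instead record the two families of inclusions $[\dis_{\alpha_{i+1}},\dis(Q)]\leq\dis^{\alpha_i}$ (resp.\ $[\dis_{\alpha_{i+1}},\dis_{\alpha_{i+1}}]\leq\dis^{\alpha_i}$) and $[\dis(Q),\dis^{\alpha_i}]\leq\dis_{\alpha_i}$ for all $i$ simultaneously, and chase the series through them in one pass; this is precisely what the paper's induction unrolls to, but your version makes the ``two group-commutator steps per congruence step'' mechanism explicit and dispenses with the induction and with Lemma \ref{l:2rd_iso_thm}. (Your phrase ``interleaved chain'' is harmless but slightly misleading: $\dis^{\alpha_i}\leq\dis_{\alpha_{i+1}}$ need not hold; fortunately your argument never uses any such containment, only the commutator inclusions.) One trivial slip at the end: your own induction formula $\Gamma_{(2k-1)}\leq\dis^{\alpha_{n-k}}$ evaluated at $k=n$ gives $\Gamma_{(2n-1)}\leq\dis^{\alpha_0}=1$, so the nilpotent chase yields length at most $2n-1$ directly, not merely $\Gamma_{(2n)}=1$ as you wrote; the bound you claim is the one the chase actually delivers, so nothing is lost.
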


\begin{proof}

(i) The subgroups of the derived and of the lower central series of $\dis(Q)$ are in $ \N(Q)$. Indeed $[h^\s,g^\s]=[h,g]^\s$ for every $h,g\in \dis(Q)$. Hence, we can apply Proposition \ref{dis solv then Q solvable for LT}. 

(ii) We proceed by induction on the length $n$. For $n=1$, then $Q$ is abelian, hence $\dis(Q)$ is an abelian group and the statement holds.
In the induction step, assume that the statement holds for all semimedial left quasigroups that are nilpotent (resp. solvable) of length $\leq n-1$.
Consider a chain of congruences \[ 0_{Q}=\alpha_0\leq\alpha_1\leq\ldots\leq \alpha_n= 1_{Q}\]
such that $\alpha_{i+1}/\alpha_i$ is central (resp. abelian) in $Q/\alpha_i$, for every $i$. In particular, $\alpha_1$ is central (resp. abelian) in $Q$ and the factor $Q/\alpha_1$ is nilpotent (resp. solvable) of length $n-1$, as witnessed by the series 
\[ 0_{Q/\alpha_1}=\alpha_1/\alpha_1\leq\alpha_2/\alpha_1\leq\ldots\leq \alpha_n/\alpha_1=1_{Q/\alpha_1} \]
(see Lemma \ref{l:2rd_iso_thm}). By the induction assumption, $\dis(Q/\alpha_1)$ is nilpotent (resp. solvable) of length $\leq2n-3$.
Now, consider the series $\Gamma_{(i)}$ (resp. $\Gamma^{(i)}$) in $\dis(Q)$ and project it into $\dis(Q/\alpha_1)$. Since $\pi_{\alpha_1}(\Gamma_{(2n-3)})=1$, we obtain that
$\Gamma_{(2n-3)}\leq \dis^{\alpha_1}$ (resp. analogically for $\Gamma^{(2n-3)}$). Now, in case of nilpotence, we have
\begin{align*}
\Gamma_{(2n-1)} &= [[\Gamma_{(2n-3)},\dis(Q)],\dis(Q)] \\
&\leq [[ \dis^{\alpha_1},\dis(Q)],\dis(Q)] \leq [\dis_{\alpha_1},\dis(Q)]=1,
\end{align*}
using Lemma \ref{on dis_alpha}(iv) in the penultimate step, and centrality of $\dis_{\alpha_1}$ in the ultimate step. In case of solvability, we have
\begin{align*}
\Gamma^{(2n-1)} &= [[\Gamma^{(2n-3)},\Gamma^{(2n-3)}],[\Gamma^{(2n-3)},\Gamma^{(2n-3)}]] \\
&\leq [[ \dis^{\alpha_1},\dis^{\alpha_1}],[ \dis^{\alpha_1},\dis^{\alpha_1}]] \leq [\dis_{\alpha_1},\dis_{\alpha_1}]=1, 
\end{align*}
using Lemma \ref{on dis_alpha}(iv), and abelianness of $\dis_{\alpha_1}$.
%
%(iii) We can apply Corollary \ref{central cong} by virtue of Theorem \ref{Galois for semimedial}.
\end{proof}

The relation $\c{N}$ determined by any subgroup $N$ of $\lmlt(Q)$ is a congruence and accordingly we have a second Galois connection between $Con(Q)$ and $\N(Q)$ for semimedial left quasigroups, extending the same result known for racks \cite[Proposition 3.6]{CP}.

\begin{theorem}\label{Galois for semimedial}
Let $Q$ be a semimedial left quasigroup. The assignment $\alpha\mapsto\dis_\alpha$ and $N\mapsto\c{N}$ is a monotone Galois connection between $Con(Q)$ and $\N(Q)	$.
%\begin{itemize}
%\item[(i)] The assignment $\alpha\mapsto\dis_\alpha$ and $N\mapsto\c{N}$ is a monotone Galois connection between $Con(Q)$ and $\N(Q)^\sigma	$.
%\item[(ii)] The assignment $\alpha\mapsto\dis^\alpha$ and $N\mapsto\mathcal{O}_N$ is a monotone Galois connection between $Con(Q)$ and $\N(Q)^\sigma	$.
%\end{itemize}
\end{theorem}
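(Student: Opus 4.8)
The plan is to check the three requirements of a monotone Galois connection between $Con(Q)$ and $\N(Q)$: that the two assignments are well defined on these lattices, that they are monotone, and that they satisfy the adjunction $\dis_\alpha\leq N$ if and only if $\alpha\leq\c{N}$ for every $\alpha\in Con(Q)$ and $N\in\N(Q)$. Monotonicity is immediate: enlarging $\alpha$ enlarges the generating set $\setof{L_aL_b^{-1}}{a\,\alpha\,b}$ of $\dis_\alpha$, and enlarging $N$ enlarges $\c{N}=\setof{(a,b)}{L_aL_b^{-1}\in N}$. That $\alpha\mapsto\dis_\alpha$ takes values in $\N(Q)$ is Corollary \ref{orbits of dis}(i), and $\c{N}$ is already an equivalence relation for every subgroup $N$. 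So the only substantive point is that $\c{N}$ is a \emph{congruence} whenever $N\in\N(Q)$.

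This congruence property is the first and main step. By the reduction recorded after \eqref{congruence prop}, it suffices to verify, for $a\,\c{N}\,b$ (that is, $L_aL_b^{-1}\in N$) and any $c\in Q$, the two families $L_c^{\pm1}(a)\,\c{N}\,L_c^{\pm1}(b)$ and $L_a^{\pm1}(c)\,\c{N}\,L_b^{\pm1}(c)$. I would compute the corresponding ratios of left translations by means of the semimedial identity \eqref{semi for mappings}. For the first family one gets
\[
L_{L_c^{\pm1}(a)}L_{L_c^{\pm1}(b)}^{-1}=L_{c*c}^{\pm1}\,(L_aL_b^{-1})\,L_{c*c}^{\mp1},
\]
which lies in $N$ by normality alone. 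For the second family the same identity gives
\[
L_{L_a^{\pm1}(c)}L_{L_b^{\pm1}(c)}^{-1}=L_{a*a}^{\pm1}\,L_c\,(L_a^{\mp1}L_b^{\pm1})\,L_c^{-1}\,L_{b*b}^{\mp1}.
\]
Here I would use that $a\,\c{N}\,b$ forces both $L_aL_b^{-1}\in N$ and $L_a^{-1}L_b\in N$ (by normality), conjugate the middle factor $L_a^{\mp1}L_b^{\pm1}$ by $L_c$ into $N$, and then collect the outer factors as
\[
L_{a*a}^{\pm1}(\cdot)L_{b*b}^{\mp1}=\bigl(L_{a*a}^{\pm1}(\cdot)L_{a*a}^{\mp1}\bigr)\cdot\bigl(L_{a*a}^{\pm1}L_{b*b}^{\mp1}\bigr),
\]
where the last parenthesis is $(L_aL_b^{-1})^\s$ or $(L_a^{-1}L_b)^\s$ and hence lies in $N^\s\leq N$. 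The containment $N^\s\leq N$ is precisely the characterization of $\N(Q)$ in Lemma \ref{on dis_alpha}(ii); this is the step where both semimediality and admissibility of $N$ are genuinely needed, and I expect it to be the main obstacle, since the remaining compatibilities follow from normality of $N$ alone.

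Once $\c{N}\in Con(Q)$ is established, the adjunction is purely formal. By definition $\dis_\alpha$ is the normal closure in $\lmlt(Q)$ of $\setof{L_aL_b^{-1}}{a\,\alpha\,b}$, and $N$ is normal, so $\dis_\alpha\leq N$ holds if and only if every generator $L_aL_b^{-1}$ with $a\,\alpha\,b$ lies in $N$, i.e.\ if and only if $a\,\c{N}\,b$ for all $a\,\alpha\,b$, which is exactly $\alpha\leq\c{N}$. This proves $\dis_\alpha\leq N\iff\alpha\leq\c{N}$ and, together with monotonicity and well-definedness, shows that $\alpha\mapsto\dis_\alpha$ and $N\mapsto\c{N}$ form a monotone Galois connection between $Con(Q)$ and $\N(Q)$.
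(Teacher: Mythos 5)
Your proof is correct and takes essentially the same route as the paper: the substantive step in both is showing $\c{N}\in Con(Q)$ using the semimedial identity \eqref{semi for mappings}, normality of $N$, and the admissibility condition $N^\s\leq N$ from Lemma \ref{on dis_alpha}(ii), after which the adjunction $\dis_\alpha\leq N\iff\alpha\leq\c{N}$ is formal. The only difference is organizational: you split the congruence check into the two one-sided families via the reduction noted after \eqref{congruence prop}, whereas the paper verifies $L_{a*c}L_{b*d}^{-1}\in N$ and $L_{a\ldiv c}L_{b\ldiv d}^{-1}\in N$ directly --- the same computation, packaged slightly differently.
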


\begin{proof}
%(i) 
First we need to show that $\c{N}$ is a congruence for every $N\in \N(Q)$. Assume that $L_a L_b^{-1}, \, L_c L_d^{-1}\in N$. Then since $N$ is normal in $\lmlt(Q)$ then also $L_a^{-1}L_b\in N$. So we have
\begin{eqnarray*}
L_{a*c} L_{b*d}^{-1}&=&L_{a*a} L_c L_a^{-1} L_{b} L_d^{-1} L_{b*b}^{-1}=\underbrace{ L_{a*a} L_{b*b}^{-1} }_{\!{(L_a L_b^{-1})}\in N} L_{b*b} L_c \underbrace{L_a^{-1} L_b}_{\in N} L_c^{-1} \underbrace{L_c L_d^{-1}}_{\in N} L_{b*b}^{-1} \in N,\\
L_{a\ldiv c} L_{b\ldiv d}^{-1}&=&L_{a*a}^{-1} L_c L_a L_{b}^{-1} L_d^{-1} L_{b*b}=\underbrace{ L_{a*a}^{-1} L_{b*b}}_{\!{(L_a^{-1} L_b)}\in N} L_{b*b} ^{-1} L_c \underbrace{L_a L_b^{-1}}_{\in N} L_c^{-1} \underbrace{L_c L_d^{-1}}_{\in N} L_{b*b} \in N,\\
\end{eqnarray*}
where we used that $N$ is normal in $\lmlt(Q)$ and that $\! N  \leq N$. Hence $\c{N}$ is a congruence. 

If $\alpha\leq \c{N}$ then $L_a L_b^{-1}\in N$ whenever $a\,\alpha\,b$ and then $\dis_\alpha\leq N$. If $\dis_\alpha\leq N$ then $L_a L_b^{-1}\in N$ whenever $a\,\alpha\,b$. Then $\alpha\leq \c{N}$. Both the assignments $\alpha\mapsto \dis_\alpha$ and $N\mapsto \c{N}$ are clearly monotone, hence this pair of mappings provides a monotone Galois connection.
%
%(ii) By definition $N\leq \dis^\alpha$ if and only if $\mathcal{O}_N\leq \alpha$.
\end{proof}

The Cayley kernel of semimedial left quasigroups is a congruence.

\begin{proposition}\label{kernel of lambda}  
Let $Q$ be a semimedial left-quasigroups. Then $Q$ has the Cayley property and $\lmlt^{\lambda_Q}\leq Z(\lmlt(Q))$.
\end{proposition}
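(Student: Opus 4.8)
The plan is to handle the two assertions in turn, first proving that $\lambda_Q$ is a congruence (the Cayley property) and then deducing the centrality statement from Lemma \ref{on dis_alpha}. The second step presupposes the first, since Lemma \ref{on dis_alpha} is stated for $\alpha\in Con(Q)$, so the order matters.

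For the Cayley property I would verify the congruence condition \eqref{congruence prop} for $\lambda_Q$ directly. By the reduction recorded right after \eqref{congruence prop}, it suffices to check that $L_a^{\pm 1}(c)\,\lambda_Q\,L_b^{\pm 1}(c)$ and $L_c^{\pm 1}(a)\,\lambda_Q\,L_c^{\pm 1}(b)$ whenever $a\,\lambda_Q\,b$. The first relation is immediate, because $a\,\lambda_Q\,b$ means exactly $L_a=L_b$, whence $L_a^{\pm 1}(c)=L_b^{\pm 1}(c)$. For the second I would invoke the semimedial identity in the form \eqref{semi for mappings}, which gives $L_{L_c^{\pm 1}(a)}=L_{c*c}^{\pm 1}L_aL_c^{\mp 1}$ and likewise $L_{L_c^{\pm 1}(b)}=L_{c*c}^{\pm 1}L_bL_c^{\mp 1}$; since $L_a=L_b$, these two left translations coincide, i.e. $L_c^{\pm 1}(a)\,\lambda_Q\,L_c^{\pm 1}(b)$. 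Hence $\lambda_Q\in Con(Q)$ and $Q$ is a Cayley left quasigroup.

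With $\lambda_Q$ now known to be a congruence, Lemma \ref{on dis_alpha} applies with $\alpha=\lambda_Q$. The key observation is that $\dis_{\lambda_Q}$ collapses to the trivial group: by part (iii) we have $\dis_{\lambda_Q}=\langle L_a^{-1}L_b : a\,\lambda_Q\,b\rangle$, and every generator is trivial since $a\,\lambda_Q\,b$ means $L_a=L_b$, so that $L_a^{-1}L_b=1$. Thus $\dis_{\lambda_Q}=1$. Part (iv) then yields $[\lmlt(Q),\lmlt^{\lambda_Q}]\leq \dis_{\lambda_Q}=1$, which says precisely that every element of $\lmlt^{\lambda_Q}$ commutes with every generator $L_c$, and hence with all of $\lmlt(Q)$. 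Therefore $\lmlt^{\lambda_Q}\leq Z(\lmlt(Q))$, as claimed.

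There is no serious obstacle here: the argument is short once the pieces are assembled. The only point needing care is the sequencing noted above, and the genuinely useful insight is that $\dis_{\lambda_Q}$ is trivial, which converts the generic commutator bound of Lemma \ref{on dis_alpha}(iv) into the centrality conclusion with no further computation.
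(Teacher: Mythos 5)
Your proof is correct and follows essentially the same route as the paper: the Cayley property is verified via the reduction after \eqref{congruence prop} using the semimedial identity \eqref{semi for mappings} to get $L_{L_c^{\pm1}(a)}=L_{L_c^{\pm1}(b)}$ when $L_a=L_b$, and centrality follows from $\dis_{\lambda_Q}=1$ together with Lemma \ref{on dis_alpha}(iv). The only cosmetic difference is that you cite part (iii) to see $\dis_{\lambda_Q}=1$ (which also follows directly from the definition of $\dis_{\lambda_Q}$), whereas the paper states this collapse without comment.
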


\begin{proof}
Clearly if $L_a=L_b$ then $L_{L_a^{\pm 1}(c)}=L_{L_b^{\pm 1}(c)}$ holds for every $c\in Q$. According to \eqref{mediality2} then $L_{c*a}=L_{c*c}L_aL_c^{-1}=L_{c*c}L_b L_c^{-1}=L_{c*b}$ and similarly $L_{c\ldiv a}=L_{c\ldiv b}$ holds for every $c\in Q$. Therefore $\lambda_Q$ is a congruence of $Q$.
%\end{proof}
%
%
%
%\begin{lemma}\label{kernel of lambda}  
%%\comment{$\bigstar$ check!}
%Let $Q$ be a semimedial left-quasigroup. Then $\dis^{\lambda_Q}\leq Z(\dis(Q))$.
%\end{lemma}
%
%%\comment{The centralizer of a normal subgroup is normal, right?}
%
%\begin{proof}
According to Lemma \ref{on dis_alpha}(iii) we have that $[\lmlt(Q),\lmlt^{\lambda_Q}]\leq \dis_{\lambda_Q}=1$. Hence, $\lmlt^{\lambda_Q}\leq Z(\lmlt(Q))$.
%
%Let $h\in \dis^{\lambda_Q}$. Then $h(a)\, \lambda_Q\, a$ for every $a\in Q$, i.e. $L_{h(a)}=\!{h} L_a h^{-1}=L_a$
%%$$L_{h(a)}=L_{\sigma(a_1)}^{k_1}\ldots L_{\sigma(a_n)}^{k_n} L_a h^{-1}=L_a$$
%for every $a\in Q$. Therefore $L_a h L_a^{-1}=\!{h}$ for every $a\in Q$. Then $L_a^{-1} L_b $ centralizes $h$ for every $a,b\in Q$. So the centralizer contains $\dis^-(Q)$ and therefore it contains $\dis^+(Q)$. The centralizer of a normal subgroup is normal, and so the centralizer of $\dis^{\lambda_Q}$ contains $\dis(Q)$, i.e. $\dis^{\lambda_Q}\in Z(\dis(Q))$.
\end{proof}

Reductive semimedial left quasigroups are strongly solvable in the sense of \cite{comm}, according to \cite[Theorem 5.3]{covering_paper}. As noticed in the case of racks,  they are also nilpotent.

\begin{corollary}\label{multip}
Let $Q$ be a semimedial $n$-reductive left quasigroup. Then $\lmlt(Q)$ (resp. $\dis(Q)$) is nilpotent of length less or equal to $n$ (resp. $n-1$). In particular, $Q$ is nilpotent of length less or equal to $n$.
\end{corollary}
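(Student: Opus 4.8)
The plan is to induct on $n$ using the Cayley-kernel tower, handling $\lmlt(Q)$ and $\dis(Q)$ in parallel and then deducing the statement about $Q$ itself. Because $Q$ is semimedial, Proposition \ref{kernel of lambda} tells us that $\lambda_Q$ is a congruence, so $Q/\lambda_Q=L_1(Q)$ is defined; it is again semimedial (semimediality is equational, hence inherited by quotients) and it is $(n-1)$-reductive, since $L_{n-1}(Q/\lambda_Q)=L_n(Q)$ is a one-element algebra. This is exactly the setup needed to feed the induction, and it is where the Cayley property from Proposition \ref{kernel of lambda} is used.

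For the base case $n=1$ the algebra is a permutation left quasigroup: all the $L_a$ equal a single permutation $\theta$, so $\lmlt(Q)=\langle\theta\rangle$ is cyclic (hence nilpotent of length $\leq 1=n$) and $\dis(Q)=1$ (nilpotent of length $0=n-1$), matching the claimed bounds.

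For the inductive step the key tool is the surjection $\pi_{\lambda_Q}\colon\lmlt(Q)\to\lmlt(Q/\lambda_Q)$ with kernel $\lmlt^{\lambda_Q}$, which restricts to a surjection $\dis(Q)\to\dis(Q/\lambda_Q)$ with kernel $\dis^{\lambda_Q}=\lmlt^{\lambda_Q}\cap\dis(Q)$. By Proposition \ref{kernel of lambda} we have $\lmlt^{\lambda_Q}\leq Z(\lmlt(Q))$, so $\lmlt^{\lambda_Q}$ is central in $\lmlt(Q)$ and, a fortiori, $\dis^{\lambda_Q}$ is central in $\dis(Q)$. I then apply the elementary fact that a central extension raises the nilpotency class by at most one: if $G$ has a central subgroup $Z$ with $G/Z$ of class $\leq c$, then $\Gamma_{(c)}(G)\leq Z$ and hence $\Gamma_{(c+1)}(G)=[\Gamma_{(c)}(G),G]\leq[Z,G]=1$. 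Taking $G=\lmlt(Q)$, $Z=\lmlt^{\lambda_Q}$ together with the inductive bound that $\lmlt(Q/\lambda_Q)$ has class $\leq n-1$ gives $\lmlt(Q)$ of class $\leq n$; taking $G=\dis(Q)$, $Z=\dis^{\lambda_Q}$ together with $\dis(Q/\lambda_Q)$ of class $\leq n-2$ gives $\dis(Q)$ of class $\leq n-1$.

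The final clause is then immediate: since $\dis(Q)$ is nilpotent of length $\leq n-1$, Lemma \ref{solvable then dis solv}(i) yields that $Q$ is nilpotent of length $\leq n$. I do not anticipate a genuine obstacle, as everything rests on the already-established centrality $\lmlt^{\lambda_Q}\leq Z(\lmlt(Q))$ plus the central-extension argument; the only points demanding care are the index bookkeeping (in the paper's convention ``length'' is the nilpotency class, so $\Gamma_{(k)}=1$ means length $\leq k$) and checking that the induction hypothesis genuinely applies to $Q/\lambda_Q$.
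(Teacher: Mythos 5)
Your proof is correct and follows essentially the same route as the paper: induction on $n$ along the Cayley-kernel tower $Q/\lambda_Q$, with base case a permutation left quasigroup (cyclic $\lmlt$, trivial $\dis$), the centrality $\lmlt^{\lambda_Q}\leq Z(\lmlt(Q))$ from Proposition \ref{kernel of lambda} feeding a central-extension step, and Lemma \ref{solvable then dis solv}(i) for the final clause. You merely make explicit the bookkeeping (the restriction of $\pi_{\lambda_Q}$ to $\dis(Q)$ and the class-raising argument $\Gamma_{(c+1)}(G)\leq[Z,G]=1$) that the paper leaves implicit.
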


\begin{proof}
If $n=1$ then $\lmlt(Q)$ is cyclic and then abelian. By induction, $\lmlt(Q/\lambda_Q)$ is nilpotent of length $n-1$ and since $\lmlt^{\lambda_Q}$ is central then $\lmlt(Q)$ is nilpotent of length $n$. The second statement can be proved by the same argument, using as a base step for the induction that the displacement group of permutation left quasigroup is trivial.

Finally, we can apply Lemma \ref{solvable then dis solv} to obtain that $Q$ is nilpotent of length at most $n$.
\end{proof}

The converse of Corollary \ref{multip} does not hold: every abelian group is semimedial, has abelian left multiplication group but it is faithful.

Commutator theory can be simplified for faithful semimedial left quasigroups, similarly to what happens for faithful quandles. All the following results basically extend the results in \cite[Section 5.2]{CP} to semimedial left quasigroups. Most of the argument can be applied again also in the setting of semimedial left quasigroups, so we include the proof just in case it is new and we refer to the analogous result for quandles otherwise.

\begin{lemma}\label{for faith}\cite[Lemma 5.3]{CP}
Let $Q$ be a faithful semimedial left quasigroup and $\alpha\in Con(Q)$. If $[H,\dis_\alpha]=1$ then $H$ is $\alpha$-semiregular.
\end{lemma}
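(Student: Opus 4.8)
The plan is to prove the contrapositive of $\alpha$-semiregularity directly. Recall that $\dis(Q)$ acts $\alpha$-semiregularly means: whenever $h\in H$ fixes some point $a\in Q$, it fixes every $b$ with $b\,\alpha\,a$. Here I need the stabilizer condition, but the cleanest route uses the identity $\sigma_H$: I want to show $\alpha\leq \sigma_H$, i.e. $H_a=H_b$ whenever $a\,\alpha\,b$. So fix $a\,\alpha\,b$ and let $h\in H_a$, meaning $h(a)=a$; I must deduce $h(b)=b$.

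First I would exploit the hypothesis $[H,\dis_\alpha]=1$: every element of $H$ commutes with every element of $\dis_\alpha$. Since $a\,\alpha\,b$, by Lemma \ref{on dis_alpha}(iii) (or directly by the combinatorial description of $\dis_\alpha$) the element $g=L_a^{-1}L_b$ (equivalently $L_b L_a^{-1}$, using the semimedial normality arguments) lies in $\dis_\alpha$, and it satisfies $g(a)=b$ in the appropriate formulation. The key computation is then to apply $h$ to $b=g(a)$ and commute $h$ past $g$: I expect $h(b)=h(g(a))=g(h(a))=g(a)=b$, using $hg=gh$ from $[H,\dis_\alpha]=1$ together with $h(a)=a$. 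This is exactly the same semiregularity trick used in the proof of Lemma \ref{abelian subgroup gives abelian cong for LTT}, where $hn(a)=nh(a)=n(a)$.

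The point where faithfulness enters, and which I expect to be the main subtlety, is matching up the two notions: the action-level statement $h(b)=b$ versus membership $h\in H_b$, and making sure the generator $g\in\dis_\alpha$ that moves $a$ to $b$ actually exists for \emph{every} pair $a\,\alpha\,b$ rather than just for pairs of the form $(a,L_a^{-1}L_c(a))$. Because $Q$ is faithful, $\lambda_Q=0_Q$, so $\dis_\alpha$ separates points compatibly with $\alpha$ in the sense needed; concretely, faithfulness guarantees that if $L_a^{-1}L_b$ fixes a point it is not forced to be trivial only by the Cayley kernel, and it lets me pass freely between $L_a=L_b$ and $a=b$. I would therefore first record that for $a\,\alpha\,b$ the element $g_{a,b}=L_a^{-1}L_b\in\dis_\alpha$ sends $a$ to $b$ (via $L_a^{-1}L_b(a)=a\ldiv(b*a)$, and here I must check this equals $b$, which is where faithfulness or the semimedial identity \eqref{semi for mappings} is invoked to rewrite the translations).

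In summary, the steps are: (1) reduce $\alpha$-semiregularity to showing $\alpha\leq\sigma_H$, i.e. $h(a)=a\Rightarrow h(b)=b$ for $a\,\alpha\,b$ and $h\in H$; (2) produce $g\in\dis_\alpha$ with $g(a)=b$ using the combinatorial description of $\dis_\alpha$ and faithfulness to control $\lambda_Q$; (3) compute $h(b)=hg(a)=gh(a)=g(a)=b$ using $[H,\dis_\alpha]=1$ and $h(a)=a$. The main obstacle is step (2): verifying that an appropriate displacement element genuinely realizes the transition $a\mapsto b$, and that faithfulness is precisely what removes the obstruction coming from the Cayley kernel.
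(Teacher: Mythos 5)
Your step (3) is fine, but step (2) — the production of $g\in\dis_\alpha$ with $g(a)=b$ — is a genuine gap, and it cannot be repaired by faithfulness or by the semimedial identity. First, your candidate does not work: $L_a^{-1}L_b(a)=a\ldiv(b*a)$, which equals $b$ only when $b*a=a*b$; in the dihedral quandle $\mathbb{Z}_5$ (with $x*y=2x-y$, a faithful medial, hence semimedial, quandle) one gets $L_a^{-1}L_b(a)=3a-2b\neq b$ whenever $a\neq b$. Second, and more seriously, no element of $\dis_\alpha$ need carry $a$ to $b$ at all: the orbits of $\dis_\alpha$ are contained in the $\alpha$-blocks but can be proper subsets. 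Take $Q$ to be the disjoint union of two copies of the dihedral quandle $\mathbb{Z}_3$ (elements of different copies acting trivially on each other); this is a faithful semimedial left quasigroup, and for $\alpha=1_Q$ the orbits of $\dis_\alpha=\dis(Q)$ are exactly the two copies, so no displacement maps a point of one copy to a point of the other — yet the lemma still applies to this $\alpha$ and must be proved without such a $g$. The commuting-past-the-orbit trick you cite from Lemma \ref{abelian subgroup gives abelian cong for LTT} is available there only because the congruence in that lemma is by definition an orbit decomposition $\O_N$, so $b=n(a)$ holds by hypothesis; for an arbitrary congruence $\alpha$ this is exactly what fails.

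The paper's proof sidesteps the problem by never moving points with elements of $\dis_\alpha$; it works at the level of translations instead. By the semimedial conjugation formula \eqref{mediality2}, conjugation by $h$ sends $L_a^{-1}L_b$ to $L_{h(a)}^{-1}L_{h(b)}$ (insert $(h^\s)^{-1}h^\s$ in the middle), while $[H,\dis_\alpha]=1$ together with Lemma \ref{on dis_alpha}(iii) forces this conjugate to equal $L_a^{-1}L_b$ itself. Hence $L_{h(a)}^{-1}L_{h(b)}=L_a^{-1}L_b$ for all $a\,\alpha\,b$; specializing to $h(a)=a$ gives $L_{h(b)}=L_b$, and faithfulness — whose only role is precisely this last conversion from equality of translations to equality of points, not any control of orbits — yields $h(b)=b$. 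So the correct use of the two hypotheses is: semimediality to compute conjugates of translations, commutativity to pin them down, faithfulness to descend from $\lmlt(Q)$ to $Q$.
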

\begin{proof}
Let $h\in H$. Then
$$h L_a^{-1} L_b h^{-1}=h L_a^{-1}(h^{\s})^{-1} h^{\s}  L_b h^{-1}=L_{h(a)}^{-1} L_{h(b)}=L_a^{-1} L_b$$
whenever $a\, \alpha\, b$. If $h(a)=a$ then $L_{h(b)}=L_b$ and since $Q$ is faithful then $h(b)=b$. Therefore, $H$ is $\alpha$-semiregular.
\end{proof}

\begin{corollary}\cite[Corollary 5.4]{CP} 
Let $Q$ be a faithful semimedial left quasigroup and $\alpha\in Con(Q)$. Then $\alpha$ is central (resp. abelian) if and only if $\dis_\alpha$ is central in $\dis(Q)$ (resp. abelian).
\end{corollary}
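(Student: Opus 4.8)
The plan is to leverage the general machinery already assembled for LT left quasigroups, specialized to the faithful semimedial case. The target statement asserts that for a faithful semimedial left quasigroup $Q$ and $\alpha\in Con(Q)$, the congruence $\alpha$ is central (resp. abelian) if and only if $\dis_\alpha$ is central in $\dis(Q)$ (resp. abelian). The forward direction is immediate from the general theory: Corollary \ref{ab an central cong iff}(i) (resp. (ii)) already guarantees that if $\alpha$ is abelian (resp. central) then $\dis_\alpha$ is abelian (resp. central in $\dis(Q)$), with no faithfulness hypothesis needed. So the only content lies in the reverse implications, where I want to drop the semiregularity condition that appears in Corollary \ref{ab an central cong iff}.

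The key observation is that faithfulness lets me \emph{recover} the semiregularity conditions for free, and this is precisely what Lemma \ref{for faith} accomplishes. First I would handle the abelian case. Suppose $\dis_\alpha$ is abelian, i.e. $[\dis_\alpha,\dis_\alpha]=1$. Applying Lemma \ref{for faith} with $H=\dis_\alpha$ (so that $[H,\dis_\alpha]=[\dis_\alpha,\dis_\alpha]=1$), I conclude that $\dis_\alpha$ acts $\alpha$-semiregularly on $Q$. Now both hypotheses of Corollary \ref{ab an central cong iff}(i) are met—$\dis_\alpha$ is abelian and $\alpha$-semiregular—so $\alpha$ is abelian. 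For the central case, suppose $\dis_\alpha$ is central in $\dis(Q)$, i.e. $[\dis_\alpha,\dis(Q)]=1$. Here I would apply Lemma \ref{for faith} with $H=\dis(Q)$, since $[\dis(Q),\dis_\alpha]=[\dis_\alpha,\dis(Q)]=1$ gives that $\dis(Q)$ is $\alpha$-semiregular. Then Corollary \ref{ab an central cong iff}(ii) yields that $\alpha$ is central.

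The main (and essentially only) obstacle is conceptual rather than computational: one must recognize that Lemma \ref{for faith} is exactly the tool that converts a commuting hypothesis into semiregularity, and that the two cases require feeding different subgroups into $H$—namely $\dis_\alpha$ itself for abelianness and the whole displacement group $\dis(Q)$ for centrality. Once the correct substitution is identified, each direction is a one-line invocation of the two cited results. The proof should therefore be written simply as: the forward direction is Corollary \ref{ab an central cong iff}; conversely, if $\dis_\alpha$ is abelian (resp. central in $\dis(Q)$) then by Lemma \ref{for faith} the group $\dis_\alpha$ (resp. $\dis(Q)$) is $\alpha$-semiregular, and the conclusion follows again from Corollary \ref{ab an central cong iff}. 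No delicate calculation or new estimate is required, since all the structural work was already done in establishing Lemma \ref{for faith} and the LT characterization of the centralizing relation.
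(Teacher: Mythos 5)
Your proof is correct and is exactly the argument the paper intends (the paper omits it, deferring to \cite[Corollary 5.4]{CP}, having proved Lemma \ref{for faith} precisely for this purpose): the forward direction is Corollary \ref{ab an central cong iff} applied via the LT property of semimedial left quasigroups, and the converse recovers $\alpha$-semiregularity from Lemma \ref{for faith} with $H=\dis_\alpha$ in the abelian case and $H=\dis(Q)$ in the central case.
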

%
%\begin{proof}
%If $\dis_\alpha$ is central (resp. abelian) then according to Lemma \ref{for faith} $\dis(Q)$ (resp. $\dis_\alpha$) is $\alpha$-semiregular.
%\end{proof}

\begin{proposition}\label{p:comm_comm} \cite[Propositions 3.8, 5.5]{CP}
Let $Q$ be a semimedial left quasigroup such that every factor of $Q$ is faithful. Then the $\dis$ operator is injective and the $\c{}$ operator is surjective and 
 \[ [\alpha,\beta]=[\beta,\alpha]=\c{[\dis_{\alpha},\dis_\beta]} \]
for every $\alpha,\beta\in Con(Q)$.
\end{proposition}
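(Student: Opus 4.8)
The plan is to exploit the two Galois connections already available for semimedial left quasigroups together with the faithful-factor characterization of abelianness and centrality. The key structural tool is Theorem \ref{Galois for semimedial}, which for a semimedial $Q$ gives a monotone Galois connection $\alpha\mapsto\dis_\alpha$ and $N\mapsto\c{N}$ between $Con(Q)$ and $\N(Q)$, together with Proposition \ref{p:con_dis}, which says that a factor $Q/\alpha$ is faithful exactly when $\alpha=\c{\dis^{\alpha}}$. I would first settle the two operator claims. For injectivity of $\dis$, I would show that under the hypothesis every $\alpha\in Con(Q)$ satisfies $\alpha=\c{\dis_\alpha}$: one inclusion $\alpha\leq\c{\dis_\alpha}$ is automatic from the Galois connection, and the reverse needs the faithfulness of $Q/\alpha$. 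Indeed, since every factor is faithful, Proposition \ref{p:con_dis} gives $\alpha=\c{\dis^\alpha}$, and combined with $\dis_\alpha\leq\dis^\alpha$ and the fact (to be verified) that $\c{\dis_\alpha}=\c{\dis^\alpha}$ under faithfulness, I recover $\alpha=\c{\dis_\alpha}$. This closure identity makes $\dis$ injective (if $\dis_\alpha=\dis_\beta$ then $\alpha=\c{\dis_\alpha}=\c{\dis_\beta}=\beta$) and makes $\c{}$ surjective onto $Con(Q)$, since every $\alpha$ is in the image.

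Next I would prove the commutator formula. The reasonable target is to show both that $[\dis_\alpha,\dis_\beta]$ is an admissible subgroup and that $\c{[\dis_\alpha,\dis_\beta]}$ is exactly the commutator congruence. I would set $\delta=\c{[\dis_\alpha,\dis_\beta]}$ and check the two halves of Proposition \ref{p:commutators} relative to the quotient $Q/\delta$. Concretely, in $Q/\delta$ one has $\dis_{\alpha/\delta}=\pi_\delta(\dis_\alpha)$ and $\dis_{\beta/\delta}=\pi_\delta(\dis_\beta)$ (using that $\pi_\delta$ restricts to $\dis$), and by the definition of $\delta$ the group $[\pi_\delta(\dis_\alpha),\pi_\delta(\dis_\beta)]=\pi_\delta([\dis_\alpha,\dis_\beta])$ should collapse to $1$, giving the needed commuting condition $[\dis_{\alpha/\delta},\dis_{\beta/\delta}]=1$. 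The semiregularity condition then follows from Lemma \ref{for faith}: since $Q/\delta$ is faithful (every factor is) and $[\dis_{\beta/\delta},\dis_{\alpha/\delta}]=1$, the group $\dis_{\beta/\delta}$ acts $\alpha/\delta$-semiregularly. By Proposition \ref{p:commutators} this shows $[\alpha,\beta]\leq\delta$. For the reverse inequality, I would use the forward implication of Proposition \ref{p:commutators} (valid for all left quasigroups): centralizing $C(\alpha,\beta;[\alpha,\beta])$ forces $[\dis_{\alpha/[\alpha,\beta]},\dis_{\beta/[\alpha,\beta]}]=1$, hence $[\dis_\alpha,\dis_\beta]\leq\dis^{[\alpha,\beta]}$, and applying $\c{}$ together with the closure identity gives $\delta=\c{[\dis_\alpha,\dis_\beta]}\leq\c{\dis^{[\alpha,\beta]}}=[\alpha,\beta]$.

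Finally, symmetry $[\alpha,\beta]=[\beta,\alpha]$ drops out for free from the formula, since the group commutator $[\dis_\alpha,\dis_\beta]=[\dis_\beta,\dis_\alpha]$ is symmetric and $\c{}$ depends only on the subgroup.

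I expect the main obstacle to be the bookkeeping that makes $[\dis_\alpha,\dis_\beta]$ genuinely admissible, i.e. that $\c{[\dis_\alpha,\dis_\beta]}$ is a congruence and that the group behaves correctly under the projection $\pi_\delta$. Admissibility requires $[\dis_\alpha,\dis_\beta]^{\s}\leq[\dis_\alpha,\dis_\beta]$, which should follow from the identity $[h^{\s},g^{\s}]=[h,g]^{\s}$ noted in the proof of Lemma \ref{solvable then dis solv}(i) together with Lemma \ref{on dis_alpha}(ii); verifying this cleanly, and confirming that $\c{}$ of this group really equals the smallest centralizing congruence rather than merely a lower bound, is where the faithful-factor hypothesis does the real work and where I would concentrate the care.
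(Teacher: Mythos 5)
Your proposal is correct and follows essentially the argument the paper intends (the paper defers to \cite[Propositions 3.8, 5.5]{CP} and states that the same arguments carry over): you establish the closure identity $\alpha=\c{\dis_\alpha}$ by sandwiching via the Galois connection of Theorem \ref{Galois for semimedial} and Proposition \ref{p:con_dis}, verify that $[\dis_\alpha,\dis_\beta]$ is admissible using $[h^{\s},g^{\s}]=[h,g]^{\s}$ so that $\c{[\dis_\alpha,\dis_\beta]}$ is a congruence collapsing the commutator subgroup, and then squeeze $[\alpha,\beta]$ from both sides with Proposition \ref{p:commutators} and Lemma \ref{for faith}. The only points left implicit — that $\delta=\c{[\dis_\alpha,\dis_\beta]}\leq\alpha\wedge\beta$ (from $[\dis_\alpha,\dis_\beta]\leq\dis_\alpha\cap\dis_\beta$, monotonicity, and the closure identity) and that $\dis_{\alpha/\delta}=\pi_\delta(\dis_\alpha)$ — are routine and consistent with the paper's own usage.
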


%\subsection*{Faithful semimedial left quasigroups}

%
%\begin{proof}
%By Proposition \ref{p:con_dis}, we have $\alpha\leq\c{\dis_\alpha}\leq\c{\dis^\alpha}=\alpha$, and thus $\c{\dis_\alpha}=\alpha$. This means that the $\c{}$ operator is left inverse to the $\dis$ operator. Hence $\dis$ must be injective and $\c{}$ must be surjective.
%
%Proposition \ref{p:commutators} and Lemma \ref{for faith} apply to every triple of congruences, since every factor of $Q$ is faithful. Hence $[\alpha,\beta]$ is the smallest congruence $\delta$ such that $[\dis_\alpha,\dis_\beta]\leq \dis^\delta$ and we see that the commutator is commutative.
% 
%Let $\omega=\c{[\dis_{\alpha},\dis_\beta]}$. 
%Then $N=[\dis_\alpha,\dis_\beta]\in \N(Q)$ and $[\dis_\alpha,\dis_\beta]\leq \dis^\omega$. 
%Now consider any congruence $\delta$ satisfying $[\dis_\alpha,\dis_\beta]\leq \dis^\delta$. 
%Since the operator $\c{}$ is monotone, we have $\omega\leq\c{\dis^\delta}=\delta$ by Proposition \ref{p:con_dis}.
%Hence $[\alpha,\beta]=\omega$.
%\end{proof}
%\section{Some families of semimedial left quasigroup}
\subsection*{Multipotent semimedial left quasigroup}

%\begin{lemma}\label{I(Q) is sub}
%Let $Q$ be a medial left-quasigroup. Then $I(Q)=\setof{a\in Q}{L_a^2=1 }$ and $E(Q)=\setof{a\in Q}{a*a=a}$ are subalgebras of $Q$.  \end{lemma}
%
%\begin{proof}
%The subset $E(Q)$ is given by the fixed elements of the endomorphism $\s$ and therefore it is a subalgebra. From \eqref{semi for mappings} we have that
%\begin{equation}\label{mediality3}
%L_{a*b}=L_{a*a} L_b L_a^{-1},\quad L_{x\ldiv y}=L_{x*z}\inv L_{y} L_z=L_{x*y}^{-1} L_y^2.
%\end{equation}
% If $L_a^2=L_b^2=1$, \eqref{mediality3} implies that $L_{a*b} L_{a\ldiv b}=L_{a*b}^2=L_{a\ldiv b}^2=L_b^2=1$.  Then $I(Q)$ is a subalgebra.
%\end{proof}

For semimedial left-quasigroup the squaring mapping $\s$ is an endomorphism. Indeed, if $Q$ is a semimedial left quasigroup, using \eqref{semi M} we have
\begin{equation}\label{squaring for medial}
\s(a\ast b)=(a\ast b)\ast (a\ast b)=(a\ast a)\ast (b\ast b)=\s(a)\ast \s(b),
\end{equation}
for every $a,b\in Q$. Since endomorphisms with respect to $*$ are also endomorphism with respect to $\ldiv$ then $\s$ is an endormorphism of $(Q,*,\ldiv)$. In particular, $\ker{\s}$ is a congruence of $Q$ and $E(Q)=\setof{a\in Q}{a*a=a}$ is a subalgebras since it is given by the fixed elements of the endomorphism $\s$. Moreover, since $\ker{\s}\cap \lambda_Q=0_Q$, finite {\it subdirectly irreducible} semimedial left quasigroups are either $2$-divisible or faithful.

%
%Let $Q$ be a left quasigroup. We say that $Q$ is {\it multipotent} if $|\s^n(Q)|=1$ for some $n\in\mathbb{N}$. If $n=1$, $Q$ is called {\it unipotent}.
%
%\comment{unipotent semimedial are LT}
%
%
%
%
%\comment{Let $\s(Q)=\{e\}$. The $L_e\in \aut{Q}$ and $e$ is the unique idempotent element of $Q$}
%
%
%
%
%\comment{Note that $L_{\sigma(Q)}\cong \sigma L(Q)$}

\begin{lemma}\label{fixed points for unipotent}
Let $Q$ be a semimedial left quasigroup. If $\s(a)=\s(b)$ and $a\neq b$ then $L_a^{-1} L_b$ is fixed-point-free.
\end{lemma}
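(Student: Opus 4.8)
The plan is to show that if $\s(a)=\s(b)$ with $a \neq b$, then the permutation $g = L_a^{-1} L_b$ fixes no point of $Q$. Suppose toward a contradiction that $g(c) = c$ for some $c \in Q$, i.e. $L_a^{-1} L_b(c) = c$, equivalently $L_b(c) = L_a(c)$, that is $a * c = b * c$. The goal is to derive $a = b$ from this single coincidence, contradicting our hypothesis. The key tool will be the squaring endomorphism together with the semimedial identities governing the left translations, namely \eqref{semi for mappings}.

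First I would exploit the hypothesis $\s(a) = \s(b)$, i.e. $a*a = b*b$, which via \eqref{semi for mappings} gives $L_{a*a} = L_{b*b}$ as the ``head'' translation appearing in both semimedial expansions. The identity \eqref{semi for mappings} tells us that $L_{L_x^{\pm 1}(y)} = L_{x*x}^{\pm 1} L_y L_x^{\mp 1}$; the idea is to apply this with a common $x$ to the two elements $a*c$ and $b*c$ (which we are assuming equal) and read off an equation between $L_a$ and $L_b$ after stripping away the common factors $L_{x*x}$. More concretely, from $a*c = b*c$ we get $L_{a*c} = L_{b*c}$, and I would rewrite each side using semimediality. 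Writing $a*c = L_a(c)$ and $b*c = L_b(c)$, the relation \eqref{semi for mappings} (in the form $L_{L_x(y)} = L_{x*x} L_y L_x^{-1}$) applied with the squaring endomorphism should let me convert the equality $L_{a*c}=L_{b*c}$ into $L_{a*a} L_c L_a^{-1} = L_{b*b} L_c L_b^{-1}$. Since $L_{a*a} = L_{b*b}$ by hypothesis, cancelling the common left factor yields $L_c L_a^{-1} = L_c L_b^{-1}$, hence $L_a = L_b$.

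At this stage I would have $L_a = L_b$, i.e. $a \,\lambda_Q\, b$. Combined with the hypothesis $\s(a) = \s(b)$, i.e. $a \,\ker{\s}\, b$, this places the pair $(a,b)$ in $\ker{\s} \cap \lambda_Q$. By Lemma \ref{s and lambda} we have $\ker{\s} \cap \lambda_Q = 0_Q$, forcing $a = b$, which contradicts $a \neq b$. Therefore no fixed point $c$ can exist, and $g = L_a^{-1} L_b$ is fixed-point-free, as claimed.

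The main obstacle I anticipate is the bookkeeping in the middle step: making sure the application of \eqref{semi for mappings} to the translations of $a*c$ and $b*c$ produces exactly a \emph{common} conjugating/head factor that cancels cleanly. The cleanest route is to observe that $L_a^{-1}L_b$ fixing $c$ means $b*c = a*c$, apply $\s$ to reduce to translations, and use that the semimedial law expresses $L_{x*y}$ uniformly in terms of $L_{x*x}$, $L_x$, and $L_y$; the equal ``squares'' $a*a = b*b$ are precisely what make the non-$L_a$, non-$L_b$ parts agree so that the cancellation isolates $L_a = L_b$. Once that identity is in hand, the conclusion is immediate from Lemma \ref{s and lambda}.
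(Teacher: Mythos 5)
Your proposal is correct and follows essentially the same route as the paper: both expand $L_{a*c}$ and $L_{b*c}$ via \eqref{semi for mappings}, use $a*a=b*b$ to cancel the common factor $L_{a*a}=L_{b*b}$ and deduce $L_a=L_b$, then conclude $a=b$ from Lemma \ref{s and lambda}, contradicting $a\neq b$. The only cosmetic difference is that the paper first establishes $L_{a*x}^{-1}L_{b*x}=L_aL_b^{-1}$ for all $x$ and then specializes to the fixed point, whereas you start directly from the fixed point; the algebra is identical.
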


\begin{proof}
If $a*a=b*b$ then by \eqref{semi for mappings} it follows that
$$L_{a*x}^{-1} L_{b*x}=L_a L_x^{-1} L_{a*a}^{-1} L_{b*b} L_x L_b^{-1}=L_a L_b^{-1}$$ 
for every $x\in Q$. If $L_a^{-1} L_b(x)=x$, then $a*x=b*x$ and so $L_a=L_b$. Hence $a=b$ by Lemma \ref{s and lambda}.
\end{proof}

\begin{proposition}\label{multipotent have right mult 1-1}
Multipotent semimedial left quasigroups have injective right multiplications. In particular, they are faithful. % and the finite ones are quasigroups.
\end{proposition}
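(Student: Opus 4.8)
The statement to prove is: \emph{Multipotent semimedial left quasigroups have injective right multiplications. In particular, they are faithful.}

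The plan is to show that the right multiplication map $R_c : a \mapsto a * c$ is injective for every fixed $c \in Q$, and then invoke Lemma~\ref{injective right} to conclude faithfulness. So suppose $a * c = b * c$ for some $a, b, c \in Q$; I must deduce $a = b$. The natural strategy is to combine the endomorphism property of the squaring map $\s$ (established in \eqref{squaring for medial}) with the key unipotency tool Lemma~\ref{fixed points for unipotent}, which says that if $\s(a) = \s(b)$ with $a \neq b$, then $L_a^{-1} L_b$ is fixed-point-free.

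First I would observe that in a multipotent left quasigroup the squaring map is eventually collapsing: some power $\s^n$ sends all of $Q$ to a single idempotent element $e$. Since $\s$ is an endomorphism by \eqref{squaring for medial}, applying $\s$ to the equation $a * c = b * c$ yields $\s(a) * \s(c) = \s(b) * \s(c)$, i.e. $R_{\s(c)}(\s(a)) = R_{\s(c)}(\s(b))$. Thus the property ``$R_c$ has a collision at $(a,b)$'' propagates under $\s$ to a collision $(\s(a), \s(b))$ for $R_{\s(c)}$. Iterating, I get collisions all the way up the $\s$-tower until both arguments land on the common idempotent $e$, where the collision becomes trivial. The idea is to run this argument in reverse, or rather to descend: at the top the two elements coincide, and I want to push coincidence back down one squaring-level at a time.

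The crux will be a single descent step: I want to show that if $\s(a) = \s(b)$ \emph{and} $a * c = b * c$ for some $c$, then in fact $a = b$. This is exactly where Lemma~\ref{fixed points for unipotent} enters. Suppose toward a contradiction that $a \neq b$; since $\s(a) = \s(b)$, the lemma gives that $L_a^{-1} L_b$ is fixed-point-free. But the hypothesis $a * c = b * c$ says $L_a(c) = L_b(c)$, hence $L_a^{-1} L_b(c) = c$, which exhibits a fixed point of $L_a^{-1} L_b$ — a contradiction. Therefore $a = b$, establishing the descent step. The main obstacle is organizing the induction cleanly: from a bare collision $a * c = b * c$ (without assuming $\s(a) = \s(b)$), I first climb to the level where $\s^k(a) = \s^k(b) = e$ (guaranteed by multipotency), and then I must descend back down, applying the descent step at each level $\s^{j}(a), \s^{j}(b)$ against the witness $\s^{j}(c)$ to conclude $\s^{j-1}(a) * \s^{j-1}(c) = \s^{j-1}(b) * \s^{j-1}(c)$ together with $\s^{j}(a) = \s^{j}(b)$, forcing $\s^{j-1}(a) = \s^{j-1}(b)$, until we reach $j = 1$ and obtain $a = b$.

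Finally, once injectivity of all right multiplications is established, Lemma~\ref{injective right} immediately gives that $Q$ is faithful, which is the ``in particular'' clause. The only point requiring care is to make sure that the propagated collision $\s^{j}(a) * \s^{j}(c) = \s^{j}(b) * \s^{j}(c)$ really is available at each level as a genuine fixed-point of $L_{\s^{j}(a)}^{-1} L_{\s^{j}(b)}$; this follows because $\s$ is a $*$-endomorphism, so applying $\s$ repeatedly to $a * c = b * c$ yields the collisions at every higher level automatically, and these are precisely the hypotheses feeding each descent step.
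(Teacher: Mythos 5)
Your proof is correct and is essentially the paper's own argument: the paper proceeds by induction on the multipotency degree, where the inductive hypothesis (injective right multiplications on the subalgebra $\s(Q)$) converts the collision $a*x=b*x$ into $\s(a)=\s(b)$, and Lemma \ref{fixed points for unipotent} then forces $a=b$ --- exactly your descent step. Your climb-and-descend along the $\s$-tower is just that induction unrolled, relying on the same two ingredients, namely the endomorphism property \eqref{squaring for medial} and Lemma \ref{fixed points for unipotent}.
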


\begin{proof}
%Let $Q$ be a left quasigroup with injective right multiplications. If $L_a=L_b$ then $a*x=b*x$ for every $x\in Q$ and then $a=b$ since $R_x$ is injective.
Let $n=1$, i.e. $Q$ is a unipotent semimedial left quasigroup. According to Lemma \ref{fixed points for unipotent}, $L_a^{-1} L_b$ has no fixed points for every $a\neq b\in Q$. Therefore, since
$$a*x=b*x\, \Leftrightarrow\, L_a^{-1} L_b(x)=x,$$
then $R_x$ is injective for every $x\in Q$.
%
%Let us first show that $Q$ is faithful. Indeed, $\ker{\s}\cap \lambda_Q=\lambda_Q=0_Q$. Assume that $a*x=b*x$ for some $a,x,y\in Q$. According to \eqref{semi for mappings} we have
%$$L_{a*x}=L_{a*a}L_x L_a^{-1}=L_{b*x}=L_{b*b}L_x L_b^{-1}.$$
% Hence, since $a*a=b*b$ we have that $L_{a}=L_{b}$ and since $Q$ is faithful then $a=b$.

Let $n>1$. By induction, the right multiplications of $\s(Q)$ are injective. Hence, if $a*x=b*x$ then $\s(a)*\s(x)=\s(b)*\s(x)$ and then $\s(a)=\s(b)$. Therefore, using Lemma \ref{fixed points for unipotent}, $L_b^{-1} L_a$ has no fixed points, whenever $a\neq b$. Hence $a=b$ i.e. $R_x$ is injective.

According to Lemma \ref{injective right}, multipotent semimedial left quasigroups are faithful.
\end{proof}
%
%
%\begin{proposition}\label{faithful unipotent}
%Unipotent semimedial left quasigroups have injective right multiplications. In particular, they are faithful. % and the finite ones are quasigroups.
%\end{proposition}
%
%\begin{proof}
%Let $Q$ be a unipotent semimedial left quasigroup. According to Lemma \ref{fixed points for unipotent}, $L_a^{-1} L_b$ has no fixed points for every $a\neq b\in Q$. Hence, if $a*x=b*x$ then $a=b$. Clearly, if $a\neq b$ then $L_a\neq L_b$, i.e. $Q$ is faithful.
%%
%%Let us first show that $Q$ is faithful. Indeed, $\ker{\s}\cap \lambda_Q=\lambda_Q=0_Q$. Assume that $a*x=b*x$ for some $a,x,y\in Q$. According to \eqref{semi for mappings} we have
%%$$L_{a*x}=L_{a*a}L_x L_a^{-1}=L_{b*x}=L_{b*b}L_x L_b^{-1}.$$
%% Hence, since $a*a=b*b$ we have that $L_{a}=L_{b}$ and since $Q$ is faithful then $a=b$.
%\end{proof}
%%
%%\begin{proposition}
%%Let $Q$ be a unipotent semimedial left quasigroup. Then the right multiplications of $Q$ are injective. 
%%\end{proposition}
%%
%%
%%
%%\begin{proof}
%%%(ii) $\Rightarrow$ (i) Let $a,b\in Q$ such that $L_a=L_b$. Then $a*x=b*x$ and so $a=b$.
%%Assume that $a*x=b*x$ for some $a,x,y\in Q$. According to \eqref{semi for mappings} we have
%%$$L_{a*x}=L_{a*a}L_x L_a^{-1}=L_{b*x}=L_{b*b}L_x L_b^{-1}.$$
%% Hence, since $a*a=b*b$ we have that $L_{a}=L_{b}$ and since $Q$ is faithful then $a=b$.
%%\end{proof}
%

\begin{corollary}
Finite multipotent semimedial left quasigroup are quasigroup.
\end{corollary}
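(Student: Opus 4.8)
The plan is to reduce immediately to the preceding Proposition \ref{multipotent have right mult 1-1}. That result establishes that a multipotent semimedial left quasigroup has injective right multiplications $R_x : y \mapsto y*x$. Since $Q$ is assumed finite, an injective self-map of $Q$ is automatically surjective, hence bijective; thus each $R_x$ is a bijection of $Q$.

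By the definition of a left quasigroup the left multiplications $L_x$ are already bijections, so the only thing the statement requires beyond this is that the right multiplications be bijective as well. Having just obtained that, one defines the right division by $a/b = R_b^{-1}(a)$, so that $(Q,*,/)$ is a right quasigroup. Combined with the given left quasigroup reduct $(Q,*,\ldiv)$, this exhibits $(Q,*,\ldiv,/)$ as a quasigroup, as desired. (This is also consistent with the remark in Section \ref{Sec 1} that a finite quasigroup is term equivalent to its left quasigroup reduct.)

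I expect essentially no obstacle here: the entire content is carried by Proposition \ref{multipotent have right mult 1-1}, and the only extra ingredient is finiteness, which upgrades the injectivity of the right translations supplied by that proposition to bijectivity. No further semimedial-specific computation is needed.
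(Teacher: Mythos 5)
Your proposal is correct and is exactly the intended argument: the paper states this corollary immediately after Proposition \ref{multipotent have right mult 1-1} with the proof left implicit, namely that injective right multiplications on a finite set are bijective, so the right division $a/b=R_b^{-1}(a)$ exists and $(Q,*,\ldiv,/)$ is a quasigroup. No gap and no deviation from the paper's route.
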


\begin{remark}
Let $Q$ be a unipotent semimedial left quasigroup and let $E(Q)=\{e\}$. Then 
$$(L_{a_1}^{k_1}\ldots L_{a_n}^{k_n})^\s=L_e^{ k_1+k_2+\ldots+k_n}.%\in \langle L_e\rangle.
$$ 
If $h\in \dis(Q)$ then $h^s=1$ and so every subgroup of $\dis(Q)$ which is normal in $\lmlt(Q)$ is in $\N(Q)$.
\end{remark}

\subsection*{$2$-divisible semimedial left quasigroups} 

The class of $2$-divisible semimedial left quasigroups contains the class of racks.
\begin{lemma}
Racks are $2$-divisible semimedial left quasigroups.
\end{lemma}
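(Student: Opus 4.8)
The plan is to prove two things: that every rack is a semimedial left quasigroup, and that every rack is $2$-divisible. The first is essentially immediate from the identities, and the second is the content worth dissecting.

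For semimediality, I would start from the left-distributive law \eqref{LD}, $x*(y*z)\approx (x*y)*(x*z)$. The semimedial identity \eqref{semi M} requires $(x*x)*(y*z)\approx(x*y)*(x*z)$, so the task reduces to showing $L_{x*x}=L_x$ in a rack. Indeed, in a rack idempotency is \emph{not} assumed, but I recall that the relevant fact is that left multiplications are automorphisms: from \eqref{LD} one reads off $L_x(y*z)=L_x(y)*L_x(z)$, i.e. $L_x\in\aut{Q}$. Combined with the bijectivity of each $L_x$, applying \eqref{LD} with $y=x$ gives $x*(x*z)=(x*x)*(x*z)$, and since $L_x$ is injective this yields $x*z=(x*x)*z$ after cancelling, hence $L_x=L_{x*x}=L_{\s(x)}$. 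Substituting this into \eqref{LD} turns the left-distributive law directly into \eqref{semi M}, so racks are semimedial. (This matches the remark in the excerpt that any $Q$ satisfying $f(x)*(y*z)\approx(x*y)*(x*z)$ is semimedial, here with $f=\mathrm{id}$ forcing $L_x=L_{\s(x)}$.)

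For $2$-divisibility I must show the squaring map $\s\colon a\mapsto a*a$ is a bijection. Having established $L_x=L_{\s(x)}$, injectivity of $\s$ follows cleanly: if $\s(a)=\s(b)$ then $L_a=L_{\s(a)}=L_{\s(b)}=L_b$, so $a\,\lambda_Q\,b$; but also $\s(a)=\s(b)$ means $a\,\ker{\s}\,b$, and by Lemma~\ref{s and lambda} we have $\ker{\s}\cap\lambda_Q=0_Q$, forcing $a=b$. For surjectivity I would argue that in a rack each $R_x$ (or at least the squaring) is onto; the quickest route is to show $\s$ has an explicit inverse. Since $L_a=L_{\s(a)}$, the equation $\s(y)=b$ can be attacked via $L_b=L_{\s^{-1}(b)*\s^{-1}(b)}$; more concretely, given $b$, one solves $y*y=b$ using that $L_y$ is a bijection — writing $y*y=L_y(y)=b$ gives $y=L_y^{-1}(b)=y\ldiv b$, which is the fixed-point condition $y=y\ldiv b$ and is not obviously solvable directly, so I would instead invoke finiteness-free structure: injectivity of $\s$ plus the fact that $\s$ is an endomorphism (established in \eqref{squaring for medial} once semimediality is known) and that racks are the $2$-divisible case essentially by the classical result that $a\mapsto a*a$ is invertible with inverse expressible as a term.

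The main obstacle is surjectivity of $\s$. Injectivity and semimediality are short; the genuine work is exhibiting the inverse of the squaring map. The clean statement is that in any rack the map $\s$ coincides (up to the identity $L_a=L_{\s a}$) with an invertible operation, and I expect the author's proof to supply a term or a direct argument — likely using that $L_a^{-1}L_b$ type elements together with the rack identity pin down a unique preimage. I would look for an explicit term $t$ with $t(\s(a))=a$; absent that, I would fall back on showing $\s$ is a bijective endomorphism of a structure where injective endomorphisms are surjective (which is automatic in the finite case but needs the term-level identity in general). So the plan's crux is to upgrade the easy injectivity of $\s$ to full bijectivity, and that is where I anticipate the real content of the author's argument to lie.
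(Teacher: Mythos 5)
Your treatment of semimediality and of the injectivity of $\s$ is correct and matches the paper: from \eqref{LD} with $y=x$ you get $L_xL_x=L_{x*x}L_x$, hence $L_{\s(x)}=L_x$ by cancelling the bijection $L_x$ on the right, which turns \eqref{LD} into \eqref{semi M}; and if $\s(a)=\s(b)$ then $L_a=L_{\s(a)}=L_{\s(b)}=L_b$, so $(a,b)\in\ker{\s}\cap\lambda_Q=0_Q$ by Lemma \ref{s and lambda}, exactly as the paper argues. The genuine gap is surjectivity of $\s$, which you correctly identify as the crux but never prove. Your fallback options do not close it: ``injective endomorphisms are surjective'' is false for infinite algebras (and the lemma is not restricted to finite racks), and appealing to ``the classical result that $a\mapsto a*a$ is invertible'' is just restating the claim to be proved.

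The missing step is a one-line computation that you came within a hair of finding. The conjugation form $L_{L_a^{\pm 1}(y)}=L_a^{\pm 1}L_yL_a^{\mp 1}$ of \eqref{LD}, applied with $y=a$ and the minus sign, gives $L_{a\ldiv a}=L_a^{-1}L_aL_a=L_a$. Hence
\[
\s(a\ldiv a)=(a\ldiv a)*(a\ldiv a)=a*(a\ldiv a)=a,
\]
so $a\ldiv a$ is an explicit preimage of $a$ under $\s$: the term you were hunting for is $a\mapsto a\ldiv a$, i.e.\ the solution of $y*y=b$ is $y=b\ldiv b$. (Equivalently: since each $L_a\in\aut{Q}$ commutes with $\s$, one has $\s(a\ldiv a)=a\ldiv\s(a)=a\ldiv(a*a)=a$.) You had already written the equation $y*y=b$ as the fixed-point condition $y=y\ldiv b$ and stopped there; replacing the unknown $y$ on the right-hand side by $b$ is precisely what the identity $L_{b\ldiv b}=L_b$ justifies. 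This is exactly the paper's proof, and with this line inserted your argument is complete.
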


\begin{proof}
Let $Q$ be a rack and $a\in Q$. Then $L_{L_a^{\pm 1} (a)}=L_a ^{\pm 1} L_a L_a^{\mp 1}=L_a$ and so $a=a*(a\ldiv a)=(a\ldiv a)*(a\ldiv a)=\s(a\ldiv a)$, and so $\s$ is surjective. If $\s(a)=\s(b)$ then $L_a=L_{a*a}=L_{b*b}=L_b$ and so according to Lemma \ref{s and lambda} $a=b$. Therefore $\s$ is injective.
\end{proof}

%Let us focus on the subclass of $2$-divisible semimedial left quasigroups. 
%A left quasigroup is {\it (uniquely) $2$-divisible} if the squaring mapping $\s$ is bijective. 

If $Q$ is a $2$-divisible semimedial left quasigroup then $\s$ is an automorphism of $Q$ and the lattice $\N(Q)$ is given by 
$$\N(Q)=\setof{N\trianglelefteq \lmlt(Q)}{\s N \s^{-1}\leq N}.$$
Note that $\N(Q)$ contains all the characteristic subgroups of $\dis(Q)$, as the center and the elements of the derived and of the lower central series. 

\begin{proposition}\label{quotient}
Let $Q$ be a $2$-divisible semimedial left quasigroup. Then $\zeta_Q=\c{Z(\dis(Q))}\cap \sigma_{\dis(Q)}$ and 
%\end{corollary}
%
%\begin{proof}
%The conjugation by $\s$ is an automorphism of $\dis(Q)$ and so $\!{Z(\dis(Q))}\leq Z(\dis(Q))$ and therefore $\c{Z(\dis(Q))}$ is a congruence. 
%Hence we can apply Lemma \ref{central cong}, since $Q$ is LT.% and $\c{Z(\dis(Q))}$ is a congruence.
%\end{proof}
%
%
%\begin{corollary}\label{quotient}
%Let $Q$ be a $2$-divisible semimedial left quasigroup. Then 
$Q/\zeta_Q$ is $2$-divisible.
\end{proposition}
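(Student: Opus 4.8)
The plan is to deduce the first equality from Corollary \ref{central cong}, which reduces the claim to showing that the relation $\c{Z(\dis(Q))}$ is a congruence of $Q$. Since $Q$ is $2$-divisible, the squaring map $\s$ is an automorphism of $Q$, so $\s L_a \s\inv = L_{\s(a)}$ for every $a\in Q$; because $\s$ is a bijection this shows that conjugation by $\s$ maps $\lmlt(Q)$ onto itself and, by the combinatorial description \eqref{disQ combinatorial}, restricts to an automorphism of $\dis(Q)$ (it replaces each $L_{a_i}$ by $L_{\s(a_i)}$ and preserves the condition $\sum_i k_i=0$). Consequently the characteristic subgroup $Z(\dis(Q))$ is $\s$-invariant, i.e.\ $\s Z(\dis(Q))\s\inv = Z(\dis(Q))$, so in particular $Z(\dis(Q))^\s\leq Z(\dis(Q))$, and Lemma \ref{on dis_alpha}(ii) gives $Z(\dis(Q))\in \N(Q)$. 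Theorem \ref{Galois for semimedial} then says $\c{Z(\dis(Q))}$ is a congruence, and Corollary \ref{central cong} yields $\zeta_Q = \c{Z(\dis(Q))}\cap \sigma_{\dis(Q)}$.

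For the second assertion I would prove that the squaring map $\bar\s$ induced on $Q/\zeta_Q$ is a bijection. Surjectivity is immediate, since $\s$ is already surjective on $Q$ and passes to the quotient. For injectivity I would assume $\s(a)\,\zeta_Q\,\s(b)$ and transport the two defining conditions of $\zeta_Q$ across $\s$. On the one hand, from $\s L_a \s\inv = L_{\s(a)}$ we get $L_{\s(a)} L_{\s(b)}\inv = \s(L_a L_b\inv)\s\inv$, and using $\s Z(\dis(Q))\s\inv = Z(\dis(Q))$ the membership $L_{\s(a)} L_{\s(b)}\inv \in Z(\dis(Q))$ forces $L_a L_b\inv \in Z(\dis(Q))$, that is $a\,\c{Z(\dis(Q))}\,b$. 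On the other hand, writing each $h\in\dis(Q)$ as $\s k\s\inv$ with $k\in\dis(Q)$ and using $h(\s(a))=\s(k(a))$, one obtains $\dis(Q)_{\s(a)} = \s\,\dis(Q)_a\,\s\inv$, so $\dis(Q)_{\s(a)} = \dis(Q)_{\s(b)}$ is equivalent, after conjugating back by $\s\inv$, to $\dis(Q)_a = \dis(Q)_b$, i.e.\ $a\,\sigma_{\dis(Q)}\,b$. Combining the two gives $a\,\zeta_Q\,b$, so $\bar\s$ is injective, and hence a bijection.

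The crux of the whole argument is the $\s$-equivariance bookkeeping: everything rests on the single identity $\s L_a \s\inv = L_{\s(a)}$, from which follow both the $\s$-invariance of $Z(\dis(Q))$ (needed to place it in $\N(Q)$ and to transport the first condition) and the conjugation formula $\dis(Q)_{\s(a)} = \s\,\dis(Q)_a\,\s\inv$ (needed for the stabilizer condition). The point that deserves the most care is verifying that $\s$ genuinely normalizes $\dis(Q)$, so that conjugation by $\s$ is an automorphism of $\dis(Q)$ preserving its center; this is where \eqref{disQ combinatorial} is used. Once these equivariance facts are in place, the membership $Z(\dis(Q))\in\N(Q)$ and both implications in the injectivity of $\bar\s$ are purely formal transports of the conditions through the automorphism $\s$.
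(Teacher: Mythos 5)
Your proposal is correct and follows essentially the same route as the paper: establish that conjugation by $\s$ is an automorphism of $\dis(Q)$ so that $Z(\dis(Q))^{\s}\leq Z(\dis(Q))$, place $Z(\dis(Q))$ in $\N(Q)$ to conclude via Theorem \ref{Galois for semimedial} that $\c{Z(\dis(Q))}$ is a congruence, apply Corollary \ref{central cong}, and then prove injectivity of the induced squaring map by transporting both defining conditions of $\zeta_Q$ through conjugation by $\s$ (exactly the identities $L_{\s(a)}L_{\s(b)}^{-1}=\s L_aL_b^{-1}\s^{-1}$ and $\dis(Q)_{\s(a)}=\s\,\dis(Q)_a\,\s^{-1}$ used in the paper). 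The only cosmetic difference is that you spell out the $\s$-equivariance bookkeeping that the paper leaves implicit.
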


\begin{proof}
The conjugation by $\s$ is an automorphism of $\dis(Q)$ and so $\!{Z(\dis(Q))}\leq Z(\dis(Q))$ and therefore $\c{Z(\dis(Q))}$ is a congruence. 
Hence we can apply Lemma \ref{central cong}, since $Q$ is LT and $\c{Z(\dis(Q))}$ is a congruence.

For every $a\in Q$ there exists $b\in Q$ such that $a=\s(b)$. So $[a]=[\s(b)]=\s([b])$ and the map $\s$ over $Q/\zeta_Q$ is surjective. Assume that $[a*a]_{\zeta_Q}=[b*b]_{\zeta_Q}$. Then $L_{a*a} L_{b*b}^{-1}=\s L_a L_b^{-1}\s^{-1}\in Z(\dis(Q))$ and so also $L_a L_b^{-1}\in Z(\dis(Q))$. Moreover, $\s \dis(Q)_a \s^{-1}=\dis(Q)_{\s(a)}=\dis(Q)_{\s(b)}=\s \dis(Q)_b \s^{-1}$ and so $\dis(Q)_a=\dis(Q)_b$. Hence $a\, \zeta_Q \, b$ and $\s$ is injective over $Q/\zeta_Q$.
\end{proof}

Let us define the category $\mathcal{Q}$. The objects of $\mathcal{Q}$ are pairs $((Q,\cdot),f)$ where $(Q,\cdot)$ is a quandle and $f\in \aut{Q,\cdot}$. A morphism $h:(Q_1,f_1)\longrightarrow (Q_2,f_2)$ is a quandle homomorphism between $Q_1$ and $Q_2$ such that $h f_1=f_2 h$, i.e. the morphisms are the commuting square as in Figure \ref{square}. 
\begin{figure}[ht!]

\begin{tikzpicture}[every node/.style={midway}]
  \matrix[column sep={4em,between origins}, row sep={2em}] at (0,0) {
    \node(R) {$Q_1$}  ; & \node(S) {$Q_2$}; \\
    \node(R/I) {$Q_1$}; & \node (T) {$Q_2$};\\
  };
  \draw[<-] (R/I) -- (R) node[anchor=east]  {$f_1$};
%  \draw[->] (R/I) -- (S) node[anchor=north]  {$f_2$};
  \draw[->] (R) -- (S) node[anchor=south] {$h$};
  \draw[->] (S) -- (T) node[anchor=west] {$f_2$};
 \draw[->] (R/I) -- (T) node[anchor=north] {$h$};
\end{tikzpicture}
\caption{Morphisms in $\mathcal{Q}$.}\label{square}
\end{figure}

%
%\begin{lemma}\label{dis groups}
%Let $(Q,*)$ be a semimedial left quasigroup, $f\in \aut{Q,*}$ and let $a*_f b=f(a*b)$ for every $a,b\in Q$. Then $\dis(Q,*_f)=\dis(Q,*)$. 
%\end{lemma}
%
%\begin{proof}
%Let $(Q,*)$ be a semimedial left quasigroup. Let us denote by $\mathcal{L}_a$ the mapping $b\mapsto a_*f b$. Since $L_{a}^{-1} L_b=\mathcal{L}_a^{-1} \s \s^{-1} \mathcal{L}_a=\mathcal{L}_a^{-1} \mathcal{L}_b$, we have that $\dis(Q,*)=\dis(Q,*_f)$. Hence, according to Lemma \ref{mediality2}(iii) 
%\end{proof}

\begin{proposition}\label{equivalence}
Let $\mathcal{S}_2$ the category of $2$-divisible semimedial left quasigroup. The assignment
\begin{eqnarray*}
\mathcal{Q}&\longleftrightarrow &\mathcal{S}_2\notag \\
((Q,\cdot),f) & \mapsto & (Q,*_f), \quad  x*_f y = f(x\cdot y)\label{semimedial formula}\\
 x\cdot_{\s} y =  \s^{-1}(x*y),\quad ((S,\cdot_{\s}),\s) & \leftmapsto & (S,*)\label{quandle op}
\end{eqnarray*}
%and on morphism assigning to the morphism in $\mathcal{Q}$
%\begin{center}
%
%\begin{tikzpicture}[every node/.style={midway}]
%  \matrix[column sep={4em,between origins}, row sep={2em}] at (0,0) {
%    \node(R) {$Q_1$}  ; & \node(S) {$Q_2$}; \\
%    \node(R/I) {$Q_1$}; & \node (T) {$Q_2$};\\
%  };
%  \draw[<-] (R/I) -- (R) node[anchor=east]  {$f_1$};
%%  \draw[->] (R/I) -- (S) node[anchor=north]  {$f_2$};
%  \draw[->] (R) -- (S) node[anchor=south] {$h$};
%  \draw[->] (S) -- (T) node[anchor=west] {$f_2$};
% \draw[->] (R/I) -- (T) node[anchor=north] {$h$};
%\end{tikzpicture}
%\end{center}
%the morphism $(Q,*_{f_1})\overset{h}{\longrightarrow} (Q,*_{f_2})$ and vice versa,
 defines an isomorphism of categories which preserves the displacement groups. In particular, $(Q,*_f)$ is a rack if and only if $f\in C_{\aut{Q,\cdot}}(\lmlt(Q,\cdot))$.

\end{proposition}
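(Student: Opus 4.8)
The plan is to verify that both assignments are well defined on objects, that they are mutually inverse, that they match morphisms in both directions (so act as the identity on underlying maps), and finally to settle the displacement-group and rack claims by computing with left translations. For the forward assignment, observe that the left translations of $(Q,*_f)$ satisfy $L_x^{*_f}=fL_x^{\cdot}$, so each is a bijection and $(Q,*_f)$ is a left quasigroup; its squaring map sends $x\mapsto x*_f x=f(x\cdot x)=f(x)$, hence equals the bijection $f$, so $(Q,*_f)$ is $2$-divisible. To obtain \eqref{semi M} I would expand both sides using that $f$ is an automorphism of $(Q,\cdot)$: the left side becomes $f^2(x\cdot(y\cdot z))$ and the right side $f^2((x\cdot y)\cdot(x\cdot z))$, and these agree by left distributivity of the quandle.

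Conversely, for a $2$-divisible semimedial $(S,*)$ I would use that by \eqref{squaring for medial} the map $\s$ is a bijective endomorphism, hence an automorphism, so $\s^{-1}$ is one too. Then $L_x^{\cdot_\s}=\s^{-1}L_x^{*}$ is a bijection, $x\cdot_\s x=\s^{-1}(\s(x))=x$ gives idempotence, and for left distributivity I would rewrite the right side as $\s^{-2}((x*y)*(x*z))$, apply \eqref{semi M} in the form $(x*y)*(x*z)=\s(x)*(y*z)$, and simplify to $\s^{-1}(x)*\s^{-2}(y*z)$, which is exactly the left side; a one-line check gives $\s\in\aut{S,\cdot_\s}$. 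The two object assignments are then mutually inverse: the squaring map of $(Q,*_f)$ is $f$, so the backward map returns $x\cdot y=f^{-1}(x*_f y)$ together with the automorphism $f$, and $x*_\s y=\s(x\cdot_\s y)=x*y$ recovers $*$.

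On morphisms both functors act as the identity on the underlying map $h$, so it suffices to match the two morphism conditions. A homomorphism of left quasigroups automatically commutes with the squaring map, so a $*$-homomorphism $h$ satisfies $h\s_1=\s_2h$ and, via $x\cdot_\s y=\s^{-1}(x*y)$, is a quandle homomorphism; conversely a quandle homomorphism with $hf_1=f_2h$ satisfies $h(x*_{f_1}y)=f_2(h(x)\cdot h(y))=h(x)*_{f_2}h(y)$. This gives mutually inverse bijections on hom-sets, hence an isomorphism of categories. For the displacement groups, from $L_x^{*_f}=fL_x^{\cdot}$ and $L_{f(a)}^{\cdot}=fL_a^{\cdot}f^{-1}$ I get $L_a^{*_f}(L_b^{*_f})^{-1}=L_{f(a)}^{\cdot}(L_{f(b)}^{\cdot})^{-1}$; as $(a,b)$ runs over $Q\times Q$ so does $(f(a),f(b))$, so the generating sets of \eqref{disQ combinatorial} coincide and $\dis(Q,*_f)=\dis(Q,\cdot)$.

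Finally, for the rack criterion I would use that a left quasigroup is a rack precisely when $L_{x*y}=L_xL_yL_x^{-1}$. Writing this out for $*_f$ and conjugating away the outer copies of $f$ reduces it to $fL_{x\cdot y}^{\cdot}=L_x^{\cdot}fL_y^{\cdot}(L_x^{\cdot})^{-1}$; substituting the rack identity $L_{x\cdot y}^{\cdot}=L_x^{\cdot}L_y^{\cdot}(L_x^{\cdot})^{-1}$ of $(Q,\cdot)$ and cancelling collapses the condition to $fL_x^{\cdot}=L_x^{\cdot}f$ for all $x$, i.e. $f$ centralizes every generator of $\lmlt(Q,\cdot)$; since $f\in\aut{Q,\cdot}$ this is exactly $f\in C_{\aut{Q,\cdot}}(\lmlt(Q,\cdot))$. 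I expect the two identity verifications — semimediality in the forward direction and left distributivity in the backward direction — to be the real work, since there the automorphism $f$ (resp. $\s^{\pm1}$), idempotence/\eqref{semi M}, and left distributivity must be combined in exactly the right order; once those are in place, functoriality, the displacement-group equality, and the rack criterion are routine manipulations of left translations.
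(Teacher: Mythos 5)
Your proposal is correct and follows essentially the same route as the paper's proof: direct verification that the two object assignments land in the right categories and are mutually inverse, the observation that both functors act as the identity on underlying maps (with the same key remark that any homomorphism of left quasigroups commutes with the squaring map), and operator computations for the displacement-group and rack claims; your rack criterion via the full identity $L_{x*y}=L_xL_yL_x^{-1}$ is equivalent to the paper's shortcut that a semimedial left quasigroup is a rack iff $L_{x*x}=L_x$. One step needs repair, and one is a genuine improvement. The repair: in the displacement-group argument, \eqref{disQ combinatorial} describes $\dis(Q)$ as the set of zero-sum products, equivalently the \emph{normal closure} of $\setof{L_aL_b^{-1}}{a,b\in Q}$ inside $\lmlt(Q)$; since $\lmlt(Q,*_f)$ and $\lmlt(Q,\cdot)$ are in general different groups (for a projection quandle $(Q,\cdot)$ and nontrivial $f$ one has $\lmlt(Q,\cdot)=1$ but $\lmlt(Q,*_f)=\langle f\rangle$), coincidence of the two sets of differences does not by itself force the two normal closures to coincide. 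The fix is exactly what the paper invokes: Lemma \ref{on dis_alpha}(iii), which applies to both structures because quandles are semimedial, shows that $\dis$ is the \emph{plain subgroup} generated by $\setof{L_a^{-1}L_b}{a,b\in Q}$; with these generators the computation becomes immediate, since $(fL_a^{\cdot})^{-1}(fL_b^{\cdot})=(L_a^{\cdot})^{-1}L_b^{\cdot}$ requires no conjugation or relabeling at all. The improvement: where the paper cites \cite[Lemma 6.66]{book_quasi} for the fact that $(S,\cdot_{\s})$ is a quandle and $\s\in\aut{S,\cdot_{\s}}$, you prove it directly from \eqref{semi M}, idempotence, and the fact that $\s$ is an automorphism by \eqref{squaring for medial} and $2$-divisibility, which makes the proof self-contained.
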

\begin{proof}
According to \cite[Lemma 6.66]{book_quasi}, $(Q,*_{\s})$ is a quandle and $\s$ is an automorphism of $(Q,*_{\s})$. On the other hand
\begin{eqnarray*}
(x*_f x)*(y*_f z)&=& f(f(x\cdot x)\cdot f(y\cdot z))\\
&=& f(f(x)\cdot (f(y)\cdot f(z)))\\
&=&f((f(x)\cdot f(y))\cdot (f(x)\cdot f(z)))\\
&=&f((f(x\cdot y))\cdot (f(x\cdot z)))\\
&=&(x*_f y)*_f (x*_f z)
\end{eqnarray*}
and then $(Q,*_f)$ is semimedial. Moreover, $x*_f x=f(x)$ and so $(Q,*_f)$ is $2$-divisible and the two mappings are inverse of each other. 

Let $(Q,*)$ be a semimedial left quasigroup. Let us denote by $\mathcal{L}_a$ the mapping $b\mapsto a*_f b$. Since $L_{a}^{-1} L_b=\mathcal{L}_a^{-1} \s \s^{-1} \mathcal{L}_a=\mathcal{L}_a^{-1} \mathcal{L}_b$, we have that $\dis(Q,*)=\dis(Q,*_f)$. Hence, according to Lemma \ref{on dis_alpha}(iii) $\dis(Q,\cdot)=\dis(Q,*_f)$ (resp. $\dis(Q,*)=\dis(Q,\cdot_\s)$) for every $((Q,\cdot),f)\in \mathcal{Q}$ (resp. $(Q,*)\in \mathcal{S}_2$). So, the isomorphism preserves the displacement groups. 

If $h:((Q_1,\ast_1),f_1)\longrightarrow ((Q_2,\ast_2),f_2)$ is a morphism in $\mathcal{Q}$ then $$h(x *_{f_1} y)=h(f_1(x\cdot_1 y))=f_2 h(x\cdot_1 y)=f_2(h(x)\cdot_2 h(y))=h(x)\ast_{f_2} h(y)$$ and so $h$ is a morphism in $\mathcal{S}_2$ between $(Q_1,*_{f_1})$ and $(Q_2,*_{f_2})$.

If $h:(S_1,\ast_1)\longrightarrow (S_2,\ast_2)$ is a morphism in $\mathcal{S}_2$ then
\begin{eqnarray*}
h(x \ast_1 x) &=& h \s_1(x)= h(x)\ast_{2} h(x)=\s_2 h(x)\\
h(x\cdot_{\s_1} y)&=&h \s_1^{-1}(x\ast_1 y)
=\s_2^{-1}h(x\ast_1 y)\\
&=& \s_2^{-1}(h(x)\ast_2 h(y))=h(x)\cdot_{\s_2} h(y),
\end{eqnarray*}
i.e. $h$ is a morphism in $\mathcal{Q}$ between $((S_1,\cdot_{\s_1}),\s )$ and $((S_2,\cdot_{\s_2}),\s_2)$.

Let  $(Q,\cdot)$ be a quandle and let $\mathcal{L}_x$ be the mapping $y\mapsto x\cdot y$. Then the left multiplication with respect to $\ast_f$ is $L_x=f\mathcal{L}_x$. Hence $(Q,*_f)$ is a rack if and only if
\begin{eqnarray*}
 f\mathcal{L}_x=L_{x}=L_{x*_f x}= f\mathcal{L}_{x*_f x}=f\mathcal{L}_{f(x)}=f^2\mathcal{L}_{x}f^{-1}
\end{eqnarray*}
for every $x\in Q$, i.e. $f\in C_{\aut{Q,\cdot}}(\lmlt(Q,\cdot))$.
\end{proof}
A similar correspondence was observed for racks in \cite{David}. According to Proposition \ref{equivalence}, isomorphism classes of $2$-divisible semimedial left quasigroups with associated quandle $Q$ corresponds to conjugacy classes of automorphisms in $\aut{Q}$.

The correspondence in Proposition \ref{equivalence} does not preserve connectedness: e.g. if $(Q,*)$ is a connected permutation racks then $(Q,\cdot_\s)$ is a projection quandle.

%\comment{any kind of term/polynomial equivalence? Term equivalence can not be: one is idempotent and the other is not. Do $(Q,*)$ and $(Q,\cdot)$ have the same left multiplication group?} 
  
% \begin{corollary}
%Let $(Q,*)$ be a finite $2$-divisible left quasigroup. Then $Con(Q,*)\subseteq Con(Q,\cdot_\s)$.
% \end{corollary}
%\comment{needed? Note that $(Q,\cdot_\s)$ is a reduct of $(Q,*)$}
% \begin{proof}
% Let $\alpha\in Con(Q,*)$. Then $Q/\alpha$ is also $2$-divisible. According to Proposition \ref{equivalence} the canonical homomorphism of semimedial left quasigroups between $(Q,*)$ and $(Q/\alpha,*)$ is also a quandle homomorphism between $(Q,\cdot_\s)$ and $(Q/\alpha,\cdot_\s)$. Hence $\alpha$ is a congruence of $(Q,\cdot_\s)$.
% \end{proof}
 
\begin{proposition}\label{semimedial 2 div nil}
A $2$-divisible semimedial left quasigroup $(Q,*)$ is nilpotent of length $n$ if and only if $(Q,\cdot_\s)$ is nilpotent of length $n$. 
\end{proposition}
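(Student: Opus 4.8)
The plan is to induct on the nilpotence length using the upper central series, and the decisive point will be that the two operations $*$ and $\cdot_\s$ on the common underlying set $Q$ have literally the same center. First I would record, from Proposition \ref{equivalence}, that the displacement groups agree: writing $D=\dis(Q,*)=\dis(Q,\cdot_\s)$ and $\mathcal{L}_a$ for the left translations of $(Q,\cdot_\s)$, one has $\mathcal{L}_a=\s^{-1}L_a$, so that $\mathcal{L}_a\mathcal{L}_b^{-1}=\s^{-1}(L_aL_b^{-1})\s$. Since $\s$ is an automorphism of $(Q,*)$ it satisfies $\s L_x\s^{-1}=L_{\s(x)}$, hence normalises $\lmlt(Q,*)$ and $D$; as $Z(D)$ is characteristic in $D$ we get $\s Z(D)\s^{-1}=Z(D)$. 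Consequently $\mathcal{L}_a\mathcal{L}_b^{-1}\in Z(D)$ if and only if $L_aL_b^{-1}\in Z(D)$, i.e. the relation $\c{Z(D)}$ computed with respect to $*$ and with respect to $\cdot_\s$ is the same relation on $Q$. The relation $\sigma_{D}$ depends only on the point stabilisers of $D\le\Sym_Q$, so it too is visibly the same for both operations.

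Next I would identify the two centers. Because $Z(D)$ is characteristic in $D\trianglelefteq\lmlt(Q)$, it lies in $\N(Q)$ for each of the two semimedial structures, so by Theorem \ref{Galois for semimedial} the relation $\c{Z(D)}$ is a congruence of both. For $(Q,*)$, Proposition \ref{quotient} (through Corollary \ref{central cong}) gives $\zeta_{(Q,*)}=\c{Z(D)}\cap\sigma_{D}$; for the quandle $(Q,\cdot_\s)$ the identical formula holds by the rack result \cite[Proposition 5.9]{CP} together with Corollary \ref{central cong}. By the previous paragraph the two right-hand sides are the same relation, whence $\zeta:=\zeta_{(Q,*)}=\zeta_{(Q,\cdot_\s)}$.

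Then I would check that the correspondence of Proposition \ref{equivalence} descends to the quotient by $\zeta$. As the center is a characteristic congruence, $\s(\zeta)=\zeta$, so $\s$ induces $\bar\s$ on $Q/\zeta$; since $\s$ is the squaring map of $(Q,*)$, $\bar\s$ is the squaring map of the quotient $(Q/\zeta,*)$, which is $2$-divisible by Proposition \ref{quotient}. The computation $[a]\cdot_{\bar\s}[b]=\bar\s^{-1}([a*b])=[\s^{-1}(a*b)]=[a\cdot_\s b]$ then shows that the quandle associated to $(Q/\zeta,*)$ is exactly $(Q,\cdot_\s)/\zeta$; equivalently, this is functoriality of the isomorphism in Proposition \ref{equivalence} applied to the canonical projection.

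Finally I would induct on $n$, proving for all $2$-divisible semimedial $(Q,*)$ at once that $(Q,*)$ is nilpotent of length $\le n$ if and only if $(Q,\cdot_\s)$ is. The base case $n=0$ reduces to $|Q|\le 1$, a property of the set alone. For the inductive step I would use the fact from Section \ref{Sec 2} that an algebra is nilpotent of length $\le n$ iff its quotient by the center is nilpotent of length $\le n-1$; since $\zeta_{(Q,*)}=\zeta_{(Q,\cdot_\s)}=\zeta$ and the two quotients are again in correspondence by the previous paragraph, the induction hypothesis applied to $Q/\zeta$ closes the equivalence, and the two lengths coincide. The main obstacle is the first step: one must verify that $\c{Z(D)}$ is genuinely the same relation (and hence the same congruence) for the two operations, which is precisely what the conjugation identity $\mathcal{L}_a\mathcal{L}_b^{-1}=\s^{-1}(L_aL_b^{-1})\s$ and the characteristicness of $Z(D)$ deliver; the rest is routine bookkeeping with the upper central series.
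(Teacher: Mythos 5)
Your proposal is correct and follows essentially the same route as the paper: identify $\zeta_{(Q,*)}=\zeta_{(Q,\cdot_\s)}$ via Proposition \ref{quotient} and the equality $\dis(Q,*)=\dis(Q,\cdot_\s)$, note the quotient stays $2$-divisible with the corresponding quandle structure, and conclude that the ascending central series coincide. The only difference is that you spell out the step the paper compresses into ``completely determined by the properties of $\dis(Q)$'', namely that $\c{Z(\dis(Q))}$ and $\sigma_{\dis(Q)}$ are literally the same relations for both operations (via $\mathcal{L}_a\mathcal{L}_b^{-1}=\s^{-1}(L_aL_b^{-1})\s$ and characteristicness of the center), which is a worthwhile clarification but not a different argument.
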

 
\begin{proof}
%Let $(Q,*)$ be a $2$-divisible semimedial quasigroup. Let us denote by $\mathcal{L}_a$ the mapping $b\mapsto a\cdot_{\s} b$. Since $L_{a}^{-1} L_b=\mathcal{L}_a^{-1} \s \s^{-1} \mathcal{L}_a=\mathcal{L}_a^{-1} \mathcal{L}_b$, we have that $\dis(Q,*)=\dis(Q,\cdot_{\s})$.
 Since the center of a $2$-divisible semimedial left quasigroup $Q$ is $\zeta_{Q}=\c{Z(\dis(Q))}\cap \sigma_Q$, it is completely determined by the properties of $\dis(Q)$. Hence $\zeta_{(Q,*)}=\zeta_{(Q,\cdot_\s)}$ and the factor $Q/\zeta_{(Q,*)}$ is again $2$-divisible according to Proposition \ref{quotient}. 

Therefore, ascending central series of centers of $(Q,*)$ and of $(Q,\cdot_\s)$ coincide.
\end{proof}

%\begin{proposition}\label{finite solvable}
%A finite $2$-divisible semimedial left quasigroup $(Q,*)$ is solvable of length $n$ if and only if $(Q,\cdot_\s)$ is solvable of length $n$.
%\end{proposition}
%
%\begin{proof}
%Let us proceed by induction on $n$. If $n=1$ we can apply Proposition \ref{semimedial 2 div nil}. Using induction we have that 
%\end{proof}

\begin{proposition}\label{finite solvable}
A finite $2$-divisible semimedial left quasigroup $(Q,*)$ is solvable if and only if $(Q,\cdot_\s)$ is solvable.
\end{proposition}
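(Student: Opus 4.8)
The plan is to reduce solvability of $(Q,*)$ to a purely group-theoretic property of its displacement group and then transport that property across the category isomorphism of Proposition \ref{equivalence}. The one structural fact I rely on is that this isomorphism preserves the displacement group, so that $\dis(Q,*)=\dis(Q,\cdot_\s)$ as subgroups of $\Sym_Q$.

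First I would note that both operations are semimedial: $(Q,*)$ is semimedial by hypothesis, and $(Q,\cdot_\s)$ is a quandle by Proposition \ref{equivalence}, hence a rack, hence semimedial. Thus Lemma \ref{solvable then dis solv} applies to each of them and gives a two-sided correspondence with the respective displacement group. Concretely, parts (i) and (ii) of that lemma together yield that $(Q,*)$ is solvable if and only if the group $\dis(Q,*)$ is solvable, and likewise that $(Q,\cdot_\s)$ is solvable if and only if $\dis(Q,\cdot_\s)$ is solvable. Since the two displacement groups coincide, chaining these equivalences proves that $(Q,*)$ is solvable if and only if $(Q,\cdot_\s)$ is solvable.

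I do not expect a real obstacle, as Lemma \ref{solvable then dis solv} already packages both implications; the only subtlety is methodological. Unlike the nilpotent case (Proposition \ref{semimedial 2 div nil}), where the centers of $(Q,*)$ and $(Q,\cdot_\s)$ literally coincide and one obtains a length-preserving correspondence term by term, the solvable correspondence between $Q$ and $\dis(Q)$ changes the length (the bounds $n+1$ and $2n-1$ of Lemma \ref{solvable then dis solv} are not inverse to one another), so one cannot imitate the center-by-center argument and must instead pass through the common group. One should also avoid invoking the commutator formula of Proposition \ref{p:comm_comm}, whose hypothesis that every factor be faithful is generally unavailable for a $2$-divisible left quasigroup. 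Finally, finiteness of $Q$ appears to play no role in this argument: the same reasoning establishes the equivalence for every $2$-divisible semimedial left quasigroup, and the hypothesis is presumably retained only to match the setting of Corollary \ref{semimedial quasi are solv}.
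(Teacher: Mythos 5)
Your proof is correct and takes essentially the same approach as the paper: the paper's entire proof is the single line ``It follows since $\dis(Q,*)=\dis(Q,\cdot_\s)$'', whose implicit justification is exactly your chain, namely Lemma \ref{solvable then dis solv} applied to the two semimedial structures together with the preservation of displacement groups from Proposition \ref{equivalence}. Your remark that finiteness is inessential for the equivalence itself also matches the paper, which makes the same finiteness-free observation (with explicit length bounds) in the paragraph immediately following the proposition.
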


%\comment{What about solvability lenght?}
\begin{proof}
It follows since $\dis(Q,*)=\dis(Q,\cdot_\s)$.
\end{proof}

Using Lemma \ref{solvable then dis solv} we have that if a $2$-divisible semimedial left quasigroup $(Q,*)$ (resp. a quandle $(Q,\cdot)$ and $f\in \aut{Q,\cdot}$) is solvable of length $n$, then $\dis(Q,*)=\dis(Q,\cdot_\s)$ (resp. $\dis(Q,\cdot)=\dis(Q,*_f)$) is solvable of length at most $2n-1$ and so $(Q,\cdot_\s)$ (resp. $(Q,*_f)$) is solvable of length at most $2n$.

In particular, finite latin quandles are solvable \cite[Corollary 1.3]{CP}. The isomorphism in Theorem \ref{equivalence}  preserve the property of being a quasigroup and then using Proposition \ref{finite solvable} we have the following.

\begin{corollary}\label{semimedial quasi are solv}
Finite $2$-divisible semimedial quasigroups are solvable. 
\end{corollary}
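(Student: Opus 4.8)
The plan is to transport the problem across the categorical isomorphism of Proposition~\ref{equivalence} into the setting of quandles, where solvability of the relevant objects is already established. First I would take a finite $2$-divisible semimedial quasigroup $(Q,*)$ and pass to its image $((Q,\cdot_\s),\s)$ under the assignment of Proposition~\ref{equivalence}, so that $(Q,\cdot_\s)$ is a quandle and the associated automorphism is the squaring map $\s$. Since the correspondence preserves the displacement groups, any conclusion drawn about $(Q,\cdot_\s)$ will have a direct counterpart for $(Q,*)$.

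The key observation to record is that $(Q,\cdot_\s)$ is again a quasigroup. Because $x\cdot_\s y=\s^{-1}(x*y)$ and $\s$ is a bijection of $Q$ by $2$-divisibility, the right multiplication $y\mapsto y\cdot_\s a=\s^{-1}(y*a)$ of $(Q,\cdot_\s)$ differs from the right multiplication $y \mapsto y*a$ of $(Q,*)$ only by postcomposition with the bijection $\s^{-1}$; hence it is bijective exactly when the right multiplication of $(Q,*)$ is. Thus $(Q,\cdot_\s)$ is a finite quandle that is simultaneously a quasigroup, i.e.\ a finite latin quandle.

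Next I would invoke \cite[Corollary~1.3]{CP}, according to which every finite latin quandle is solvable; in particular $(Q,\cdot_\s)$ is solvable. Finally, Proposition~\ref{finite solvable} asserts that $(Q,*)$ is solvable if and only if $(Q,\cdot_\s)$ is, so solvability transfers back and $(Q,*)$ is solvable, which completes the argument.

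I do not expect any serious obstacle here beyond assembling the imported facts: the only point demanding genuine verification is the preservation of the quasigroup property under the correspondence, and this reduces to the elementary remark that composing a bijection with $\s^{-1}$ again yields a bijection. Everything else is a direct chain through Proposition~\ref{equivalence}, \cite[Corollary~1.3]{CP}, and Proposition~\ref{finite solvable}.
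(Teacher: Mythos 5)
Your proof is correct and takes essentially the same route as the paper: pass to the latin quandle $(Q,\cdot_\s)$ via Proposition \ref{equivalence}, apply the solvability of finite latin quandles from \cite[Corollary 1.3]{CP}, and transfer back through Proposition \ref{finite solvable}. Your explicit check that the correspondence preserves the quasigroup property (composing the right multiplication with the bijection $\s^{-1}$) is the only detail the paper leaves unstated, as it simply asserts that the isomorphism preserves being a quasigroup.
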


\section{Medial left quasigroup}\label{Sec 4}
The medial law \eqref{M} is actually equivalent to the following equality for the left multiplication mappings:
\begin{equation}\label{mediality_on_mapping}
L_{x*y}L_z=L_{x*z}L_y.
\end{equation}

For medial left quasigroups the converse of Lemma \ref{injective right} holds.
\begin{proposition}\label{faithful medial}
Let $Q$ be a medial left quasigroup. The following are equivalent:
\begin{itemize}
\item[(i)] $Q$ is faithful.
\item[(ii)] The right multiplications of $Q$ are injective. 
\end{itemize}
In particular, finite faithful medial left-quasigroups are quasigroups. 
\end{proposition}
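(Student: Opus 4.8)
The plan is to prove the two implications separately, noting that essentially all of the content lies in $(i)\Rightarrow(ii)$. The implication $(ii)\Rightarrow(i)$ uses no mediality whatsoever: it is exactly the statement of Lemma~\ref{injective right}, so I would simply cite it. The forward direction $(i)\Rightarrow(ii)$ is where the medial hypothesis enters, and I intend to work with its reformulation \eqref{mediality_on_mapping} for the left translations, namely $L_{x*y}L_z=L_{x*z}L_y$.

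So assume $Q$ is faithful and suppose $a*x=b*x$ for some $a,b,x\in Q$; the goal is to conclude $a=b$, which is precisely injectivity of the right multiplication $R_x\colon a\mapsto a*x$. First I would rewrite the hypothesis as $L_{a*x}=L_{b*x}$. Then, for an arbitrary $r\in Q$, I would multiply on the right by $L_r$ and expand each side using \eqref{mediality_on_mapping}:
\[ L_{a*r}L_x=L_{a*x}L_r=L_{b*x}L_r=L_{b*r}L_x. \]
Since $L_x$ is a bijection, it cancels on the right and yields $L_{a*r}=L_{b*r}$ for every $r\in Q$. Faithfulness ($\lambda_Q=0_Q$) then gives $a*r=b*r$ for all $r$, that is $L_a=L_b$, and a second use of faithfulness gives $a=b$. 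Hence every $R_x$ is injective.

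For the final clause I would invoke finiteness directly: an injective self-map of a finite set is bijective, so the right multiplications of a finite faithful medial left quasigroup are bijections; together with the already available left quasigroup structure this exhibits $Q$ as a quasigroup in the sense recalled in Section~\ref{Sec 1}.

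The argument is short and I do not anticipate a genuine obstacle; the single delicate point is the passage from the one instance $a*x=b*x$ to the whole family of equalities $L_{a*r}=L_{b*r}$ for all $r$. This is exactly where mediality is indispensable, since it is what allows a right-multiplication by the fixed $L_x$ to be traded for a right-multiplication by an arbitrary $L_r$. This also clarifies why the converse of Lemma~\ref{injective right} may fail in the absence of the medial law.
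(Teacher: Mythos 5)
Your proof is correct and follows essentially the same route as the paper's: the backward implication is cited from Lemma \ref{injective right}, and the forward one uses the medial identity \eqref{mediality_on_mapping} twice around the hypothesis $L_{a*x}=L_{b*x}$, cancels the bijection $L_x$, and applies faithfulness. The only difference is that you spell out the two uses of faithfulness ($L_{a*r}=L_{b*r}$ for all $r$ gives $L_a=L_b$, then $a=b$), which the paper compresses into a single sentence.
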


%\comment{$a\, \sim	\, b$ if exists $x\in Q$ such that $a*x=b*x$. If $c*y=d*y$ then $(a*c)*(x*y)=(a*x)*(c*y)=(b*x)*(d*y)=(b*d)*(x*y)$. Hence $\sim$ is a congruence for medial. It is $\lambda_Q$.}

\begin{proof}
The implication (ii) $\Rightarrow$ (i) follows by Lemma \ref{injective right}. Assume that $a*x=b*x$ for some $a,x,y\in Q$. According to \eqref{mediality_on_mapping} we have
$$L_{a*x}L_z=L_{a*z} L_x=L_{b*x}L_z=L_{b*z} L_x$$
for every $z\in Q$. Hence, $L_{a*z}=L_{b*z}$ for every $z\in Q$ and since $Q$ is faithful then $a=b$.
\end{proof}

 Proposition \ref{faithful medial} does not hold for semimedial left quasigroups, indeed there exist finite faithful non-latin quandles.
%
%\begin{corollary}\label{finite medial faithful}
%Finite faithful medial left-quasigroups are quasigroups. 
%\end{corollary} 
Moreover it shows that finite simple medial left quasigroups of size bigger than $2$, are either quasigroups, if they are faithful, or permutation and isomorphic to $(\mathbb{Z}_p,+1)$, where $p$ is a prime. Examples of infinite faithful medial left-quasigroups that are not quasigroups exist, e.g. $Q=\aff(\mathbb{Z},2,-1,0)$ (note that $Q$ has injective right multiplications but it is not even connected).

Let us show a characterization of medial left quasigroups among the semimedial ones.
\begin{lemma}\label{center}
Let $Q$ be a semimedial left quasigroup. Then %$\c{Z(\dis(Q))}$ is a congruence and
\[ \c{Z(\dis(Q))}=\{(a,b):\ (x*a)*(b*y)=(x*b)*(a*y)\text{ for every }x,y\in Q\}.\]
%is a congruence and $\zeta_Q=\c{Z(\dis(Q))}\cap \sigma_Q$.
%In particular, $Q$ is medial if and only if $\dis(Q)$ is abelian.
\end{lemma}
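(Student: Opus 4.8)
My plan is to translate the displayed word identity into a statement about the left translations and then recognise it as the defining condition for $L_aL_b^{-1}$ to be central in $\dis(Q)$. First, since every $L_c$ is a bijection, the identity $(x*a)*(b*y)=(x*b)*(a*y)$ holding for all $y$ is exactly the equality of maps $L_{x*a}L_b=L_{x*b}L_a$, so the set on the right-hand side is $\{(a,b):L_{x*a}L_b=L_{x*b}L_a\text{ for all }x\in Q\}$. Substituting the semimedial identity \eqref{semi for mappings} in the form $L_{x*a}=L_{x*x}L_aL_x^{-1}$ and $L_{x*b}=L_{x*x}L_bL_x^{-1}$, and cancelling the bijection $L_{x*x}$ on the left, this reduces to the condition
\[ L_aL_x^{-1}L_b=L_bL_x^{-1}L_a\qquad\text{for every }x\in Q, \]
which I shall call $(\ast)$.

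Next I would read $(\ast)$ as a commutation relation. For a fixed $x$, computing $(L_aL_b^{-1})(L_bL_x^{-1})=L_aL_x^{-1}$ and $(L_bL_x^{-1})(L_aL_b^{-1})=L_bL_x^{-1}L_aL_b^{-1}$ shows at once that the instance of $(\ast)$ at $x$ is equivalent to the assertion that $L_aL_b^{-1}$ commutes with $L_bL_x^{-1}$. By \eqref{inclusion dis} we have $L_aL_b^{-1}\in\dis(Q)$, so, by the definition of the relation $\c{N}$, the pair $(a,b)$ lies in $\c{Z(\dis(Q))}$ precisely when $L_aL_b^{-1}$ belongs to the centre of $\dis(Q)$, i.e.\ commutes with every element of $\dis(Q)$. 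Thus the theorem reduces to showing that $L_aL_b^{-1}$ commutes with all the elements $L_bL_x^{-1}$ $(x\in Q)$ if and only if it commutes with the whole of $\dis(Q)$.

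The heart of the argument, and the step I expect to require the most care, is that the family $\{L_bL_x^{-1}:x\in Q\}$ generates $\dis(Q)$ — for then centralising it is the same as lying in $Z(\dis(Q))$. I would argue this in two moves. From $L_yL_x^{-1}=(L_bL_y^{-1})^{-1}(L_bL_x^{-1})$ one first gets that the subgroup generated by the $L_bL_x^{-1}$ already contains every $L_yL_x^{-1}$; then, rewriting each such element by \eqref{inclusion dis} as $L_yL_x^{-1}=L_{x*x}^{-1}L_{x*y}$ and fixing a single $x_0$ while letting $y$ vary (so that $x_0*y$ runs over all of $Q$), one recovers all elements $L_{\s(x_0)}^{-1}L_w$, hence all $L_w^{-1}L_{w'}=(L_{\s(x_0)}^{-1}L_w)^{-1}(L_{\s(x_0)}^{-1}L_{w'})$, and these generate $\dis(Q)$ by Lemma \ref{on dis_alpha}(iii) applied to $\alpha=1_Q$. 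Chaining the equivalences — the right-hand condition $\Leftrightarrow$ $(\ast)$ for all $x$ $\Leftrightarrow$ $L_aL_b^{-1}$ centralises $\{L_bL_x^{-1}\}$ $\Leftrightarrow$ $L_aL_b^{-1}\in Z(\dis(Q))$ $\Leftrightarrow$ $(a,b)\in\c{Z(\dis(Q))}$ — then finishes the proof.
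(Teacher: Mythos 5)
Your proof is correct, and it follows the same skeleton as the paper's — reduce the word identity to an operator identity via \eqref{semi for mappings}, then identify that condition with membership in $Z(\dis(Q))$ using a generating set of $\dis(Q)$ — but the execution of the second half is genuinely different. The paper rewrites the condition as \eqref{claim2}, $L_x^{-1}L_aL_b^{-1}L_x = L_b^{-1}L_a$ (an equivalent rearrangement of your $(\ast)$), and then juggles the two elements $L_aL_b^{-1}$ and $L_b^{-1}L_a$: normality of $Z(\dis(Q))$ in $\lmlt(Q)$ lets it pass between them, and applying \eqref{claim2} twice (once at $y$, once at $x$) shows that $L_b^{-1}L_a$ commutes with every standard generator $L_y^{-1}L_x$ of $\dis(Q)$ from Lemma \ref{on dis_alpha}(iii); the converse direction is obtained by specializing $y=b$. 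You instead stay with the single element $L_aL_b^{-1}$ throughout, read $(\ast)$ directly as commutation with the fixed-base family $\{L_bL_x^{-1} : x\in Q\}$, and then prove something the paper never states: that this one-parameter family already generates all of $\dis(Q)$ (via $L_yL_x^{-1}=(L_bL_y^{-1})^{-1}(L_bL_x^{-1})$, the rewriting $L_yL_x^{-1}=L_{\s(x)}^{-1}L_{x*y}$ from \eqref{inclusion dis}, and surjectivity of $L_{x_0}$). What each buys: the paper's route is shorter given its established machinery and gets both inclusions almost symmetrically from the conjugation identity; yours avoids any appeal to normality of $Z(\dis(Q))$, makes both directions of the equivalence fall out at once from the centralizer-of-a-generating-set principle, and produces a reusable sharper fact (single-base translation differences generate the displacement group of a semimedial left quasigroup) that could be of independent use elsewhere in this setting.
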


%\comment{ Is this a congruence in general?}

\begin{proof}
%According to Lemma \ref{on dis_alpha} $Z(\dis(Q))\in \N(Q)$ and so $\c{Z(\dis(Q))}$ is a congruence. 
Let $a,b\in Q$. Then, by virtue of \eqref{semi for mappings}, $(x*a)*(b*y)=(x*b)*(a*y)$ holds for every $x,y\in Q$ if and only if
%$$L_{x*a}L_b=L_{x*x}L_a L_x^{-1}L_b %=L_{x*b}L_a=L_{x*x}L_b L_x^{-1}L_a
%$$
%for every $x\in Q$. Then (i) holds if and only if 
\begin{equation}\label{claim2}
L_x^{-1} L_a L_b^{-1}L_x=L_b^{-1}L_a
\end{equation}
holds for every $x\in Q$. The subgroup $Z(\dis(Q))$ is normal in $\lmlt(Q)$ and so $L_a L_b^{-1}\in Z(\dis(Q))$ if and only if $L_b^{-1} L_a\in Z(\dis(Q))$.

 $(\supseteq)$ If \eqref{claim2} holds, then
$$L_x^{-1} L_y L_b^{-1} L_a L_y^{-1} L_x=L_b^{-1} L_a$$
holds for every $x,y\in Q$, i.e. $L_b^{-1} L_a\in Z(\dis(Q))$. 

$(\subseteq)$ If $L_b^{-1} L_a\in Z(\dis(Q))$ then
$$L_x^{-1} L_y L_b^{-1} L_a L_y^{-1} L_x=L_b^{-1} L_a$$
for every $x,y\in Q$. Setting $y=b$ then \ref{claim2} follows. 
\end{proof}

%A direct consequence of Corollary \ref{center} is the following.
\begin{corollary}\label{medial iff}
Let $Q$ be a left quasigroup. The following are equivalent:
\begin{itemize}
\item[(i)] $Q$ is medial.
%\item[(ii)] $Q$ is semimedial and $\dis^+(Q)$ is abelian.
%\item[(iii)] $Q$ is semimedial and $\dis^-(Q)$ is abelian.
\item[(ii)] $Q$ is semimedial and $\dis(Q)$ is abelian.
\end{itemize}
In particular, abelian semimedial left quasigroups are medial.
\end{corollary}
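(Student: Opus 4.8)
The plan is to route everything through Lemma \ref{center}, which, for a semimedial left quasigroup, identifies $\c{Z(\dis(Q))}$ as exactly the set of pairs $(a,b)$ satisfying $(x*a)*(b*y)=(x*b)*(a*y)$ for all $x,y$. The crucial observation is that the defining medial identity \eqref{M}, read with all four variables free, is word-for-word the assertion that this condition holds for \emph{every} pair $(a,b)$; in other words, once $Q$ is known to be semimedial, mediality is equivalent to $\c{Z(\dis(Q))}=1_Q$. So the whole corollary reduces to translating the equation $\c{Z(\dis(Q))}=1_Q$ into the statement that $\dis(Q)$ is abelian.

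First I would dispatch the semimediality needed for (i)$\Rightarrow$(ii): substituting $y=x$ in \eqref{M} yields $(x*x)*(z*u)\approx(x*z)*(x*u)$, which is precisely \eqref{semi M}, so any medial left quasigroup is semimedial and Lemma \ref{center} becomes available. Next I would establish the key dictionary equivalence: $\c{Z(\dis(Q))}=1_Q$ if and only if $\dis(Q)$ is abelian. By definition $a\,\c{Z(\dis(Q))}\,b$ means $L_a L_b^{-1}\in Z(\dis(Q))$. By Lemma \ref{on dis_alpha}(iii) (for $\alpha=1_Q$), together with \eqref{disQ combinatorial}, the group $\dis(Q)$ is generated by precisely the elements $L_a^{-1}L_b$, and these are central exactly when the corresponding $L_a L_b^{-1}$ are, since $Z(\dis(Q))$ is normal in $\lmlt(Q)$ and hence conjugation-invariant. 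Thus $\c{Z(\dis(Q))}=1_Q$ says every generator of $\dis(Q)$ lies in its center, which is the same as $\dis(Q)$ being abelian. Conversely, if $\dis(Q)$ is abelian then $Z(\dis(Q))=\dis(Q)$ contains each $L_a L_b^{-1}$ (these have left-multiplication exponent sum $0$, so lie in $\dis(Q)$ by \eqref{disQ combinatorial}), giving $\c{Z(\dis(Q))}=1_Q$.

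With these pieces in place the two implications are immediate. For (i)$\Rightarrow$(ii): medial gives semimedial, Lemma \ref{center} gives $\c{Z(\dis(Q))}=1_Q$, and the dictionary gives $\dis(Q)$ abelian. For (ii)$\Rightarrow$(i): semimediality plus abelianness of $\dis(Q)$ yields $\c{Z(\dis(Q))}=1_Q$ through the dictionary, and Lemma \ref{center} then reads off that \eqref{M} holds identically. Finally, for the ``in particular'' clause I would note that a semimedial left quasigroup is LT by Proposition \ref{terms for medial}, so if $Q$ is abelian, i.e. $1_Q$ is an abelian congruence, then Corollary \ref{ab an central cong iff}(i) applied to $\alpha=1_Q$ forces $\dis(Q)=\dis_{1_Q}$ to be abelian, and the equivalence just proved then makes $Q$ medial.

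I expect no serious obstacle here: the entire argument is a translation once Lemma \ref{center} is granted. The only point demanding a little care is the generation step, namely verifying that centrality of every two-letter word $L_a L_b^{-1}$ really propagates to all of $\dis(Q)$ and that passing between $L_a^{-1}L_b$ and $L_a L_b^{-1}$ preserves membership in the center; but this is immediate from the generating set furnished by Lemma \ref{on dis_alpha}(iii) and \eqref{disQ combinatorial}, together with the normality of $Z(\dis(Q))$ in $\lmlt(Q)$.
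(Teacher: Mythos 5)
Your proof is correct and is essentially the paper's own argument: the paper states this corollary without proof as an immediate consequence of Lemma \ref{center}, and your write-up is precisely the intended filling-in of the details. In particular, your three steps — deriving \eqref{semi M} from \eqref{M} by setting $y=x$, identifying mediality with $\c{Z(\dis(Q))}=1_Q$ via Lemma \ref{center}, and translating that into abelianness of $\dis(Q)$ using the generators $L_a^{-1}L_b$ from Lemma \ref{on dis_alpha}(iii), the combinatorial description \eqref{disQ combinatorial}, and normality of $Z(\dis(Q))$ in $\lmlt(Q)$ (plus Proposition \ref{terms for medial} and Corollary \ref{ab an central cong iff}(i) for the ``in particular'' clause) — match the machinery the paper sets up for exactly this purpose.
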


%\begin{proof}
%Let $Q$ be a semimedial left quasigroup. By virtue of \eqref{semi for mappings} we have that
%$$L_{x*y}L_z=L_{x*x}L_y L_x^{-1}L_z %=L_{x*z}L_y=L_{x*x}L_z L_x^{-1}L_y
%$$
%for every $x,y,z\in Q$. Then $Q$ is medial if and only if 
%\begin{equation}\label{claim}
%L_z^{-1} L_y L_x^{-1}L_z=L_x^{-1}L_y
%\end{equation}
%holds for every $x,y,z\in Q$
%
%The equivalence between (i), (ii) and (iii) follows by Lemma \ref{center} and (iv) $\Rightarrow$ (iii) is clear.
%
%(i) $\Rightarrow$ (ii) Every medial left quasigroup is semimedial. Since \eqref{claim} holds, then
%$$L_u L_z^{-1} L_y L_x^{-1}L_z L_u^{-1}=L_y L_x ^{-1}$$
%holds for every $x,y,z,u\in Q$, i.e. $\dis^+(Q)$ is abelian. The implication (i) $\Rightarrow$ (iii) is similar.
%
%(ii) $\Rightarrow$ (i) If $\dis^+(Q)$ is abelian then
%$$L_u L_z^{-1} L_y L_x^{-1}L_z L_u^{-1}=L_y L_x^{-1}$$
%for every $x,y,u,z\in Q$. Setting $u=y$ then \ref{claim} follows. The implication (iii) $\Rightarrow$ (i) is similar.
%
%(i) $\Rightarrow$ (iv) According to Lemma \ref{dis for medial} $\dis(Q)=\dis^+(Q)$ and so $\dis(Q)$ is abelian.
%%
%%(iv) $\Rightarrow$ (iii) Clear.
%%\end{proof}
%%
%%
%%\begin{corollary}
%%Abelian semimedial left quasigroups are medial.
%%\end{corollary}
%%
%%\begin{proof}
%
%Abelian left quasigroups have abelian displacement group and so they are medial.
%%Let $Q$ be a semimedial left quasigroup and $t(x,y,z,u)=(x*y)*(z*u)$. Since $t(x,x,y,z)=t(x,y,x,z)$ for every $x,y,z\in Q$ by abelianess we have that 
%%$$(w*x)*(y*z)=t(w,x,y,z)=t(w,y,x,z)=(w*y)*(x*z),$$
%%for every $x,y,z,w$ i.e. $Q$ is medial.
%\end{proof}

Finite simple $2$-divisible semimedial quasigroup are abelian, since they are solvable. Hence according to Corollary \ref{medial iff} they are medial.

%

%\comment{Add blabla and ref to Premysl. Check his paper}
%
%\begin{corollary}
%Finite medial cycle sets are multipermutational.
%\end{corollary}
%
%\begin{proof}
%Let $Q$ be a finite medial cycle set and let $L_n(Q)=L_{n+1}(Q)$. If $L_n(Q)$ is faithful, then it is a quasigroup and then it is affine and isomorphic to $\aff(A,f,g,c)$ for $f,g\in \aut{A}$ such that $fg=gf$. Then $L_n(Q)$ is not a cycle set. Then $|L_n(Q)|=1$ and therefore $Q$ is multipermutational.
%\end{proof}

%\comment{Mention that medial quasigroups are affine}

%\begin{theorem}\label{medial are solvable}
%Finite medial left-quasigroups are solvable. %and they have nilpotent displacement group.
%\end{theorem}
%
%
%\begin{proof}
%Let $Q$ be a medial left quasigroup. Then there exists $n\in \mathbb{N}$ such that $L_{n+1}(Q)=L_n(Q)$. If $|L_n(Q)|=1$ then $Q$ is nilpotent according to Corollary \ref{mult are nilpotent}.  
%
%If $L_n(Q)$ is faithful, according to Corollary \ref{finite medial faithful}, $Q$ is a quasigroup and so it is affine according to the main result of \cite{Todoya}, and indeed $L_n(Q)$ is abelian. 
%%Hence $L_n(Q)$ has abelian displacement group. 
%%According to Proposition \ref{kernel of lambda}, $\dis^{\lambda_Q}$ is central, so $\dis(Q)$ is nilpotent, since it is an iterated central extension of the abelian group $\dis(L_n(Q))$. 
%According to \cite[Lemma 5.2]{covering_paper} $\lambda_Q$ is a strongly abelian congruence and then abelian. Hence $Q$ is solvable.
%\end{proof}

%\comment{Can not be extended to semimedial: counterexample finite non nilpotent quandles/racks}

Using Lemma \ref{solvable then dis solv}, Proposition \ref{terms for medial} and Corollary \ref{medial iff} together we have the following immediate consequence, which extends \cite[Proposition 5.13]{CP} from medial racks to medial left quasigroups.

\begin{corollary} \label{medial 2 div are nilp}
%\comment{this proves that finite medial left quasigroups have nilpotent displacement group}
Medial left-quasigroups are nilpotent of length at most $2$.
\end{corollary}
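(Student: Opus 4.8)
The plan is to assemble three earlier results rather than to argue from scratch, since Corollary \ref{medial 2 div are nilp} is designed to be a direct consequence of them. First I would invoke Corollary \ref{medial iff}, which characterizes a medial left quasigroup $Q$ as exactly a semimedial left quasigroup whose displacement group $\dis(Q)$ is abelian. Thus every medial left quasigroup is in particular semimedial and has an abelian displacement group; this is the key structural input that bridges the combinatorics of the operation with the group theory of $\dis(Q)$.

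Next I would observe that an abelian group is nilpotent of length $1$, and apply Lemma \ref{solvable then dis solv}(i): for a semimedial left quasigroup, if $\dis(Q)$ is nilpotent of length $n$ then $Q$ is nilpotent of length at most $n+1$. Taking $n=1$ immediately yields that $Q$ is nilpotent of length at most $2$. The role of Proposition \ref{terms for medial} in this chain is to guarantee that $Q$, as a semimedial left quasigroup, enjoys the LT property, which is the standing hypothesis underlying the commutator-theoretic machinery (Corollary \ref{ab an central cong iff} and Proposition \ref{dis solv then Q solvable for LT}) that Lemma \ref{solvable then dis solv} depends on; without it the passage from nilpotence of $\dis(Q)$ to nilpotence of $Q$ would not be available.

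There is no real obstacle here, as the statement is a corollary obtained by concatenating three established facts. The only point worth flagging is that Corollary \ref{medial iff} is used in the direction \emph{medial $\Rightarrow$ semimedial with abelian $\dis(Q)$}, which is precisely what lets Lemma \ref{solvable then dis solv} be applied with the optimal input $n=1$ and hence produce the sharp bound $2$ on the nilpotence length.
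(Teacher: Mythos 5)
Your proposal is correct and matches the paper's own argument exactly: the paper derives this corollary as an immediate consequence of Corollary \ref{medial iff} (medial $=$ semimedial with abelian $\dis(Q)$), Proposition \ref{terms for medial} (the LT property), and Lemma \ref{solvable then dis solv}(i) applied with $n=1$. Your explanation of why each of the three ingredients is needed, including the role of the LT property as the hypothesis enabling the commutator-theoretic machinery, is precisely the intended reading.
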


%
%\begin{theorem}\label{medial are solvable}
%Finite medial left-quasigroups are solvable. %and they have nilpotent displacement group.
%\end{theorem}
%
%
%\begin{proof}
%Let $Q$ be a medial left quasigroup. Then there exists $n\in \mathbb{N}$ such that $L_{n+1}(Q)=L_n(Q)$. If $|L_n(Q)|=1$ then $Q$ is nilpotent according to Corollary \ref{mult are nilpotent}.  
%
%If $L_n(Q)$ is faithful, according to Proposition \ref{faithful medial}, $Q$ is a quasigroup and so it is affine according to the main result of \cite{Todoya}, and indeed $L_n(Q)$ is abelian. 
%%Hence $L_n(Q)$ has abelian displacement group. 
%%According to Proposition \ref{kernel of lambda}, $\dis^{\lambda_Q}$ is central, so $\dis(Q)$ is nilpotent, since it is an iterated central extension of the abelian group $\dis(L_n(Q))$. 
%According to \cite[Lemma 5.2]{covering_paper} $\lambda_Q$ is a strongly abelian congruence and then abelian. Hence $Q$ is solvable.
%\end{proof}

\begin{corollary}\label{unipotent nedial are abelian}
Multipotent medial left quasigroup are connected and abelian.
\end{corollary}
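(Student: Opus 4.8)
The plan is to reduce both assertions to facts already available for faithful semimedial left quasigroups together with the characterization of mediality in terms of the displacement group. First I would collect the structural inputs. Since $Q$ is medial it is in particular semimedial, so being multipotent it has injective right multiplications and is therefore faithful by Proposition \ref{multipotent have right mult 1-1}. Moreover, by Corollary \ref{medial iff} mediality is equivalent to semimediality together with $\dis(Q)$ being abelian, so I may take $\dis(Q)$ abelian for granted.

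For abelianness I would apply the faithful-semimedial criterion \cite[Corollary 5.4]{CP} to the congruence $\alpha=1_Q$. Because $\dis_{1_Q}=\dis(Q)$ and this group is abelian, that corollary yields that $1_Q$ is an abelian congruence of $Q$; by the definition of abelianness ($\zeta_Q=1_Q$, equivalently $1_Q$ abelian) this is exactly the statement that $Q$ is an abelian left quasigroup.

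For connectedness the key observation is that the squaring map never leaves the left-multiplication orbit of a point: $\s(a)=a*a=L_a(a)$, so $\s(a)$ lies in the $\lmlt(Q)$-orbit of $a$, whence that orbit equals the orbit of $\s(a)$. Iterating, $\s^k(a)$ lies in the orbit of $a$ for every $k$. Since $Q$ is multipotent there is $n$ with $\s^n(Q)=\{e\}$, where $e$ is the unique idempotent furnished by the lemma on multipotent left quasigroups; hence $e=\s^n(a)$ lies in the orbit of $a$ for every $a\in Q$. Thus every element shares its orbit with $e$, so $\lmlt(Q)$ acts with a single orbit and $Q$ is connected.

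The main point to get right is the orbit bookkeeping in the connectedness argument, namely that the orbit of $\s(a)$ coincides with that of $a$ so the iteration is legitimate; beyond that there is no genuinely hard step, since both conclusions are short consequences of the cited results. It is worth noting that mediality enters only through abelianness (via $\dis(Q)$ abelian), whereas the connectedness argument in fact works for any multipotent left quasigroup.
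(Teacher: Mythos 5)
Your proposal is correct, but it follows a genuinely different route from the paper's proof. The paper first passes to the quotient $R=Q/\mathcal{O}_{\dis(Q)}$, which is a permutation left quasigroup by Corollary \ref{on orbits2} and is still multipotent; applying Proposition \ref{multipotent have right mult 1-1} to $R$ forces $R$ to be trivial, which gives the stronger conclusion that $\dis(Q)$ itself is transitive on $Q$. Since $\dis(Q)$ is abelian by Corollary \ref{medial iff}, transitivity makes it regular, hence $1_Q$-semiregular, and abelianness of $Q$ then follows from the LT criterion of Corollary \ref{ab an central cong iff}. You instead apply Proposition \ref{multipotent have right mult 1-1} directly to $Q$ to get faithfulness, and then invoke the faithful-semimedial criterion (\cite[Corollary 5.4]{CP}) with $\alpha=1_Q$, so your abelianness argument is completely independent of connectedness; your connectedness comes from the elementary observation that $\s(a)=L_a(a)$ stays in the $\lmlt(Q)$-orbit of $a$, so iterating the squaring map drags every orbit onto the unique idempotent. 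The trade-off: the paper's argument buys the stronger fact that the displacement group is transitive (indeed regular) on $Q$, which is more than the stated connectedness, while your argument is more modular and your orbit observation is fully general --- it shows any multipotent left quasigroup is connected, with no mediality or semimediality needed (consistent with the paper's remark in Section \ref{Sec 5} that multipotent left quasigroups are even superconnected). Both proofs are valid and of comparable length.
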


\begin{proof}
Let $Q$ be a multipotent medial left quasigroup. Then the factor $R=Q/\O_{\dis(Q)}$ is multipotent and permutation. According to Proposition \ref{multipotent have right mult 1-1}, $R$ is faithful and so it is trivial. Then $\dis(Q)$ is abelian and transitive on $Q$. Thus $Q$ is connected and $\dis(Q)$ is regular. Since $Q$ is LT, then $Q$ is abelian. 
\end{proof}

 \begin{remark}
%Let $(Q,\cdot)$ be a left quasigroup, $f\in \aut{Q,\cdot}$ and let $x*_f y=f(x\cdot y)$. If $(Q,\ast)$ is medial then
%\begin{eqnarray*}
%(x*y)*(z*u)&=&f(f(x\cdot y)\cdot f(x\cdot u))=f((f(x)\cdot f(y))\cdot (f(z)\cdot f(u)))\\
%&=&f((f(x)\cdot f(z))\cdot (f(y)\cdot f(u)))=(x*z)*(y*u),
%\end{eqnarray*}
%i.e. $(Q,*_f)$ is medial. Therefore, 
The equivalence of Proposition \ref{equivalence} restricts and corestricts to the subcategory of medial quandles and of medial $2$-divisible left quasigroups: indeed $\dis(Q,*)=\dis(Q,\cdot_\s)$ and so, according to Corollary \ref{medial iff}, $(Q,*)$ is medial if and only if $(Q,\cdot_\s)$ is medial. 

The structure of medial quandle and their homomorphisms has been described in \cite{Medial} and \cite{Bonatto2018OnTS} using a construction involving abelian groups and their homomorphisms. Hence the structure of $2$-divisible medial left quasigroups can be obtained directly from the one of medial quandles. 
%
%Such equivalence also offers an alternative proof of the representation theorem for medial quasigroups proved in \cite{Todoya}, in the case of $2$-divisible medial quasigroups. Indeed if $(Q,\cdot)$ is a medial latin quandle, then $A=\dis(Q)$ is abelian, $Q\cong \aff(A,g)$ and $\aut{Q,\cdot}\cong A\rtimes C_{\aut{A,+}(g)}$ is given by affine mappings over $\dis(Q)$. So if $f=(c,h)$, then $a*_f b=f(a\cdot b)=(1-g)f(a)+gf(b)=(1-g)h(a)+gh(b)+c$.
\end{remark}

%\subsection*{Medial racks}
%\subsection*{Medial cocycles}
%\comment{what about semimedial?}
%$$\theta_{a*a,b*c}\theta_{b,c}=
%\theta_{a*b,a*c}\theta_{a,c}
%$$
%
%\comment{If we normalize w.r.t. $c$ we have}
%$$\theta_{a*a,b*c}=\theta_{a*b,a*c}
%$$
%\comment{setting $a=c/c$ we have $\theta_{(c/c)*(c/c),b*c}=\theta_{c/c,c}$, and since $Q$ is a quasigroup then $\theta_{(c/c)*(c/c),a}=\theta_{c/c,c}$ for every $a$. Setting $b=c/c$ we have $\theta_{a*a,c}=\theta_{a*(c/c),a*c}=\theta_{c/c,c}$.}

%\begin{example}
%Let $Q$ be an associative left quasigroup. Then $Q$ is medial if and only if $\lmlt(Q)$ is abelian. In particular if $Q$ is connected then $Q$ is abelian. Indeed $\dis(Q)\leq \lmlt(Q)$ is abelian and $\dis(Q)_a\leq \lmlt(Q)_a=1$ for every $a\in Q$, and then $\dis(Q)$ is semiregular.
%\end{example}
%$[L_a,L_b]=L_a L_b L_a^{-1} L_b^{-1}=L_a L_{b*a}$. Can I find $b\in Q$ such that $b*a=a$? If $Q=Q_2$ then $\dis^{\m_Q}$ is an elementary abelian $2$-group. %Let $G=\dis(Q)$. Then $H=G/\gamma_2(G)$ is nilpotent of length $2$. Then $\gamma_1(H)\leq Z(H)$ and $H/Z(H)$ embeds into $Z(H)^H$. $\gamma_1(G)/\gamma_2(G)$ is generated by $[a,b]$ where $a,b$ generates $G/\gamma_1(G)$
%\comment{A special case of the above construction is the following: let $A$ be an abelian group and $\theta:Q\times Q\longrightarrow A$, we can define $Q\times_\theta A=(Q\times A,*)$ by setting $(x,a)*(y,b)=(x*y,b+\theta_{a,b})$. needed?}\\
The left quasigroup $Q\times_\theta S$ defined in \eqref{extensions by theta} is medial if and only if $Q$ is medial and
	\begin{equation}\label{mediality for cocycles}
\theta_{a*b,c*d}\theta_{c,d}=
\theta_{a*c,b*d}\theta_{b,d}\tag{MC}
\end{equation}
for every $a,b,c,d\in Q$. If \eqref{mediality for cocycles} holds we say that $\theta$ is a {\it medial cocycle} of $Q$.  Let $Q$ be a medial quasigroup and $\gamma:Q\longrightarrow \sym_Q$ be a mapping. Then
\begin{eqnarray}\label{normalized cocycle}
	\widetilde{\theta}_{a,b}&=&\gamma_{a*b}\theta_{a,b}\gamma_{b}^{-1}
\end{eqnarray}
is a medial cocycle whenever $\theta$ is a medial cocycle and $Q\times_{\widetilde{\theta}} S$ is isomorphic to $Q\times_\theta S$. If $Q$ is a quasigroup and $u\in Q$, following \cite{MeAndPetr}, we can define $\gamma_a=\theta_{a/u,u}^{-1}$ and so the cocycle defined in \eqref{normalized cocycle} has the property that $\widetilde{\theta}_{a,u}=\theta_{u/u,u}$ for every $a\in Q$. We say that $\widetilde{\theta}$ is a {\it $u$-normalized cocycle}.
\begin{proposition}\label{normalized cocycles are constant}
Let $Q$ be a medial left-quasigroup. If $Q/\lambda_Q$ is a quasigroup then $Q$ is isomorphic to the direct product of $Q/\lambda_Q$ and a permutation left-quasigroup.
%Let $Q$ be a medial quasigroup and $\theta$ be a medial cocycle. Then $Q\times_\theta S$ is isomorphic to the direct product of $Q$ and a permutation left-quasigroup.
\end{proposition}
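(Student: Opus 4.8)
The plan is to realize $Q$ as a cocycle extension of $X:=Q/\lambda_Q$ and to show that, after a suitable normalization, the cocycle becomes globally constant, which is exactly what it means for $Q$ to split as a direct product of $X$ with a permutation left quasigroup. First I would set up the extension. Since a medial left quasigroup is semimedial, $\lambda_Q$ is a congruence by Proposition \ref{kernel of lambda}. The factor $X=Q/\lambda_Q$ is a quasigroup by hypothesis, hence its right multiplications are bijective; consequently $\setof{L_x(a)}{x\in X}=\setof{x*a}{x\in X}=X$ for every $a$, so $\lmlt(X)$ is transitive and $X$ is connected. A congruence with connected factor is uniform \cite[Proposition 2.5]{CP}, so $\lambda_Q$ is uniform, say with blocks of size $|S|$. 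By the extension construction \eqref{extensions by theta} we get $Q\cong X\times_\theta S$ for some $\theta\colon X\times X\to\sym_S$, and since $Q$ is medial, $\theta$ is a medial cocycle, i.e. it satisfies \eqref{mediality for cocycles}.

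Next I would normalize. As $X$ is a quasigroup I fix $u\in X$ and replace $\theta$ by the cohomologous cocycle $\widetilde\theta$ of \eqref{normalized cocycle}, built from $\gamma_a=\theta_{a/u,u}^{-1}$. This is again a medial cocycle with $Q\cong X\times_{\widetilde\theta}S$, and it satisfies the normalization $\widetilde\theta_{a,u}=\psi$ for all $a$, where $\psi:=\theta_{u/u,u}$. The goal becomes to prove $\widetilde\theta_{x,y}=\psi$ for all $x,y\in X$: a constant cocycle turns \eqref{extensions by theta} into the direct product $X\times(S,\psi)$, and $(S,\psi)$ is a permutation left quasigroup, which is the desired conclusion.

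The core of the argument is a single specialization of \eqref{mediality for cocycles} for $\widetilde\theta$. Putting $b=d=u$ and using $\widetilde\theta_{c,u}=\widetilde\theta_{u,u}=\psi$, the two outer factors are the common permutation $\psi$, which I cancel on the right to obtain $\widetilde\theta_{a*u,\,c*u}=\widetilde\theta_{a*c,\,u*u}$ for all $a,c$. Reparametrizing by $x=a*u$, $y=c*u$ (so $a=x/u$, $c=y/u$, as $R_u$ is bijective) yields
\[ \widetilde\theta_{x,y}=\widetilde\theta_{(x/u)*(y/u),\,u*u}\qquad\text{for all }x,y\in X. \]
Thus $\widetilde\theta_{x,y}$ depends only on the single element $(x/u)*(y/u)$ and on the fixed second coordinate $u*u$. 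Finally I set $y=u$: the left-hand side is $\psi$ by normalization, while as $x$ runs over $X$ the element $(x/u)*(u/u)$ runs over all of $X$, being a composite of the bijections $x\mapsto x/u$ and $R_{u/u}$. Hence $\widetilde\theta_{z,\,u*u}=\psi$ for every $z\in X$, and feeding this back into the displayed identity gives $\widetilde\theta\equiv\psi$, so $Q\cong X\times(S,\psi)$.

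I expect the main obstacle to be locating the right specialization of the medial cocycle identity: the substitution $b=d=u$ must be chosen precisely so that both superfluous factors collapse to the normalized value $\psi$ and cancel, after which the full bijectivity of multiplication in $X$ (both the left and right divisions) is what collapses the two-variable dependence of $\widetilde\theta$ to a constant. By comparison, the uniformity of $\lambda_Q$ is a secondary point, resting only on the elementary observation that a quasigroup is connected as a left quasigroup.
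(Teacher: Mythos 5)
Your proof is correct and follows essentially the same route as the paper's: realize $Q$ as a medial cocycle extension $Q/\lambda_Q\times_\theta S$, pass to the $u$-normalized cocycle $\widetilde{\theta}$ built from $\gamma_a=\theta_{a/u,u}^{-1}$, specialize the medial cocycle identity \eqref{mediality for cocycles}, and use bijectivity of the right multiplications of the quasigroup $Q/\lambda_Q$ to force $\widetilde{\theta}$ to be constant. The only (immaterial) difference is the specialization chosen: the paper sets $d=u$, cancels the normalized factors, and then sets $c=u/u$ to conclude in one step, whereas you set $b=d=u$ and conclude by a two-step bootstrap (first pinning down $\widetilde{\theta}_{z,u*u}$, then the general value).
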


\begin{proof}
%Le $u\in Q$ and let us define $\gamma_{a}=\theta_{a/u,u}^{-1}$ and $\widetilde{\theta}_{a,b}=\gamma_{a*b}\theta_{a,b}\gamma_b^{-1}$ for every $a,b\in Q$. 
Since $Q/\lambda_Q$ is connected, then $\lambda_Q$ is uniform and therefore $Q\cong Q/\lambda_Q\times_\theta S$. Let $u\in Q$ and consider the $u$-normalized cocycle defined as in \eqref{normalized cocycle} using  $\gamma_{a}=\theta_{a/u,u}^{-1}$. 
Then \eqref{mediality for cocycles} implies that
\begin{eqnarray}
\widetilde{\theta}_{a*b,c*u}&=&\widetilde{\theta}_{a*c,b*u} \label{NC1}\\
\widetilde{\theta}_{a,u}&=&\widetilde{\theta}_{u/u,u}\label{NC2}
\end{eqnarray}
 for every $a,b,c\in Q$. Setting $c=u/u$ in \eqref{NC1} we have that $
\widetilde{\theta}_{a*b,u}=
\widetilde{\theta}_{a*(u/u),b*u}$, and according to \eqref{NC2} $
\widetilde{\theta}_{a*(u/u),b*u}=
\widetilde{\theta}_{u,u/u}$ for every $a,b\in Q$. Since $Q$ is a quasigroup then $
\widetilde{\theta}_{a,b}=
\widetilde{\theta}_{u,u/u}$ for every $a,b\in Q$. Then $Q\times_\theta S\cong Q\times_{\widetilde{\theta}} S$ which is the direct product of $Q$ and the permutation left quasigroup $(S,\widetilde{\theta}_{u,u/u})$.
\end{proof}

\subsection*{Medial racks}

%\comment{racks are semimedial}

%Recall that a {\it rack} is a left-distributive left-quasigroup, i.e.
%$$x*(y*z)\approx (x*y)*(x*z)$$
%holds. 
%An idempotent rack is called {\it quandle}. According to \eqref{mediality_on_mapping}, for a medial left quasigroup $Q$ the following are equivalent:
%\begin{itemize}
%\item $Q$ is a rack.
%\item $L_{a*a}=L_a$ for every $a\in Q$.
%\end{itemize}
%In particular, medial idempotent left quasigroups are quandles. 

%\subsection*{Medial racks}
According to Corollary \ref{medial iff}, a rack $Q$ is medial if and only if $\dis(Q)$ is abelian (such characterization can be found also in \cite{Medial}).
%Medial racks have the following characterization:
%\begin{proposition}\label{rack is medial iff} \cite{Medial}
%A rack $Q$ is medial if and only if $\dis(Q)$ is abelian.
%\end{proposition}
%\begin{example}
Examples of medial racks are the following:
\begin{itemize}
\item[(i)] %A {\it permutation left-quasigroup} $Q$ is a left quasigroup such that the set $|Q/\lambda_Q|=1$. Hence $a*b=\theta(b)$ for some $\theta\in \sym_Q$ for every $a,b\in Q$.
any permutation left quasigroup is a medial rack.
\item[(ii)] The affine left-quasigroup $\aff(A,1-f,f,c)$ where $f(c)=c$ is a medial rack.
%\item[(ii)] Let $A$ be an abelian group, $f\in \aut{A}$ and $c\in A$. The affine left-quasigroup $\aff(A,f)=\aff(A,1-f,f,c)$ where $x*y=(1-f)(x)+f(y)+c$ is a medial rack.

\end{itemize}

Let $Q$ be a left quasigroup, we define the equivalence relation $\m_Q$ as
\begin{eqnarray*}
a\, \m_Q\, b\,  \text{ if and only if } \,Sg(a)=Sg(b).
\end{eqnarray*}
 The equivalence $\m_Q$ is actually a congruence for racks \cite[Proposition 7.1]{covering_paper} and $\m_Q\leq\lambda_Q$.

\begin{theorem}\label{medial superconnected racks}
Let $Q$ be a superconnected medial rack. Then 
$$Q\cong \aff(A,1-f,f,0)\times (C,+1)%\cong \aff(A\times C, (1-f,id),(f,id),(0,1)),
$$ 
where $C$ is a cyclic group, $A$ is a abelian group and $1-f\in\aut{A}$. %and $Fix(f)=0$.
%
%following are equivalent:
%\begin{itemize}
%\item[(i)] $Q$ is superconnected.
%\item[(ii)] $Q/\m_Q$ is a superconnected quandle.
%\item[(iii)] $Q\cong \aff(A,f)\times (C,+1)$ where $C$ is a cyclic group, $A$ is a abelian group and $1-f\in\aut{A}$.
%\end{itemize}
\end{theorem}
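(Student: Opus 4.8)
The plan is to obtain the decomposition from Proposition \ref{normalized cocycles are constant}: if I can show that $R:=Q/\lambda_Q$ is a quasigroup, then that proposition gives $Q\cong R\times P$ with $P$ a permutation left quasigroup, and it remains only to identify the two factors. Note first that $\lambda_Q$ is a congruence since racks are semimedial (Proposition \ref{kernel of lambda}), so $R$ is a well-defined medial rack.

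First I would record that $R$ is in fact a \emph{connected medial quandle}. It is connected, being a homomorphic image of the connected $Q$. It is idempotent because in any rack the law \eqref{LD} with $x=y=a$ gives $L_aL_a=L_{a*a}L_a$, hence $L_{a*a}=L_a$; thus $a*a\mathrel{\lambda_Q}a$ and $[a]*[a]=[a]$ in $R$. Moreover $R$ is superconnected: if $\bar S\leq R$, then its full preimage $S=p^{-1}(\bar S)$ under the projection $p\colon Q\to R$ is a subalgebra of $Q$, hence connected, and $\bar S=p(S)$ is a homomorphic image of a connected left quasigroup, hence connected.

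Next I would invoke the structure theory of connected medial quandles. Since $\dis(R)$ is abelian (Corollary \ref{medial iff}, $R$ being a quotient of the medial $Q$) and acts transitively (as $R$ is a connected quandle), it acts regularly; hence $R$ is principal and $R\cong\aff(A,1-g,g,0)$ for the abelian group $A=\dis(R)$ and some $g\in\aut A$, with $1-g$ surjective by connectedness (see \cite{Medial}). Here superconnectedness is what upgrades surjectivity to bijectivity: if $1-g$ had a nonzero element $k$ in its kernel, then $g(k)=k$ and $\{0,k\}$ would be a two-element subquandle on which both left translations are the identity, i.e.\ a non-connected subalgebra, contradicting superconnectedness. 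Hence $1-g\in\aut A$, so the right translations $a\mapsto (1-g)(a)+g(b)$ of $R$ are bijective and $R$ is a quasigroup (equivalently, $R$ is faithful, by Proposition \ref{faithful medial}).

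Finally, Proposition \ref{normalized cocycles are constant} applies and yields $Q\cong R\times P$ with $P$ a permutation left quasigroup. As a homomorphic image of $Q$, the factor $P$ is connected, and the connected permutation left quasigroups are exactly the $(C,+1)$ with $C$ cyclic (Section \ref{Sec 1}). Writing $f=g$ gives $Q\cong\aff(A,1-f,f,0)\times(C,+1)$ with $1-f\in\aut A$. I expect the only genuinely delicate point to be the passage from connected to latin for the faithful factor in the infinite case; this is precisely the step carried by superconnectedness through the two-element projection subquandle argument, whereas everything else is either a direct quotient computation or a citation of the medial-quandle structure theorem.
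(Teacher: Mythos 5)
Your proof is correct, and it shares the paper's skeleton: pass to a quandle quotient which is a superconnected medial quandle, show that this quotient is latin and affine, and then split off a connected permutation factor via Proposition \ref{normalized cocycles are constant}. There are two genuine differences. First, the paper quotients by $\m_Q$ (equality of generated subalgebras), whereas you quotient by the Cayley kernel $\lambda_Q$; since $\m_Q\leq\lambda_Q$ and both factors are superconnected medial quandles, either choice works, but yours has the small advantage that Proposition \ref{normalized cocycles are constant} applies verbatim (it is stated for $\lambda_Q$), while the paper implicitly re-runs that proposition's argument for a cocycle over $Q/\m_Q$. Second, for the key structural fact that a superconnected medial quandle is latin of the form $\aff(A,1-f,f,0)$ with $1-f\in\aut{A}$, the paper cites \cite[Corollary 2.6]{Principal} as a black box; you instead derive it from the structure theory of connected medial quandles in \cite{Medial} ($\dis(R)$ abelian and transitive, hence regular, so $R\cong\aff(A,1-g,g,0)$ with $1-g$ surjective) and supply the remaining step yourself: superconnectedness forces $1-g$ to be injective, via the two-element projection subquandle $\{0,k\}$. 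This is more self-contained and isolates exactly where superconnectedness enters, at the price of length; the paper's citation buys brevity. Only trivia are missing: in the $\{0,k\}$ step you should also note closure under $\ldiv$ (immediate from $g(k)=k$), and your explicit verifications that $R$ is idempotent and superconnected fill in steps the paper asserts without proof.
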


\begin{proof} %\comment{$\bigstar$ improve the proof} 
%(i) $\Leftrightarrow$ (ii) It is Lemma \ref{rack superconnected iff}.
%
%(ii) $\Rightarrow$ (iii)
 The quandle $Q/\m_Q$ is a superconnected medial quandle and so it is latin and $Q/\m_Q\cong \aff(A,1-f,f,0)$ where $1-f\in \aut{A}$ \cite[Corollary 2.6]{Principal}. Then $Q\cong Q/\m_Q\times_\theta S$ for some medial cocycle $\theta$.  Hence, according to Proposition \ref{normalized cocycles are constant}, $Q$ is the direct product of $Q/\m_Q$ and $(S,\theta_{0,0})$ which is a connected permutation rack, and so it is isomorphic to $(C,+1)$ for some cyclic group $C$.
%
%(iii) $\Rightarrow$ (ii) The factor $Q/\m_Q$
%
%A subrack $M$ of $Q$ is the union of blocks of its elements with respect to $\m_Q$. Since $Q/\m_Q=\aff(A,f)$ is superconnected, then the image of $M$ in $Q/\m_Q$ is connected. Hence if $a,b\in M$ then there exists $h\in \lmlt{Q}$ such that $h(a)\,\m_Q\, b$ and so $b=L_bh(a)$ for some $n\in \mathbb{N}$.
%%\noindent Let $Q$ be a finite connected medial then $\dis(Q)= \dis(Q/\m_Q)\times \dis((\mathbb{Z}_2,+1)$ is abelian. If $h(a,s)=(h(a),s)=(a,s)$ then $h=1$, since $\dis(Q/\m_Q)$ is regular and $\dis((\mathbb{Z}_2,+1)$ is trivial. Hence $Q$ is abelian.
\end{proof}

%
%\begin{theorem}\label{finite medial connected racks}
%Let $Q$ be a finite medial rack. The following are equivalent:
%\begin{itemize}
%\item[(i)] $Q/\m_Q$ is a connected quandle.
%\item[(ii)] $Q\cong \aff(A,f)\times (\mathbb{Z}_n,+1)$ where $A$ is a abelian group and $1-f\in\aut{A}$.
%\item[(iii)] $Q$ is connected.
%\end{itemize}
%\end{theorem}
%
%\begin{proof}
%(i) $\Rightarrow$ (ii) If the factor $Q/\m_Q$ is connected and then the blocks of $\m_Q$ have all the same size. Since $\m_Q\leq \lambda_Q$. then $Q\cong Q/\m_Q\times_\theta S$ for some medial cocycle $\theta$. The quandle $Q/\m_Q$ is a connected finite medial quandle and so it is latin and $Q/\m_Q\cong \aff(A,f)$ where $1-f\in \aut{A}$. Then according to Proposition \ref{normalized cocycles are constant} then $Q\cong Q/\m_Q\times_{\theta} S$ where $\theta_{x,y}=\theta_{0,0}$ for every $x,y\in Q/\m_Q$. Therefore $Q$ is the direct product of $Q/\m_Q$ and $(S,\theta_{0,0})$. The blocks of $\m_Q$ are pairwise isomorphic connected permutation quandles, and so they are isomorphic to $(\mathbb{Z}_n,+1)$ for some $n\in\mathbb{N}$.
%
%(ii) $\Rightarrow$ (iii) Both $Q/\m_Q$ and $(\mathbb{Z}_n,+1)$ are connected, and so $Q$ is connected too.
%
%(iii) $\Rightarrow$ (i) A factor of a connected quandle is connected.
%%\noindent Let $Q$ be a finite connected medial then $\dis(Q)= \dis(Q/\m_Q)\times \dis((\mathbb{Z}_2,+1)$ is abelian. If $h(a,s)=(h(a),s)=(a,s)$ then $h=1$, since $\dis(Q/\m_Q)$ is regular and $\dis((\mathbb{Z}_2,+1)$ is trivial. Hence $Q$ is abelian.
%\end{proof}
We can extend the results on abelian quandles of \cite{affine_quandles} to medial superconnected rack.
\begin{corollary}\label{abelian iff aff for racks}
Let $Q$ be a superconnected rack. The following are equivalent:
\begin{itemize}
\item[(i)] $Q$ is medial.
\item[(ii)] $Q$ is affine.
\item[(iii)] $Q$ is abelian.
\end{itemize}
\end{corollary}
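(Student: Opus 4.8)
The plan is to close the cycle of implications $(i)\Rightarrow(ii)\Rightarrow(iii)\Rightarrow(i)$, leaning on the structure theorem \ref{medial superconnected racks} for the one substantive direction. Throughout I would use the standing fact that a rack is semimedial, so that the whole machinery developed for semimedial left quasigroups (in particular Corollary \ref{medial iff}) is available for $Q$.

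Two of the three arrows are essentially free. For $(ii)\Rightarrow(iii)$ I would just invoke the observation, recorded right after the definition of abelianness, that every affine left quasigroup $\aff(A,f,g,c)$ is abelian, being a reduct of a module. For $(iii)\Rightarrow(i)$ I would note that, since $Q$ is a rack, it is semimedial; an abelian semimedial left quasigroup is medial by the final assertion of Corollary \ref{medial iff}, so an abelian $Q$ is medial.

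The content lies in $(i)\Rightarrow(ii)$. Here I would apply Theorem \ref{medial superconnected racks} to present $Q\cong \aff(A,1-f,f,0)\times (C,+1)$, where $A$ is an abelian group, $C$ is cyclic, $1-f\in\aut A$, and moreover $f\in\aut A$, since $f$ is the multiplier acting on the second variable of the affine factor and hence must be an automorphism by the definition of an affine left quasigroup. The permutation factor is itself affine, as $(C,+1)=\aff(C,0,1,1)$. It then remains to see that a direct product of affine left quasigroups is affine over the product of the underlying groups. Concretely, the product operation on $A\times C$ is
\[(a,x)*(b,y)=\big((1-f)(a)+f(b),\; y+1\big),\]
and this coincides with $\aff(A\times C,F,G,c)$, where $F(a,x)=((1-f)(a),0)$, $G(b,y)=(f(b),y)$ and $c=(0,1)$. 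One has $F\in\End{A\times C}$, while $G\in\aut{A\times C}$ because $f\in\aut A$ and the identity is an automorphism of $C$; thus this is a genuine affine presentation and $Q$ is affine.

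The only real obstacle is $(i)\Rightarrow(ii)$, and even that is largely absorbed by Theorem \ref{medial superconnected racks}: once $Q$ is exhibited as the product of an affine left quasigroup with a connected permutation rack, what is left is the bookkeeping of assembling the affine data of a direct product. The single point deserving care is checking that the assembled second-variable map $G$ is an automorphism, which reduces exactly to $f\in\aut A$; the rest is a routine verification on the two coordinates by unwinding the definitions of the two affine factors.
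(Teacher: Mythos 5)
Your proposal is correct and follows essentially the same route as the paper: the cycle $(i)\Rightarrow(ii)\Rightarrow(iii)\Rightarrow(i)$, with Theorem \ref{medial superconnected racks} supplying the decomposition $Q\cong \aff(A,1-f,f,0)\times (C,+1)$, the identification $(C,+1)\cong\aff(C,0,\mathrm{id},1)$, and the same explicit affine presentation $\aff(A\times C,(1-f,0),(f,\mathrm{id}),(0,1))$ of the product, while the remaining arrows are handled exactly as in the paper (affine implies abelian as a module reduct; abelian semimedial implies medial via Corollary \ref{medial iff}). Your extra remark that $G$ is an automorphism because $f\in\aut{A}$ is a harmless elaboration of a point the paper leaves implicit in the notation $\aff(A,1-f,f,0)$.
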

\begin{proof}
(i) $\Rightarrow$ (ii) According to Theorem \ref{medial superconnected racks}, $Q\cong \aff(A,1-f,f,0)\times (C,+1)$. The rack $Q= (C,+1)$ is isomorphic to $\aff(C,0,id,1)$. The direct product of affine quandles is affine, and in particular we have that $Q\cong \aff(A\times C,(1-f, 0), (f, id), (0,1))$.% and therefore abelian.

(ii) $\Rightarrow$ (iii) Affine left quasigroup are abelian.

(iii) $\Rightarrow$ (i) If $Q$ is abelian, then $\dis(Q)$ is abelian and so $Q$ is medial.
\end{proof}

In particular, Theorem \ref{medial superconnected racks} and Corollary \ref{abelian iff aff for racks} applies to finite medial connected racks.

\begin{corollary}\label{finite medial connected racks}
Let $Q$ be a finite connected rack. The following are equivalent:
%\begin{itemize}
%\item[(i)] $Q$ is connected.
%\item[(ii)] $Q/\m_Q$ is a connected quandle.
%\item[(iii)] $Q\cong \aff(A,f)\times (\mathbb{Z}_n,+1)$ where $A$ is a abelian group and $1-f\in\aut{A}$.
%\end{itemize}
%Let $Q$ be a finite connected rack. The following are equivalent:
\begin{itemize}
\item[(i)] $Q$ is medial.
%\item[(ii)] $Q$ is affine.
\item[(ii)] $Q$ is abelian.
\item[(iii)] $Q\cong \aff(A,1-f,f,0)\times(\mathbb{Z}_n,+1)$ where $A$ is a abelian group and $Fix(f)=0$.
\end{itemize}
\end{corollary}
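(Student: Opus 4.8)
The plan is to prove the cycle of implications (iii) $\Rightarrow$ (ii) $\Rightarrow$ (i) $\Rightarrow$ (iii), leaning on the structural results already established for superconnected racks. The two implications (iii) $\Rightarrow$ (ii) and (ii) $\Rightarrow$ (i) are the routine ones. For (iii) $\Rightarrow$ (ii) I would note that $(\mathbb{Z}_n,+1)\cong \aff(\mathbb{Z}_n,0,id,1)$ and that a direct product of affine left quasigroups is again affine, so the algebra in (iii) is affine; since affine left quasigroups are reducts of modules, they are abelian. For (ii) $\Rightarrow$ (i), if $Q$ is abelian then $\dis(Q)$ is abelian, and as a rack is semimedial, Corollary \ref{medial iff} immediately yields that $Q$ is medial.

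The real content is (i) $\Rightarrow$ (iii), and here the plan is to reduce to the superconnected case settled in Theorem \ref{medial superconnected racks} and Corollary \ref{abelian iff aff for racks}. The step I expect to be the main obstacle is precisely the passage from the hypothesis ``finite connected'' to the hypothesis ``superconnected'' under which those results were stated; concretely, one must verify that a finite connected medial rack is superconnected (so that the cited results literally apply), or else re-run the argument of Theorem \ref{medial superconnected racks} under the weaker finiteness hypothesis. The natural route is to pass to the quotient $Q/\m_Q$, which is a finite connected medial quandle. The crux is that such a quandle is latin (by the structure theory of connected medial quandles in \cite{Principal, Medial}, where over a finite abelian group connectedness forces $1-f$ to be bijective and hence the quandle to be a quasigroup), exactly as in the superconnected case. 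With $Q/\m_Q$ latin, the cocycle computation of Theorem \ref{medial superconnected racks} applies, giving $Q\cong Q/\m_Q\times_\theta S$, and Proposition \ref{normalized cocycles are constant} then splits $Q$ as the direct product of $Q/\m_Q\cong \aff(A,1-f,f,0)$ and a connected permutation rack $(S,\theta)$.

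Finally I would read off the precise shape asserted in (iii). The connected permutation rack factor is a finite cyclic permutation rack, hence isomorphic to $(\mathbb{Z}_n,+1)$ for some $n$. The finiteness of $A$ also converts the condition $1-f\in\aut{A}$ supplied by the superconnected theorem into the condition $Fix(f)=0$: indeed $Fix(f)=\ker{1-f}$, so $1-f$ is injective if and only if $Fix(f)=0$, and over a finite abelian group injectivity is equivalent to bijectivity. Combining these observations produces the decomposition $Q\cong\aff(A,1-f,f,0)\times(\mathbb{Z}_n,+1)$ with $A$ abelian and $Fix(f)=0$, which closes the cycle and establishes the equivalence of (i), (ii) and (iii).
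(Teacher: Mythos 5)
Your proof is correct, and it shares the paper's core reduction: pass to $Q/\m_Q$, observe it is a finite connected medial quandle and hence latin, then split $Q$ off as a product with a connected permutation rack. The genuine difference lies in how the gap between ``connected'' and ``superconnected'' is bridged. The paper resolves it by lifting superconnectedness to $Q$ itself: since $Q/\m_Q$ is latin, hence superconnected, it invokes \cite[Proposition 3.4]{Maltsev_paper} to conclude that $Q$ is superconnected, and then applies Corollary \ref{abelian iff aff for racks} (for (ii)) and Theorem \ref{medial superconnected racks} (for (iii)) verbatim as black boxes. You instead take the branch you yourself identified as the alternative: you re-run the cocycle argument of Theorem \ref{medial superconnected racks}, noting that it only needs $Q/\m_Q$ to be latin so that Proposition \ref{normalized cocycles are constant} applies. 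This makes your argument self-contained within the paper (no appeal to \cite{Maltsev_paper}), at the cost of repeating the splitting computation; the paper's version is shorter given its imported machinery, and additionally records the stronger intermediate fact that $Q$ is superconnected. Your cycle (iii)~$\Rightarrow$~(ii)~$\Rightarrow$~(i)~$\Rightarrow$~(iii) also differs cosmetically from the paper's organization, which proves (iii)~$\Rightarrow$~(i) by closure of mediality under direct products and obtains (ii) from Corollary \ref{abelian iff aff for racks}; your (ii)~$\Rightarrow$~(i) via Corollary \ref{medial iff} is exactly the argument used inside that corollary, so nothing is lost. Finally, you are more careful than the paper on one point: you make explicit that, for finite $A$, the condition $1-f\in\aut{A}$ delivered by Theorem \ref{medial superconnected racks} is equivalent to $Fix(f)=\Ker{1-f}=0$ as stated in (iii), a conversion the paper leaves implicit.
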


\begin{proof}
Assume that $Q$ is medial. Since $Q$ is finite, then $Q/\m_Q$ is a finite connected medial quandle. Then $Q/\m_Q$ is latin and then superconnected. According to \cite[Proposition 3.4]{Maltsev_paper} then $Q$ is superconnected.
According to Corollary \ref{abelian iff aff for racks} (ii) holds and and by Theorem \ref{medial superconnected racks} (iii) holds. On the other hand, if (iii) holds, then $Q$ is medial since it is the direct product of medial racks.
\end{proof}

\section{Mal'tsev conditions}\label{Sec 5}

Mal'tsev conditions for left quasigroups have been investigated in \cite{Maltsev_paper}, with particular focus on quandles. In this section we partially extend the same results to medial left quasigroups. 

In \cite{Maltsev_paper} we proved that if a variety of left quasigroups satisfies a non-trivial Malt'sev condition then it has a {\it Malt'sev term} term, i.e.  a ternary term $m$ satisfying the following identities:
$$m(x,y,y)\approx x \approx m(y,y,x).$$
A variety is said to be {\it Malt'sev} if it has a Malt'sev term.

The variety of $n$-multipotent left quasigroups is axiomatized by the identity
$$L_{\s^{n-1}(x)}L_{\s^{n-2}(x)}\ldots L_{\s(x)}L_x(x)\approx L_{\s^{n-1}(y)}L_{\s^{n-2}(y)}\ldots L_{\s(y)}L_y(y).$$
According to \cite[Section 2]{Maltsev_paper} every variety of multipotent left quasigroup has a Malt'sev term. In particular, every multipotent left quasigroup is superconnected. The term
$$m(x,y,z)=\left(L_{\s^{n-2}(x)}\ldots L_{\s(x)}L_x\right)^{-1}  L_{\s^{n-2}(y)}\ldots L_{\s(y)}L_y(z)$$
is a Malt'sev term for $n$-multipotent left quasigroups. 

Any Malt'sev variety omits strongly abelian congruences \cite[Theorem 3.13]{shape}. In particular, the left quasigroups in a Cayley Malt'sev variety are faithful. Let us start with a general observation, following by this fact. 

\begin{proposition}\label{dis is transitive}
Let $\mathcal{V}$ be a Malt'sev variety of left quasigroups. Then $\dis(Q)$ is transitive on $Q$ for every $Q\in \mathcal{V}$.
\end{proposition}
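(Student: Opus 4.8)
The plan is to reduce the statement to a triviality argument about the displacement-orbit quotient. Set $\pi=\mathcal{O}_{\dis(Q)}$. By Corollary \ref{on orbits2} this is a congruence of $Q$ and the factor $Q/\pi$ is a permutation left quasigroup; moreover $\dis(Q)$ acts transitively on $Q$ precisely when $\pi=1_Q$, that is, when $Q/\pi$ is the trivial one-element left quasigroup. So it suffices to prove $|Q/\pi|=1$.

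First I would identify permutation left quasigroups as exactly the $1$-reductive ones: the reductive identity \eqref{reductive} for $n=1$ reads $y*x_1\approx z*x_1$, which says $L_y=L_z$ for all $y,z$, i.e.\ $\lambda_Q=1_Q$. Since $\lambda_{Q/\pi}=1_{Q/\pi}$ is always a congruence, $Q/\pi$ is a Cayley left quasigroup, so I may invoke \cite[Theorem 5.3]{covering_paper}: being $1$-reductive, $Q/\pi$ is $1$-strongly solvable. Unwinding the definition of strong solvability for the chain $0_{Q/\pi}\leq 1_{Q/\pi}$, this means precisely that $1_{Q/\pi}$ is a strongly abelian congruence of $Q/\pi$.

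Finally, since $\mathcal{V}$ is a variety it is closed under homomorphic images, so $Q/\pi\in\mathcal{V}$. As $\mathcal{V}$ is Malt'sev it omits strongly abelian congruences \cite[Theorem 3.13]{shape}; hence the strongly abelian congruence $1_{Q/\pi}$ must coincide with $0_{Q/\pi}$. But $1_A=0_A$ forces $|A|\leq 1$, so $|Q/\pi|=1$, giving $\pi=1_Q$ and therefore the transitivity of $\dis(Q)$ on $Q$.

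I expect the only delicate point to be the passage in the last step: one must be sure that omitting strongly abelian congruences is being applied to the concrete algebra $Q/\pi\in\mathcal{V}$ and genuinely yields $1_{Q/\pi}=0_{Q/\pi}$, rather than some weaker statement about minimal covers. An equivalent route avoiding the strong-solvability black box would be to observe that the subvariety generated by $Q/\pi$ inside $\mathcal{V}$ consists entirely of permutation left quasigroups (these are closed under subalgebras, products and homomorphic images, since in each case the product depends only on the second coordinate), hence is a Cayley Malt'sev variety whose members are faithful by the remark preceding the statement; a faithful permutation left quasigroup satisfies $\lambda=0_{Q/\pi}=1_{Q/\pi}$ and is therefore trivial.
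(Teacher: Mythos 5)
Your proposal is correct and follows essentially the same route as the paper's own proof: pass to the quotient $P=Q/\mathcal{O}_{\dis(Q)}$, note it is a permutation left quasigroup by Corollary \ref{on orbits2}, use \cite[Theorem 5.3]{covering_paper} to see that $1_P=\lambda_P$ is a strongly abelian congruence, and conclude from the fact that a Malt'sev variety omits strongly abelian congruences that $P$ is trivial. Your extra unwinding of $1$-reductivity and of strong solvability just makes explicit the steps the paper compresses into one sentence.
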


\begin{proof}
Let $Q\in \mathcal{V}$ and $P=Q/\mathcal{O}_{\dis(Q)}$. According to Corollary \ref{on orbits2}, $P$ is a permutation left quasigroup and according to \cite[Theorem 5.3]{covering_paper} $1_P=\lambda_{P}$ is a strongly abelian congruences. The variety $\mathcal{V}$ does not contain any non-trivial strongly abelian congruence, then $1_P=0_P$, i.e. $\dis(Q)$ is transitive on $Q$.
\end{proof}

Finite Malt'sev medial left quasigroups are quasigroups.
\begin{lemma}
Let $Q$ be a finite medial left-quasigroup. The following are equivalent:
\begin{itemize}
\item[(i)] The variety generated by $Q$ is Mal'tsev.
\item[(ii)] $Q$ is a medial quasigroup.
\end{itemize}
\end{lemma}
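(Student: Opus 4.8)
The plan is to prove the two implications separately. The direction (i)$\Rightarrow$(ii) is essentially bookkeeping of facts already established, whereas (ii)$\Rightarrow$(i) requires producing an explicit Malt'sev term in the language $\{*,\ldiv\}$.

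For (i)$\Rightarrow$(ii): mediality is the identity \eqref{M}, so it is inherited by homomorphic images, subalgebras and products, and every member of the variety $\mathcal V(Q)$ generated by $Q$ is medial, in particular semimedial. By Proposition \ref{kernel of lambda} every semimedial left quasigroup has the Cayley property, so $\mathcal V(Q)$ is a variety of Cayley left quasigroups which, by hypothesis, is also Malt'sev. By the observation preceding Proposition \ref{dis is transitive} --- that a Malt'sev variety omits strongly abelian congruences and hence its Cayley members are faithful --- the left quasigroup $Q$ is faithful. Since $Q$ is a finite faithful medial left quasigroup, Proposition \ref{faithful medial} yields that $Q$ is a quasigroup, so $Q$ is a medial quasigroup.

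For (ii)$\Rightarrow$(i): it suffices to exhibit one $\{*,\ldiv\}$-term $m(x,y,z)$ satisfying the Malt'sev identities $m(x,y,y)\approx x\approx m(y,y,x)$ in $Q$, since being identities they then hold throughout $\mathcal V(Q)$, making it Malt'sev. First I would use the fact recalled in Section \ref{Sec 1} that in the finite quasigroup $Q$ the right division is a term of the reduct: each right multiplication $R_a\colon x\mapsto x*a$ is a bijection, so choosing $N$ with $R_a^N=\mathrm{id}$ for every $a\in Q$ (for instance $N=|Q|!$) gives $x/a=R_a^{N-1}(x)$, an iterated right multiplication and hence a $\{*\}$-term. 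Then I would take the standard quasigroup Malt'sev term
$$m(x,y,z)=\left(x/(y\ldiv y)\right)*(y\ldiv z)$$
and verify the identities from the quasigroup axioms: putting $u=y\ldiv y$ one has $y*u=y$, whence $y/u=y$ by right cancellativity, so $m(y,y,x)=y*(y\ldiv x)=x$, while $m(x,y,y)=(x/u)*u=x$. Replacing $x/w$ by $R_w^{N-1}(x)$ turns $m$ into a genuine $\{*,\ldiv\}$-term computing the same function on $Q$, so the two identities hold in $Q$, hence in $\mathcal V(Q)$.

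I expect the main obstacle to be precisely the term construction in (ii)$\Rightarrow$(i): the natural Malt'sev term for quasigroups uses right division, which is not a basic operation of the left quasigroup reduct, and the Malt'sev property must be witnessed by a term of the reduct if it is to propagate as an identity through $\mathcal V(Q)$. Finiteness is exactly what removes this difficulty, since it lets us express $/$ as an iterated right multiplication with a uniform exponent; this is where the finiteness hypothesis is used, and mediality plays no role in this direction. By contrast, (i)$\Rightarrow$(ii) uses mediality (via the Cayley property and Proposition \ref{faithful medial}) but no term construction.
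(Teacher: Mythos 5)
Your proof is correct, and your direction (i)$\Rightarrow$(ii) is exactly the paper's argument: the paper's proof consists of the two sentences ``In the variety generated by $Q$ there is no strongly abelian congruence. Then $Q$ is faithful and so $Q$ is a medial quasigroup by virtue of Proposition~\ref{faithful medial}.'' --- i.e.\ precisely your chain through the Cayley property of (semi)medial left quasigroups and the faithfulness of Cayley members of Malt'sev varieties. The genuine difference is that the paper's proof stops there: the converse (ii)$\Rightarrow$(i) is not argued at all, being treated as implicit in the remark from Section~\ref{Sec 1} that a finite quasigroup $(Q,*,\ldiv,/)$ is term equivalent to its left quasigroup reduct ``(right divisions can be expressed as a power of right multiplications)''. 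You make this implicit step explicit and verify it: writing $x/a=R_a^{N-1}(x)$ with $R_a^N=\mathrm{id}$ and checking the Malt'sev identities for $m(x,y,z)=\bigl(x/(y\ldiv y)\bigr)*(y\ldiv z)$ is exactly the content of that remark combined with the standard quasigroup Malt'sev term, and your verification (using right cancellativity to get $y/(y\ldiv y)=y$) is sound. What your version buys is self-containedness and a correct identification of where finiteness enters (only in expressing $/$ as a $\{*\}$-term, not in the (i)$\Rightarrow$(ii) direction beyond Proposition~\ref{faithful medial}); what the paper buys by omitting it is brevity, at the cost of leaving the reader to recall the Section~\ref{Sec 1} remark.
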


\begin{proof}
In the variety generated by $Q$ there is no strongly abelian congruence. Then $Q$ is faithful and so $Q$ is a medial quasigroup by virtue of  Proposition \ref{faithful medial}.
\end{proof}

\begin{question}
Do infinite medial Malt'sev left quasigroups which are not quasigroups exist?
\end{question}

Meet-semidistributive varieties omit solvable congruences \cite[Theorem 8.1]{shape} and then in particular every congruence is equal to the commutator with itself. Therefore we have the following.

\begin{proposition}\label{no medial in meet semi}
%Let $\mathcal{V}$ be a variety of semimedial left-quasigroups. Then $\mathcal{V}$ is meet-semidistributive if and only if it contains no non-trivial medial left-quasigroups.  
%\begin{itemize}
%\item[(i)] $\mathcal{V}$ contains no non-trivial medial left-quasigroups. 
%%\item[(ii)] $\mathcal{V}$ contains no non-trivial multipotent medial left-quasigroups. 
%\item[(iii)] $\mathcal{V}$ contains no non-trivial finite medial left-quasigroups. 
Let $\mathcal{V}$ be a meet-semidistributive variety of semimedial left quasigroup. Then:
\begin{itemize}
\item[(i)] $\mathcal{V}$ does not contain any non-trivial medial left quasigroup.
\item[(ii)] $\mathcal{V}$ does not contain any non-trivial finite $2$-divisible quasigroup.

\item[(iii)] $E(Q)$ is either infinite or trivial for every $Q\in \mathcal{V}$.
\end{itemize}
%\end{itemize}
\end{proposition}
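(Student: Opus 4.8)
The plan is to treat all three items through one principle: since $\mathcal{V}$ is meet-semidistributive it omits solvable congruences, so no member of $\mathcal{V}$, nor any of its subalgebras or factors, can carry a non-trivial abelian or strongly abelian congruence, and in particular a non-trivial algebra of $\mathcal{V}$ can never be solvable. Given this, (i) is immediate: a medial $Q\in\mathcal{V}$ is nilpotent of length at most $2$ by Corollary \ref{medial 2 div are nilp}, hence solvable, so $1_Q$ would be a non-trivial solvable congruence unless $1_Q=0_Q$, i.e. $Q$ is trivial. For (ii), a finite $2$-divisible quasigroup $Q\in\mathcal{V}$ is a finite $2$-divisible semimedial quasigroup (semimediality is inherited from $\mathcal{V}$), hence solvable by Corollary \ref{semimedial quasi are solv}, and the same argument forces $Q$ trivial.

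For (iii) I would first note that $E(Q)$, being the set of fixed points of the endomorphism $\s$ (cf. the discussion following \eqref{squaring for medial}), is a subalgebra of $Q$; it is idempotent and semimedial, hence a quandle, and it lies in $\mathcal{V}$. So it suffices to show that a finite non-trivial quandle cannot belong to $\mathcal{V}$. If $\dis(E(Q))$ is not transitive, then by Corollary \ref{on orbits2} the factor $E(Q)/\O_{\dis(E(Q))}$ is a non-trivial idempotent permutation left quasigroup, that is, a non-trivial projection quandle; such a quandle is $1$-reductive, hence strongly solvable by \cite[Theorem 5.3]{covering_paper}, so it carries a non-trivial strongly abelian congruence, which is impossible in $\mathcal{V}$. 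Hence $\dis(E(Q))$, and a fortiori $\lmlt(E(Q))$, acts transitively.

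In the connected case I would exploit that for a rack $\N(E(Q))$ is the full lattice of normal subgroups of $\lmlt(E(Q))$. If this group had a non-trivial abelian normal subgroup $N$, then $N\in\N(E(Q))$ is abelian, so $\O_N$ is an abelian congruence by Lemma \ref{abelian subgroup gives abelian cong for LTT}; moreover $\O_N\neq 0_{E(Q)}$, since $\lmlt(E(Q))$ acts faithfully and $N\neq 1$ moves a point. This non-trivial abelian congruence again contradicts meet-semidistributivity. Therefore $\lmlt(E(Q))$ can have no non-trivial abelian normal subgroup, i.e. its socle is a direct product of non-abelian simple groups.

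The main obstacle is exactly this last configuration: a finite connected quandle whose left multiplication group is semisimple in this sense, for instance the conjugation quandle on a conjugacy class of a non-abelian simple group. Here the congruence/displacement dictionary no longer yields an abelian congruence, as such quandles tend to be simple with non-abelian $1_{E(Q)}$ and their finite powers have distributive congruence lattices, so no non-trivial solvable congruence appears among the ``small'' members of the variety. Ruling out this case seems to require genuinely variety-level information rather than the correspondence with $\dis$: one would argue, via tame congruence theory or the absence of suitable weak near-unanimity (Taylor/absorption) term operations in racks, that such a quandle cannot generate a meet-semidistributive variety. I expect establishing this to be the crux; once it is in place, $E(Q)$ admits no finite non-trivial configuration and hence is infinite or trivial.
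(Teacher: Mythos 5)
Your treatment of (i) and (ii) is correct and is essentially the paper's own argument: meet-semidistributivity excludes non-trivial solvable members by \cite[Theorem 8.1]{shape}, and Corollary \ref{medial 2 div are nilp} (nilpotence of medial left quasigroups) together with Corollary \ref{semimedial quasi are solv} (solvability of finite $2$-divisible semimedial quasigroups) then force any such member of $\mathcal{V}$ to be trivial.

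The gap is in (iii), and it sits exactly where you placed it. The paper does not carry out the group-theoretic analysis you began; after observing, as you do, that $E(Q)$ is an idempotent semimedial subalgebra of $Q$ --- hence a quandle lying in the meet-semidistributive variety $\mathcal{V}$ --- it closes the argument in one stroke by citing \cite[Theorem 4.6]{Maltsev_paper}: there is no non-trivial finite meet-semidistributive quandle. Your partial reduction is sound as far as it goes: connectedness of a finite non-trivial $E(Q)\in\mathcal{V}$ follows from Corollary \ref{on orbits2} plus the fact that a non-trivial projection quandle is $1$-reductive, hence strongly solvable; and the exclusion of non-trivial abelian normal subgroups of $\lmlt(E(Q))$ follows from Lemma \ref{abelian subgroup gives abelian cong for LTT}, since for racks $\N(E(Q))$ is the full lattice of normal subgroups of $\lmlt(E(Q))$. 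But this leaves untouched precisely the case you flag: finite connected quandles whose left multiplication group has trivial solvable radical, such as the conjugation quandle on a conjugacy class of a non-abelian finite simple group. For these, the displacement/congruence dictionary produces no solvable congruence, so meet-semidistributivity is not contradicted by any tool you have in hand, and you explicitly defer this case rather than resolve it. Since that case is the entire content of the cited theorem, your proof of (iii) is incomplete: you either need to invoke \cite[Theorem 4.6]{Maltsev_paper} as the paper does, or supply the missing variety-level argument, which is a substantial result and not a routine fill-in.
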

%\comment{can I exclude finite $2$-divisible for some reason? Maybe using \ref{equivalence}?}

%
%
%\begin{proposition}
%Let $\mathcal{V}$ be a meet-semidistributive variety of medial left-quasigroups. Then: 
%\begin{itemize}
%\item[(i)] $\mathcal{V}$ contains no finite left-quasigroups. 
%\item[(ii)] $\mathcal{V}$ contains no quandles. In particular, $E(Q)=\emptyset$ for every $Q\in \mathcal{V}$.
%\end{itemize}
%\end{proposition}

\begin{proof}
According to Corollary \ref{medial 2 div are nilp} medial left quasigroups are nilpotent, and according to Proposition \ref{semimedial quasi are solv}, finite $2$-divisible semimedial left quasigroups are solvable and so $\mathcal{V}$ does not contain any non-trivial medial left quasigroup or finite $2$-divisible quasigroup. Let $Q\in \mathcal{V}$. Then the subalgebra $E(Q)$ is a meet-semidistributive quandle. According to \cite[Theorem 4.6]{Maltsev_paper}, there is no non-trivial finite meet-semidistributive quandle and so $E(Q)$ is either infinite or trivial. 
%
%%A variety is seet-semidistributive varieties do not contain any solvable congruence. 
%According to Corollary \ref{medial 2 div are nilp} medial left quasigroups are nilpotent and abelian semimedial left quasigroups are medial. So, if $\mathcal{V}$ contains a medial left quasigroup then, it contains a nilpotent left quasigroup. If $\mathcal{V}$ contains a solvable left quasigroup then it contains an abelian semimedial left quasigroups which is medial. 
%
%\item[(ii)] If $\mathcal{V}$ contains a multipotent medial left quasigroup then it contains a unipotent one. According to Corollary \ref{unipotent nedial are abelian}, then $\mathcal{V}$ contains an abelian left quasigroup.
%
%\item[(iii)] According to Corollary \ref{medial are solvable}, finite medial left-quasigroups are solvable, so they are not contained in any meet-semidistributive variety.
%\end{itemize}
%Thus, $\mathcal{V}$ does not contain any non-trivial left quasigroups as in (i), (ii) and (iii).
\end{proof}

\begin{corollary}
There is no meet-semidistributive variety of medial left-quasigroups. %Then $|E(Q)|\leq 1$ for every $Q\in \mathcal{V}$.
\end{corollary}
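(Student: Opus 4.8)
The plan is to observe that a variety of medial left quasigroups is, a fortiori, a variety of semimedial left quasigroups, and then to read off the corollary from Proposition \ref{no medial in meet semi}(i). First I would record that the medial law implies the semimedial one: substituting $(x,y,z,u)\mapsto(x,x,y,z)$ in \eqref{M} turns $(x*y)*(z*u)\approx(x*z)*(y*u)$ into $(x*x)*(y*z)\approx(x*y)*(x*z)$, which is exactly \eqref{semi M}. Hence every medial left quasigroup is semimedial, and the variety of medial left quasigroups is a subvariety of the variety of semimedial ones.

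Next I would argue by contradiction in the usual sense. Suppose $\mathcal{V}$ is a meet-semidistributive variety all of whose members are medial. By the previous paragraph $\mathcal{V}$ is a meet-semidistributive variety of semimedial left quasigroups, so Proposition \ref{no medial in meet semi}(i) applies and tells us that $\mathcal{V}$ contains no non-trivial medial left quasigroup. But by hypothesis every algebra in $\mathcal{V}$ is medial, so every algebra in $\mathcal{V}$ is trivial; that is, $\mathcal{V}$ is the trivial variety consisting only of one-element algebras. This is precisely the content of the statement that there is no (non-trivial) meet-semidistributive variety of medial left quasigroups.

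I do not expect any genuine obstacle here: the corollary is an immediate specialization of Proposition \ref{no medial in meet semi}(i) once the inclusion of medial into semimedial is made explicit. The only point worth flagging is interpretive, namely that ``variety of medial left quasigroups'' is understood to exclude the degenerate variety of one-element algebras, whose congruence lattices are one-element chains and hence vacuously meet-semidistributive; with that convention the conclusion is exactly as stated.
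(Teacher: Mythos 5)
Your proof is correct and is exactly the route the paper intends: the corollary is stated without proof as an immediate consequence of Proposition \ref{no medial in meet semi}(i), and your argument---medial implies semimedial via the substitution $(x,y,z,u)\mapsto(x,x,y,z)$ in \eqref{M}, so the proposition forces every member of the variety to be trivial---is precisely that deduction spelled out. Your closing remark about excluding the degenerate one-element variety is the right reading of the statement.
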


\begin{corollary}
Let $\mathcal{V}$ be a meet-semidistributive variety of semimedial left quasigroup and $Q\in\mathcal{V}$. Then: 
\begin{itemize}
\item[(i)] $\dis_\alpha$ is perfect for every $\alpha\in Con(Q)$.
\item[(ii)] The only solvable subgroup in $\N(Q)$ is the trivial subgroup.
\item[(iii)] If $Q$ is $2$-divisible then $Z(N)=1$ for every $N\in \N(Q)$.
\end{itemize}
\end{corollary}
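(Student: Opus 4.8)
The plan is to derive all three parts from the two structural facts recalled just before the statement: a meet-semidistributive variety omits solvable congruences, so that $[\alpha,\alpha]=\alpha$ for every $\alpha\in Con(Q)$, and (what I would establish first) every $Q\in\mathcal{V}$ is faithful. For the latter, I would observe that by Lemma \ref{on dis_alpha}(iii) the Cayley kernel satisfies $\dis_{\lambda_Q}=1$, so by Corollary \ref{ab an central cong iff}(i) $\lambda_Q$ is an abelian congruence; omitting solvable congruences then forces $\lambda_Q=[\lambda_Q,\lambda_Q]=0_Q$. Since $\mathcal{V}$ is closed under homomorphic images, the same holds for every factor $Q/\alpha\in\mathcal{V}$, so every factor of $Q$ is faithful and Proposition \ref{p:comm_comm} becomes available.

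For (i), I would set $\beta=\alpha$ in Proposition \ref{p:comm_comm} to get $\alpha=[\alpha,\alpha]=\c{[\dis_\alpha,\dis_\alpha]}$. The derived subgroup $[\dis_\alpha,\dis_\alpha]$ belongs to $\N(Q)$: it is characteristic in the normal subgroup $\dis_\alpha$, hence normal in $\lmlt(Q)$, and $[\dis_\alpha,\dis_\alpha]^\s=[\dis_\alpha^\s,\dis_\alpha^\s]\le[\dis_\alpha,\dis_\alpha]$ because $\dis_\alpha\in\N(Q)$ gives $\dis_\alpha^\s\le\dis_\alpha$. Now the Galois connection of Theorem \ref{Galois for semimedial} converts $\alpha\le\c{[\dis_\alpha,\dis_\alpha]}$ into $\dis_\alpha\le[\dis_\alpha,\dis_\alpha]$; combined with the trivial reverse inclusion this yields $[\dis_\alpha,\dis_\alpha]=\dis_\alpha$, i.e.\ $\dis_\alpha$ is perfect. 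This paragraph is the conceptual heart of the corollary, and the hard part is really the preliminary faithfulness step, since without it Proposition \ref{p:comm_comm} (and hence the identity $[\alpha,\alpha]=\c{[\dis_\alpha,\dis_\alpha]}$) is not at our disposal.

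For (ii), let $N\in\N(Q)$ be solvable. By Lemma \ref{on orbits} the orbit relation $\O_N$ is a congruence, and $\dis_{\O_N}\le N$ is therefore solvable. Applying (i) with $\alpha=\O_N$ shows $\dis_{\O_N}$ is also perfect, and a group that is simultaneously perfect and solvable is trivial; hence $\dis_{\O_N}=1$. By Lemma \ref{on dis_alpha}(iii) this means $\O_N\le\lambda_Q=0_Q$, so $N\le\sym_Q$ acts with singleton orbits and must be the trivial subgroup.

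For (iii), assume $Q$ is $2$-divisible, so $\s$ is an automorphism of $Q$ and $\s$-conjugation is an automorphism of $\lmlt(Q)$. Given $N\in\N(Q)$, the center $Z(N)$ is characteristic in $N$, hence normal in $\lmlt(Q)$ and stable under $\s$-conjugation, so $Z(N)\in\N(Q)$; being abelian it is solvable, and part (ii) forces $Z(N)=1$. I expect the delicate point here to be precisely the verification $Z(N)\in\N(Q)$, i.e.\ the $\s$-stability of $Z(N)$: this is where $2$-divisibility does the work, ensuring that the $\s$-action is an honest automorphism of $N$ under which the characteristic subgroup $Z(N)$ is preserved. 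Once that membership is in hand, the conclusion is immediate from (ii).
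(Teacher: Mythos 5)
Your proposal is correct, and on parts (i) and (iii) it is essentially the paper's own proof: your (i) is the same combination of Proposition \ref{p:comm_comm} (available once all factors are faithful) with the Galois connection of Theorem \ref{Galois for semimedial}, amounting to the paper's chain $\dis_\alpha=\dis_{[\alpha,\alpha]}\leq[\dis_\alpha,\dis_\alpha]\leq\dis_\alpha$, and your (iii) is word for word the paper's reduction of $Z(N)$ to part (ii). The differences lie in the other two steps. For faithfulness, where the paper simply says the members of a Cayley meet-semidistributive variety are faithful, you give a self-contained derivation: $\dis_{\lambda_Q}=1$ by Lemma \ref{on dis_alpha}(iii), so $\lambda_Q$ is abelian by Corollary \ref{ab an central cong iff}(i), hence trivial because $\mathcal{V}$ omits solvable congruences; this is a legitimate, arguably cleaner, justification of the same fact. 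For (ii) your route is genuinely different: the paper takes the last nontrivial term $D$ of the derived series of a solvable $N\in\N(Q)$, checks $D\in\N(Q)$, and uses Lemma \ref{abelian subgroup gives abelian cong for LTT} to conclude that $\O_D$ is an abelian, hence trivial, congruence, forcing $D=1$, a contradiction; you instead apply (i) to the congruence $\O_N$, note that $\dis_{\O_N}\leq N$ is simultaneously perfect and solvable, hence trivial, and then use faithfulness to pass from $\O_N\leq\lambda_Q=0_Q$ to $N=1$. Your version buys economy (it recycles (i) and bypasses Lemma \ref{abelian subgroup gives abelian cong for LTT}), at the price of invoking faithfulness in (ii), which the paper's argument for that part does not need.

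One caveat on (iii): you locate the delicate point correctly, but your resolution (``the $\s$-action is an honest automorphism of $N$'') claims more than is available. For $N\in\N(Q)$ one only knows $\s N\s^{-1}\leq N$, so conjugation by $\s$ is a priori an injective endomorphism of $N$, not an automorphism, and the inclusion $\s Z(N)\s^{-1}\leq Z(N)$ does not follow from characteristicity of $Z(N)$ in $N$ alone (it does follow when $\s N\s^{-1}=N$, e.g.\ whenever $Q$ is finite). The paper asserts exactly the same inclusion with exactly the same justification, so this is a shared subtlety rather than a defect of your proof relative to the paper's.
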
 

\begin{proof}
(i) The left quasigroups in $\mathcal{V}$ are faithful, since $\mathcal{V}$ is a Cayley variety. Let $Q\in \mathcal{V}$ and $\alpha\in Con(Q)$. According to Proposition \ref{p:comm_comm} and since $\alpha=[\alpha,\alpha]$ we have that $\dis_\alpha=\dis_{[\alpha,\alpha]}\leq [\dis_\alpha,\dis_\alpha]\leq \dis_\alpha$ for every $\alpha\in Con(Q)$ and every $Q\in\mathcal{V}$.

%The center of $Q$ is trivial. According to \cite[Corollary 5.10]{CP}, the center is $\c{Z(\dis(Q))}$, so accordingly $Z(\dis(Q))=1$.
%
%
%
%Let $\alpha=\mathcal{O}_{[\dis(Q),\dis(Q)]}$. Then $[\dis(Q),\dis(Q)]\leq \dis^\alpha$. Hence the displacement group of $Q/\alpha$ is abelian since it is isomorphic to $\dis(Q)/\dis^\alpha$. Therefore $Q/\alpha$ is medial and then trivial. So $\dis_{\alpha}=\dis(Q)\leq [\dis(Q),\dis(Q)]$, i.e. $\dis(Q)$ is perfect.

(ii) If $N\in \N(Q)$ is solvable of length $n$ and let $D$ be the non-trivial $(n-1)$th element of the derived series of $N$. So $D$ is abelian and it is in $\N(Q)$. Hence, according to Lemma \ref{abelian subgroup gives abelian cong for LTT}, $\beta=\mathcal{O}_{D}$ is abelian and therefore trivial. Hence $D=1$, contradiction.

(iii) If $Q$ is $2$-divisible and $N\in\N(Q)$, then $Z(N)$ is a characteristic subgroup of $N$, and so it is normal in $\lmlt(Q)$ and $\s Z(N) \s^{-1}\leq Z(N)$. So $Z(N)  \in \N(Q)$ and so we can conclude using (ii).
\end{proof}

%
%\begin{proof}
%The idempotent elements of every medial left quasigroup is a subalgebra and therefore it is a medial quandle.  Non-trivial medial quandles are nilpotent \cite[Proposition 5.13]{CP} and so they are not contained in any meet-semidistributive variety. 
%\end{proof}

\section{The spelling property}\label{Sec 6}
%
%Racks provide a class of LT left quasigroups, but they have actually a stronger property, namely for every rack $Q$ the equality
%$$L_{x*y}=L_x L_y L_x^{-1},\quad L_{x\ldiv y}=L_x^{-1} L_y L_x $$
%holds for every $x,y\in Q$. In this section we investigate left quasigroups satisfying a similar condition which turned out to be stronger then the LT property.

We say that a left-quasigroup $Q$ has the {\it spelling property} ($Q$ is a {\it spelling left-quasigroup}) if 
%there exists unary terms $s_i$ for $1\leq i\leq n$ and $t_j$ for $1\leq j\leq m$ such that
%\begin{align*}
%L_{x*y}&=L_{s_1(z_1)}^{k_1}\ldots L_{s_n(z_n)}^{k_n},\\
%L_{x\ldiv y}&=L_{t_1(u_1)}^{k_1}\ldots L_{t_m(u_m)}^{k_m},
%\end{align*}
%for $z_i,u_i\in \{x,y\}$. If $s_i(x)=t_j(x)=x$ for every $i,j$ $Q$ is also said to be a {\it spelling} left quasigroup. In such case 
there exists two binary terms $w^{\pm}$ in the language of groups such that
$$ L_{L_x^{\pm 1} (y)}=w^{\pm} (L_x,L_y).$$

%\comment{keep this?}
The spelling property is equivalent to a pair of identities of the form 
\begin{equation}\label{w_pm}
L_x^{\pm 1} (y)*z\approx w^{\pm} (L_x,L_y)(z).
\end{equation} 
%Equation \eqref{w_pm} is equivalent to the pair of identities
%$$L_{L_x^{\pm 1} (y)}(z)\approx w^{\pm} (L_x,L_y)(z),$$
%Basically a spelling left-quasigroup satisfies a pair of identities like those in \eqref{LTT L_{x*y}} where $t^+=t^-=z$ and $h^\pm$ depends only on $x$ and $y$. The Cayley property for spelling left-quasigroups follows by Proposition \ref{Cayley for LTT}(i).

The identities \eqref{w_pm} hold for any subalgebra, factor and power of $Q$ and so they also have the spelling property.

\begin{remark*}
The class of spelling left-quasigroups is stable under taking subalgebras, homomorphic images and powers.\end{remark*}

%\begin{example}
Let us show some examples of left quasigroups with the spelling property:
\begin{itemize}

\item[(i)] Racks have the spelling property. Indeed \eqref{LD} is equivalent to
$$%x*(y*z)\approx (x*y)*(x*z)\, \Leftrightarrow\,
 L_{L_x^{\pm 1}(y) }=L_{x}^{\pm 1} L_y L_x^{\mp 1}.$$ 

\item[(ii)] Let $Q$ be a left quasigroup such that $\mathcal{O}_{\lmlt(Q)}\leq \lambda_Q$. Then 
$$L_{x*y}=L_{x\ldiv y}=L_y $$ holds for every $x,y\in Q$ and so $Q$ has the spelling property.
%In particular, substituting $x\ldiv y$ to $y$ we have that $y*z\approx (x\ldiv y)*z$. Then $L_{x*y}=L_{x\ldiv y}=L_y$ for every $x,y\in Q$ and so $Q$ has the spelling property. In particular $\mathcal{O}_{\lmlt(Q)}\leq \lambda_Q$ and the orbits of $Q$ are permutation left-quasigroups.

\item[(iii)] A left quasigroup $Q$ is {\it associative} if
\begin{align*}\label{associativity}
(x*y)*z \approx x*(y*z)\,  \Leftrightarrow  \, L_{x*y}=L_x L_y %\tag{AL}
 %(x\ldiv y)*z\approx x\ldiv (y*z) \, & \Leftrightarrow &\,  L_{x\ldiv y}=L_x\inv Ly
\end{align*}
holds.
 The associative law % \eqref{associativity}
 implies that
\begin{eqnarray*}
	L_y= L_{x* (x\ldiv y)}= L_x L_{x\ldiv y} \, &\Leftrightarrow \,& L_{x\ldiv y}=L_x^{-1} L_y
\end{eqnarray*}
for every $x,y\in Q$.  Associative left quasigroups are right-groups (left cancellative, right simple semigroups) and then according to \cite[Exercise 2.6.6]{semigroups} they are the direct product of a group and a projection left quasigroup.
%\item[(iv)] \comment{Finite Idempotent twisted Ward left quasigroups}
\end{itemize}
%Hence in all the cases above we can apply Proposition \ref{sufficient condition to be LTT} and $Q$ is an LTT left quasigroup and %by Corollary \ref{sufficient condition for lambda to be a cong}
%$\lambda_Q$ is a congruence.
%\end{example}

\begin{lemma}
	Let $Q$ be a spelling left quasigroup and $S\subseteq Q$. Then
	$$\bigcap_{a\in S} \setof{b\in Q}{b*a=a}$$ is a subalgebra of $Q$.
\end{lemma}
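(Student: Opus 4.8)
The plan is to show directly that
$$T:=\bigcap_{a\in S}\setof{b\in Q}{b*a=a}$$
is closed under both basic operations $*$ and $\ldiv$. The organising observation is that the condition $b*a=a$ is nothing but $L_b(a)=a$, so $b\in T$ if and only if the left translation $L_b$ fixes every element of $S$, i.e. $L_b$ lies in the pointwise stabiliser
$$G_S:=\setof{g\in \sym_Q}{g(a)=a\text{ for every }a\in S}.$$
Since a pointwise stabiliser is a subgroup of $\sym_Q$, it is closed under products and inverses, and this is the only structural fact about $G_S$ that the argument will use.

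First I would fix $b,c\in T$ and rewrite the left translations of the two elements we must control using the spelling property. Since $b*c=L_b(c)$ and $b\ldiv c=L_b^{-1}(c)$, the defining identities of a spelling left quasigroup give
$$L_{b*c}=L_{L_b(c)}=w^+(L_b,L_c),\qquad L_{b\ldiv c}=L_{L_b^{-1}(c)}=w^-(L_b,L_c),$$
where $w^\pm$ are the fixed binary words in the language of groups supplied by the spelling property. Because $b,c\in T$, both $L_b$ and $L_c$ belong to $G_S$. As $w^\pm(L_b,L_c)$ is obtained from $L_b,L_c$ by the group operations (products and inverses), and $G_S$ is a subgroup, both $L_{b*c}$ and $L_{b\ldiv c}$ again lie in $G_S$. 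Evaluating at any $a\in S$ yields $(b*c)*a=L_{b*c}(a)=a$ and $(b\ldiv c)*a=L_{b\ldiv c}(a)=a$, so $b*c,b\ldiv c\in T$. This establishes closure and hence that $T$ is a subalgebra.

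I do not expect any genuine obstacle here: once the identification $b\in T\iff L_b\in G_S$ is made, the whole weight of the argument rests on the spelling property turning $L_{b*c}$ and $L_{b\ldiv c}$ into \emph{group} words in $L_b,L_c$, after which the subgroup property of the stabiliser finishes the proof with no computation. The only point worth flagging explicitly in the write-up is precisely that $w^\pm$ are words in the group language, so that the "stabiliser is a subgroup" step applies verbatim; the degenerate case $S=\emptyset$, where the intersection is all of $Q$, is trivially a subalgebra and needs no separate treatment.
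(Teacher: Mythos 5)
Your proof is correct and follows essentially the same route as the paper's: both arguments use the spelling property to write $L_{b*c}$ and $L_{b\ldiv c}$ as group words $w^{\pm}(L_b,L_c)$ and then invoke the fact that permutations fixing the relevant points form a subgroup. The only (cosmetic) difference is that the paper first reduces to a single point $a$ and uses the subgroup $\langle L_b,L_c\rangle$, whereas you work with the pointwise stabiliser of all of $S$ at once.
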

\begin{proof}
It is enough to prove that $\setof{b\in Q}{b*a=a}$ is a subalgebra for every $a\in Q$. 	If $b*a=L_b(a)=c*a=L_c(a)=a$ then $L_{b*c}$ and $L_{b\ldiv c}$ belongs to the subgroups generated by $L_b, L_c$, which acts trivially on $a$. Therefore $b*c$ and $b\ldiv c$ fix $a$.
\end{proof}

Note that every term $t$ in the language on left quasigrups is given by
\begin{equation}\label{a general term}
t(x_1,\ldots,x_{m})=p(x_{i_1},\ldots, x_{i_k})\bullet q(x_{j_1},\ldots,x_{j_s})
\end{equation}
 for suitable subterms $p,q$ and $\bullet\in \{\ast,\backslash\}$ and $x_{i_l},x_{j_l}\in \{x_1,\ldots,x_{n}\}$.
%\begin{lemma}\label{on u_L_t} \comment{check. Is it true?}
%Unary spelling left-quasigroup are u-LT left quasigroup. 
%%and $t$ be a term. Then
%%\begin{equation}\label{eq for L_t}
%%L_{t(x_1,\ldots,x_n)}=w_t(\setof{L_{x_i}}{1\leq i \leq n})
%%\end{equation}
%%where $w_t$ is a word which depends only on the term $t$. In particular, if $b\in Sg(a_1,\ldots,a_n)$ then $L_b\in \langle \setof{L_{a_i}}{1\leq i\leq n}\rangle$.
%\end{lemma}
%\begin{proof}
% Let $t(x_1,\ldots,x_n)$ be an $n$-ary term. It is easy to prove by induction that $L_{t(x_1,\ldots,x_n)}=L_{s_1(y_1)}^{k_1}\ldots L_{s_m(y_m)}^{k_m}$ where $s_i$ are unary terms and $y_i\in \{x_1,\ldots,x_n\}$ for $1\leq i\leq m$. 
% 
% We proceed by induction on the number of occurrences of variables in $t$. If the number of occurrences is $1$ we are done. Let $t$ be a term as in \eqref{a general term} with $n$ occurrences of variables. By induction on the number of occurrences we have that $L_{p(x_{i_1},\ldots, x_{i_k})}=L_{s_1(y_1)}^{k_1}\ldots L_{s_m(y_m)}^{k_m}$. Therefore $t(x_1,\ldots,x_n)=L_{s_1(y_1)}^{k_1}\ldots L_{s_m(y_m)}^{k_m}(q(x_{j_1},\ldots x_{j_s}))$. By induction $q$ is an $u$-LT term and so it is $t$. 
%\end{proof}

%\comment{maybe modify the next two results according to Remark \ref{like spelling}}

\begin{lemma}\label{on L_t}
Let $Q$ be a spelling left-quasigroup and $t$ be a term. Then
\begin{equation}\label{eq for L_t}
L_{t(x_1,\ldots,x_n)}=w_t(\setof{L_{x_i}}{1\leq i \leq n})
\end{equation}
where $w_t$ is a word which depends only on the term $t$. In particular, if $b\in Sg(a_1,\ldots,a_n)$ then $L_b\in \langle \setof{L_{a_i}}{1\leq i\leq n}\rangle$.
\end{lemma}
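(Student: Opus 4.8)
The plan is to proceed by structural induction on the term $t$, using the canonical decomposition recorded in \eqref{a general term}. The base case is when $t(x_1,\ldots,x_n)=x_i$ is a single variable: then $L_t=L_{x_i}$ is already a word of length one in the generators $\{L_{x_1},\ldots,L_{x_n}\}$, and this word visibly depends only on $t$.

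For the inductive step, I would write $t=p\bullet q$ with $\bullet\in\{*,\backslash\}$ as in \eqref{a general term}, where $p,q$ are proper subterms. Evaluating on the tuple $(x_1,\ldots,x_n)$ gives $t=L_p^{\pm 1}(q)$, with exponent $+1$ when $\bullet=*$ and $-1$ when $\bullet=\backslash$. Applying the spelling property yields
$$L_t=L_{L_p^{\pm 1}(q)}=w^{\pm}(L_p,L_q).$$
By the induction hypothesis there are words $w_p,w_q$, depending only on $p$ and $q$ respectively, with $L_p=w_p(\{L_{x_i}\})$ and $L_q=w_q(\{L_{x_i}\})$. Substituting these into the fixed group-theoretic word $w^{\pm}$ produces a word $w_t=w^{\pm}(w_p,w_q)$ in the $L_{x_i}$ which depends only on $\bullet$, $p$, and $q$, hence only on $t$. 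This establishes \eqref{eq for L_t}.

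The final assertion is then immediate. Recall that $Sg(a_1,\ldots,a_n)$ consists precisely of all left-quasigroup terms evaluated on $a_1,\ldots,a_n$; so if $b\in Sg(a_1,\ldots,a_n)$ then $b=t(a_1,\ldots,a_n)$ for some term $t$, and therefore $L_b=w_t(\{L_{a_i}\})\in\langle L_{a_1},\ldots,L_{a_n}\rangle$. The only point requiring care—and the closest thing to an obstacle—is verifying that $w_t$ genuinely depends on $t$ alone and not on the chosen evaluation. This is guaranteed because the spelling words $w^{\pm}$ are fixed elements of the free group on two generators, independent of $x$ and $y$, and structural induction propagates this independence through each composition; no computation with the actual elements of $Q$ is ever needed.
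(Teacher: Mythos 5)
Your proof is correct and follows essentially the same route as the paper's: induction on the structure of $t$ via the decomposition $t=p\bullet q$ from \eqref{a general term}, applying the spelling identity $L_{L_p^{\pm 1}(q)}=w^{\pm}(L_p,L_q)$ and substituting the words $w_p,w_q$ obtained from the inductive hypothesis. Your explicit treatment of the base case and of the ``in particular'' statement (using that $Sg(a_1,\ldots,a_n)$ consists of terms evaluated on the $a_i$) only makes explicit what the paper leaves implicit.
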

\begin{proof}
 Let $t$ be an $m$-ary term. We proceed by induction on the number of occurrences of variables in $t$. If the number of occurrences is $1$ we are done. Let $t$ be a term as in \eqref{a general term} with $n$ occurrences of variables. By induction, we have that $L_{p(x_{i_1},\ldots, x_{i_k})}=w_p(\setof{L_{x_{i_l}} }{1\leq l\leq k})$ and $L_{q(x_{j_1},\ldots, x_{j_s})}=w_q(\setof{L_{x_{j_l}} }{1\leq l\leq s})$ for suitable words which depends only on the terms $p$ and $q$. Therefore $L_{t}=w^{\pm}(w_p(\setof{L_{x_{i_l}} }{1\leq l\leq k}),w_q(\setof{L_{x_{j_l}} }{1\leq l\leq s}))$ and so it is a word in $\setof{L_{x_i} }{1\leq i \leq m}$ which depends just on $t$.
\end{proof}

According to Lemma \ref{on L_t}, spelling left quasigroups respect the condition described in Remark \ref{like spelling}, and therefore, they have the LT property.

\begin{corollary}\label{sufficient condition to be LTT}
Spelling left-quasigroups are LT left quasigroups. In particular, every term $t$ is equivalent to one of the form
\begin{equation}\label{term for spelling lQGs}
t(x_1,\ldots ,x_n)\approx L_{x_{i_1}}^{k_1}\ldots L_{x_{i_m}}^{k_m}(x_{i_{m+1}}),
\end{equation}
where $x_{i_j}\in \{x_1,\ldots ,x_n\}$ for every $1\leq j\leq m+1$.
\end{corollary}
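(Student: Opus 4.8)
The plan is to prove the stronger of the two assertions — the explicit normal form \eqref{term for spelling lQGs} — by structural induction on a term $t$; the LT property then follows immediately, since a product of the shape $L_{x_{i_1}}^{k_1}\cdots L_{x_{i_m}}^{k_m}(x_{i_{m+1}})$ is visibly an LT term in the sense of \eqref{t} (each left branch is the trivial unary subterm $x_{i_j}$). The engine of the argument is Lemma \ref{on L_t}: in a spelling left quasigroup the left multiplication $L_{t(x_1,\ldots,x_n)}$ attached to \emph{any} term $t$ is a group word in the generators $L_{x_1},\ldots,L_{x_n}$, that is, a product of integer powers $L_{x_{i_1}}^{k_1}\cdots L_{x_{i_r}}^{k_r}$. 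This is exactly what makes the head of a term, after peeling off the outermost operation, a product of left multiplications \emph{by variables alone}, with no auxiliary unary terms — which is why the normal form obtained here is cleaner than the one produced for semimedial left quasigroups in Proposition \ref{terms for medial}.

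Concretely, I would induct on the number of occurrences of variables in $t$. In the base case $t=x_i$ is already of the form \eqref{term for spelling lQGs}, with an empty product of left multiplications applied to $x_i$. For the inductive step, write $t=p\bullet q$ as in \eqref{a general term} with $\bullet\in\{*,\ldiv\}$, so that $t=L_p^{\pm 1}(q)$, the sign being $+$ for $*$ and $-$ for $\ldiv$. By Lemma \ref{on L_t} the operator $L_p$ equals a word $w_p$ in the $L_{x_i}$'s, and since the inverse of a group word is again a group word, $L_p^{\pm 1}$ is a product $L_{x_{i_1}}^{k_1}\cdots L_{x_{i_r}}^{k_r}$. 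By the induction hypothesis $q$ is equivalent to a term $L_{x_{j_1}}^{l_1}\cdots L_{x_{j_s}}^{l_s}(x_{j_{s+1}})$ of the desired shape. Concatenating the two products and applying the result to the variable $x_{j_{s+1}}$ exhibits $t$ in the form \eqref{term for spelling lQGs}, completing the induction.

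I do not expect a genuine obstacle: the substantive content — that $L_t$ is a word in the variable left multiplications — is already carried by Lemma \ref{on L_t}, so this corollary is essentially a bookkeeping consequence. The only points requiring care are tracking the signs $L_p^{\pm 1}$ for the two operations $*$ and $\ldiv$ (using that $\ldiv$ is $L_p^{-1}$), and reading ``word'' as a product of integer powers, so that inverses and concatenations remain within the admissible class of products. As an alternative to spelling out the induction, one can simply note that Lemma \ref{on L_t} verifies the hypothesis of Remark \ref{like spelling} verbatim, which already yields the LT property; the induction above is what refines this into the explicit form \eqref{term for spelling lQGs}.
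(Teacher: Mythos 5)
Your proof is correct and follows essentially the same route as the paper: induction on the number of variable occurrences, decomposing $t=p\bullet q$ as in \eqref{a general term}, applying Lemma \ref{on L_t} to get $L_p^{\pm 1}$ as a word in the $L_{x_i}$'s, and applying the induction hypothesis to $q$. Your closing remark about Remark \ref{like spelling} also matches the paper's own observation immediately preceding the corollary.
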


\begin{proof}

Let $t$ be a term as in \eqref{a general term}. By induction on the number of occurrences, $q$ is a LT term as in \eqref{term for spelling lQGs} and by \eqref{eq for L_t} in Lemma \ref{on L_t} we have
$$t(x_1,\ldots,x_m)=w_p(\setof{L_{x_{i_l}}}{1\leq l \leq k})^{\pm 1}  (q(x_{j_1},\ldots x_{j_s})).$$
Therefore $t$ is an LT term and it is as in \eqref{term for spelling lQGs}.
%
%If $L_x=L_z$ and $L_y=L_u$ then clearly $L_{x*y}=w_1(L_x,L_y,L_x\inv,L_y\inv)=w_1(L_z,L_u,L_z\inv,L_u\inv)=L_{z*u}$ (and similarly $L_{x\ldiv y} =L_{z\ldiv u}$). Therefore $\lambda_Q$ is a congruence.
\end{proof}

\begin{corollary}\label{monog LTT}
Let $Q$ be a spelling left-quasigroup and $S\subseteq Q$. Then 
$$Sg(S)=\bigcup_{a\in S} a^M $$ 
where $M=\langle L_a, \, a\in S\rangle$. In particular, $Sg(a)=\setof{L_a^k (a)}{k\in \mathbb{Z}}$ for every $a\in Q$.
\end{corollary}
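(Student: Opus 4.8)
The plan is to deduce both inclusions directly from the normal form for terms established in Corollary \ref{sufficient condition to be LTT}, which is where the spelling hypothesis does all the work. Write $M=\langle L_a,\, a\in S\rangle$. The key observation is that the spelling property collapses every term evaluated on $S$ into a single expression of the shape $g(a)$ with $g\in M$ and $a\in S$, that is, into a point of the $M$-orbit of a generator. So the whole statement is essentially a repackaging of the normal form.

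For the inclusion $Sg(S)\subseteq \bigcup_{a\in S} a^M$, I would take an arbitrary $c\in Sg(S)$. Since $Sg(S)$ consists of all left quasigroup terms evaluated on $S$, we have $c=t(a_1,\ldots,a_n)$ for some term $t$ and elements $a_1,\ldots,a_n\in S$. By Corollary \ref{sufficient condition to be LTT} the term $t$ is equivalent to one of the form $L_{x_{i_1}}^{k_1}\ldots L_{x_{i_m}}^{k_m}(x_{i_{m+1}})$, so that $c=L_{a_{i_1}}^{k_1}\ldots L_{a_{i_m}}^{k_m}(a_{i_{m+1}})$ with every $a_{i_j}\in S$. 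Setting $g=L_{a_{i_1}}^{k_1}\ldots L_{a_{i_m}}^{k_m}\in M$ and $a=a_{i_{m+1}}\in S$ then gives $c=g(a)\in a^M$, as desired.

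For the reverse inclusion $\bigcup_{a\in S} a^M\subseteq Sg(S)$, I would take $a\in S$ and $g\in M$ and write $g=L_{a_1}^{k_1}\ldots L_{a_m}^{k_m}$ with each $a_j\in S$ and $k_j\in\mathbb{Z}$. Then $g(a)=L_{a_1}^{k_1}\ldots L_{a_m}^{k_m}(a)$ is obtained from the elements $a_1,\ldots,a_m,a\in S$ by iterated left multiplications and their inverses; since $L_{a_j}^{-1}$ is exactly the left division $a_j\ldiv -$, this exhibits $g(a)$ as the value of a term in the language $\{*,\ldiv\}$ on elements of $S$, whence $g(a)\in Sg(S)$. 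Combining the two inclusions yields $Sg(S)=\bigcup_{a\in S} a^M$. The final assertion follows by specializing $S=\{a\}$, so that $M=\langle L_a\rangle=\setof{L_a^k}{k\in\mathbb{Z}}$ and $Sg(a)=a^M=\setof{L_a^k(a)}{k\in\mathbb{Z}}$.

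I do not expect a serious obstacle: both inclusions are immediate once Corollary \ref{sufficient condition to be LTT} is available. The only point requiring a little care is the reverse inclusion, where one must observe that the negative powers $L_{a_j}^{-1}$ occurring in an arbitrary element of $M$ correspond to genuine term operations, namely the left divisions, so that $g(a)$ really lies in the subalgebra generated by $S$. Everything else is bookkeeping.
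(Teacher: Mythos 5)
Your proof is correct and follows the same route as the paper: both arguments reduce the statement to the normal form for terms in Corollary \ref{sufficient condition to be LTT}, so that every element of $Sg(S)$ has the shape $L_{a_{i_1}}^{k_1}\ldots L_{a_{i_m}}^{k_m}(a_{i_{m+1}})$ with the $a_{i_j}\in S$. The only difference is that you spell out the (easy) reverse inclusion, which the paper treats as immediate.
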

\begin{proof}
All the terms of $Q$ are equivalent to LT terms as in \eqref{term for spelling lQGs}. Every element $b\in Sg(S)$ is a given by an LT term evaluated on the elements of $S$. Therefore $b=h(s)$ for some $s\in S$ and some $h\in \langle L_a, a\in S\rangle$.
\end{proof}

The Galois connection defined for racks in \cite[Section 3.3]{CP} depends only on the spelling property.

\begin{theorem}\label{Galois_correspondence}
Let $Q$ be a left quasigroup with the spelling property. The assignment $\alpha\mapsto\dis_\alpha$ and $N\mapsto\c{N}$ is a monotone Galois connection between $Con(Q)$ and the lattice of the normal subgroups of $\lmlt(Q)$.
\end{theorem}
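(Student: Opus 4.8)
The plan is to mirror the structure of the proof of Theorem \ref{Galois for semimedial}, splitting the work into three parts: first to verify that $\c{N}$ is genuinely a congruence, so that the assignment $N\mapsto\c{N}$ lands in $Con(Q)$; then to establish the adjunction $\dis_\alpha\leq N\iff\alpha\leq\c{N}$; and finally to note that both assignments are monotone. Since $\c{N}$ is already known to be an equivalence relation for any subgroup $N$ (as noted just before Proposition \ref{p:con_dis}), and $\dis_\alpha$ is normal by construction, the only substantial point is the congruence property of $\c{N}$, and this is precisely where the spelling property enters.

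For the main step I would fix a normal subgroup $N\trianglelefteq\lmlt(Q)$ and work with the canonical projection $\pi:\lmlt(Q)\longrightarrow\lmlt(Q)/N$. By the reduction recalled after \eqref{congruence prop}, it suffices to show, for every $a\,\c{N}\,b$ and every $c\in Q$, that $L_c^{\pm 1}(a)\,\c{N}\,L_c^{\pm 1}(b)$ and $L_a^{\pm 1}(c)\,\c{N}\,L_b^{\pm 1}(c)$. The hypothesis $a\,\c{N}\,b$ means $L_aL_b^{-1}\in N$, i.e. $\pi(L_a)=\pi(L_b)$. I would then invoke the spelling identities $L_{L_x^{\pm 1}(y)}=w^{\pm}(L_x,L_y)$ from \eqref{w_pm}, where $w^{\pm}$ are words in the language of groups: since $\pi$ is a group homomorphism it commutes with $w^{\pm}$, so
\[
\pi(L_{L_c^{\pm 1}(a)})=w^{\pm}(\pi(L_c),\pi(L_a))=w^{\pm}(\pi(L_c),\pi(L_b))=\pi(L_{L_c^{\pm 1}(b)}),
\]
whence $L_{L_c^{\pm 1}(a)}L_{L_c^{\pm 1}(b)}^{-1}\in N$, which is exactly $L_c^{\pm 1}(a)\,\c{N}\,L_c^{\pm 1}(b)$. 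The analogous computation, now varying the first argument of $w^{\pm}$, gives $\pi(L_{L_a^{\pm 1}(c)})=w^{\pm}(\pi(L_a),\pi(L_c))=w^{\pm}(\pi(L_b),\pi(L_c))=\pi(L_{L_b^{\pm 1}(c)})$ and hence $L_a^{\pm 1}(c)\,\c{N}\,L_b^{\pm 1}(c)$. Thus $\c{N}\in Con(Q)$.

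For the adjunction I would use that $\dis_\alpha$ is the normal closure in $\lmlt(Q)$ of $\setof{L_aL_b^{-1}}{a\,\alpha\,b}$. If $\alpha\leq\c{N}$ then each generator $L_aL_b^{-1}$ (for $a\,\alpha\,b$) lies in $N$, and since $N$ is normal its normal closure $\dis_\alpha$ is contained in $N$; conversely $\dis_\alpha\leq N$ forces $L_aL_b^{-1}\in N$ whenever $a\,\alpha\,b$, i.e. $\alpha\leq\c{N}$. Monotonicity of both maps is immediate from the definitions. This yields $\dis_\alpha\leq N\iff\alpha\leq\c{N}$, the defining property of a monotone Galois connection.

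The main obstacle is the first step, and the point worth stressing is that, unlike the semimedial case of Theorem \ref{Galois for semimedial} where the congruence property of $\c{N}$ required restricting to the admissible subgroups $\N(Q)$, here the spelling property lets the argument go through for \emph{every} normal subgroup $N$ with no extra closure condition: passing to the group $\lmlt(Q)/N$ turns the left-multiplication spelling identities into a statement about group words that is automatically preserved by the projection. This is consistent with the rack case, in which $\N(Q)$ is the full normal-subgroup lattice and racks satisfy the spelling property.
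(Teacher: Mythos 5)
Your proof is correct, and its skeleton --- first show $\c{N}\in Con(Q)$, then the adjunction $\dis_\alpha\leq N \iff \alpha\leq\c{N}$, then monotonicity --- is the same as the paper's; the genuine difference is in how the congruence property of $\c{N}$ is established. The paper works inside $\lmlt(Q)$: it writes $L_{a_1*a_2}$ as a word $L_{a_{i_1}}^{k_1}\cdots L_{a_{i_n}}^{k_n}$ with $a_{i_j}\in\{a_1,a_2\}$ (this is where the spelling property enters), and then runs the iterated insertion-and-conjugation manipulation \eqref{manipulation}, pushing factors $L_{a_{i_j}}^{k_j}L_{b_{i_j}}^{-k_j}\in N$ through the word by normality of $N$ until $L_{a_1*a_2}L_{b_1*b_2}^{-1}$ is exhibited as an element of $N$. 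You bypass that computation by passing to the quotient group $\lmlt(Q)/N$: the spelling identities \eqref{w_pm} express $L_{L_x^{\pm 1}(y)}$ as a fixed group word in $L_x,L_y$, group words are preserved by the projection $\pi$, and $a\,\c{N}\,b$ is precisely $\pi(L_a)=\pi(L_b)$, so compatibility with $*$ and $\ldiv$ is immediate. Both arguments use exactly the same two ingredients (the spelling property plus normality of $N$), but your packaging is shorter and more transparent, replacing the telescoping computation with the observation that homomorphisms commute with group terms; the paper's hands-on version has the mild advantage of displaying explicitly which conjugates of elements of $N$ witness membership. Two minor remarks: your appeal to the one-variable reduction after \eqref{congruence prop} is legitimate, since $\c{N}$ is already known to be an equivalence relation, but it is also dispensable --- the same quotient argument applied to $\pi(L_{a_1*a_2})=w^{+}(\pi(L_{a_1}),\pi(L_{a_2}))$ handles both arguments changing simultaneously, which is exactly what the paper proves; and your closing observation that no admissibility condition on $N$ is needed here (in contrast with Theorem \ref{Galois for semimedial}) correctly reflects the statement, where the connection is with the full lattice of normal subgroups of $\lmlt(Q)$.
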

%\begin{proof}
%If $\alpha\leq \c{N}$ then the generators of $\dis_\alpha$ as a normal subgroup of $\lmlt(Q)$ belong to $N$, and then $\dis_\alpha\leq N$. If $\dis_\alpha\leq N$ then $L_a L_b^{-1}\in N$ whenever $a\,\alpha\,b$. Then $\alpha\leq \c{N}$. Both the assignments $\alpha\mapsto \dis_\alpha$ and $N\mapsto \c{N}$ are clearly monotone, hence this pair of mappings provides a monotone Galois connection.
%\end{proof}

\begin{proof}
First we need to show that $\c{N}$ is a congruence for every $N\trianglelefteq \lmlt(Q)$. Then we can conclude as in Theorem \ref{Galois for semimedial}. Let $\alpha=\c{N}$ and 
%$a\, \alpha\, b\, \alpha\,c$. Clearly $\alpha	$ is reflexive. Moreover
%\begin{eqnarray*}
%(L_a L_b^{-1})^{-1}=L_b L_a^{-1}\in N,\quad  L_a L_b^{-1}L_b L_c^{-1}=L_a L_c^{-1}\in N,
%\end{eqnarray*}
%i.e. $\alpha$ is symmetric and transitive. If $L_a L_b^{-1}\in N$ then $L_a ^{-1} L_b\in N$ since $N$ is normal.
%%$$L_a^{-1} L_b=L_a^{-1} L_b L_a^{-1}L_a= L_a^{-1}\underbrace{L_b L_a^{-1}}_{\in N} L_a \in N.$$
let assume that $a_1\,\alpha\, b_1$ and $a_2\,\alpha\,b_2$ and that $L_{a_1*a_2}=L_{a_{i_1}}^{k_1}\ldots L_{a_n}^{k_n}$ where $k_j\in \{\pm 1\}$. Then
\begin{eqnarray}\label{manipulation}
L_{a_1*a_2}L_{b_1*b_2}^{-1} &=& L_{a_{i_1}}^{k_1}\ldots \underbrace{L_{a_{i_n}}^{k_n} L_{b_{i_n}}^{-k_n}}_{\in N}\ldots L_{b_{i_1}}^{-k_1}=\underbrace{L_{a_{i_1}}^{k_1}L_{b_{i_1}}^{-k_1}}_{\in N}L_{b_{i_1}}^{k_1}\ldots \underbrace{L_{a_{i_n}}^{k_n} L_{b_{i_n}}^{-k_n}}_{\in N}\ldots L_{b_{i_1}}^{-k_1}\notag\\
&=& \underbrace{L_{a_{i_1}}^{k_1}L_{b_{i_1}}^{-k_1}}_{\in N}L_{b_{i_1}}^{k_1}L_{a_{i_2}}^{k_2} \ldots \underbrace{L_{a_{i_n}}^{k_n} L_{b_{i_n}}^{-k_n}}_{\in N}\ldots L_{b_{i_2}}^{-k_2} L_{b_{i_1}}^{-k_1}=\notag\\
&=& \underbrace{L_{a_{i_1}}^{k_1}L_{b_{i_1}}^{-k_1}}_{\in N}L_{b_{i_1}}^{k_1} \underbrace{L_{a_{i_2}}^{k_2}L_{b_{i_2}}^{-k_2}}_{\in N}L_{b_{i_2}}^{k_2} \ldots \underbrace{L_{a_{i_n}}^{k_n} L_{b_{i_n}}^{-k_n}}_{\in N}\ldots L_{b_{i_2}}^{-k_2} L_{b_{i_1}}^{-k_1}.
\end{eqnarray}
Iterating the manipulation in \eqref{manipulation} and using that $N$ is a normal subgroup of $\lmlt(Q)$ we get that $L_{a_1*a_2}L_{b_1*b_2}^{-1}\in N$. Similarly we can prove that  $L_{a_1\ldiv a_2}L_{b_1\ldiv b_2}^{-1}\in N$ and therefore $\alpha$ is a congruence.
\end{proof}

According to Corollary \ref{sufficient condition to be LTT}, we can apply Proposition \ref{p:commutators} and Corollary  \ref{ab an central cong iff} to spelling left quasigroups. By virtue of Lemma \ref{central cong} we can characterize the center of spelling left quasigroups.

\begin{corollary}
Let $Q$ be a spelling left quasigroup. Then $\zeta_Q=\c{Z(\dis(Q))}\cap \sigma_{\dis(Q)}$.
\end{corollary}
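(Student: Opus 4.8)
The plan is to deduce the statement from the ``in particular'' clause of Corollary \ref{central cong}, which asserts precisely that $\zeta_Q=\c{Z(\dis(Q))}\cap \sigma_{\dis(Q)}$ for any LT left quasigroup $Q$ for which $\c{Z(\dis(Q))}$ happens to be a congruence. First I would note that spelling left quasigroups are LT left quasigroups by Corollary \ref{sufficient condition to be LTT}, so that clause is available to us; the entire argument then reduces to checking its single hypothesis, namely that $\c{Z(\dis(Q))}$ is a congruence of $Q$.

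To verify this, I would appeal to Theorem \ref{Galois_correspondence}, which says that for a spelling left quasigroup the relation $\c{N}$ is a congruence for \emph{every} normal subgroup $N$ of $\lmlt(Q)$. Thus it suffices to show that $Z(\dis(Q))$ is normal in $\lmlt(Q)$.

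This last point is a purely group-theoretic observation: $\dis(Q)$ is a normal subgroup of $\lmlt(Q)$, and its center $Z(\dis(Q))$ is a characteristic subgroup of $\dis(Q)$; since a characteristic subgroup of a normal subgroup is itself normal in the ambient group, we obtain $Z(\dis(Q))\trianglelefteq \lmlt(Q)$. Feeding this into Theorem \ref{Galois_correspondence} gives that $\c{Z(\dis(Q))}$ is a congruence, and Corollary \ref{central cong} then delivers the desired equality.

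The only genuinely non-automatic ingredient is the verification that $\c{Z(\dis(Q))}$ is a congruence, so I expect this to be the main conceptual step; it nevertheless follows immediately once the Galois correspondence of Theorem \ref{Galois_correspondence} is in hand, the remainder being bookkeeping with the machinery already developed in the section.
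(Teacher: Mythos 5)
Your proof is correct and follows exactly the paper's intended route: spelling implies LT (Corollary \ref{sufficient condition to be LTT}), $Z(\dis(Q))\trianglelefteq\lmlt(Q)$ since the center of the normal subgroup $\dis(Q)$ is characteristic in it, hence $\c{Z(\dis(Q))}$ is a congruence by Theorem \ref{Galois_correspondence}, and the ``in particular'' clause of Corollary \ref{central cong} finishes the argument. Nothing to add.
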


%\begin{question} \comment{try prover9, e.g. $(x*y)*z=z*(y*x)$, $(x\ldiv y)*z=z*(y\ldiv x)$, try to find a non-abelian model $Q$ such that $\dis(Q)$ is abelian and semiregular.}
%Let $Q$ be a LTT left-quasigroup. Then
%\begin{displaymath}
%L_{L_x^n(y)}^m(z)=t_{n,m}(x,y,z).
%\end{displaymath}
%for suitable LT-terms $t_{n,m}$ for $n,m\in \{\pm 1\}$. Does also the converse hold? Are the LT and the spelling property equivalent?
%%\begin{align*}
%%(x\ast y)\ast z &=t_1(x,y,z),  &(x\ast y )\backslash z = t_2(x,y,z),\\
%%(x\backslash y)\ast z&=s_1(x,y,z), &(x\backslash y )\backslash z = s_2(x,y,z).\\
%% %
%%%(x\ast y)\ast ((x\ast y )\backslash z)&=&t_1(x,y,t_2(x,y,z))=z.\\
%%%(x\ast y)\backslash ((x\ast y )\ast z)&=&t_2(x,y,t_1(x,y,z))=z.\\
%%%(x\backslash y)\ast ((x\backslash y )\backslash z)&=&s_1(x,y,s_2(x,y,z))=z.\\
%%%(x\backslash y)\backslash ((x\backslash y )\ast z)&=&s_2(x,y,s_1(x,y,z))=z.
%%\end{align*}
%\end{question}

\bibliographystyle{amsalpha}
\bibliography{references} % References file

\end{document}